\numberwithin{equation}{section}
\newcommand{\bb}{\bm{b}}
\newcommand{\Mm}{{\bf{M}}}
\newcommand{\PP}{{\bf{P}}}
\newcommand{\NN}{{\bf{N}}}
\newcommand{\Dd}{{\bf{D}}}
\newcommand{\Qq}{\mathbb{Q}}
\newcommand{\Rr}{\mathbb{R}}
\newcommand{\Center}{\operatorname{center}}
\newcommand{\Exc}{\operatorname{Exc}}
\newcommand{\Ima}{\operatorname{Im}}
\newcommand{\Nklt}{\operatorname{Nklt}}
\newcommand{\Supp}{\operatorname{Supp}}
\newcommand{\Ngklt}{\operatorname{Ngklt}}
\newcommand{\Nlc}{\operatorname{Nlc}}
\newcommand{\mult}{\operatorname{mult}}
\newcommand{\Div}{\operatorname{Div}}
\newcommand{\cont}{\operatorname{cont}}
\newcommand{\Ii}{\mathcal{I}}
\newcommand{\Pic}{\mathrm{Pic}}
\newtheorem{thm}{Theorem}[section]
\newtheorem{cor}[thm]{Corollary}
\newtheorem{lem}[thm]{Lemma}
\newtheorem{claim}[thm]{Claim}
\theoremstyle{definition}
\newtheorem{defn}[thm]{Definition}
\theoremstyle{definition}
\newtheorem{rem}[thm]{Remark}
\newtheorem{deflem}[thm]{Definition-Lemma}
\newtheorem{nota}[thm]{Notation}
\theoremstyle{definition}
\begin{document}

\title{Existence of flips for generalized lc pairs}
\author{Christopher D. Hacon and Jihao Liu}

\address{Department of Mathematics, The University of Utah, Salt Lake City, UT 84112, USA}
\email{hacon@math.utah.edu}

\address{Department of Mathematics, The University of Utah, Salt Lake City, UT 84112, USA}
\email{jliu@math.utah.edu}

\subjclass[2020]{14E30,14C20.14E05,14J17,14J30,14J35}
\date{\today}

\begin{abstract}
We prove the existence of flips for $\Qq$-factorial NQC generalized lc pairs, and the cone and contraction theorems for NQC generalized lc pairs. This answers a question of C. Birkar which was conjectured by J. Han and Z. Li. As an immediate application, we show that we can run the minimal model program for $\Qq$-factorial NQC generalized lc pairs. In particular, we complete the minimal model program for $\Qq$-factorial NQC generalized lc pairs in dimension $\leq 3$ and pseudo-effective $\Qq$-factorial NQC generalized lc pairs in dimension $4$.
\end{abstract}

\maketitle

\tableofcontents

\section{Introduction}

We work over the field of complex numbers $\mathbb C$, however many of the results also hold over any algebraically closed field $k$ of characteristic zero.

The main purpose of this paper is to prove the following theorem:

\begin{thm}\label{thm: existence of glc closure}
Let $(X,B,\Mm)/U$ be an NQC glc g-pair and $U^0\subset U$ a non-empty open subset. Let  $X^0:=X\times_UU^0$, $B^0:=B\times_UU^0$, and $\Mm^0:=\Mm\times_UU^0$ (see Definition \ref{defn: b divisor restriction over an open subset} below). Assume that
\begin{enumerate}
\item the morphism $X\rightarrow U$ is a projective morphism between normal quasi-projective varieties,
\item $(X^0,B^0,\Mm^0)/U^0$ has a good minimal model,
\item all glc centers of $(X,B,\Mm)$ intersect $X^0$, and
\item $\Mm ^0$ descends to $X^0$ and $\Mm^0_{X^0}\sim_{\mathbb R,U^0}0$.
\end{enumerate}
Then $(X,B,\Mm)/U$ has a good minimal model.
\end{thm}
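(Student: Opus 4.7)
The plan is to reduce the generalized statement to the lc analogue (existence of good minimal models for lc pairs that admit one over the open set $U^0$, in the Hacon--Xu--Birkar style) by exploiting condition (4): the $\Rr$-linear triviality of the nef part over $U^0$ is precisely what allows us to replace $\Mm$ by an effective $\Rr$-divisor supported off $X^0$ without disturbing the non-klt structure on $X^0$.

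First I would pass to a $\Qq$-factorial gdlt modification $\phi\colon Y\to X$ on which $\Mm$ descends, so that $K_Y+B_Y+\Mm_Y=\phi^*(K_X+B+\Mm_X)$. All four hypotheses transfer to $(Y,B_Y,\Mm)/U$: (1) and (4) are immediate; (3) holds because every glc center of $(Y,B_Y,\Mm)$ is either the strict transform of a glc center of $(X,B,\Mm)$ or a $\phi$-exceptional component lying over one, both of which meet $Y^0:=Y\times_U U^0$; and (2) holds because a good minimal model on $X^0$ pulls back via $\phi|_{Y^0}$ to a good minimal model on $Y^0$. Since a good minimal model of $(Y,B_Y,\Mm)/U$ contracts to one of $(X,B,\Mm)/U$, we may assume from the start that $(X,B,\Mm)$ is $\Qq$-factorial gdlt with $\Mm$ descending to $X$; write $M:=\Mm_X$, a nef $\Rr$-Cartier divisor.

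Next, I would exploit (4). The relation $M|_{X^0}\sim_{\Rr,U^0}0$ gives $M|_{X^0}=(\pi|_{X^0})^*E^0+\sum a_i(g_i^0)$ for some $\Rr$-divisor $E^0$ on $U^0$ and $g_i^0\in K(X^0)^\times$; extending $E^0$ and $g_i^0$ by closure to $E$ on $U$ and $g_i$ on $X$, the divisor $D:=M-\pi^*E-\sum a_i(g_i)$ is $\Rr$-Cartier, supported on $X\setminus X^0$, and satisfies $M\sim_{\Rr,U}D$. The technical heart is to upgrade this to an effective $\tilde D\geq 0$ with $\tilde D\sim_{\Rr,U}M$, $\Supp(\tilde D)\subset X\setminus X^0$, and such that $(X,B+\tilde D)$ is lc with the same non-klt centers as $(X,B,\Mm)$. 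For this I would use the NQC decomposition $\Mm=\sum t_j\Mm^{(j)}$ with nef Cartier $b$-divisor summands, apply the construction to each summand together with a small general ample perturbation (available by NQC), and rescale to absorb the negative part of $D$ without damaging the lc property. The resulting lc pair $(X,B+\tilde D)/U$ has all lc centers meeting $X^0$, and its restriction to $X^0$ is $(X^0,B^0)$, which has a good minimal model over $U^0$ by combining (2) with the reduction of glc to lc provided by (4).

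Applying the Hacon--Xu--Birkar theorem to $(X,B+\tilde D)/U$ then yields a good minimal model, and via $K_X+B+\tilde D\sim_{\Rr,U}K_X+B+M$ this is simultaneously a good minimal model of $(X,B,\Mm)/U$. The main obstacle is the construction of $\tilde D$: the raw difference $D$ may fail to be effective or may have coefficients along components of $\lfloor B\rfloor$ incompatible with lc singularities, so the NQC structure and the ample perturbation must be used delicately, possibly in combination with a further birational modification supported over $U\setminus U^0$; an induction on $\dim X$ or on the number of glc strata meeting $X\setminus X^0$ may enter at this point.
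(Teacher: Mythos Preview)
Your overall strategy---replace $\Mm_X$ by an effective $\Rr$-divisor $G\sim_{\Rr,U}\Mm_X$ so that $(X,B+G)$ is lc with the same non-klt locus, then invoke the Hacon--Xu/Hashizume lc closure theorem, and transfer back via $\Rr$-linear equivalence---is exactly the paper's approach. Two points, however, separate your sketch from a proof.

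First, a minor one: a gdlt modification does \emph{not} in general force $\Mm$ to descend. You can arrange either gdlt or $\Mm$ descending, but not both by a single crepant extraction; the paper reduces to $\Qq$-factorial gdlt and then passes to a further log resolution $X'\to X$ (on which $\Mm$ descends) only for the purpose of constructing $G$, pushing $G'$ back down at the end.

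Second, and this is the real gap: your construction of $\tilde D$ does not work as stated. Extending the $\Rr$-linear trivialisation from $X^0$ gives a divisor $D$ supported on $X\setminus X^0$, but $D$ has no reason to be effective, and ``absorbing the negative part'' via an NQC decomposition plus a small ample perturbation is not a mechanism: the nef summands $\Mm^{(j)}_X$ need not lie in any effective $\Rr$-linear system over $U$ at all, and no finite ample perturbation produces an effective member supported on a prescribed closed set. The paper's substitute is a genuinely different and deeper step (its Theorem~\ref{thm: reduce special gpair to pair}): after a Shokurov-polytope reduction to $\Qq$-coefficients, take a weak semistable reduction so that $X'\to U'$ is equidimensional with $U'$ smooth and $(X',\Supp B_{X'})$ quasi-smooth. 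On such a model the argument of Lemma~\ref{lem: lift equivalence from generic fiber} together with the general negativity lemma shows $\Mm_{X'}\sim_{\Qq,U'}0$ \emph{globally}, hence $\Mm_{X'}\sim_{\Qq,U}\pi'^*M_{U'}$ for a $\varphi$-nef $\Qq$-divisor on the base. One then chooses $G'$ as a general member of $|\Mm_{X'}/U|_{\Qq}$: the ample-minus-effective decomposition of $M_{U'}$ over $U$ controls the non-klt places, and the fact that $M_{U'^0}\sim_{\Qq,U^0}0$ lets one find, for each stratum $S'$ of $\lfloor B_{X'}\rfloor$, a member of $|\Mm_{X'}/U|_{\Qq}$ missing $S'$. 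This is where hypotheses (3) and (4) are actually consumed. Note also that the paper does \emph{not} require $G$ to be supported on $X\setminus X^0$; what is needed---and what the semistable-reduction argument delivers---is only that $(X,B+G)$ be lc with $\Nklt(X,B+G)=\Ngklt(X,B,\Mm)$.
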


Here ``glc g-pair" stands for ``generalized lc pair" and ``NQC" stands for ``nef $\Qq$-Cartier combination". We remark that NQC generalized pairs in our paper exactly correspond to the original generalized pairs defined in \cite{BZ16}. See Definition \ref{defn: g-pairs} for more details.

As an immediate application of Theorem \ref{thm: existence of glc closure}, we show the existence of flips for $\Qq$-factorial NQC generalized lc pairs:

\begin{thm}[Existence of generalized lc flips]\label{thm: existence of q-factorial glc flips}
Let $(X,B,\Mm)/U$ be a $\Qq$-factorial NQC glc g-pair and $f: X\rightarrow Z$ a $(K_X+B+\Mm_X)$-flipping contraction over $U$. Then the flip $f^+: X^+\rightarrow Z$ of $f$ exists.
\end{thm}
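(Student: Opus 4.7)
The plan is to derive Theorem \ref{thm: existence of q-factorial glc flips} from Theorem \ref{thm: existence of glc closure}. Since the flip is constructed locally on the base, I would first shrink $U$ to reduce to the case $U=Z$ affine. The flip $X^+\to Z$ of $f$ exists if and only if the relative sheaf of algebras $\bigoplus_{m\ge 0} f_*\mathcal{O}_X(\lfloor m(K_X+B+\Mm_X)\rfloor)$ is finitely generated over $\mathcal{O}_Z$; by the generalized MMP framework, this is equivalent to $(X,B,\Mm)/Z$ admitting a good minimal model. Because $f$ is small with relative Picard number one, any such good minimal model is automatically a small birational modification of $X$ over $Z$ and must coincide with $X^+$. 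Hence it suffices to produce a good minimal model of $(X,B,\Mm)/Z$.

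I would then apply Theorem \ref{thm: existence of glc closure} with the choice $U^0:=Z\setminus f(\Exc(f))$ and $X^0:=f^{-1}(U^0)$. Because $f$ is small, $X^0\to U^0$ is an isomorphism, and three of the four hypotheses become essentially formal: condition (2) holds because $X^0$ is trivially its own good minimal model over $U^0$ (no curves are contracted by the isomorphism), and condition (4) holds because every $\mathbb{R}$-Cartier divisor on $X^0\cong U^0$ is automatically $\sim_{\mathbb{R},U^0}0$---after first replacing $X$ by a higher $\Qq$-factorial birational model on which $\Mm$ descends, so that $\Mm^0$ descends to $X^0$.

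The delicate condition is (3): every glc center of $(X,B,\Mm)$ must meet $X^0$. Divisorial glc centers (components of $\lfloor B\rfloor$) meet $X^0$ automatically because $\Exc(f)$ has codimension at least two. To handle higher-codimensional glc centers, which could a priori lie entirely in $\Exc(f)$, I would pass to a $\Qq$-factorial dlt modification $h\colon Y\to X$ of $(X,B,\Mm)$: since $h$ is crepant, a good minimal model of $(Y,B_Y,\Mm)/Z$ pushes forward to one of $(X,B,\Mm)/Z$, and on the dlt g-pair the glc centers are precisely the strata of $\lfloor B_Y\rfloor$, giving a combinatorial handle on the problem.

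The main obstacle is then to arrange that every stratum of $\lfloor B_Y\rfloor$ meets $Y^0:=h^{-1}(X^0)$. The strict transforms of components of $\lfloor B\rfloor$ meet $Y^0$ automatically as divisors lying over divisors of $X$; the difficulty concerns $h$-exceptional components whose centers on $X$ lie in $\Exc(f)$. I expect this to be handled either by constructing the dlt modification so as to control the extracted divisors and their strata (using the combinatorial transversality of dlt pairs to propagate the condition from components to higher-codimensional strata), or by a small perturbation of the boundary---valid because the flipping contraction $f$ depends only on the numerical class of $-(K_X+B+\Mm_X)$, which remains $f$-ample under small perturbations of $(B,\Mm)$. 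Once condition (3) is secured, Theorem \ref{thm: existence of glc closure} furnishes a good minimal model of $(Y,B_Y,\Mm)/Z$, which contracts via $h$ to the flip $X^+\to Z$.
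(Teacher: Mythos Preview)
Your strategy—deriving the flip from a good minimal model of $(X,B,\Mm)/Z$ via Theorem \ref{thm: existence of glc closure}—matches the paper's, but your verifications of conditions (3) and (4) both have gaps, and the fixes are not the ones you suggest. For (4), your choice $U^0=Z\setminus f(\Exc(f))$ does not force $\Mm^0$ to descend to $X^0$: the $\bb$-divisor $\Mm$ may fail to descend at points of $X$ lying outside the flipping locus, and passing to a higher model $Y$ where $\Mm$ descends then breaks the condition $\Mm^0_{Y^0}\sim_{\Rr,U^0}0$ (by the negativity lemma the defect $E=h^*\Mm_X-\Mm_{\tilde X}\geq 0$ would have to vanish over $X^0$, which your $U^0$ does not arrange). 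The paper instead splits on the sign of $\Mm_X\cdot C$ for a flipping curve $C$: if $\Mm_X\cdot C\geq 0$ the contraction is already $(K_X+B)$-negative and one simply invokes the existence of lc flips; if $\Mm_X\cdot C<0$ then the flipping locus lies inside $h(\Supp E)$, and one takes $Z^0=Z\setminus f(h(\Supp E))$, so that $\Mm$ descends to $X^0$ and is trivial over $Z^0$ by construction.

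For (3), with this corrected $Z^0$ glc centers of $(X,B,\Mm)$ may genuinely lie in $h(\Supp E)$, and no choice of gdlt modification can move them out; your first suggestion cannot work. Your perturbation idea is the right one, but the specific perturbation is $\Mm\mapsto(1-\epsilon)\Mm$: the log discrepancy of any divisor centered over $\Supp E$ increases by a positive multiple of $\epsilon$, so the glc centers of $(X,B,(1-\epsilon)\Mm)$ meet $X^0$ and Theorem \ref{thm: existence of glc closure} applies to the perturbed pair. One must then transfer back: since $\rho(X/Z)=1$, Theorem \ref{thm: contraction theorem glc g-pairs}(3) gives $K_X+B+\Mm_X\sim_{\Rr,Z}r(K_X+B+(1-\epsilon)\Mm_X)$ for some $r>0$, and Lemma \ref{lem: rlin equivalent good minimal model} transports the good minimal model. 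Finally, your claim that a good minimal model over $Z$ is automatically small is false (Definition \ref{defn: models} permits extracted divisors with coefficient $1$); the paper verifies separately that the resulting ample model $X^+$ extracts no divisors from $X$.
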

In fact, we will also show that $X^+$ is $\Qq$-factorial and $\rho(X)=\rho(X^+)$, and hence the flip $X\dasharrow X^+$ is compatible with the minimal model program for generalized pairs. See Theorem \ref{thm: existence glc flip with m r cartier} below.

\medskip

As a complement to Theorem \ref{thm: existence of q-factorial glc flips}, we prove the cone and contraction theorems for NQC generalized lc pairs, thus completely answering Birkar's question on the existence of contractions and flips \cite[6.1]{Bir20b} which was originally conjectured by Han-Li \cite[Conjectures 3.1, 3.3]{HL18}.

\begin{thm}[Cone and contraction theorems for generalized lc pairs]\label{thm: cone and contraction theorem glc pair}
Let $(X,B,\Mm)/U$ be an NQC glc g-pair and $\pi: X\rightarrow U$ the associated morphism. Let $\{R_j\}_{j\in\Lambda}$ be the set of $(K_X+B+\Mm_X)$-negative extremal rays in $\overline{NE}(X/U)$ that are rational. Then:
\begin{enumerate}
    \item $$\overline{NE}(X/U)=\overline{NE}(X/U)_{K_X+B+\Mm_X\geq 0}+\sum_{j\in\Lambda} R_j.$$
    In particular, any $(K_X+B+\Mm_X)$-negative extremal ray in $\overline{NE}(X/U)$ is rational.
    \item Each $R_j$ is spanned by a rational curve $C_j$ such that $\pi(C_j)=\{pt\}$ and 
    $$0<-(K_X+B+\Mm_X)\cdot C_j\leq 2\dim X.$$
    \item For any ample$/U$ $\Rr$-divisor $A$ on $X$,
    $$\Lambda_A:=\{j\in\Lambda\mid R_j\subset\overline{NE}(X/U)_{K_X+B+\Mm_X+A<0}\}$$
    is a finite set. In particular, $\{R_j\}_{j\in\Lambda}$ is countable, and is a discrete subset in $\overline{NE}(X/U)_{K_X+B+\Mm_X+A<0}$. Moreover, we may write
    $$\overline{NE}(X/U)=\overline{NE}(X/U)_{K_X+B+\Mm_X+A\geq 0}+\sum_{j\in\Lambda_A}R_j.$$
    \item Assume that $\Mm_X$ is $\Rr$-Cartier. Let $R$ be a $(K_X+B+\Mm_X)$-negative extremal ray in $\overline{NE}(X/U)$. Then $R$ is a rational extremal ray. In particular, there exists a projective morphism $\cont_R: X\rightarrow Y$ over $U$ satisfying the following.
    \begin{enumerate}
        \item For any integral curve $C$ such that $\pi(C)$ is a point, $\cont_R(C)$ is a point if and only if $[C]\in R$.
        \item $\mathcal{O}_Y\cong(\cont_R)_*\mathcal{O}_X$. In other words, $\cont_R$ is a contraction.
        \item Let $L$ be a line bundle on $X$ such that $L\cdot R=0$. Then there exists a line bundle $L_Y$ on $Y$ such that $L\cong f^*L_Y$.
    \end{enumerate}
\end{enumerate}
\end{thm}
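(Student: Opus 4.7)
The overall plan is to reduce the cone and contraction theorem for NQC glc g-pairs to its already-established counterpart for $\Qq$-factorial NQC gklt g-pairs, by passing through a gdlt modification. The key new ingredient, available to us now, is Theorem \ref{thm: existence of q-factorial glc flips}: running a $(K_{X'}+B'+\Mm_{X'})$-MMP on a log resolution of $(X,B,\Mm)$ using the existence of flips, we produce a $\Qq$-factorial NQC gdlt modification $f\colon (X',B',\Mm)/U \to (X,B,\Mm)/U$ satisfying $K_{X'}+B'+\Mm_{X'}=f^*(K_X+B+\Mm_X)$. The induced $f_*\colon N_1(X'/U)\to N_1(X/U)$ is surjective, and the projection formula identifies $(K_{X'}+B'+\Mm_{X'})$-negative classes on $X'$ with lifts of $(K_X+B+\Mm_X)$-negative classes on $X$.

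Next I would establish the cone and contraction theorem on the gdlt model $(X',B',\Mm)/U$ by perturbation to the gklt case. Given a $(K_{X'}+B'+\Mm_{X'})$-negative extremal ray $R'$, pick $\delta>0$ small and a suitable ample $\Qq$-divisor $H$ so that $(X', B'-\delta\lfloor B'\rfloor+\delta H, \Mm)$ is $\Qq$-factorial NQC gklt and so that $R'$ remains negative for its adjoint class. Invoking the known cone and contraction theorem for $\Qq$-factorial NQC gklt g-pairs (Birkar--Zhang, Han--Li) yields rationality of $R'$, the length bound $-(K_{X'}+B'+\Mm_{X'})\cdot C'\leq 2\dim X$, and, when $\Mm_{X'}$ is $\Rr$-Cartier, an extremal contraction. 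A limiting argument as $\delta\to 0^+$, controlled by the finiteness of the analogous $\Lambda_{H}^\delta$ at each scale, collates these into the full statement on $X'$.

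With (1)--(4) in hand on $X'$, I would descend to $X$ via $f$. Given a $(K_X+B+\Mm_X)$-negative extremal ray $R$, its preimage under $f_*$ meets the $(K_{X'}+B'+\Mm_{X'})$-negative part of $\overline{NE}(X'/U)$ in an extremal face, which by the previous step is spanned by rational extremal rays of bounded length; pushing forward yields the decomposition in (1), the rationality in (2), and the length bound. The finiteness assertion (3) follows because $f_*$ is linear and maps finite subsets to finite subsets, and any $R\in\Lambda_A$ lifts to some $R'\in\Lambda_{f^*A}^{X'}$. For (4), assume $\Mm_X$ is $\Rr$-Cartier, so $\Mm_{X'}$ is $\Rr$-Cartier too, and the contraction $\cont_{R'}\colon X'\to Y'$ of the face lifting $R$ exists. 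A standard rigidity argument, combined with the fact that the $f$-exceptional divisors are contracted by $\cont_{R'}$, shows that $\cont_{R'}$ factors through $X$, producing $\cont_R\colon X\to Y$ with the required universal and base-point-free properties.

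The main obstacle will be the second step: establishing the cone and contraction theorem for $\Qq$-factorial NQC gdlt g-pairs in the case when $\Mm$ is only $\Rr$-Cartier rather than $\Qq$-Cartier, with uniform length bound $2\dim X$. This requires extending the Kawamata--Shokurov base-point-free machinery into the $\Rr$-coefficient NQC regime, exploiting a positive $\Qq$-Cartier decomposition of $\Mm$, and verifying that the perturbations used in passing from gdlt to gklt do not inflate the length bound. A secondary technical point is the descent of contractions through $f$: one must check that exceptional curves over $X$ whose classes push forward into $R$ are in fact contained in the face of $\overline{NE}(X'/U)$ that contracts to $Y'$, so that the composition $f\circ\cont_{R'}^{-1}$ is well-defined as a morphism from $X$.
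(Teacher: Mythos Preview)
Your strategy for parts (1)--(3) via a gdlt modification $f\colon(X',B',\Mm)\to(X,B,\Mm)$ is sound and is a genuinely different route from the paper. The paper does not pass to a gdlt model; instead it works directly on $X$ by induction on dimension: it perturbs with an ample divisor to obtain a usual pair $(X,\Delta)$ with $\Nlc(X,\Delta)=\Ngklt(X,B,\Mm)$ (Lemma~\ref{lem: perturb gpair to make nlc locus ngklt locus}), applies the Ambro--Fujino cone theorem for not-necessarily-lc pairs (Theorem~\ref{thm: cone theorem for not necessarily lc pairs}), and handles the contribution from $\overline{NE}(X/U)_{\Nlc(X,\Delta)}$ by generalized sub-adjunction to glc centers (Theorem~\ref{thm: gpair subadjunction}) combined with the inductive hypothesis in lower dimension. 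Your approach is more direct for (1)--(3) and avoids the quasi-log machinery, though your gklt perturbation step is unnecessary: on a $\Qq$-factorial gdlt model $X'$ is already klt, so the cone theorem with length bound $2\dim X$ is available from \cite[Proposition~3.13]{HL18} together with the reduction $K_{X'}+B'+\Mm_{X'}+A'\sim_{\Rr,U}K_{X'}+\Delta'$ to an ordinary klt pair.

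Your approach to (4), however, has a genuine gap. The face $F:=f_*^{-1}(R)\cap\overline{NE}(X'/U)$ is \emph{not} $(K_{X'}+B'+\Mm_{X'})$-negative: every curve contracted by $f$ lies in $F$ (it maps to $0\in R$) but has intersection $0$ with $K_{X'}+B'+\Mm_{X'}=f^*(K_X+B+\Mm_X)$. Hence no contraction theorem on $X'$ directly produces $\cont_F\colon X'\to Y'$, and any attempt to contract $F$ via a supporting function $f^*H$ (with $H$ a rational supporting function of $R$ on $X$) reduces to showing that $H$ is semi-ample on $X$, which is precisely the statement you are trying to prove. The paper avoids this circularity by working on $X$ itself: after replacing $\Mm$ by $(1-\epsilon)\Mm$ so that $\Ngklt(X,B,\Mm)=\Nklt(X,B)$, it uses a special extraction (Lemma~\ref{lem: special extraction}, which in turn relies on Theorem~\ref{thm: existence of glc closure}) to construct an honest lc pair $(X,\Delta)$ with $\Delta\sim_{\Rr,U}B+\Mm_X+A$ for which $R$ is still negative, and then applies the classical base-point-free theorem (Theorem~\ref{thm: base-point-free theorem for not necessarily lc pairs}) directly on $X$.
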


An immediate corollary of Theorems \ref{thm: existence of q-factorial glc flips} and \ref{thm: cone and contraction theorem glc pair} is that we can run minimal model programs (MMPs) for $\Qq$-factorial NQC glc g-pairs:

\begin{thm}\label{thm: can run gpair mmp}
We can run the MMP for $\Qq$-factorial NQC glc g-pairs. More precisely, for any $\Qq$-factorial NQC glc g-pair $(X,B,\Mm)/U$, there exists a sequence of $(K_X+B+\Mm_X)$-flips and divisorial contractions over $U$. Moreover, any such sequence ends either with a Mori fiber space, or a minimal model, or an infinite sequence of flips over $U$.
\end{thm}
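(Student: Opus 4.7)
The plan is to construct the MMP sequence step by step, feeding the output of each step back into the next. Starting from $(X,B,\Mm)/U$, at each stage I would first test whether $K_X+B+\Mm_X$ is nef over $U$; if so, we stop with a minimal model. Otherwise, Theorem \ref{thm: cone and contraction theorem glc pair}(1) produces a $(K_X+B+\Mm_X)$-negative extremal ray $R\subset\overline{NE}(X/U)$, and since $X$ is $\Qq$-factorial the divisor $\Mm_X$ is automatically $\Rr$-Cartier, so Theorem \ref{thm: cone and contraction theorem glc pair}(4) yields a contraction $\cont_R\colon X\to Y$ over $U$. If $\dim Y<\dim X$, this is a Mori fiber space and we stop. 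If $\cont_R$ is birational and contracts a divisor, I replace $(X,B,\Mm)$ by the pushforward pair on $Y$. If $\cont_R$ is a flipping contraction, Theorem \ref{thm: existence of q-factorial glc flips} produces the flip $X^+\to Y$, and I replace $(X,B,\Mm)$ by the transformed pair on $X^+$. Iterating gives the desired sequence.

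The key verification at each step is that the resulting target is again a $\Qq$-factorial NQC glc g-pair over $U$. For a flip, the strengthening following Theorem \ref{thm: existence of q-factorial glc flips} asserts that $X^+$ is $\Qq$-factorial with $\rho(X^+)=\rho(X)$; the transformed boundary $B^+$ together with the unchanged nef b-divisor $\Mm$ continue to define a generalized pair with controlled singularities by standard negativity-of-contraction arguments applied on a common resolution of $X$ and $X^+$. For a divisorial contraction $\cont_R\colon X\to Y$, I would use Theorem \ref{thm: cone and contraction theorem glc pair}(4)(c) to deduce that every line bundle on $X$ orthogonal to $R$ descends to $Y$, which together with $\Qq$-factoriality of $X$ and negativity of the contracted divisor on $R$ forces $Y$ to be $\Qq$-factorial and $\rho(Y)=\rho(X)-1$. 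The NQC hypothesis is preserved throughout because it is a property of the nef b-divisor $\Mm$ rather than of its trace on any particular birational model.

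Once the iteration is set up with these invariants, the trichotomy in the conclusion is essentially automatic. By construction the process can only halt by producing either a minimal model (when $K_X+B+\Mm_X$ becomes nef) or a Mori fiber space (when a contraction of fiber type is encountered). If it never halts, then since each divisorial contraction strictly decreases the non-negative integer $\rho$, the sequence can contain only finitely many divisorial contractions, so its tail consists entirely of flips.

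The hard part is not any individual step — the cone, contraction, and flip theorems do the actual geometric work — but rather the careful bookkeeping that shows the full package $(\Qq\text{-factorial}, \mathrm{NQC}, \mathrm{glc}, \text{over } U)$ is preserved simultaneously under both divisorial contractions and flips. These are the generalized-pair analogues of well-known facts in the classical lc setting, but they must be recorded cleanly so that the inductive construction of the MMP is actually well-defined from one step to the next.
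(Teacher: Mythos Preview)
Your proposal is correct and matches the paper's approach: the paper's proof is a one-line citation of the cone theorem, the contraction theorem, the existence of flips (in its strengthened $\Qq$-factorial form), and the corollaries that $\Qq$-factoriality and the Picard-number drop are preserved under divisorial contractions, which is exactly the content you spell out. Your explicit inductive construction and the trichotomy argument via the drop of $\rho$ under divisorial contractions are the standard unpacking of these citations.
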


Therefore, as long as we know the termination of flips, we can completely establish the minimal model program for $\Qq$-factorial NQC glc g-pairs. In particular, we have:

\begin{thm}\label{thm: gpair mmp 3fold and pe fourfold}
The MMP for $\Qq$-factorial NQC glc g-pairs in dimension $\leq 3$ holds, and the MMP for pseudo-effective $\Qq$-factorial NQC glc g-pairs in dimension $4$ holds. More precisely, for any $\Qq$-factorial NQC glc g-pair $(X,B,\Mm)/U$ such that either $\dim X\leq 3$ or $\dim X=4$ and $K_X+B+\Mm_X$ is pseudo-effective$/U$, there exists a sequence of $(K_X+B+\Mm_X)$-flips and divisorial contractions over $U$. Moreover, any such sequence ends either with a Mori fiber space or a minimal model over $U$.

\end{thm}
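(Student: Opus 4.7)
The plan is to derive this theorem as the combination of Theorem~\ref{thm: can run gpair mmp} with termination-of-flips results in the two specified dimension ranges. By Theorem~\ref{thm: can run gpair mmp}, for any $\Qq$-factorial NQC glc g-pair $(X,B,\Mm)/U$ we can run a $(K_X+B+\Mm_X)$-MMP over $U$, and any such sequence ends either with a Mori fiber space, a minimal model, or an infinite sequence of flips. Hence the entire content of the theorem is to rule out the third possibility in the two cases at hand.

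For $\dim X \le 3$, I would invoke termination of flips for NQC glc g-pairs in dimension at most $3$. The cases $\dim X \le 2$ are vacuous. For $\dim X = 3$, after passing to a small $\Qq$-factorial dlt modification (whose existence for glc g-pairs is available by Theorem~\ref{thm: existence of q-factorial glc flips} together with standard constructions), one reduces to the dlt g-pair setting and runs a Shokurov-type decreasing-log-discrepancy argument: the sum of log discrepancies of divisorial valuations with center in the flipping locus strictly decreases along each flip, while in dimension $3$ only finitely many valuations can have log discrepancy below $1$. Combined with the ACC for glc thresholds in low dimension, this forces termination.

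For the pseudo-effective case in $\dim X = 4$, I would run the MMP with scaling of an ample$/U$ $\Rr$-divisor $A$, chosen so that $(X,B+A,\Mm)$ is glc and $K_X+B+\Mm_X+A$ is nef$/U$; the existence of this MMP is guaranteed by Theorem~\ref{thm: can run gpair mmp} and the cone/contraction theorem~\ref{thm: cone and contraction theorem glc pair}. Denote by $\lambda_i$ the scaling parameters and let $\lambda=\lim_i \lambda_i$. If $\lambda>0$, then after finitely many steps the tail of the sequence is a $(K_X+B+\frac{\lambda}{2}A+\Mm_X)$-MMP with scaling, which terminates in dimension $4$ by special termination. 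The subtle case is $\lambda=0$: here, using pseudo-effectivity of $K_X+B+\Mm_X$, the length-of-extremal-rays bound from Theorem~\ref{thm: cone and contraction theorem glc pair}(2), and the existence of a good minimal model supplied by Theorem~\ref{thm: existence of glc closure} applied to a suitably chosen relative situation $(X^0,B^0,\Mm^0)/U^0$ in which $\Mm^0$ descends and is $\Rr$-linearly trivial, one derives a contradiction to $\lambda_i \to 0$.

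The main obstacle will be the $\lambda=0$ case in dimension $4$: the delicate point is to engineer the open subset $U^0$ and an auxiliary birational model so that hypotheses (2)–(4) of Theorem~\ref{thm: existence of glc closure} all hold simultaneously, so that the existence of a good minimal model can be bootstrapped into a termination statement. This mirrors the bootstrap in the work of Birkar--Cascini--Hacon--M\textsuperscript{c}Kernan for klt pairs, but the presence of the nef part $\Mm$ and of glc (rather than klt) centers requires careful tracking: one must ensure no glc center is lost in restricting to $U^0$ (hypothesis (3) of Theorem~\ref{thm: existence of glc closure}) and that $\Mm$ becomes base-point free in the relative sense after the modification (hypothesis (4)).
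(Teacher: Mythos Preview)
The paper's own proof is a one-line citation: it combines Theorem~\ref{thm: can run gpair mmp} with the termination results \cite[Corollary 1]{HM20} and \cite[Theorems 1.2, 1.3]{CT20}. Your reduction to termination via Theorem~\ref{thm: can run gpair mmp} is exactly right, but from that point on you attempt to reconstruct the termination arguments themselves, and the dimension-$4$ part has a genuine gap.

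The problem is your proposed use of Theorem~\ref{thm: existence of glc closure} in the $\lambda=0$ case. That theorem has very restrictive hypotheses: one needs an open subset $U^0\subset U$ over which $\Mm^0$ \emph{descends} to $X^0$ and $\Mm^0_{X^0}\sim_{\Rr,U^0}0$, and such that every glc center meets $X^0$. In the paper this is invoked only in the flipping-contraction setting (Theorem~\ref{thm: existence glc flip with m r cartier}), where the flipping locus is small and one can take $U^0$ to be the complement of its image; outside that locus $\Mm_X$ is automatically $\Rr$-trivial over $Z$. For an arbitrary pseudo-effective $(X,B,\Mm)/U$ in dimension $4$ there is no mechanism to produce such a $U^0$: the nef part $\Mm$ need not be $\Rr$-trivial over any dense open subset of the base, and you give no construction that would force it to be. So the ``engineering'' you describe in the last paragraph cannot be carried out, and the $\lambda=0$ contradiction does not follow.

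The actual termination input from \cite{HM20,CT20} goes through weak Zariski decompositions and special termination for NQC g-pairs, not through Theorem~\ref{thm: existence of glc closure}. Your $\lambda>0$ case is also incomplete as stated: ``terminates in dimension $4$ by special termination'' needs justification, since special termination only controls flips intersecting $\lfloor B\rfloor$ and must be combined with a further argument (e.g.\ reduction to the klt case via the ample contribution $\frac{\lambda}{2}A$). For the purposes of this theorem, the correct move is simply to cite the external termination results, as the paper does.
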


\medskip

The theory of \emph{generalized pairs} (\emph{g-pairs} for short) was introduced by C. Birkar and D.-Q. Zhang in \cite{BZ16} to tackle the effective Iitaka fibration conjecture. Some embryonic forms of this theory can be found in \cite{Bir12b,BH14}, and can even be traced back to the early studies on the moduli part of the canonical bundle formula and sub-adjunctions \cite{Kaw98,FM00}. Although, seemingly technical, in recent years, the theory of generalized pairs has proven to be a powerful tool in birational geometry. In particular, this theory has been essentially used in the proof of the Borisov-Alexeev-Borisov conjecture \cite{Bir19,Bir21a}. For other results closely related to the theory of generalized pairs, we refer the reader to \cite{HX15,Fil18a,Mor18,HL18, Fil18b,Bir18,HH19,HL19,LT19,HM20,HL20a,HL20b,HL20d,LP20a,LP20b,Li20,Hu20,FS20a,Fil20,Bir20a,HL20c,CX20,Bir20c,FS20b,FW20,BDCS20,CT20,Sho20,Has20,Li21,Liu21,LX21,Hu21,Jia21,Bir21b,FH21}. We also refer the reader to \cite{Bir20b} for a more detailed introduction to the theory of generalized pairs.

It has recently become apparent that the minimal model program (MMP) for generalized pairs is closely related to the minimal model program for usual pairs and varieties. In particular, generalized pairs have been used to prove the termination of pseudo-effective fourfold flips \cite{Mor18,HL18,HM20,CT20}. For this, and other reasons, it is important to study the minimal model program for generalized pairs. For gklt (generalized klt) g-pairs or $\Qq$-factorial gdlt (generalized dlt) g-pairs, the corresponding theory is very similar to the case of the usual klt or $\Qq$-factorial dlt pairs (cf. \cite[Lemma 4.4]{BZ16}, \cite[Lemma 3.5]{HL18}). However, when studying the MMP for glc g-pairs, we encounter several non-trivial issues. Before discussing these, let us first recall the main features of the usual minimal model program. 

\medskip

\noindent\textbf{Step 1}. We start with a $\Qq$-factorial projective pair $(X,B)$ with at worst lc singularities.

\medskip

\noindent\textbf{Step 2}. If $K_X+B$ is nef, i.e. $(X,B)$ is a minimal model then we are done. Otherwise, by the cone and the contraction theorems, we contract a $(K_X+B)$-negative extremal ray and get a contraction $f: X\rightarrow Z$.

\medskip

\noindent\textbf{Step 3}. If $f$ is a Mori fiber space, then we are done. If $f$ is a divisorial contraction, then we replace $X$ with $Z$ and continue. If $f$ is a flipping contraction and the flip $f^+: X^+\rightarrow Z$ of $f$ exists, then we replace $X$ with $X^+$ and continue. Note that the $\Qq$-factorial condition, which usually follows from the cone and the contraction theorems, needs to be preserved.

\medskip

\noindent\textbf{Step 4}. If there does not exist an infinite sequence of flips, then the MMP terminates with either a minimal model or a Mori fiber space, and we are done.

\medskip

In summary, to complete a minimal model program, we need
\begin{enumerate}
    \item the cone and the contraction theorems,
    \item the existence of flips, and
    \item the termination of flips.
\end{enumerate}

For the usual lc pairs, (1) and (2) are completely known. In fact, the cone and contraction theorems for projective klt pairs appear in \cite{Kaw84} and are completed in \cite{Kol84}, the relative versions are proven in \cite{KMM87}, the cone and contraction theorems for lc pairs are proven in \cite{Amb03,Fuj11} by using the theory of quasi-log varieties, the existence of klt flips is proven in \cite{BCHM10}, and the existence of lc flips is proven in \cite{Bir12a,HX13}. The difficult part for the minimal model program for usual pairs is (3): we only know the termination of flips in full generality, in dimension $\leq 3$ \cite{Kaw92,Sho96}. In dimension $4$, some special cases have been proven: the terminal case in \cite{KMM87}, klt anti-effective case and some other special cases in \cite{AHK07}, canonical case with rational coefficients in \cite{Fuj04,Fuj05}, effective case in \cite{Bir07,HMX14}, and pseudo-effective case  \cite{Mor18,HL18,HM20,CT20} as we mentioned earlier.

For glc pairs that are neither gklt nor $\Qq$-factorial gdlt, the situation is completely different. First of all, we usually need to add the NQC condition for technical reasons (cf. \cite[Example 3.15]{HL18}), however this is a natural assumption and is contained in the original definition of generalized pairs in \cite{BZ16}. Under the NQC assumption, the known results on the termination of flips are similar to the usual pair case (in particular, in full generality in dimension $\leq 3$ \cite{CT20} and in the pseudo-effective case in dimension $4$ \cite{HM20,CT20}). However, the cone and contraction theorems and the existence of flips seem to be far more challenging even in dimension $3$, and we only know some partial results when $\Mm$ descends to $X$, i.e., $\Mm_X$ is nef \cite{LP20a,LP20b}. We remark that for curves and surfaces, there are no flips, and the cone and contraction theorems follow from the usual cone and contraction theorems as $\Mm_X$ is always nef. 

Very recently, there has been some progress towards the existence of flips for generalized pairs. K. Hashizume has shown the non-vanishing theorem for glc pairs with a polarization  \cite[Theorem 1.1]{Has20}, and also investigated the termination of a special MMP for glc pairs \cite[Theorem 1.3]{Has20}, which has a close connection with the existence of glc flips. Recently, Z. Hu proved the finiteness of B-representations for some special glc pairs \cite[Theorem 1.5]{Hu21}, a generalized pair version of \cite[Theorem 1.1]{FG14} and \cite[Theorem 1.2]{HX16}. This result induces the generalized pair version of \cite[Theorem 1.4]{FG14} and \cite[Theorem 1.4]{HX16}  (\cite[Thereom 1.9]{Hu21}), and \cite[Theorem 1.7]{Bir12a} and \cite[Corollary 1.5]{HX16}  (\cite[Thereom 1.10]{Hu21}), which are important theorems in the proof of the existence of lc flips. In this paper, we will prove the existence of flips for $\Qq$-factorial NQC glc g-pairs, which solves (2). Although it turns out that we do not need  K. Hashizume's result and Z. Hu's result to prove the existence of glc flips, their results motivated us. We also expect that the methods in \cite{Hu21} can be applied to the termination of flips under the setting of \cite[Theorem 1.3]{Has20}. 

To guarantee that we can run the minimal model program for $\Qq$-factorial glc g-pairs, we are only left to prove the corresponding cone and contraction theorems. We can prove the cone theorem by combining results of F. Ambro and O. Fujino on the cone and contraction theorems of non-lc pairs \cite{Amb03,Fuj11} and recent results of J. Han and W. Liu  on generalized sub-adjunction \cite{HL19}. Although we cannot prove the base-point-free theorem in the same generality as in \cite{Amb03,Fuj11}, fortunately for us, we are able to prove the contraction theorem for extremal rays by using the special properties of extremal rays. Combining with the cone theorem, we solve (1). We remark that O. Fujino has a recent paper \cite{Fuj21} on related topics, where he considers the cone and contraction theorems for quasi-log schemes. Note that any quasi-lc pair is a generalized lc pair (cf. \cite[Remark 1.9]{Fuj18}).

\medskip

\noindent\textit{Sketch of the proof}. We first sketch our proof on the existence of flips (Theorem \ref{thm: existence of q-factorial glc flips}). Notice that for a $\Qq$-factorial generalized pair $(X,B,\Mm)/U$ equipped with a flipping contraction $f: X\rightarrow Z$ over $U$, we can always assume that $\Mm_X\cdot C<0$ for any flipping curve, otherwise $f$ is a $(K_X+B)$-flipping contraction and the existence of the flip just follows from the existence of flips for lc pairs. Notice that on the complement of the flipping locus, we have $\Mm_X\sim_{\Rr}0$ as a $\bb$-divisor over $Z$. In other words, $\Mm_X|_{f^{-1}(Z^0)}\sim_{\Rr,Z^0}0$ over some non-empty open subset $Z^0$ of $Z$. However, in this situation, we can pick $0\leq G\sim_{\Rr,Z}\Mm_X$ such that $(X,B+G)$ is lc and have some additional good properties. The existence a good minimal model of $(X,B,\Mm)/U$ follows from the existence of a good minimal model of $(X,B+G)$ which is preserved by \cite{HX13,Has19}.

Now we sketch a proof of the cone theorem. Possibly perturbing the generalized pair $(X,B,\Mm)/U$ with an ample divisor, we may assume that $K_X+B+\Mm_X\sim_{\Rr,U}K_X+\Delta$ for some pair $(X,\Delta)$. $(X,\Delta)$ is not necessary lc; however, we may assume that the non-lc locus of $(X,\Delta)$ is exactly the non-gklt locus of $(X,B,\Mm)$. Now we want to apply the results of F. Ambro \cite{Amb03} and O. Fujino \cite{Fuj11}. Since we have a generalized pair structure, a key observation is that we can do sub-adjunction to any non-lc center of $(X,B,\Mm)$ and still have a generalized lc pair structure after the sub-adjunction \cite{HL19}. Applying induction on the dimension, we immediately get the cone theorem. 

We can use the cone theorem to deduce the contraction theorem. Assume that $\Mm_X$ is $\Rr$-Cartier. To prove the contraction theorem for any $(K_X+B+\Mm_X)$-negative extremal ray $R$, we only need to prove some special base-point-freeness theorems for any supporting function $L$ of a $(K_X+B+\Mm_X)$-negative extremal ray. We can always assume that $\Mm_X\cdot R<0$. In this case, possibly replacing $\Mm$ with $(1-\epsilon)\Mm$ for $0<\epsilon\ll 1$, we may assume that $\Ngklt(X,B,\Mm)=\Nklt(X,B)=\Nlc(X,\Delta)$ and $\Nlc(X,\Delta)$ does not contain any curve $C$ such that $[C]\in R$. By the cone theorem and Kleiman's Criterion, we know that $L|_{\Nlc(X,\Delta)}$ is ample. Now we can apply the results of F. Ambro \cite{Amb03} and O. Fujino \cite{Fuj11} to prove the base-point-freeness theorem for $L$. 

Finally, together with the existence of flips we just proved, we know that we can run MMP for any $\Qq$-factorial NQC glc g-pair (Theorem \ref{thm: can run gpair mmp}). In other words, for $\Qq$-factorial NQC glc g-pairs, whenever we know the termination of flips, we will have the complete MMP, and Theorem \ref{thm: gpair mmp 3fold and pe fourfold} follows from \cite{HM20,CT20}.

\medskip

\noindent\textit{Structure of the paper}. In Section 2, we introduce some notation and tools for generalized pairs and MMPs which will be used in this paper. In Section 3, we study different models for generalized pairs. In Section 4, we proof prove Theorem \ref{thm: existence of glc closure}. In Section 5, we prove the base-point-free theorem, contraction theorem, and cone theorem for generalized lc pairs, which implies Theorem \ref{thm: cone and contraction theorem glc pair}. In Section 6, we prove the rest of our main theorems, i.e. Theorems \ref{thm: existence of q-factorial glc flips}, \ref{thm: can run gpair mmp}, and \ref{thm: gpair mmp 3fold and pe fourfold}.

\medskip

\noindent\textbf{Postscript}. After the first version of our paper appeared, we were informed by Haidong Liu and Zhengyu Hu that there are some troubles with \cite[Theorem 1.9]{Hu21}, a result we used in the proofs of Theorems 7.5 and 9.1 of the first version of the paper. Therefore some of the corresponding  proofs in the first version of this paper are incomplete. Fortunately for us, we could avoid using \cite{Hu21} and still prove all our main results except Theorem \ref{thm: cone and contraction theorem glc pair}(4): without applying \cite{Hu21}, we could only contract $(K_X+B+\Mm_X)$-negative extremal rays rather than faces, and we need to assume that $\Mm_X$ is $\Rr$-Cartier. Nevertheless, this modification does not affect the other main results of the paper. Indeed, the current Theorem \ref{thm: cone and contraction theorem glc pair}(4.c) is even comparably stronger than that of the first version from the perspective of running the minimal model program: for example, the current Theorem \ref{thm: cone and contraction theorem glc pair}(4.c) will imply the $\Qq$-factorial glc version of Lemma \ref{lem: still an mmp under perturbation} while the first version did not. We were later informed by Nikolaos Tsakanikas that he and Vladimir Lazi\'c have obtained some results on the minimal model program for generalized pairs \cite{LT21} based on the main theorems of our paper, which require the current version of Theorem \ref{thm: cone and contraction theorem glc pair}(4.c).

After the second version of this paper  appeared,  we  were  informed  by  Kenta Hashizume that Theorem 1.1 can be immediately implied by Claim 7.6 of the second version of our paper on the arXiv.  This greatly simplifies our proof.  Comparing to the second version, we completely remove Sections 5,6, most parts of Sections 4,7,8, and a part of Section 2.  The main theorems are not changed.

We thank Kenta Hashizume, Zhengyu Hu, Haidong Liu, and Nikolaos Tsakanikas for these discussions.

\medskip

\noindent\textbf{Acknowledgement}.  The second author would like to thank Jingjun Han, Junpeng Jiao, and Yuchen Liu for useful discussions. The authors are partially supported by NSF research grants no: DMS-1801851, DMS-1952522 and by a grant from the Simons Foundation; Award Number: 256202.

\section{Preliminaries}

We will freely use the notation and definitions from \cite{KM98,BCHM10}. For generalized pairs, we will follow the definitions in \cite{HL18} but follow the notation as in \cite{FS20b,Has20} (see Remarks \ref{rem: difference in definition of g-pair}, \ref{rem: difference in definition of gdlt}, \ref{rem: difference in notation of g-pair} below).

\subsection{Divisors}
\begin{defn}
Let $a$ be a real number, $X$ a normal variety, and $D=\sum_i d_iD_i$ an $\Rr$-divisor on $X$, where $D_i$ are the irreducible components of $D$. We define $D^{\leq a}:=\sum_{i\mid d_i\leq a} d_iD_i$, $D^{=a}:=\sum_{i\mid d_i=a} d_iD_i$,  $D^{\geq a}:=\sum_{i\mid d_i\leq a} d_iD_i$, $\lfloor D\rfloor:=\sum_i\lfloor d_i\rfloor D_i$, and $\{D\}:=\sum_i\{d_i\}D_i$.
\end{defn}


\begin{defn}
Let $\phi: X\dashrightarrow Y$ be a birational map. We let $\Exc(\phi)$ be the union of the exceptional divisors of $\phi$, and usually identify $\Exc(\phi)$ with the reduced exceptional divisor of $\phi$.
\end{defn}

The following lemma and its proof are taken verbatim from \cite[Lemma 3.2.1]{BCHM10}. 
\begin{lem}[cf. {\cite[Lemma 3.2.1]{BCHM10}}]\label{lem: lift equivalence from generic fiber}
Let $\mathbb K=\mathbb Q$ or $\mathbb R$. Let $\pi: X\rightarrow U$ be a projective morphism between normal quasi-projective varieties. Let $D$ be a $\mathbb K$-Cartier $\mathbb K$-divisor on $X$ and let $D'$ be its restriction to the generic fiber of $\pi$.

If $D'\sim_{\mathbb K} B'\geq 0$ for some $\mathbb K$-divisor $B'$ on the generic fiber of $\pi$, then $D\sim_{\mathbb K,U}B\geq 0$ for some $\mathbb K$-divisor $B$, such that $B'$ is the restriction of $B$ to the generic fiber of $\pi$. 
\end{lem}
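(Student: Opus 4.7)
The plan is to follow the classical strategy: lift the linear equivalence from the generic fiber $X_\eta$ back to $X$, and then absorb the (necessarily vertical) non-effective part of the resulting divisor by a large multiple of a pullback from $U$.

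For the lifting step, I would use that $X_\eta = X \times_U \Spec K(U)$ has the same function field as $X$, so every rational function on $X_\eta$ extends uniquely to a rational function on $X$. By hypothesis, $B' - D' = \sum_i r_i (f_i')$ for finitely many $r_i \in \mathbb K$ and $f_i' \in K(X_\eta)^*$. Choosing $f_i \in K(X)^*$ restricting to $f_i'$ and setting
$$B_0 := D + \sum_i r_i (f_i),$$
I obtain a $\mathbb K$-divisor on $X$ with $B_0 \sim_{\mathbb K} D$ (hence $B_0 \sim_{\mathbb K,U} D$) and $B_0|_{X_\eta} = B'$.

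Next I would correct $B_0$ to make it globally effective. Since $B_0|_{X_\eta} = B' \geq 0$, every prime component of $B_0$ carrying a negative coefficient is vertical over $U$, i.e., maps to a proper closed subset of $U$. Let $W \subsetneq U$ be the union of the images of these finitely many components. Because $U$ is quasi-projective, there is an effective Cartier divisor $H$ on $U$ whose support contains $W$ (embed $U$ in projective space and cut by a hypersurface containing the closure of $W$ but not containing $U$). For every negative vertical prime $E \subset X$ of $B_0$, the inclusion $\pi(E) \subset W \subset \Supp(H)$ forces $v_E(\pi^* h) \geq 1$ for any local equation $h$ of $H$, so $E$ appears in $\pi^* H$ with positive multiplicity. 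Thus for $c \gg 0$ the divisor $B := B_0 + c\, \pi^* H$ is effective. Since $H$ avoids the generic point of $U$, we still have $B|_{X_\eta} = B'$, and since $\pi^* H \sim_{\mathbb Z, U} 0$, we get $B \sim_{\mathbb K, U} D$, as required.

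The subtle point I anticipate is precisely the claim in the previous paragraph that every negative vertical prime $E$ appears in $\pi^* H$ with positive multiplicity, even when $\pi(E)$ has codimension $\geq 2$ in $U$. The resolution is that the multiplicity $v_E(\pi^* h)$ depends only on whether a local equation $h$ of $H$ pulls back to a function vanishing along $E$; since $h$ vanishes identically on $\pi(E)$ its pullback vanishes along $E$, yielding a positive integer multiplicity. Scaling by $c$ then dominates all finitely many negative coefficients simultaneously, and the rest is formal.
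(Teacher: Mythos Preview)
Your proof is correct and follows essentially the same strategy as the paper's: lift the data from the generic fiber to get a $\mathbb{K}$-divisor $\mathbb{K}$-linearly equivalent to $D$ whose only non-effective components are vertical, then absorb those by adding a sufficiently large effective pullback $\pi^*H$ from $U$. The paper differs only cosmetically, first taking closures of the components of $B'$ to obtain an effective $B_1$ and then lifting the equivalence $D'-B'\sim_{\mathbb K}0$ separately, but the key correction step via $\pi^*H$ (and the observation that every vertical prime over $\Supp H$ appears in $\pi^*H$) is identical.
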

\begin{proof}
Taking the closure of the generic points of $B'$, we may assume that there exists a $\mathbb K$-divisor $B_1\geq 0$ such that $B'$ is the restriction of $B_1$ to the generic fiber of $\pi$. Since $D'-B'\sim_{\mathbb K} 0$, $(D-B_1)|_{\pi^{-1}(U_1)}\sim_{\mathbb K} 0$ for some non-empty proper open subset $U_1$ of $U$. Then there exists a $\mathbb K$-divisor $G$ on $X$ such that $D-B_1\sim_{\mathbb K} G$ and $Z:=\pi(\Supp G)$ is a proper closed subset of $U$. Since $U$ is quasi-projective, there exists an ample $\mathbb K$-divisor $H\geq 0$ on $U$ which contains $Z$, such that $F:=\pi^*H\geq -G$. Thus $D\sim_{\mathbb K,U} B_1+F+G\geq 0$. By our construction, $F$ and $G$ are vertical over $U$, so $B'$ is the restriction of $B:=B_1+F+G$ to the generic fiber of $\pi$. 
\end{proof}

\subsection{Rational maps}

\begin{defn}[Contraction and birational contraction]
A \emph{contraction} is a projective morphism $f: X\rightarrow Y$ such that $f_*\mathcal{O}_X=\mathcal{O}_Y$. In particular, $f$ has connected fibers, and if $X\rightarrow Z\rightarrow Y$ is the Stein factorization of $f$, then $Z\rightarrow Y$ is an isomorphism. Moreover, if $X$ is normal, then $Y$ is normal.

Let $\phi: X\dashrightarrow Y$ be a proper birational map between normal varieties. Then $\phi$ is called a \emph{birational contraction} if $\phi$ does not extract any divisors. 
\end{defn}

\begin{defn}
For any birational contraction $\phi: X\dashrightarrow Y$ and $\Rr$-Cartier $\Rr$-divisor $D$ on $X$, let $p: W\rightarrow X$ and $q: W\rightarrow Y$ be a common resolution such that $q=\phi\circ p$, and let $D_Y:=\phi_*D$. Then $f$ is called
\begin{enumerate}
    \item \emph{$D$-trivial} if $D_Y$ is $\Rr$-Cartier and $p^*D=q^*D_Y$,
    \item \emph{$D$-non-positive} if $D_Y$ is $\Rr$-Cartier, and $p^*D=q^*D_Y+E$ for some $E\geq 0$ that is exceptional over $Y$, and
    \item \emph{$D$-negative} if $D_Y$ is $\Rr$-Cartier, $p^*D=q^*D_Y+E$ for some $E\geq 0$ that is exceptional over $Y$, and $\Exc(\phi)\subset\Supp (p_*E)$.
\end{enumerate}
\end{defn}

\begin{lem}\label{lem: trivial map keep property over an open subset}
Let $\pi: X\rightarrow U$ be a projective morphism such that $X$ is normal. Let $D$ be an $\Rr$-Cartier $\Rr$-divisor on $X$, $\phi: X\dashrightarrow Y$ a birational contraction over $U$ such that $\phi$ is $D$-trivial, and $D_Y:=\phi_*D$. Then for any non-empty open subset $U^0\subset U$, $D^0:=D\times_UU^0$ is semi-ample$/U^0$ if and only if $D_Y^0:=D_Y\times_UU^0$ is  semi-ample$/U^0$.
\end{lem}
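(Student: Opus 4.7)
The plan is to reduce the statement to the following general fact: for a proper birational morphism $\mu\colon W\to X$ between normal quasi-projective varieties over a base $V$ and an $\RR$-Cartier $\RR$-divisor $E$ on $X$, $E$ is semi-ample$/V$ if and only if $\mu^{*}E$ is semi-ample$/V$. Granting this, the lemma is immediate. Since $\phi$ is $D$-trivial, on a common resolution $p\colon W\to X$, $q\colon W\to Y$ of $\phi$ we have $p^{*}D=q^{*}D_Y$. Base-changing to $U^0$ yields proper birational morphisms $p^{0}\colon W^{0}\to X^{0}$ and $q^{0}\colon W^{0}\to Y^{0}$ between normal quasi-projective varieties, and pullback commutes with open base change, so $(p^{0})^{*}D^{0}=(q^{0})^{*}D^{0}_{Y}$. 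Applying the general fact twice chains the equivalences $D^{0}$ is semi-ample$/U^{0}$ $\iff$ $(p^{0})^{*}D^{0}$ is semi-ample$/U^{0}$ $\iff$ $(q^{0})^{*}D^{0}_{Y}$ is semi-ample$/U^{0}$ $\iff$ $D^{0}_{Y}$ is semi-ample$/U^{0}$.

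For the general fact, the forward implication is immediate: writing $E\sim_{\RR,V}\sum a_{i}E_{i}$ with $a_{i}\geq 0$ and each $E_{i}$ a base-point-free Cartier divisor over $V$, the pullback $\mu^{*}E\sim_{\RR,V}\sum a_{i}\mu^{*}E_{i}$ exhibits $\mu^{*}E$ as semi-ample over $V$. For the reverse implication I would argue via the semi-ample fibration. If $\mu^{*}E$ is semi-ample$/V$, there is a contraction $g\colon W\to T$ over $V$ and an ample $\RR$-divisor $A$ on $T$ with $\mu^{*}E\sim_{\RR,V}g^{*}A$. Every $\mu$-exceptional curve $C$ satisfies $\mu^{*}E\cdot C=E\cdot\mu_{*}C=0$, hence $g^{*}A\cdot C=0$, and so $g$ contracts $C$. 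By the rigidity lemma applied to the proper birational $\mu$ between normal varieties, $g$ factors as $g=h\circ\mu$ for a morphism $h\colon X\to T$ over $V$. Pushing the relation $\mu^{*}E\sim_{\RR,V}\mu^{*}(h^{*}A)$ forward via $\mu_{*}$ and using $\mu_{*}\mu^{*}=\id$ on $\RR$-Cartier classes modulo $\sim_{\RR,V}$, we get $E\sim_{\RR,V}h^{*}A$, whence $E$ is semi-ample$/V$.

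The main technical obstacle is justifying the factorisation $g=h\circ\mu$ in the required generality, since we are working with $\RR$-Cartier divisors and a priori only have a contraction $g$ defined by a semi-ample $\RR$-linear combination. A clean alternative, which I would likely adopt, bypasses the rigidity lemma by first reducing to the $\QQ$-Cartier case using the decomposition into base-point-free Cartier summands, choosing $m$ so that $m\mu^{*}E$ is a base-point-free Cartier divisor over $V$, and then invoking $\mu_{*}\mathcal{O}_{W}(m\mu^{*}E)\cong\mathcal{O}_{X}(mE)$ (from the projection formula and $\mu_{*}\mathcal{O}_{W}=\mathcal{O}_{X}$) to transfer base-point-freeness from $W$ down to $X$: the base locus of $|mE/V|$ on $X$ is the image under $\mu$ of that of $|m\mu^{*}E/V|$ on $W$, hence empty. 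Extending linearly over the coefficients $a_{i}$ of a semi-ample decomposition of $\mu^{*}E$ recovers the $\RR$-Cartier case.
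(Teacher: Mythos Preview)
Your proposal is correct and follows essentially the same route as the paper: take a common resolution $p\colon W\to X$, $q\colon W\to Y$, use $D$-triviality to get $p^{*}D=q^{*}D_{Y}$, base-change to $U^{0}$, and then invoke the standard fact that relative semi-ampleness is invariant under pullback by a proper birational morphism between normal varieties. The paper's proof is simply terser---it stops at the identity $(p|_{W^0})^{*}D^{0}=(q|_{W^0})^{*}D^{0}_{Y}$ and writes ``and the lemma follows,'' treating that standard fact as known; one small caution on your added justification: your ``clean alternative'' via the projection formula does not literally work for genuine $\RR$-divisors (there is no $m$ making $m\mu^{*}E$ Cartier), so you should rely on your first argument via the semi-ample fibration and rigidity, which is fine.
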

\begin{proof}
Let $p: W\rightarrow X$ and $q: W\rightarrow Y$ be a common resolution such that $q=\phi\circ p$. Since $\phi$ is $D$-trivial, $D_W:=p^*D=q^*D_Y$. Let $D_W^0:=D_W\times_UU^0$ and $W^0:=W\times_UU^0$. Then
$$(p|_{W^0})^*D^0=(p^*D)|_{W^0}=D_W^0=(q^*D_Y)|_{W^0}=(q|_{W^0})^*D^0_Y,$$
and the lemma follows.
\end{proof}

\subsection{Semi-stable reduction}

\begin{defn}
A pair $(X,B)$ is called \emph{quasi-smooth} if $X$ is $\Qq$-factorial and $(X,B)$ is toroidal.
\end{defn}


\begin{thm}\label{thm: has19 weak semistable reduction}
Let $(X,B)$ be a dlt pair and $\pi: X\rightarrow U$ a projective surjective morphism over a normal variety $U$. Then there exists a commutative diagram of projective morphisms
\begin{center}$\xymatrix{
Y\ar@{->}[r]^{f}\ar@{->}[d]_{\pi'} & X\ar@{->}[d]^{\pi}\\
V\ar@{->}[r]^{\varphi} & U
}$
\end{center}
such that
\begin{enumerate}
    \item $f,\varphi$ are birational morphisms, $\pi'$ is an equidimensional contraction, $Y$ only has $\Qq$-factorial toroidal singularities, and $V$ is smooth, and
    \item there exist two $\Rr$-divisors $B_Y$ and $E$ on $Y$, such that
    \begin{enumerate}
    \item $K_Y+B_Y=f^*(K_X+B)+E$,
    \item $B_Y\geq 0$, $E\geq 0$, and $B_Y\wedge E=0$,
    \item $(Y,B_Y)$ is lc quasi-smooth, and any lc center of $(Y,B_Y)$ on $X$ is an lc center of $(X,B)$.
    \end{enumerate}
\end{enumerate}
\end{thm}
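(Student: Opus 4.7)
The plan is to combine a weak semistable reduction theorem for the geometric modifications with a direct discrepancy computation for the divisorial decomposition. The geometric input lies in producing the modifications $Y\to X$ and $V\to U$; once these are in hand, $B_Y$ and $E$ are read off from the crepant pullback of $K_X+B$.

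First I would reduce to the log smooth situation by replacing $(X,B)$ with a log resolution and $U$ with a resolution, keeping track of exceptional divisors. After further blowing up if needed, the induced morphism fits into the setup required by the weak semistable reduction theorem of Abramovich--Karu. Applying that theorem produces a projective birational $V\to U$ (as the composition of a finite alteration with a resolution) and a projective birational $Y\to X$ fitting into the commutative square of the statement, with $\pi':Y\to V$ equidimensional, $Y$ having only $\Qq$-factorial toroidal singularities, and $V$ smooth. This settles item (1).

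For item (2), let $f:Y\to X$ be the composition, and let $\tilde B$ be the unique $\Rr$-divisor with $K_Y+\tilde B=f^*(K_X+B)$. Since $(X,B)$ is dlt, every coefficient of $\tilde B$ is at most $1$. Set $B_Y:=\tilde B^{\geq 0}$ and $E:=(-\tilde B)^{\geq 0}$. Then $B_Y,E\geq 0$, $B_Y\wedge E=0$, and $K_Y+B_Y=f^*(K_X+B)+E$, proving (2a) and (2b). The coefficients of $B_Y$ lie in $[0,1]$, and by the toroidal construction $\Supp(B_Y+E)$ is contained in the toroidal boundary of $Y$, so $(Y,B_Y)$ is lc and quasi-smooth.

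For the final clause of (2c), given an lc center $Z$ of $(Y,B_Y)$, toroidality shows that $Z$ is a component of an intersection $\bigcap_{i\in I}D_i$ of boundary divisors with $\mult_{D_i}B_Y=1$, and the generic point of $Z$ lies in no other boundary component; in particular it avoids $\Supp E$. A toroidal blowup centered at $Z$ then yields a divisorial valuation $D'$ over $Y$ with $\Center_Y(D')=Z$ and $a(D';Y,B_Y)=-1$. From $K_Y+B_Y-E=f^*(K_X+B)$ and the disjointness just noted, we obtain $a(D';X,B)=a(D';Y,B_Y)=-1$, so $D'$ is an lc place of $(X,B)$ and $f(Z)=\Center_X(D')$ is an lc center of $(X,B)$. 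The main obstacle is invoking Abramovich--Karu to achieve equidimensionality, toroidal $\Qq$-factoriality, and base smoothness simultaneously; the remaining steps are routine discrepancy bookkeeping in the dlt toroidal setting.
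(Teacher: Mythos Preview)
Your approach matches the paper's, which simply cites \cite{AK00} together with \cite[Theorem B.6]{Hu20}, \cite[Theorem 2]{Kaw15}, and \cite[Step 2, Proof of Lemma 3.2]{Has19}, and your expansion of the discrepancy bookkeeping in (2) is correct (modulo the convention that the paper uses \emph{log} discrepancy, so an lc place has $a=0$, not $-1$).

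There is, however, one concrete error in your justification of (1). You assert that $\varphi:V\to U$ is birational ``as the composition of a finite alteration with a resolution'', but such a composition is generically finite of the degree of the alteration, hence not birational unless the alteration is trivial. The main theorem of \cite{AK00} produces an \emph{alteration} of the base, not a birational modification, so invoking it naively does not give the birational $\varphi$ the statement demands. Obtaining a birational $V\to U$ with $\pi'$ equidimensional and $Y$ $\Qq$-factorial toroidal is exactly why the paper also cites the refinements in Kawamata and Hashizume: once one has arranged (via log resolution and flattening as in \cite[6-1-3]{KMM87}) that the morphism is toroidal, the combinatorial part of \cite{AK00} only needs birational subdivisions of the base fan, and no finite cover. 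That step is the actual technical content you are glossing over; the rest of your argument goes through as written.
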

\begin{proof} This result follows from \cite{AK00}, see also \cite[Theorem B.6]{Hu20}, \cite[Theorem 2]{Kaw15} and \cite[Step 2 of Proof of Lemma 3.2]{Has19}.
\end{proof}

\subsection{\texorpdfstring{$\bb$}{}-divisors}

\begin{defn}[$\bb$-divisors]\label{defn: b divisors} Let $X$ be a normal quasi-projective variety. We call $Y$ a \emph{birational model} over $X$ if there exists a projective birational morphism $Y\to X$. 

Let $X\dashrightarrow X'$ be a birational map. For any valuation $\nu$ over $X$, we define $\nu_{X'}$ to be the center of $\nu$ on $X'$. A \emph{$\bb$-divisor} $\Dd$ over $X$ is a formal sum $\Dd=\sum_{\nu} r_{\nu}\nu$ where $\nu$ are valuations over $X$ and $r_{\nu}\in\mathbb R$, such that $\nu_X$ is not a divisor except for finitely many $\nu$. If in addition, $r_{\nu}\in\Qq$ for every $\nu$, then $\Dd$ is called a \emph{$\Qq$-$\bb$-divisor}. The \emph{trace} of $\Dd$ on $X'$ is the $\Rr$-divisor
$$\Dd_{X'}:=\sum_{\nu_{i,X'}\text{ is a divisor}}r_i\nu_{i,X'}.$$
If $\Dd_{X'}$ is $\Rr$-Cartier and $\Dd_{Y}$ is the pullback of $\Dd_{X'}$ on $Y$ for any birational model $Y$ of $X'$, we say that $\Dd$ \emph{descends} to $X'$, and also say that $\Dd$ is the \emph{closure} of $\Dd_{X'}$, and write $\Dd=\overline{\Dd_{X'}}$. 

Let $X\rightarrow U$ be a projective morphism and assume that $\Dd$ is a $\bb$-divisor over $X$ such that $\Dd$ descends to some birational model $Y$ over $X$. If $\Dd_Y$ is nef$/U$, then we say that $\Dd$ is \emph{nef}$/U$. If $\Dd_Y$ is a Cartier divisor, then we say that $\Dd$ is \emph{$\bb$-Cartier}. If $\Dd_Y$ is a $\Qq$-Cartier $\Qq$-divisor, then we say that $\Dd$ is \emph{$\Qq$-$\bb$-Cartier}. If $\Dd$ can be written as an $\Rr_{\geq 0}$-linear combination of nef$/U$ $\bb$-Cartier $\bb$-divisors, then we say that $\Dd$ is \emph{NQC}$/U$.

We let $\bm{0}$ be the $\bb$-divisor $\bar{0}$.
\end{defn}

\begin{defn}\label{defn: b divisor restriction over an open subset}
Let $X\rightarrow U$ be a projective morphism such that $X$ is a normal quasi-projective varieties, and let $U^0$ be a non-empty open subset of $U$. Let $\Dd$ be a $\bb$-divisor over $X$. We define a $\bb$-divisor $\Dd^0:=\Dd\times_UU^0$ in the following way. 
For any birational projective morphism $Y^0\to X^0=X\times _UU^0$, we may assume that $Y^0=Y\times _UU^0$ where $Y\to X$ is a birational projective morphism. We let $\Dd^0 _{Y^0}=\Dd _Y|_{Y_0}$. It is easy to see that this definition is independent of the choice of $Y$ and defines a $\bb$-divisor.

It is easy to see that if $W\rightarrow X$ is a birational morphism such that $\Dd$ descends to $W$, then $\Dd^0$ is the closure of $\Dd_W\times_{U}U^0$. Since base change is compatible with pullbacks, $\Dd^0$ is well-defined and independent of the choice of $W$. We also note that if $\Dd$ is nef$/U$, then $\Dd^0$ is nef$/U^0$, and if $\Dd$ is NQC$/U$, then $\Dd^0$ is NQC$/U^0$.
\end{defn}

\subsection{Generalized pairs}

\begin{defn}[Generalized pairs]\label{defn: g-pairs}
A \emph{generalized sub-pair} (\emph{g-sub-pair} for short) $(X,B,\Mm)/U$ consists of a normal quasi-projective variety $X$ associated with a projective morphism $X\rightarrow U$, an $\Rr$-divisor $B$ on $X$, and a nef$/U$ $\bb$-divisor $\Mm$ over $X$, such that $K_X+B+\Mm_X$ is $\Rr$-Cartier. If $\Mm$ is NQC$/U$, then we say that $(X,B,\Mm)/U$ is an \emph{NQC g-sub-pair}. If $B$ is a $\Qq$-divisor and $\Mm$ is a $\Qq$-$\bb$-divisor, then we say that $(X,B,\Mm)/U$ is a \emph{$\Qq$-g-sub-pair}.

If $\Mm=\bm{0}$, a g-sub-pair $(X,B,\Mm)/U$ is called a \emph{sub-pair} and is denoted by $(X,B)$ or $(X,B)/U$. 

If $U=\{pt\}$, we usually drop $U$ and say that $(X,B,\Mm)$ is \emph{projective}.

A g-sub-pair (resp. NQC g-sub-pair, $\Qq$-g-sub-pair) $(X,B,\Mm)/U$ is called a \emph{g-pair} (resp. \emph{NQC g-pair}, \emph{$\Qq$-g-pair}) if $B\geq 0$. A sub-pair $(X,B)$ is called a \emph{pair} if $B\geq 0$.
\end{defn}

\begin{nota}
In the previous definition, if $U$ is not important, we may also drop $U$. This usually happens when we emphasize the structures of $(X,B,\Mm)$ that are independent of the choice of $U$, such as the singularities of $(X,B,\Mm)$. See Definition \ref{defn: sing of g-pairs} below.
\end{nota}

\begin{defn}[Singularities of generalized pairs]\label{defn: sing of g-pairs}
	Let $(X,B,\Mm)/U$ be a g-(sub-)pair. For any prime divisor $E$ and $\mathbb R$-divisor $D$ on $X$, we define $\mult_{E}D$ to be the \emph{multiplicity} of $E$ along $D$.  Let $h:W\to X$
	be any log resolution of $(X,\Supp B)$ such that $\Mm$ descends to $W$, and let
	$$K_W+B_W+\Mm_W:=h^*(K_X+B+\Mm_X).$$
	The \emph{log discrepancy} of a prime divisor $D$ on $W$ with respect to $(X,B,\Mm)$ is $1-\mult_{D}B_W$ and it is denoted by $a(D,X,B,\Mm).$
	
	We say that $(X,B,\Mm)$ is \emph{(sub-)glc} (resp. \emph{(sub-)gklt}) if $a(D,X,B,\Mm)\ge0$ (resp. $>0$) for every log resolution $h: W\to X$ as above and every prime divisor $D$ on $W$. 
	
	We say that $(X,B,\Mm)$ is \emph{gdlt} if $(X,B,\Mm)$ is glc, and there exists a closed subset $V\subset X$, such that
\begin{enumerate}
    \item $X\backslash V$ is smooth and $B_{X\backslash V}$ is simple normal crossing, and
    \item for any prime divisor $E$ over $X$ such that $a(E,X,B,\Mm)=0$, $\Center_XE\not\subset V$ and $\Center_XE\backslash V$ is an lc center of $(X\backslash V,B|_{X\backslash V})$.
\end{enumerate}
If $\Mm=\bm{0}$ and $(X,B,\Mm)$ is (sub-)glc (resp, (sub-)gklt, gdlt), we say that $(X,B)$ is (sub-)lc (resp. (sub-)klt, dlt).
	    
	 Suppose that $(X,B,\Mm)$ is sub-glc. A \emph{glc place} of $(X,B,\Mm)$ is a prime divisor $E$ over $X$ such that $a(E,X,B,\Mm)=0$. A \emph{glc center} of $(X,B,\Mm)$ is the center of a glc place of $(X,B,\Mm)$ on $X$. The \emph{non-gklt locus} $\Ngklt(X,B,\Mm)$ of $(X,B,\Mm)$ is the union of all glc centers of $(X,B,\Mm)$. If $\Mm=\bm{0}$, a glc place (resp. a glc center, the non-gklt locus) of $(X,B,\Mm)$ will be called an lc place (resp. an lc center, the non-klt locus) of $(X,B)$, and we will denote $\Ngklt(X,B,\Mm)$ by $\Nklt(X,B)$. 
	 
	 We note that the definitions above are independent of the choice of $U$.
\end{defn}

\begin{rem}\label{rem: difference in definition of g-pair}
The generalized pairs defined in \cite{BZ16} correspond to the NQC generalized pairs defined in Definition \ref{defn: g-pairs}.
\end{rem}

\begin{rem}\label{rem: difference in definition of gdlt}
The definition of gdlt in Definition \ref{defn: sing of g-pairs} is the same as the definition in \cite[Definition 2.2]{HL18}, and has slight difference with the definitions in \cite{Bir19,Fil18b,FS20b}. This definition is preserved by adjunction (cf. \cite[Proposition 2.8]{HL18}). We remark that when $X$ is $\Qq$-factorial, our definition for gdlt coincides with the definitions in \cite{Bir19,Fil18b,FS20b}. 
 
 Because of these differences in definitions, for the reader's convenience, we will usually cite \cite{HL18} for generalized pair related results although other references may have similar results.
\end{rem}

\begin{rem}\label{rem: difference in notation of g-pair}
For the notation related to generalized pairs as above, we generally adopt the same notation as in \cite{FS20b} and \cite{Has20}.  We also remark that for log discrepancies of generalized pairs, we use the notation $a(D,X,B,\Mm)$ instead of $a(D,X,B+\Mm_X)$. This is because $(X,B+\Mm_X)$ is a sub-pair, and the log discrepancy of the sub-pair $(X,B+\Mm_X)$ may not be equal to the log discrepancy of the generalized pair $(X,B,\Mm)$. Our notation is also similar to the notation as in \cite{HL19,HL20d} where they use $(X/U,B+\Mm)$ for generalized pairs and $a_D(X/U,B+\Mm)$ for log discrepancies. We do not use their notation as well because it is important to notice that log discrepancies of a generalized pair are independent of the base $U$.
\end{rem}

\subsection{Some results on MMPs for generalized pairs}

\subsubsection{Set-up} 

\begin{defn}
Let $(X,B,\Mm)/U$ be a glc g-pair. We say that a $(K_X+B+\Mm_X)$-MMP$/U$: $X\dashrightarrow X'$ \emph{ends with a minimal model} if $K_{X'}+B'+\Mm_{X'}$ is nef$/U$, where $B'$ is the strict transform of $B$ on $X'$. We say that  a $(K_X+B+\Mm_X)$-MMP$/U$: $X\dashrightarrow X'$ \emph{ends with a Mori fiber space} if there exists a $(K_{X'}+B'+\Mm_{X'})$-Mori fiber space structure $X'\rightarrow Z$ over $U$. 
\end{defn}

The following result tells us that we can always run the MMP for generalized lc pairs $(X,B,\Mm)/U$ when $X$ is $\Qq$-factorial klt.
\begin{thm}[{cf. \cite[Lemma 3.5]{HL18}}]\label{thm: can run mmp for gklt pair}
Let $(X,B,\Mm)/U$ be a $\Qq$-factorial glc g-pair such that $X$ is klt. Then we can always run a $(K_X+B+\Mm_X)$-MMP$/U$. More precisely, there exists a sequence of flips and divisorial contractions over $U$, which ends either with a Mori fiber space, or a minimal model, or an infinite sequence of flips over $U$.
\end{thm}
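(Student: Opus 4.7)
The plan is to reduce to the known MMP for $\Qq$-factorial gklt g-pairs, which is classical (essentially \cite{BZ16}; see e.g.\ \cite[Lemma 4.4]{HL18}).

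\textbf{Reduction to gklt via perturbation.} The first step is to show that, because $X$ is klt, the g-pair $(X,(1-\delta)B,(1-\delta)\Mm)/U$ is gklt for all sufficiently small $\delta\in(0,1)$. Fix a log resolution $h\colon W\to X$ of $(X,\Supp B)$ to which $\Mm$ descends, and write $K_W+B_W+\Mm_W=h^*(K_X+B+\Mm_X)$. Since $(1-\delta)\Mm$ also descends to $W$ with trace $(1-\delta)\Mm_W$, a direct computation gives the perturbed boundary
$$B'_W=(1-\delta)\,B_W-\delta\,(K_W-h^*K_X).$$
For each of the finitely many prime divisors $E$ on $W$, the glc hypothesis gives $\mult_E B_W\leq 1$ and the klt hypothesis on $X$ gives $d_E:=\mult_E(K_W-h^*K_X)>-1$. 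If $\mult_E B_W<1$, continuity ensures $\mult_E B'_W<1$ for small $\delta$; if $\mult_E B_W=1$, then $\mult_E B'_W=1-\delta(1+d_E)<1$ for every $\delta>0$. Picking a uniform $\delta$ (finitely many divisors on $W$), we obtain that $(X,(1-\delta)B,(1-\delta)\Mm)$ is gklt, and $(1-\delta)\Mm$ remains nef$/U$.

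\textbf{Running the MMP.} Choose an ample$/U$ $\Rr$-divisor $A$ on $X$ with $(X,B+A,\Mm)$ glc and with $K_X+B+\Mm_X+A$ nef$/U$; shrinking $\delta$ further we may assume $(X,(1-\delta)B+\delta A,(1-\delta)\Mm)/U$ is $\Qq$-factorial gklt. Apply \cite[Lemma 4.4]{HL18} to run an MMP with scaling of $A$ for this gklt g-pair. At each step, the contracted extremal ray $R\subset\overline{NE}(X/U)$ is $(K_X+(1-\delta)B+(1-\delta)\Mm_X)$-negative; by the length bound on extremal rays in the gklt case, we may choose $\delta$ small enough (once, using $0<-\big(K_X+(1-\delta)B+(1-\delta)\Mm_X\big)\cdot C\leq 2\dim X$) so that every such ray is also $(K_X+B+\Mm_X)$-negative. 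Hence each contraction/flip of the gklt MMP is a legitimate step of a $(K_X+B+\Mm_X)$-MMP$/U$. The $\Qq$-factorial klt property of $X$ is preserved by divisorial contractions and flips (log discrepancies of $(X,0)$ only increase, $\Qq$-factoriality is standard), so the construction iterates, and either terminates with a minimal model or a Mori fiber space$/U$, or yields an infinite sequence of flips.

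\textbf{Main obstacle.} The delicate point is the compatibility between the gklt MMP with scaling and the desired $(K_X+B+\Mm_X)$-MMP. Concretely, one has to choose $\delta$ uniformly small so that every extremal ray produced by the gklt cone theorem is $(K_X+B+\Mm_X)$-negative, and one must verify that the $\Qq$-factorial klt condition on $X$ is stable after each step so that \cite[Lemma 4.4]{HL18} continues to apply inductively. Both are handled by the length of extremal rays and by the standard preservation of singularities under MMP, but they are the genuine technical content of the argument.
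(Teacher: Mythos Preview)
The paper does not give its own proof of this statement; it is simply cited from \cite[Lemma 3.5]{HL18}. So your proposal has to stand on its own.

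Your perturbation step is correct and is indeed the key observation: because $X$ is klt, the g-pair $(X,(1-\delta)B,(1-\delta)\Mm)$ is gklt for all small $\delta>0$. The gap is in how you use it. You fix $\delta$ once, run a gklt MMP with scaling of $A$, and claim that the length bound forces every contracted ray to be $(K_X+B+\Mm_X)$-negative. This does not follow. Writing $D=K_X+B+\Mm_X$ and $D_\delta=K_X+(1-\delta)B+(1-\delta)\Mm_X$, one has
\[
D\cdot C \;=\; D_\delta\cdot C \;+\; \delta\,(B+\Mm_X)\cdot C,
\]
and the length bound only gives $D_\delta\cdot C\in[-2\dim X,0)$; it says nothing about $(B+\Mm_X)\cdot C$ from above. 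Concretely, if at some stage $D$ is already nef and $R$ is an extremal ray with $D\cdot R=0$ but $K_X\cdot R<0$ (equivalently $(B+\Mm_X)\cdot R>0$), then $D_\delta\cdot R=-\delta(B+\Mm_X)\cdot R<0$, so your gklt MMP may contract $R$ even though this is not a $D$-negative step. Hence the sequence you produce need not be a $(K_X+B+\Mm_X)$-MMP at all, no matter how small $\delta$ is.

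The argument in \cite[Lemma 3.5]{HL18} runs in the opposite direction and one step at a time: one first singles out a $(K_X+B+\Mm_X)$-negative extremal ray $R$ (via the nef threshold with an ample divisor), and then picks $\delta>0$ small \emph{depending on $R$} so that $R$ is also $D_\delta$-negative; the gklt (hence klt) contraction and flip theorems then handle $R$. The resulting variety is again $\Qq$-factorial klt and one repeats, with a possibly different $\delta$ at the next step. In short, your reduction to gklt is right, but the implication must go from the glc ray to the gklt perturbation, not the other way around, and the choice of $\delta$ cannot be made uniform via the length bound alone.
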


We also make the following remark on MMPs with scaling of relatively ample $\Rr$-divisors:

\begin{rem}
Let $(X,B,\Mm)/U$ be a $\Qq$-factorial glc g-pair such that $X$ is klt. When we say ``we run a $(K_X+B+\Mm_X)$-MMP$/U$ with scaling of an ample$/U$ $\Rr$-divisor", we always assume that the choice of the ample$/U$ $\Rr$-divisor $A$ on $X$ satisfies that $A\geq 0$, $(X,B+A,\Mm)$ is glc, and $K_X+B+A+\Mm_X$ is nef$/U$. We remark that for any ample$/U$ $\Rr$-divisor $A$ on $X$, one can always choose $A'\sim_{\Rr,U}A$ such that $(X,B+A',\Mm)$ is glc.
\end{rem}

\subsubsection{MMP for very exceptional divisors}

\begin{lem}[MMP for very exceptional divisors, {cf. \cite[Proposition 3.8]{HL18}}]\label{lem: rlinear version of hl18 3.8}
Let $(X,B,\Mm)/U$ be a $\Qq$-factorial glc g-pair such that $X$ is klt and $K_X+B+\Mm_X\equiv_{U}D_1-D_2$ (resp.  $\sim_{\mathbb R,U}D_1-D_2$) where $D_1\geq 0$, $D_2\geq 0$ have no common components. Suppose that $D_1$ is very exceptional over $U$. Then any $(K_X+B+\Mm_X)$-MMP$/U$ with scaling of an ample$/U$ $\Rr$-divisor either terminates with a Mori fiber space or contracts $D_1$ after finitely many steps. Moreover, if $D_2=0$, then this MMP terminates with a model $Y$ such that  $K_Y+B_Y+\Mm_Y\equiv_{U}0$ (resp. $\sim_{\mathbb R,U}0$), where $B_Y$ is the strict transform of $B$ on $Y$. 
\end{lem}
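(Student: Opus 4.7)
The plan is to reduce to the numerical version of the statement, which is essentially \cite[Proposition 3.8]{HL18}, and then upgrade the $\equiv_U$ conclusion to $\sim_{\Rr,U}$ by tracking $\Rr$-linear equivalences through the steps of the MMP.

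First I would invoke Theorem~\ref{thm: can run mmp for gklt pair} to ensure that, since $(X,B,\Mm)/U$ is $\Qq$-factorial glc with $X$ klt, a $(K_X+B+\Mm_X)$-MMP$/U$ with scaling of an ample$/U$ $\Rr$-divisor exists. Then I would apply \cite[Proposition 3.8]{HL18} directly to the numerical relation $K_X+B+\Mm_X\equiv_{U}D_1-D_2$, which is automatic from the $\sim_{\Rr,U}$ hypothesis. This immediately yields that any such MMP either terminates with a Mori fiber space or contracts $D_1$ after finitely many steps, and in the case $D_2=0$ the resulting model $Y$ satisfies $K_Y+B_Y+\Mm_Y\equiv_{U}0$. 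All conclusions of the lemma except the $\sim_{\Rr,U}$-statement in the moreover part follow formally.

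It then remains to refine $\equiv_U$ to $\sim_{\Rr,U}$. I would argue by induction on the MMP step that the $\sim_{\Rr,U}$-relation on the pair is preserved under pushforward. For a single MMP step $\phi_i:X_i\dashrightarrow X_{i+1}$, which is a divisorial contraction or a flip (both birational contractions in the sense of our definitions), every principal divisor $(f)$ on $X_i$ pushes forward to the principal divisor $(f)$ on $X_{i+1}$, and the pullback $\pi_i^*G$ of any $\Rr$-Cartier divisor from $U$ pushes forward to $\pi_{i+1}^*G$. Decomposing the relation $K_{X_i}+B_i+\Mm_{X_i}\sim_{\Rr,U}(D_1)_i-(D_2)_i$ into such pieces and pushing forward therefore produces the analogous relation on $X_{i+1}$. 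After the finitely many steps that contract $D_1$, the strict transform of $D_1$ vanishes; combined with $D_2=0$, this gives $K_Y+B_Y+\Mm_Y\sim_{\Rr,U}0$ on the output model $Y$.

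The main obstacle will be the bookkeeping in the flip case, where one must verify that the pushforward identities for principal divisors and for $\pi^*G$ hold across the flip diagram $X_i\leftarrow W\to X_{i+1}$. This is where the smallness of the flip (both sides are isomorphic in codimension one over the base of the flipping contraction) is essential: outside a codimension-two locus, $X_i$ and $X_{i+1}$ are canonically identified, and $\sim_{\Rr,U}$ is insensitive to modifications in codimension $\geq 2$, so the two sides of the relation remain $\Rr$-linearly equivalent over $U$. Once this is verified for the two types of elementary MMP steps, the rest is formal.
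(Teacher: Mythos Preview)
Your proposal is correct and follows essentially the same approach as the paper: cite \cite[Proposition 3.8]{HL18} for the numerical statement, then upgrade to $\sim_{\Rr,U}$ by pushing forward the relation $K_X+B+\Mm_X\sim_{\Rr,U}D_1$ through the MMP to obtain $K_Y+B_Y+\Mm_Y\sim_{\Rr,U}0$ once $D_1$ is contracted. The paper treats the pushforward of $\sim_{\Rr,U}$ along a birational contraction as a one-line fact, whereas you spell out the bookkeeping for divisorial contractions and flips; that extra care is harmless but not needed, and the invocation of Theorem~\ref{thm: can run mmp for gklt pair} is superfluous since the MMP is given in the hypothesis.
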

\begin{proof}
The numerical equivalence part of the lemma is exactly \cite[Proposition 3.8]{HL18}. Thus we we assume that $K_X+B+\Mm_X\sim_{\mathbb R,U}D_1-D_2$, $D_2=0$, and we only need to show that the MMP terminates with a model $Y$ such that $K_Y+B_Y+\Mm_Y\sim_{\mathbb R,U}0$, where $B_Y$ is the strict transform of $B$ on $Y$.

By the numerical equivalence part of the lemma, the MMP contracts $D_1$ after finitely many steps. We may let $\phi: X\dashrightarrow Y$ be the birational map corresponding to this partial MMP. Since $K_X+B+\Mm_X\sim_{\mathbb R,U}D_1$ and $\phi$ contracts $D_1$, we have $K_Y+B_Y+\Mm_Y\sim_{\mathbb R,U}0$, where $B_Y$ is the strict transform of $B$ on $Y$, and the lemma is proved.
\end{proof}

\subsubsection{MMP with scaling and log minimal models}

We refer the reader to \cite[Lemma 3.19, Definition 3.20]{HL18} for the definition  MMP with scaling for generalized pairs.

We need the following definition for log minimal models. A detailed discussion of log minimal models and other models for g-pairs will be given in Section 3.

\begin{defn}[Log minimal models, cf. Definition {\ref{defn: models}(3)} and {\cite[Definition 2.9]{HL18}}]
Let $(X,B,\Mm)/U$ be a glc g-pair, $\phi: X\dashrightarrow X'$ a birational map over $U$, and $E:=\Exc(\phi^{-1})$ the reduced $\phi^{-1}$-exceptional divisor. A g-pair $(X',B',\Mm)/U$ is called a \emph{log minimal model} of $(X,B,\Mm)/U$ if
\begin{enumerate}
    \item $B'=\phi_*B+E$,
    \item $K_{X'}+B'+\Mm_{X'}$ is nef$/U$,
    \item $(X',B',\Mm)$ is $\Qq$-factorial gdlt, and
    \item for any prime divisor $D$ on $X$ which is exceptional over $X'$, $a(D,X,B,\Mm)<a(D,X',B',\Mm)$.
\end{enumerate}
\end{defn}

We need the following theorem from \cite{BZ16}:

\begin{thm}[{cf. \cite[Theorem 4.4(2)]{BZ16}}]\label{thm: bz16 4.4(2)}
Let $(X,B,\Mm)/U$ be a $\Qq$-factorial NQC gklt g-pair and $A\geq 0$ an ample$/U$ $\Rr$-divisor on $X$, such that 
\begin{enumerate}
    \item $K_X+B+\Mm_X$ is pseudo-effective$/U$,
    \item $K_X+B+\Mm_X+(1+\alpha)B+(1+\beta)\Mm_X$ is big$/U$ for some $\alpha,\beta\geq 0$,
    \item $(X,B+A,\Mm)$ is glc and $K_X+B+A+\Mm_X$ is nef$/U$.
\end{enumerate}
Then we can run a $(K_X+B+\Mm_X)$-MMP$/U$ with scaling of $A$, which terminates with a log minimal model $(X',B',\Mm)/U$ such that $K_{X'}+B'+\Mm_{X'}$ is semi-ample$/U$.
\end{thm}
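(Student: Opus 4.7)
The plan is to reduce the statement to the existence of good minimal models for klt pairs with big boundary, i.e., to the main theorem of \cite{BCHM10}.

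The core step is to produce an effective $\mathbb R$-divisor $\Delta \geq 0$ on $X$ such that $(X, \Delta)$ is klt, $\Delta$ is big$/U$, and $K_X + \Delta \sim_{\mathbb R, U} K_X + B + \Mm_X$. Condition (2) provides the crucial input: since $K_X + (2+\alpha)B + (2+\beta)\Mm_X$ is big$/U$, Kodaira's lemma yields an $\mathbb R$-linear equivalence $K_X + (2+\alpha)B + (2+\beta)\Mm_X \sim_{\mathbb R, U} H + E$ with $H$ ample$/U$ and $E \geq 0$. A small convex combination with $K_X + B + \Mm_X$, together with the ampleness of $A$ (which provides additional wiggle room), lets us construct $\Delta$; roughly, for suitable $t \in (0,1)$ small, one writes
\[
K_X+B+\Mm_X \sim_{\mathbb R,U} (1-t)(K_X+B+\Mm_X) + t(K_X+(2+\alpha)B+(2+\beta)\Mm_X) - t((1+\alpha)B+(1+\beta)\Mm_X),
\]
feeds in the Kodaira decomposition for the middle term, and uses the NQC structure $\Mm = \sum_i \mu_i \Nn_i$ to produce effective representatives of the residual nef contributions after twisting by a small multiple of $A$. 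The gklt hypothesis on $(X,B,\Mm)$ ensures that for a generic choice the resulting $\Delta$ is klt, and the inclusion of a small ample summand from $H$ guarantees $\Delta$ is big$/U$.

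With $\Delta$ in hand, choose $A' \sim_{\mathbb R, U} A$ such that $(X, \Delta + A')$ is klt and $K_X + \Delta + A'$ is nef$/U$; this is possible because $A$ is ample$/U$ and $(X,B+A,\Mm)$ is glc. Then every step of the $(K_X + B + \Mm_X)$-MMP$/U$ with scaling of $A$ coincides step-by-step with the $(K_X + \Delta)$-MMP$/U$ with scaling of $A'$, since the two log canonical divisors and the two scaling divisors are $\mathbb R$-linearly equivalent over $U$. By BCHM \cite{BCHM10}, the klt MMP with scaling of an ample $\mathbb R$-divisor for a klt pair with big boundary terminates with a good minimal model $(X', \Delta')$ where $K_{X'} + \Delta'$ is semi-ample$/U$. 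Transferring back through the $\mathbb R$-linear equivalence, the same birational model $\phi\colon X\dashrightarrow X'$ realizes a log minimal model $(X', B', \Mm)/U$ of $(X, B, \Mm)/U$ with $K_{X'}+B'+\Mm_{X'}$ semi-ample$/U$; the $\Qq$-factorial gdlt (in fact klt) property is inherited because the MMP preserves it on the pair side.

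The main obstacle is the construction of the klt boundary $\Delta$. The nef $\bb$-divisor $\Mm$ need not itself be $\mathbb R$-linearly equivalent over $U$ to any effective divisor, so absorbing $\Mm_X$ into $\Delta$ while preserving \emph{both} the $\mathbb R$-linear equivalence and the klt property is delicate. This is precisely where condition (2), the NQC hypothesis, and the ampleness of $A$ combine: the bigness of the enhanced divisor supplies effective representatives, the NQC decomposition provides access to $\mathbb Q$-Cartier nef components on a common resolution, and the ample divisor $A$ absorbs perturbation errors. The rest of the argument is a bookkeeping exercise propagating the $\mathbb R$-linear equivalence through each MMP step.
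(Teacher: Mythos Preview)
Your overall strategy---reducing to \cite{BCHM10} by finding a klt pair $(X,\Delta)$ whose MMP with scaling coincides with the generalized one---is exactly the paper's approach. However, the construction of $\Delta$ has a genuine gap.

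You claim to produce $\Delta\geq 0$ with $(X,\Delta)$ klt, $\Delta$ big$/U$, and $K_X+\Delta\sim_{\mathbb R,U}K_X+B+\Mm_X$, i.e.\ $\Delta\sim_{\mathbb R,U}B+\Mm_X$. But nothing in the hypotheses forces $B+\Mm_X$ to be big$/U$, or even $\mathbb R$-linearly equivalent to an effective divisor. Take $X$ smooth $\Qq$-factorial with $K_X$ big$/U$, $B=0$, $\Mm=\bm{0}$: conditions (1)--(3) all hold (with $\alpha=\beta=0$), yet $\Delta\sim_{\mathbb R,U}0$ certainly cannot be big. Your displayed identity is a tautology that still contains $(1-t)(K_X+B+\Mm_X)$ on the right, so it does not exhibit $K_X+B+\Mm_X$ as $K_X+\Delta$; and ``twisting by a small multiple of $A$'' would change the $\mathbb R$-linear equivalence class of $\Delta$, destroying the step-by-step identification of the two MMPs that the rest of your argument relies on.

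The paper repairs this with a single rescaling: one constructs $\Delta$ so that $(1+\epsilon)(K_X+B+\Mm_X)\sim_{\mathbb R,U}K_X+\Delta$ for some $0<\epsilon\ll 1$. Then $\Delta\sim_{\mathbb R,U}\epsilon K_X+(1+\epsilon)(B+\Mm_X)$, which rewrites as
\[
\epsilon\bigl(K_X+(2+\alpha)B+(2+\beta)\Mm_X\bigr)+\bigl(1-\epsilon(1+\alpha)\bigr)B+\bigl(1-\epsilon(1+\beta)\bigr)\Mm_X,
\]
i.e.\ (big)$+$(effective)$+$(pseudo-effective), hence big$/U$. Now a Kodaira decomposition on a log resolution lets one absorb the nef part $\Mm_W$ into an ample piece and descend to an effective big $\Delta$ with $(X,\Delta)$ klt (this is the computation in \cite{BZ16}). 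The $(K_X+B+\Mm_X)$-MMP$/U$ with scaling of $A$ is then literally a $(K_X+\Delta)$-MMP$/U$ with scaling of $A'\sim_{\mathbb R,U}(1+\epsilon)A$, and \cite[Corollary~1.4.2]{BCHM10} gives termination with a semi-ample model. The $(1+\epsilon)$ factor is precisely what turns condition (2) into bigness of the boundary; without it your construction cannot work.
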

\begin{proof}
We may pick a real number $0<\epsilon\ll 1$ such that $(1+\epsilon)(K_X+B+\Mm_X)\sim_{\Rr,U}K_X+\Delta$ for some klt pair $(X,\Delta)$ such that $\Delta$ is big$/U$ (see the proof of \cite[Theorem 4.4(2)]{BZ16}). Then any $(K_X+B+\Mm_X)$-MMP$/U$ with scaling of $A$ is also a $(K_X+\Delta)$-MMP$/U$ with scaling of $A'\sim_{\Rr}(1+\epsilon)A$ for some ample$/U$ $\Rr$-divisor $A'\geq 0$ such that $(X,\Delta+A')$ is klt. By \cite[Corollary 1.4,2]{BCHM10}, this MMP terminates with a log minimal model $X'$ such that $K_{X'}+\Delta'$ is semi-ample$/U$, where $\Delta'$ is the strict transform of $\Delta$ on $\Delta'$. Thus $(X',B',\Mm)/U$ is a log minimal model of $(X,B,\Mm)/U$ such that $K_{X'}+B'+\Mm_{X'}$ is semi-ample$/U$, where $B'$ is the strict transform of $B$ on $X'$.
\end{proof}

\begin{thm}[{\cite[Remark 3.21, Theorem 4.1]{HL18}}]\label{thm: hl18 4.1}
Let $(X,B,\Mm)/U$ be a $\Qq$-factorial NQC glc g-pair such that $X$ is klt, $D\geq 0$ an $\Rr$-divisor on $X$, and $\NN$ an NQC$/U$ $\bb$-divisor over $X$, such that $(X,B+D,\Mm+\NN)$ is glc and $K_X+B+D+\Mm_X+\NN_X$ is nef$/U$. Assume that there exists a $(K_X+B+\Mm_X)$-MMP$/U$ with scaling of $D+\NN_X$:
$$(X,B,\Mm):=(X_1,B_1,\Mm)\dashrightarrow (X_2,B_2,\Mm)\dashrightarrow\dots\dashrightarrow (X_i,B_i,\Mm)\dashrightarrow\dots,$$
and let $\lambda_i$ be the $i$-th scaling number of this MMP for each $i$, i.e.
$$\lambda_i:=\inf\{t\mid t\geq 0, K_{X_i}+B_i+tD_i+\Mm_{X_i}+t\NN_{X_i}\text{ is nef/}U\},$$
where $D_i$ is the strict transform of $D$ on $X_i$. Then $\lambda_i\geq\lambda_{i+1}$ for each $i$, and one of the following holds:
\begin{enumerate}
    \item This MMP terminates after finitely many steps.
    \item This MMP does not terminate and $\lambda_i=\lambda_{i+1}$ for any $i\gg 0$.
    \item This MMP does not terminate,  $\lambda:=\lim_{i\rightarrow+\infty}\lambda_i\not=\lambda_j$ for any $j$, and $(X,B+\lambda D,\Mm+\lambda\NN)/U$ does not have a log minimal model.
\end{enumerate}
\end{thm}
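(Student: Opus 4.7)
My plan is to follow the classical MMP-with-scaling trichotomy, adapted to NQC glc g-pairs with $X$ klt. The first step is to verify the monotonicity $\lambda_i\geq\lambda_{i+1}$. At the $i$-th step, the contracted ray $R_i$ satisfies $(K_{X_i}+B_i+\Mm_{X_i})\cdot R_i<0$ and, by the definition of $\lambda_i$ as the infimum of scaling parameters keeping the sum nef$/U$, $(K_{X_i}+B_i+\lambda_i D_i+\Mm_{X_i}+\lambda_i\NN_{X_i})\cdot R_i=0$, hence $(D_i+\NN_{X_i})\cdot R_i>0$. A standard cone-decomposition argument, valid here because Theorem \ref{thm: can run mmp for gklt pair} supplies the cone and contraction theorems in the $\Qq$-factorial klt setting, then shows that $K_{X_{i+1}}+B_{i+1}+\lambda_i D_{i+1}+\Mm_{X_{i+1}}+\lambda_i\NN_{X_{i+1}}$ remains nef$/U$, which gives $\lambda_{i+1}\leq\lambda_i$.

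If the MMP terminates after finitely many steps, we are in case (1). Otherwise $\{\lambda_i\}$ is a bounded non-increasing sequence converging to some $\lambda\geq 0$. If $\lambda_j=\lambda$ for some $j$, monotonicity forces $\lambda_i=\lambda$ for every $i\geq j$, placing us in case (2). The remaining possibility is $\lambda_i>\lambda$ strictly for every $i$, and here one must show that $(X,B+\lambda D,\Mm+\lambda\NN)/U$ has no log minimal model.

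For this last assertion I would argue by contradiction: suppose such a log minimal model exists. Because $\lambda_i>\lambda$, the given sequence is simultaneously a $(K_X+B+\lambda D+\Mm_X+\lambda\NN_X)$-MMP$/U$ with scaling of $D+\NN_X$ whose scaling numbers are $\mu_i:=\lambda_i-\lambda\to 0^+$. Since $X$ is klt and $\Mm+\lambda\NN$ is NQC$/U$, an arbitrarily small ample perturbation rewrites $(1+\epsilon)(K_X+B+\lambda D+\Mm_X+\lambda\NN_X)$, up to $\Rr$-linear equivalence over $U$, in the form $K_X+\Delta$ for a $\Qq$-factorial klt pair $(X,\Delta)$ with $\Delta$ big$/U$, exactly as in the proof of Theorem \ref{thm: bz16 4.4(2)}. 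The given MMP then becomes a $(K_X+\Delta)$-MMP$/U$ with scaling of a big$/U$ $\Rr$-divisor, and the assumed g-minimal model pulls back to a log minimal model for $(X,\Delta)$. BCHM \cite[Corollary 1.4.2]{BCHM10}, together with finiteness of minimal models for klt pairs with big boundary, forces such an MMP to terminate in finitely many steps, so that $\mu_i=0$ eventually — contradicting $\mu_i>0$ for all $i$.

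The main obstacle is precisely this termination step. One must verify that the perturbation producing $(X,\Delta)$ can be chosen small enough to preserve the glc structure and to track each scaling number $\mu_i$ up to a uniform constant, so that a $(K_X+\Delta)$-log minimal model genuinely exists and the klt BCHM framework applies; equivalently, one can try a direct generalized-pair special termination along glc centers. Either route ultimately relies on converting the assumed existence of a minimal model at the limit parameter $\lambda$ into a finiteness statement forcing the tail of the MMP with scaling to stop, and this conversion is the technical heart of the proof.
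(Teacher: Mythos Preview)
Your outline of monotonicity and the trichotomy is correct and standard; the issue lies entirely in your treatment of case (3).

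The reduction to BCHM you propose is essentially the argument for Theorem~\ref{thm: mmp with scaling gpair terminates assuming gmm}, not for the present statement. There the scaling divisor $A$ is \emph{ample}$/U$, so when $\lambda=\lim\lambda_i>0$ one can absorb $\frac{\lambda}{2}A$ into the boundary and obtain a klt pair $(X,\Delta)$ with $\Delta$ big$/U$; BCHM then terminates the MMP. In the present theorem $D$ is merely an effective $\Rr$-divisor and $\NN$ is only NQC, so $D+\NN_X$ need not be ample or even big, and there is no way to manufacture a big klt boundary from the data. Your fix of ``adding an arbitrarily small ample perturbation'' does not work uniformly: to ensure that every step of the given \emph{infinite} MMP remains negative for the perturbed divisor $K_X+\Delta$ you would need a single $\epsilon>0$ valid for all $i$, which is exactly what one does not have a priori. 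Nor does the assumed log minimal model for $(X,B+\lambda D,\Mm+\lambda\NN)$ transfer to one for $(X,\Delta)$ after such a perturbation. You acknowledge this as ``the main obstacle'' but do not resolve it; as written, the argument does not close.

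The paper does not give its own proof here and simply cites \cite[Remark~3.21, Theorem~4.1]{HL18}. The argument there avoids any reduction to BCHM. Instead, assuming a log minimal model $(X',B',\Mm+\lambda\NN)/U$ exists at the limit parameter $\lambda$, one takes a common resolution of $X_i$ and $X'$ and compares pullbacks via the negativity lemma (as in Lemma~\ref{lem: g-pair version bir12 2.6}); the NQC hypothesis enters through the Shokurov-type polytope \cite[Proposition~3.16]{HL18}, which controls the $(K_{X_i}+B_i+\lambda D_i+\Mm_{X_i}+\lambda\NN_{X_i})$-negative extremal rays uniformly and forces the sequence of flips to become $(K+\lambda$-boundary$)$-trivial after finitely many steps, contradicting $\lambda_i>\lambda$. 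This direct model-comparison, not a big-boundary reduction, is the mechanism you are missing.
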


\begin{thm}\label{thm: mmp with scaling gpair terminates assuming gmm}
Let $(X,B,\Mm)/U$ be a $\Qq$-factorial NQC glc g-pair such that $X$ is klt, and $A\geq 0$ an ample$/U$ $\Rr$-divisor on $X$ such that $(X,B+A,\Mm)$ is glc and $K_X+B+A+\Mm_X$ is nef$/U$. Let $$(X,B,\Mm):=(X_1,B_1,\Mm)\dashrightarrow (X_2,B_2,\Mm)\dashrightarrow\dots\dashrightarrow (X_i,B_i,\Mm)\dashrightarrow\dots$$
be a $(K_X+B+\Mm_X)$-MMP$/U$ with scaling of $A$, and let $\lambda_i$ be the $i$-th scaling number of this MMP for each $i$, i.e.
$$\lambda_i:=\inf\{t\mid t\geq 0, K_{X_i}+B_i+tA_i+\Mm_{X_i}\text{ is nef/}U\},$$
where $A_i$ is the strict transform of $A$ on $X_i$ for each $i$. Then $\lambda_i\geq\lambda_{i+1}$ for each $i$, and one of the following holds:
\begin{enumerate}
    \item This MMP terminates after finitely many steps.
    \item $\lim_{i\rightarrow +\infty}\lambda_i=0$, and $(X,B,\Mm)$ does not have a log minimal model.
\end{enumerate}
In particular, if $(X,B,\Mm)/U$ is gdlt and has a log minimal model, then this MMP terminates with log minimal model of $(X,B,\Mm)/U$.
\end{thm}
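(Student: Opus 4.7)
The plan is to invoke Theorem~\ref{thm: hl18 4.1} with $D=A$ and $\NN=\bm{0}$, whose hypotheses are precisely those of our statement. This immediately yields $\lambda_i\geq\lambda_{i+1}$ and a trichotomy: (i) the MMP terminates; (ii) the MMP does not terminate and $\lambda_i=\lambda_{i+1}$ for $i\gg 0$; or (iii) the MMP does not terminate, $\lambda:=\lim_i\lambda_i\neq\lambda_j$ for any $j$, and $(X,B+\lambda A,\Mm)/U$ admits no log minimal model. Case (i) is conclusion (1), so it suffices to show that the non-terminating cases force $\lambda=0$; since $\lambda=0$ is incompatible with case (ii), non-termination would then place us in case (iii) with $\lambda=0$, which is conclusion (2).

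The central technical step is a perturbation lemma: whenever $0<\mu\leq 1$ and $(Y,B_Y+\mu A_Y,\Mm)/U$ is a $\Qq$-factorial NQC glc g-pair with $Y$ klt and $\mu A_Y$ ample$/U$, there exists a $\Qq$-factorial NQC gklt g-pair $(Y,\Delta_Y,\Mm)/U$ with $\Delta_Y$ big$/U$ and $K_Y+\Delta_Y+\Mm_Y\sim_{\Rr,U}(1+\eta)(K_Y+B_Y+\mu A_Y+\Mm_Y)$ for some $0<\eta\ll 1$. This is a standard adaptation of the reduction used inside the proof of Theorem~\ref{thm: bz16 4.4(2)}, exploiting ampleness of $\mu A_Y$ and klt-ness of $Y$ to simultaneously absorb the glc non-klt locus and the moduli $\Mm$ into a klt big ordinary boundary. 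Establishing this lemma cleanly is the main obstacle.

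Using the lemma, case (iii) with $\lambda>0$ is ruled out as follows. The nefness of $K_{X_{i+1}}+B_{i+1}+\lambda_iA_{i+1}+\Mm_{X_{i+1}}$ together with $\lambda_i\to\lambda$ implies that $K_X+B+\lambda A+\Mm_X$ is pseudo-effective$/U$; applying the perturbation lemma and Theorem~\ref{thm: bz16 4.4(2)} to the resulting gklt pair then produces, after transporting along the $\Rr$-linear equivalence, a log minimal model of $(X,B+\lambda A,\Mm)/U$, contradicting case (iii). Case (ii) with $\lambda>0$ is handled similarly: each ray $R_i$ contracted at step $i\geq N$ satisfies $(K+B+\lambda A+\Mm)\cdot R_i=0$ and $(K+B+\Mm)\cdot R_i<0$, hence $A_i\cdot R_i>0$ and $(K+B+\mu A+\Mm)\cdot R_i<0$ for any $0<\mu<\lambda$, so the tail of the MMP is a $(K+B+\mu A+\Mm)$-MMP with scaling of $A$ on a pair satisfying the hypotheses of the perturbation lemma; the resulting $(K+\Delta)$-MMP on a klt pair with big boundary terminates by \cite[Corollary 1.4.2]{BCHM10}, contradicting non-termination. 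Finally, if $(X,B,\Mm)/U$ is gdlt with a log minimal model, conclusion (2) is excluded so the MMP terminates; since $\Qq$-factoriality and gdlt-ness are preserved under the MMP and log discrepancies strictly increase on exceptional divisors, the terminal model is a log minimal model of $(X,B,\Mm)/U$.
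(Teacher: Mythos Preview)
Your proposal is correct and follows essentially the same route as the paper: invoke the trichotomy of Theorem~\ref{thm: hl18 4.1}, then rule out $\lambda>0$ by reducing to a klt situation with big boundary and applying \cite[Corollary 1.4.2]{BCHM10}. The paper's execution is more economical---rather than treating cases (ii) and (iii) separately and building a gklt g-pair via your perturbation lemma, it observes uniformly that whenever $\lambda>0$ one may (by \cite[Lemma 3.5]{HL18}, using that $X$ is $\Qq$-factorial klt) choose an \emph{ordinary} klt pair $(X,\Delta)$ with $\Delta\sim_{\Rr,U}B+\Mm_X+\tfrac{\lambda}{2}A$ big, so the entire sequence is already a $(K_X+\Delta)$-MMP with scaling of an ample divisor and terminates directly by BCHM; your detour through Theorem~\ref{thm: bz16 4.4(2)} is then unnecessary.
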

\begin{proof}
By Theorem \ref{thm: hl18 4.1},  $\lambda_i\geq\lambda_{i+1}$ for each $i$, and we may assume that this MMP does not terminate and $\lambda_i=\lambda_{i+1}>0$ for any $i\gg 0$. Let $\lambda:=\lim_{i\rightarrow+\infty}\lambda_i$, then $\lambda>0$. Since $X$ is $\Qq$-factorial klt, by \cite[Lemma 3.5]{HL18}, we may pick 
$$0\leq \Delta\sim_{\mathbb R,U}B+\Mm_X+\frac{\lambda}{2}A$$ 
such that $(X,\Delta)$ is klt and $\Delta$ is big$/U$. Now this MMP is also a $(K_X+\Delta)$-MMP with scaling of $0\leq A'\sim_{\mathbb R,U}(1-\frac{\lambda}{2})A$ for some $A'$ such that $(X,\Delta+A')$ is klt. This MMP terminates by \cite[Corollary 1.4.2]{BCHM10}, a contradiction.

The in particular part follows from the fact that $(X_i,B_i,\Mm)$ is $\Qq$-factorial gdlt for each $i$ if $(X,B,\Mm)$ is gdlt, and $a(D,X,B,\Mm)<a(D,X_i,B_i,\Mm)$ for any $i$ and any prime divisor $D$ on $X$ that is exceptional over $X_i$.
\end{proof}

\subsubsection{Perturbation of MMPs}

\begin{lem}\label{lem: still an mmp under perturbation}
Let $X\rightarrow U$ be a projective morphism such that $X$ is normal quasi-projective. Let $D,A$ be two $\Rr$-Cartier $\Rr$-divisors on $X$ and let $\phi: X\dashrightarrow X'$ be a partial $D$-MMP$/U$. Then there exists a positive real number $t_0$, such that for any $t\in (0,t_0]$, $\phi$ is also a partial $(D+tA)$-MMP$/U$. Note that $A$ is not necessarily effective.
\end{lem}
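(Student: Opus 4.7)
The plan is to decompose the partial MMP $\phi$ into its elementary steps and verify independently that each step remains valid after perturbing $D$ by $tA$ for sufficiently small $t$. Write
$$X = X_0 \dashrightarrow X_1 \dashrightarrow \cdots \dashrightarrow X_n = X',$$
where each step $X_i \dashrightarrow X_{i+1}$ is either a divisorial contraction or the flip of a small contraction, and contracts a $D_i$-negative extremal ray $R_i \subset \overline{NE}(X_i/U)$. Here $D_i$ and $A_i$ denote the strict transforms of $D$ and $A$ on $X_i$, both of which remain $\Rr$-Cartier at each stage because $\Qq$-factoriality is preserved through MMP steps. The crucial observation is that extremal rays, divisorial contractions, and flips are determined by the ray alone, while the conditions identifying a birational map as a $D$-MMP step (the $D$-negativity of the contracted ray, and, in the flip case, the relative ampleness of the flipped divisor) are \emph{strict} numerical inequalities, hence robust under small perturbation of the divisor.

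Concretely, for each $i$, choose a curve $C_i$ with $[C_i]$ spanning $R_i$. Since $D_i \cdot C_i < 0$ strictly, continuity of $t \mapsto (D_i + t A_i) \cdot C_i$ yields a threshold $t_i^{(1)} > 0$ below which $R_i$ remains $(D_i + t A_i)$-negative, so the same contraction of $R_i$ is a genuine $(D + tA)$-MMP step. In the flip case, let $X_i \to Z_i$ denote the flipping contraction and $X_{i+1} \to Z_i$ its flip; one must further verify that $D_{i+1} + t A_{i+1}$ is relatively ample over $Z_i$. Since $X_{i+1} \to Z_i$ is small with one-dimensional relative cone, this reduces to a single strict inequality $(D_{i+1} + t A_{i+1}) \cdot C_i^+ > 0$ on a flipped curve $C_i^+$; again already satisfied by $D_{i+1}$, hence it persists for $t \in (0, t_i^{(2)}]$. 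By uniqueness of flips, $X_{i+1}$ is then the $(D + tA)$-flip of the same contraction. Taking $t_0 := \min_i \min(t_i^{(1)}, t_i^{(2)}) > 0$ concludes the proof.

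The main ``obstacle'' is essentially nothing beyond careful bookkeeping of finitely many strict inequalities and their thresholds; the only subtle point worth highlighting is the flip case, where one should invoke the uniqueness of flips (together with the preserved relative anti-ampleness of $D_i + tA_i$ and relative ampleness of $D_{i+1} + tA_{i+1}$ over $Z_i$) to identify the $(D + tA)$-flip with the already-constructed variety $X_{i+1}$, rather than trying to re-prove existence.
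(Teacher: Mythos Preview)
Your argument is correct and follows the same strategy as the paper's proof: break $\phi$ into its finitely many elementary steps, and observe that each of the finitely many strict inequalities $D_i\cdot R_i<0$ persists after perturbing by $tA_i$ for all sufficiently small $t>0$. The paper stops there, implicitly using that the flip of a small extremal contraction does not depend on which anti-ample divisor is chosen; your explicit verification that $(D_{i+1}+tA_{i+1})\cdot C_i^+>0$ on the flipped side is a welcome extra bit of care, but it is not a different method.
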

\begin{proof}
We let
$$X:=X_1\dashrightarrow X_2\dashrightarrow\dots\dashrightarrow X_n=X'$$
be this partial MMP, and $D_i,A_i$ the strict transforms of $D$ and $A$ on $X_i$ respectively. Let $X_i\rightarrow Z_i$ be the $D_i$-negative extremal contraction of a $D_i$-negative extremal ray $R_i$ in this MMP for each $i$, then $D_i\cdot R_i<0$ for each $i$. Thus there exists a positive real number $t_0$, such that $(D_i+t_0A_i)\cdot R_i<0$ for each $i$. In particular, $(D_i+tA_i)\cdot R_i<0$ for any $i$ and any $t\in (0,t_0]$. Thus $\phi$ is a partial $(D+tA)$-MMP$/U$ for any $t\in (0,t_0]$.
\end{proof}

\begin{lem}[{\cite[Lemma 3.17]{HL18}}]\label{lem: trivial mmp under perturbation}
Let $(X,B+A,\Mm)/U$ be a $\Qq$-factorial NQC glc g-pair such that $X$ is klt, $(X,B,\Mm)$ is glc, and $K_X+B+\Mm_X$ is nef$/U$. Then there exists a positive real number $t_0$, such that for any $t\in (0,t_0]$, any partial $(K_X+B+tA+\Mm_X)$-MMP$/U$ is $(K_X+B+\Mm_X)$-trivial. Note that $A$ is not necessarily effective.
\end{lem}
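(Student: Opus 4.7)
The plan is to proceed by induction on the number of steps in the partial MMP. Write it as $X = X_0 \dashrightarrow X_1 \dashrightarrow \cdots$, with $R_i$ the $(K_{X_i}+B_i+tA_i+\Mm_{X_i})$-negative extremal ray contracted at the $i$-th step and $B_i, A_i$ the strict transforms. I aim to produce a single $t_0 > 0$, depending only on $(X,B,\Mm,A)$, such that for every $t \in (0, t_0]$ and every such partial MMP, each step is $(K_{X_i}+B_i+\Mm_{X_i})$-trivial. Since $(K_X+B+\Mm_X)$-triviality is preserved under composition (on a common resolution, telescoping the identities $p_i^*(K_{X_i}+B_i+\Mm_{X_i}) = p_{i+1}^*(K_{X_{i+1}}+B_{i+1}+\Mm_{X_{i+1}})$), the full birational map $X \dashrightarrow X_n$ is $(K_X+B+\Mm_X)$-trivial, which is the desired conclusion.

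For the inductive step, assume $K_{X_i}+B_i+\Mm_{X_i}$ is nef$/U$ (the base case $i=0$ is the hypothesis). Since $R_i$ is $(K_{X_i}+B_i+tA_i+\Mm_{X_i})$-negative and $K_{X_i}+B_i+\Mm_{X_i}$ is nef$/U$, the ray $R_i$ must be $A_i$-negative. The variety $X_i$ remains $\Qq$-factorial klt by Theorem \ref{thm: can run mmp for gklt pair}, and the pair $(X_i, B_i+tA_i, \Mm)$ is glc for $t \in (0,1]$ by convexity, since $B_i+tA_i = (1-t)B_i + t(B_i+A_i)$. Applying the length of extremal rays for $\Qq$-factorial glc g-pairs with klt underlying variety (reducing to the klt case via \cite[Lemma 3.5]{HL18}, cf. \cite[Proposition 3.13]{HL18}), I extract a curve $C_i$ with $[C_i]\in R_i$ and
$$0 < -(K_{X_i}+B_i+tA_i+\Mm_{X_i})\cdot C_i \leq 2\dim X.$$
Writing $a_i := (K_{X_i}+B_i+\Mm_{X_i})\cdot C_i \geq 0$ and $b_i := -A_i\cdot C_i > 0$, this reads $0 < tb_i - a_i \leq 2\dim X$. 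The aim is to force $a_i = 0$: once this holds, $R_i$ is $(K_{X_i}+B_i+\Mm_{X_i})$-trivial, the step $X_i \dashrightarrow X_{i+1}$ is then $(K_{X_i}+B_i+\Mm_{X_i})$-trivial (the contracted face being trivial forces the divisorial or flipping pullback identity by intersection with the contracted ray), and the push-forward $K_{X_{i+1}}+B_{i+1}+\Mm_{X_{i+1}}$ remains nef$/U$, closing the induction.

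The hard part is the uniform choice of $t_0$ across all steps and all partial MMPs. I would establish two uniformities: an upper bound $b_i \leq M$ independent of $i$ and of the chosen MMP, and a discreteness statement forcing $a_i \in \{0\}\cup[\delta,\infty)$ for a uniform $\delta>0$. For the upper bound, the inductive $(K_X+B+\Mm_X)$-triviality of $X \dashrightarrow X_i$ gives, on a common resolution with maps $p$ to $X$ and $q$ to $X_i$, that $q^*(K_{X_i}+B_i+\Mm_{X_i}) = p^*(K_X+B+\Mm_X)$; the class of $A_i$ is then controlled by $p^*A$ modulo an exceptional contribution supported on the finitely many prime divisors extracted by $X \dashrightarrow X_i$, which can be bounded once and for all on a fixed log resolution of $(X, B + \Supp A)$. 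For the discreteness of $a_i$, the finite-dimensionality of $N^1(X_i/U)$ together with the length bound $2\dim X$ constrains the relevant intersection numbers to a discrete subset of $\Rr$ that cannot accumulate at $0$ unless $a_i \equiv 0$. Combining the two, $t_0 := \delta/M$ (or any smaller positive number) yields a contradiction to $a_i > 0$, completing the induction; this uniformity argument is where I expect the real technical work to lie.
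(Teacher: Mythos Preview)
Your overall inductive scheme is fine, and you correctly identify that the heart of the matter is a uniform choice of $t_0$ working on every $X_i$. But both of the two ``uniformities'' you propose have genuine gaps, and the actual proof (as in \cite[Lemma 3.17]{HL18}) avoids them by a different decomposition.

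First, your bound $b_i=-A_i\cdot C_i\le M$. The map $X\dashrightarrow X_i$ is a sequence of divisorial contractions and flips, so it \emph{contracts} divisors and never extracts any; there is no ``finitely many prime divisors extracted'' to speak of. On a common resolution $p:W\to X$, $q:W\to X_i$, the difference $p^*A-q^*A_i$ is neither effective nor bounded in any uniform way, and the curve $C_i$ lives on $X_i$ with no canonical lift to $X$. There is no mechanism to bound $A_i\cdot C_i$ in terms of $A$. Second, your discreteness claim for $a_i$ does not follow from finite-dimensionality of $N^1(X_i/U)$: intersection numbers of $\Rr$-Cartier divisors with curves are a priori arbitrary real numbers.

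The fix is to stop trying to bound $A_i\cdot C_i$ and instead apply the length bound to the glc pair $(X_i,B_i+A_i,\Mm)$ directly: choose a curve $C_i$ spanning $R_i$ with $(K_{X_i}+B_i+A_i+\Mm_{X_i})\cdot C_i\ge -2\dim X$ (this holds either by the cone theorem if the ray is $(K_{X_i}+B_i+A_i+\Mm_{X_i})$-negative, or trivially otherwise). Writing
\[
0>(K_{X_i}+B_i+tA_i+\Mm_{X_i})\cdot C_i=(1-t)a_i+t(K_{X_i}+B_i+A_i+\Mm_{X_i})\cdot C_i\ge (1-t)a_i-2t\dim X
\]
gives $a_i<\frac{2t\dim X}{1-t}$ with no reference to $b_i$. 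For the discreteness, use the NQC hypothesis: by the Shokurov-type polytope (Theorem \ref{thm: shokurov polytope gpair}), write $K_X+B+\Mm_X=\sum a_j(K_X+B^j+\Mm^j_X)$ with each summand a nef$/U$ $\Qq$-Cartier divisor and $m(K_X+B^j+\Mm^j_X)$ Cartier; then $(K_X+B+\Mm_X)\cdot C\in\{0\}\cup[\alpha,\infty)$ for an explicit $\alpha>0$ depending only on $m$ and the $a_j$. Since each step is inductively $(K_X+B+\Mm_X)$-trivial, each nef summand is also trivial on the contracted ray (their nonnegative intersections sum to zero), so the contraction theorem preserves Cartierness and nefness of each summand on $X_i$, and the same $\alpha$ works throughout. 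Taking $t_0$ with $\frac{2t_0\dim X}{1-t_0}<\alpha$ then forces $a_i=0$ at every step.
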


\subsection{Rational polytopes for generalized pairs}

In some situations, we need to perturb the coefficients of NQC g-pairs in order to use the results for $\Qq$-g-pairs.  The key ideas are simple: First, we have a rational polytope (Shokurov-type polytope) for NQC glc pairs with nef generalized log canonical divisors (\cite[3.3]{HL18}). Second, for usual pairs, Han-Liu-Shokurov establishes a complete state-of-the-art theory for rational polytopes \cite[Section 5]{HLS19} with many important applications in birational geometry. Therefore, we will adopt the ideas in \cite{HLS19} to prove Theorem \ref{thm: shokurov polytope gpair} (cf. \cite[3.3]{HL18}) which also addresses some additional properties of generalized pairs. We remark that although there are many works and results in this direction (\cite{HL18,HL19,HL20d,Che20}), directly applying these results is not sufficient for our purposes.

First we prove an easy lemma.

\begin{lem}\label{lem: from polytope to perturbation}
Let $n,c$ be two non-negative integers, and let $\bm{v}_1,\dots,\bm{v}_{c+1}\in\mathbb Q^n$ be $c+1$ rational points. Let $\bm{v}\in\mathbb R^n$ is a point which is contained in the interior of the convex hull of $\bm{v}_1,\dots,\bm{v}_{c+1}$. Then there exist real numbers $a_1,\dots,a_{c+1}\in (0,1]$, such that $\sum_{i=1}^{c+1}a_i=1$ and $\sum_{i=1}^{c+1}a_i\bm{v}_i=\bm{v}$.
\end{lem}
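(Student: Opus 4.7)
The statement is a standard fact from convex geometry phrased in the form needed for the perturbation arguments later in the paper: a point in the (relative) interior of a convex hull of finitely many points admits a convex combination representation with strictly positive coefficients. The plan is to exploit the interior hypothesis to manufacture, for each vertex $\bm{v}_i$, a representation of $\bm{v}$ in which the coefficient of $\bm{v}_i$ is strictly positive, and then average these representations.

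More concretely, let $P:=\mathrm{Conv}(\bm{v}_1,\dots,\bm{v}_{c+1})$. Since $\bm{v}$ lies in the interior of $P$ (relative to its affine span), for each $i\in\{1,\dots,c+1\}$ the half-line emanating from $\bm{v}_i$ through $\bm{v}$ extends strictly past $\bm{v}$ while remaining in $P$. Equivalently, there exists $\varepsilon_i>0$ such that
$$\bm{w}_i:=(1+\varepsilon_i)\bm{v}-\varepsilon_i\bm{v}_i\in P.$$
Writing $\bm{w}_i=\sum_{j=1}^{c+1}b_{ij}\bm{v}_j$ with $b_{ij}\geq 0$ and $\sum_j b_{ij}=1$, and solving for $\bm{v}$, I obtain
$$\bm{v}=\tfrac{\varepsilon_i}{1+\varepsilon_i}\bm{v}_i+\tfrac{1}{1+\varepsilon_i}\sum_{j=1}^{c+1}b_{ij}\bm{v}_j,$$
which is a convex combination of $\bm{v}_1,\dots,\bm{v}_{c+1}$ in which the coefficient of $\bm{v}_i$ is at least $\tfrac{\varepsilon_i}{1+\varepsilon_i}>0$.

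Finally, I would average the $c+1$ representations obtained above: setting $a_i$ equal to $\tfrac{1}{c+1}$ times the sum over $k$ of the coefficient of $\bm{v}_i$ in the representation coming from index $k$, I get $a_i>0$ for every $i$ (since the $k=i$ term already contributes $\tfrac{1}{c+1}\cdot\tfrac{\varepsilon_i}{1+\varepsilon_i}>0$), with $\sum_i a_i=1$ and $\sum_i a_i\bm{v}_i=\bm{v}$. Rescaling any $a_i$ that happens to exceed $1$ is not an issue because $\sum a_i=1$ forces $a_i\leq 1$ automatically. The only subtle point is justifying the existence of $\varepsilon_i$, which is precisely the definition of the (relative) interior of a convex set applied to the direction $\bm{v}-\bm{v}_i$; this is the step I would state most carefully, but it is routine and presents no real obstacle.
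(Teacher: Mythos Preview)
Your argument is correct. The paper's proof takes a different, shorter route: it translates by $\bm{v}$, setting $\bm{v}_i':=\bm{v}_i-\bm{v}$, observes that $\bm{0}$ lies in the interior of the convex hull of the $\bm{v}_i'$, and then simply asserts that there exist positive reals $a_1',\dots,a_{c+1}'$ with $\sum_i a_i'\bm{v}_i'=\bm{0}$, after which normalizing $a_i:=a_i'/\sum_j a_j'$ finishes. In effect the paper reduces to the case $\bm{v}=\bm{0}$ and invokes the standard fact you are actually proving. Your half-line-and-average construction is more explicit: it justifies precisely the step the paper treats as known, at the cost of a few more lines. Either approach is fine; yours would be preferable if one wanted the lemma to be fully self-contained.
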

\begin{proof}
Let $\bm{v}_i':=\bm{v}_i-\bm{v}$ for each $i$. Then $\bm{0}$ is contained in the interior of the convex hull spanned by $\bm{v}_1',\dots,\bm{v}_{c+1}'$. Thus there exist positive real numbers $a_1',\dots,a_{c+1}'$ such that $\sum_{i=1}^{c+1}a_i'\bm{v}_i'=\bm{0}$. We may let $a_i:=\frac{a_i'}{\sum_{j=1}^{c+1}a_j'}$ for each $i$. 
\end{proof}

Some notation in the next theorem is taken from \cite{HLS19} and \cite{Nak16}.
\begin{thm}\label{thm: shokurov polytope gpair}
Let $c,m,n,l$ be four non-negative integers, $r_1,\dots,r_c$ real numbers such that $1,r_1,\dots,r_c$ are linearly independent over $\Qq$, $\bm{r}:=(r_1,\dots,r_c)$, and $s_1,\dots,s_{m+n}:\mathbb R^{c+1}\rightarrow\mathbb R$ are $\Qq$-linear functions, such that $s_j(1,\bm{r})\geq 0$ for any $j$.

Let $(X,B,\Mm)/U$ be an NQC glc g-pair, $B=\sum_{j=1}^ms_j(1,\bm{r})B_j$, and $\Mm=\sum_{j=1}^ns_{j+m}(1,\bm{r})\Mm_j$, where each $B_j\geq 0$ is a $\Qq$-divisor and each $\Mm_j$ is a nef$/U$ $\Qq$-$\bb$-divisor. Let $B(\bm{x}):=\sum_{j=1}^ms_j(1,\bm{x})B_j$ and $\Mm(\bm{x}):=\sum_{j=1}^ns_{j+m}(1,\bm{x})\Mm_j$ for any $\bm{x}\in\mathbb R^c$. Let $U^0\subset U$ be a non-empty open subset, $X^0:=X\times_UU^0$, and let $S_1,\dots,S_l$ be the normalization of the irreducible components of $\lfloor B\rfloor$.

Then there exists an open set $V\ni\bm{r}$ of $\mathbb R^c$ (which may depend on $(X,B,\Mm)/U$, etc.) satisfying the following. For any $\bm{v}\in V$,
\begin{enumerate}
\item $s_j(1,\bm{v})\geq 0$ for each $j$,
\item $(X,B(\bm{v}),\Mm(\bm{v}))/U$ is an NQC glc g-pair,
\item $\Ngklt(X,B(\bm{v}),\Mm(\bm{v}))=\Ngklt(X,B,\Mm)$,
\item if $X\rightarrow Z$ is a projective surjective morphism over $U$ such that $K_X+B+\Mm_X\sim_{\Rr,Z}0$, then $K_X+B(\bm{x})+\Mm(\bm{x})_X\sim_{\Rr,Z}0$ for any $\bm{x}\in\mathbb R^c$,
\item if $\Mm_X|_{X^0}\sim_{\Rr,U^0}0$, then $\Mm(\bm{x})_X|_{X^0}\sim_{\Rr,U^0}0$ for any $\bm{x}\in\mathbb R^c$,
\item if $\Mm\times_UU^0$ descends to $X^0$, then $\Mm(\bm{x})\times_UU^0$ descends to $X^0$ for any $\bm{x}\in\mathbb R^c$, 
\item if $K_X+B+\Mm_X$ is nef$/U$, then $K_X+B(\bm{v})+\Mm(\bm{v})_X$ is nef$/U$,
\item if $(K_X+B+\Mm_X)|_{X^0}$ is semi-ample$/U^0$, then $(K_X+B(\bm{v})+\Mm(\bm{v})_X)|_{X^0}$ is semi-ample$/U^0$,
\item for any $k$, if $(K_X+B+\Mm_X)|_{S_k}$ is semi-ample$/U$, then $(K_X+B(\bm{v})+\Mm(\bm{v})_X)|_{S_k}$ is semi-ample$/U$.
\end{enumerate}
In particular, there exist positive real numbers $a_1,\dots,a_{c+1}\in (0,1]$ and $\Qq$-g-pairs $(X,B^i,\Mm^i)/U$, such that
\begin{itemize}
    \item $\sum_{i=1}^{c+1}a_i=1$, $B=\sum_{i=1}^{c+1}a_iB^i$, and $\Mm=\sum_{i=1}^{c+1}a_i\Mm^i$,
    \item $(X,B^i,\Mm^i)/U$ is glc for each $i$,
    \item $\Ngklt(X,B^i,\Mm^i)=\Ngklt(X,B,\Mm)$ for each $i$,
    \item if $X\rightarrow Z$ is a projective surjective morphism over $U$ such that $K_X+B+\Mm_X\sim_{\Rr,Z}0$, then $K_X+B^i+\Mm^i_X\sim_{\Qq,Z}0$ for each $i$,
    \item if $\Mm_X|_{X^0}\sim_{\Rr,U^0}0$, then $\Mm^i_X|_{X^0}\sim_{\Qq,U^0}0$ for each $i$,
\item if $\Mm\times_UU^0$ descends to $X^0$, then $\Mm^i\times_UU^0$ descends to $X^0$ for each $i$,
\item if $K_X+B+\Mm_X$ is nef$/U$, then $K_X+B^i+\Mm^i_X$ is nef$/U$ for each $i$,
\item if $(K_X+B+\Mm_X)|_{X^0}$ is semi-ample$/U^0$, then $(K_X+B^i+\Mm^i_X)|_{X^0}$ is semi-ample$/U^0$ for each $i$,
\item for any $k$, if $(K_X+B+\Mm_X)|_{S_k}$ is semi-ample$/U$, then $(K_X+B^i+\Mm^i_X)|_{S_k}$ is semi-ample$/U$ for each $i$.
\end{itemize}
\end{thm}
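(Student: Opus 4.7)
The ``in particular'' part follows formally from the main statement by Lemma \ref{lem: from polytope to perturbation}: choose $c+1$ rational points in $V$ forming a full-dimensional simplex with $\bm{r}$ in its interior, and read off the decomposition $B = \sum a_i B^i$, $\Mm = \sum a_i \Mm^i$. Each $\Qq$-g-pair so produced inherits all of (1)--(9). The work therefore lies in producing the open neighborhood $V$.

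The central tool, used throughout, is the following vanishing principle: since $1, r_1, \dots, r_c$ are $\Qq$-linearly independent, any $\Qq$-linear function $s:\Rr^{c+1}\to\Rr$ with $s(1,\bm{r})=0$ vanishes identically. Writing each $s_j(1,\bm{x})=q_{j,0}+\sum_{i=1}^c q_{j,i}x_i$ with $q_{j,k}\in\Qq$ and grouping terms, we obtain the decomposition $B(\bm{x})=\sum_{i=0}^c x_i P_i$ and $\Mm(\bm{x})=\sum_{i=0}^c x_i \NN_i$ (with the convention $x_0:=1$), where the $P_i$ are $\Qq$-divisors and the $\NN_i$ are $\Qq$-$\bb$-divisors over $X$. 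The $\Rr$-Cartier hypothesis on $K_X+B+\Mm_X$, together with the vanishing principle applied to the coefficient functions in local trivializations (in the spirit of \cite[Section 5]{HL18} and the Han--Liu--Shokurov polytope methodology), forces each $K_X+P_0+(\NN_0)_X$ and each $P_i+(\NN_i)_X$ (for $i\geq 1$) to be individually $\Qq$-Cartier.

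With this decomposition, properties (1)--(6) are continuity arguments. For (1), either $s_j(1,\bm{r})>0$ and openness is automatic, or $s_j(1,\bm{r})=0$ and the vanishing principle gives $s_j(1,\cdot)\equiv 0$. For (2), fix a log resolution $h:W\to X$ of $(X,\Supp B)$ on which all $\Mm_j$ descend; each log discrepancy $a(D,X,B(\bm{v}),\Mm(\bm{v}))$ at a prime divisor $D$ on $W$ is a $\Qq$-affine function of $\bm{v}$, nonnegative at $\bm{r}$, hence nonnegative in a neighborhood (there are finitely many relevant $D$), and the NQC property is inherited from (1). Property (3) is again a direct application of the vanishing principle: divisors realizing $a(D,X,B,\Mm)=0$ continue to do so for all $\bm{v}$, while those with strictly positive log discrepancy retain that property locally. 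For (4)--(6), the vanishing principle promotes any $\Rr$-linear relation $K_X+B+\Mm_X\sim_{\Rr,Z}0$, any $\Rr$-triviality of $\Mm_X$ on $X^0$, or any descent condition on $X^0$ into $\Qq$-linear relations among the components $K_X+P_0+(\NN_0)_X$ and $P_i+(\NN_i)_X$ separately; recombining with the new coefficients $\bm{x}$ yields the desired $\Rr$-linear relation, with Lemma \ref{lem: lift equivalence from generic fiber} mediating between generic-fiber relations and those on $X^0$.

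Properties (7)--(9), on preservation of nefness and semi-ampleness, constitute the main obstacle and require the Shokurov-polytope machinery for generalized pairs, following \cite[3.3]{HL18}. For (7), the cone theorem (Theorem \ref{thm: cone and contraction theorem glc pair}) shows that for any fixed ample$/U$ $\Rr$-divisor $A$ only finitely many extremal rays can be $(K_X+B(\bm{v})+\Mm(\bm{v})_X+A)$-negative; by Kleiman's criterion and continuity of intersection numbers in $\bm{v}$, nefness is preserved in a neighborhood of $\bm{r}$. For (8) and (9), the $\Qq$-decomposition writes the semi-ample class on $X^0$ (respectively on $S_k$) as a $\Qq$-linear combination of $\Qq$-Cartier classes, and the base-point-free theorem applied to a positive rational combination exhibits a morphism that realizes semi-ampleness stably under small $\Rr$-perturbation of the coefficients. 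For (9), gdlt adjunction in the form of \cite[Proposition 2.8]{HL18} guarantees that the restricted g-pair structure on each $S_k$ depends on $\bm{v}$ through the same $\Qq$-linear parametrization, so the semi-ampleness on $S_k$ reduces to a case already handled. Intersecting the finitely many open neighborhoods obtained for the nine conditions yields the desired $V$.
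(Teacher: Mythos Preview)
Your overall architecture matches the paper's, and your handling of (1)--(6) and of the ``in particular'' part via Lemma \ref{lem: from polytope to perturbation} is fine. The substantive gap is in (7)--(8).

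For (7) you invoke Theorem \ref{thm: cone and contraction theorem glc pair}. But that theorem is proved in Section 5 and its proof (via Lemma \ref{lem: gpair cone theorem spanned by extremal rays real case}) explicitly uses Theorem \ref{thm: shokurov polytope gpair} to decompose the $\Rr$-g-pair into $\Qq$-g-pairs. So your argument is circular. The paper avoids this by first replacing $(X,B,\Mm)$ with a gdlt modification $(W,B_W,\Mm)$, so that $W$ is $\Qq$-factorial klt; in that setting the length-of-extremal-rays bound and hence the nef polytope statement are already available from \cite[Proposition 3.16]{HL18}, independently of the new cone theorem. You should make the same reduction.

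For (8) your sketch is too vague and does not actually produce semi-ampleness of the perturbed divisor: writing a semi-ample $\Rr$-class as an $\Rr$-combination of $\Qq$-Cartier classes and ``applying the base-point-free theorem to a positive rational combination'' does not give control over nearby $\Rr$-combinations, and you do not say which base-point-free theorem you mean or verify its hypotheses. The paper's argument is much simpler and self-contained: let $\phi:X^0\to Y^0$ be the contraction defined by the semi-ample divisor $(K_X+B+\Mm_X)|_{X^0}$, so that this divisor is $\sim_{\Rr,U^0}\phi^*A$ with $A$ ample$/U^0$; then property (4), applied to the morphism $X^0\to Y^0$, shows $(K_X+B(\bm{x})+\Mm(\bm{x})_X)|_{X^0}\sim_{\Rr,Y^0}0$ for every $\bm{x}$, hence it is $\phi^*A(\bm{x})$ for an affine family $A(\bm{x})$ of divisors on $Y^0$ with $A(\bm{r})=A$. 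Since ampleness on $Y^0$ is an open condition, $A(\bm{v})$ stays ample for $\bm{v}$ near $\bm{r}$. Your reduction of (9) to (8) via adjunction is correct and matches the paper.
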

\begin{proof}
We only need to find open subsets $V$ satisfying each individual condition and then take their common intersections. (1) is obvious. (2) and (3) follow from \cite[Theorem 1.4]{Che20} and the linearity of log discrepancies. (4) and (5) follow from the proof of \cite[Lemma 5.3]{HLS19}. 

To prove (6), let $\Mm^0:=\Mm\times_UU^0$ and $\Mm(\bm{x})^0:=\Mm(\bm{x})\times_UU^0$ for any $\bm{x}\in\mathbb R^c$. Let $g: \tilde X\rightarrow X$ be a resolution such that $\Mm$ and any $\Mm_j$ descend to $\tilde X$, and let $\tilde X^0:=\tilde X\times_UU^0$. Since $\Mm^0$ descends to $X^0$, $\Mm^0_{X^0}$ is $\Rr$-Cartier. By \cite[Lemma 5.3]{HLS19}, $\Mm(\bm{x})^0_{X^0}$ is $\Rr$-Cartier for any $\bm{x}\in\mathbb R^c$. By the $\Qq$-linearity of log discrepancies, there exists a $\Qq$-affine function $F: \mathbb R^c\rightarrow\Div_{\mathbb R}(\tilde X^0)$, such that for any $\bm{x}\in\mathbb R^c$,
$$\Mm(\bm{x})^0_{\tilde X^0}=(g|_{\tilde X^0})^*\Mm(\bm{x})^0_{X^0}+F(\bm{x}),$$
where $F(\bm{x})$ is exceptional over $X^0$. Since $\Mm^0$ descends to $X^0$, $F(\bm{r})=0$. Thus $F(\bm{x})=0$ for any $\bm{x}\in\mathbb R^c$, hence $\Mm(\bm{x})^0$ descends to $X^0$ for any $\bm{x}\in\mathbb R^c$.

We prove (7). Let $f: W\rightarrow X$ be a gdlt modification of $(X,B,\Mm)$ (see Definition-Lemma \ref{deflem: gdlt modification} below) such that
$$K_W+f^{-1}_*B+E+\Mm_W=f^*(K_X+B+\Mm_X)$$
where $E:=\Exc(f)$. Let $B_{j,W}$ be the strict transform of $B_j$ on $W$ for each $j$, and $B_W(\bm{x}):=\sum_{j=1}^ms_j(1,\bm{x})B_{j,W}+E$ for any $\bm{x}\in\mathbb R^c$. By $\Qq$-linearity of log discrepancies, we have
$$K_W+B_W(\bm{x})+\Mm(\bm{x})_W=f^*(K_X+B(\bm{x})+\Mm(\bm{x})_X)$$
for any $\bm{x}\in\mathbb R^c$. By \cite[Proposition 3.16]{HL18}, there exists an open subset $V\ni\bm{r}$ such that $K_W+B_W(\bm{v})+\Mm(\bm{v})_W$ is nef$/U$ for any $\bm{v}\in V$, and this $V$ satisfies (7).

We prove (8). If $(K_X+B+\Mm_X)|_{X^0}$ is semi-ample$/U^0$, then we let $\phi: X^0\rightarrow Y^0$ be the contraction defined by $(K_X+B+\Mm_X)|_{X^0}$ over $U^0$. We have
$$(K_X+B+\Mm_X)|_{X^0}\sim_{\mathbb R,U^0}\phi^*A$$
for some ample$/U^0$ $\Rr$-divisor $A$ on $Y^0$. By (5), for any $\bm{x}\in\mathbb R^c$, we have $(K_X+B(\bm{x})+\Mm(\bm{x})_X)|_{X^0}\sim_{\mathbb R,Y^0}0$, so $(K_X+B(\bm{x})+\Mm(\bm{x})_X)|_{X^0}\sim_{\mathbb R,U^0}\phi^*A(\bm{x})$ for some $\Rr$-divisor $A(\bm{x})$ on $Y^0$. Possibly replacing $A(\bm{x})$, we may assume that $\mathbb R^c\rightarrow\Div_{\mathbb R}(Y^0)$ given by $\bm{x}\rightarrow A(\bm{x})$ is an affine function and $A(\bm{r})=A$. Since ampleness is an open condition, there exists an open set $V\ni\bm{r}$ such that $A(\bm{v})$ is ample$/U^0$ for any $\bm{v}\in V$. 

We prove (9), and finish the proof of the main part of the theorem. For any $k$, let $(S_k,B_{S_k},\Mm^{S_k})$ be the NQC glc g-pair given by the adjunction
$$K_{S_k}+B_{S_k}+\Mm^{S_k}_{S_k}:=(K_X+B+\Mm_X)|_{S_k},$$
then we may write $B_{S_k}=\sum_{j=1}^{m_k}s^k_j(1,\bm{r})B_{j,S_k}$ and $\Mm^{S_k}=\sum_{j=1}^{n_k}s_{j+m_k}(1,\bm{r})\Mm^{S_k}_j$, where $B_{j,S_k}\geq 0$ are $\Qq$-divisors, $\Mm_j^{S_k}$ are nef$/U$ $\Qq$-$\bb$-divisors, and $s_j^k(1,\bm{r})\geq 0$ for any $j,k$. Let $B_{S_k}(\bm{x}):=\sum_{j=1}^{m_k}s^k_j(1,\bm{x})B_{j,S_k}$ and $\Mm^{S_k}(\bm{x}):=\sum_{j=1}^{n_k}s_{j+m_k}(1,\bm{x})\Mm^{S_k}_j$ for any $\bm{x}\in\mathbb R^c$, then
$$K_{S_k}+B_{S_k}(\bm{x})+\Mm^{S_k}_{S_k}(\bm{x}):=(K_X+B(\bm{x})+\Mm_X(\bm{x}))|_{S_k}$$
for any $\bm{x}\in\mathbb R^c$. Thus (9) follows from (8).

To prove the in particular part of the theorem, we let $\bm{v}_1,\dots,\bm{v}_{c+1}\in V\cap\mathbb Q^c$ be $c+1$ points such that $\bm{r}$ is contained in the interior of the convex hull of $\bm{v}_1,\dots,\bm{v}_{c+1}$. By Lemma \ref{lem: from polytope to perturbation}, we may let $B^i:=B(\bm{v}_i)$ and $\Mm^i:=\Mm(\bm{v}_i)$ for each $i$, and let $a_1,\dots,a_{c+1}\in (0,1]$ be any real numbers such that $\sum_{i=1}^{c+1}a_i=1$ and $\sum_{i=1}^{c+1}a_i\bm{v}_i=\bm{r}$.
\end{proof}

\section{Models}

In this sections, we will study  different types of models of generalized pairs. For the case of models of usual pairs, we refer the reader to \cite[Section 2]{Bir12a}, \cite[Section 2]{Has19}.

\subsection{Definitions}

\begin{defn}[Log smooth model]\label{defn: log smooth models}
Let $(X,B,\Mm)/U$ be a glc g-pair and $h: W\rightarrow X$ a log resolution of $(X,\Supp B)$ such that $\Mm$ descends to $W$. Let $B_W\geq 0$ and $E\geq 0$ be two $\Rr$-divisors on $W$ such that
\begin{enumerate}
    \item $K_W+B_W+\Mm_W=h^*(K_X+B+\Mm_X)+E$,
    \item $(W,B_W)$ is log smooth dlt,
    \item $E$ is $h$-exceptional, and
    \item for any $h$-exceptional prime divisor $D$ such that $a(D,X,B,\Mm)>0$, $D$ is a component of $E$.
\end{enumerate}
Then $(W,B_W,\Mm)$ is called a \emph{log smooth model} of $(X,B,\Mm)$. If we additionally assume that
\begin{itemize}
    \item[(5)] for any $h$-exceptional prime divisor $D$ such that $a(D,X,B,\Mm)>0$, $D$ is a component of $\{B_W\}$,
\end{itemize}
then $(W,B_W,\Mm)$ is called a \emph{proper log smooth model} of $(X,B,\Mm)$.
\end{defn}

\begin{defn}[Models]\label{defn: models}
Let $(X,B,\Mm)/U$ be a glc g-pair, $\phi: X\dashrightarrow X'$ a proper birational map over $U$, and $E:=\Exc(\phi^{-1})$ the reduced $\phi^{-1}$-exceptional divisor. Let $B':=\phi_*B+E$.

\begin{enumerate}
    \item $(X',B',\Mm)/U$ is called a \emph{log birational model} of $(X,B,\Mm)/U$. 
    \item $(X',B',\Mm)/U$ is called a \emph{weak glc model} of $(X,B,\Mm)/U$ if
\begin{enumerate}
\item $(X',B',\Mm)/U$ is a log birational model of $(X,B,\Mm)/U$, 
    \item $K_{X'}+B'+\Mm_{X'}$ is nef$/U$, and
    \item for any prime divisor $D$ on $X$ which is exceptional over $X'$, $a(D,X,B,\Mm)\leq a(D,X',B',\Mm)$.
\end{enumerate}
\item $(X',B',\Mm)/U$ is called a \emph{log minimal model} of $(X,B,\Mm)/U$ if
\begin{enumerate}
    \item $(X',B',\Mm)/U$ is a weak glc model of $(X,B,\Mm)/U$,
    \item $(X',B',\Mm)$ is $\Qq$-factorial gdlt, and
    \item for any prime divisor $D$ on $X$ which is exceptional over $X'$, $a(D,X,B,\Mm)<a(D,X',B',\Mm)$.
\end{enumerate}
\item 
$(X',B',\Mm)/U$ is called a \emph{good minimal model} of $(X,B,\Mm)/U$ if
\begin{enumerate}
        \item $(X',B',\Mm)/U$ is a log minimal model of $(X,B,\Mm)/U$, and
        \item $K_{X'}+B'+\Mm_{X'}$ is semi-ample$/U$.
\end{enumerate}
\end{enumerate}
\end{defn}

\begin{deflem}[Gdlt modification, {\cite[Proposition 3.9]{HL18}}]\label{deflem: gdlt modification}
Let $(X,B,\Mm)/U$ be a glc g-pair. Then there exists a birational morphism $f: Y\rightarrow X$ and a glc g-pair $(Y,B_Y,\Mm)/U$, such that
\begin{enumerate}
\item $(Y,B_Y,\Mm)$ is $\Qq$-factorial gdlt,
    \item $K_Y+B_Y+\Mm_Y=f^*(K_X+B+\Mm_X)$, and
    \item any $f$-exceptional divisor is a component of $\lfloor B_Y\rfloor$.
\end{enumerate}
For any birational morphism $f$ and $(Y,B_Y,\Mm)$ which satisfies (1-3), $f$ will be called a \emph{gdlt modification} of $(X,B,\Mm)$, and $(Y,B_Y,\Mm)$ will be called a  \emph{gdlt model} of $(X,B,\Mm)$.
\end{deflem}

\begin{rem}
As the definition of gdlt follows from \cite{HL18} and is different from \cite{Bir20a} and \cite{FS20b}, we do not define the gdlt modifications for g-pairs that are not glc here.
\end{rem}

\begin{rem}
Log birational models, weak glc models, log minimal models, and good minimal models depend on the base $U$, so in the definitions, we have the notation ``$/U$". On the other hand, log smooth models and gdlt models do not depend on the base $U$, so in the definitions, we do not have the notation ``$/U$". 
\end{rem}

\subsection{Proper log smooth models}

\begin{lem}\label{lem: existence of proper log smooth model}
Let $(X,B,\Mm)/U$ be a glc g-pair and $h: W\rightarrow X$ a log resolution of $(X,\Supp B)$ such that $\Mm$ descends to $W$. Then $(X,B,\Mm)$ has a proper log smooth model $(W,B_W,\Mm)$ for some $\Rr$-divisor $B_W$ on $W$.
\end{lem}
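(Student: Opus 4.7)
\medskip

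\noindent\textit{Proof proposal.} The plan is to build $B_W$ by a direct coefficient-by-coefficient recipe on the exceptional divisors of $h$. Start from the numerical identity
$$K_W + h^{-1}_*B + \sum_i (1-a_i) E_i + \Mm_W = h^*(K_X+B+\Mm_X),$$
where the $E_i$ are the $h$-exceptional prime divisors and $a_i := a(E_i,X,B,\Mm)$. Because $(X,B,\Mm)$ is glc, every $a_i \geq 0$, and because each prime component of $B$ is of the form $D$ with $a(D,X,B,\Mm)=1-\mult_D B\geq 0$, the strict transform $h^{-1}_*B$ has coefficients in $[0,1]$.

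Now I would choose real numbers $b_i\in[0,1]$ as follows and set $B_W := h^{-1}_*B+\sum_i b_i E_i$. For each $E_i$, the identity above gives
$$K_W+B_W+\Mm_W = h^*(K_X+B+\Mm_X)+\sum_i(b_i+a_i-1)E_i,$$
so condition (1) of Definition \ref{defn: log smooth models} holds with $E := \sum_i(b_i+a_i-1)E_i$, and $E$ is automatically $h$-exceptional (condition (3)). Effectivity of $E$ (and hence of $B_W$) and the remaining divisibility/component conditions reduce to constraints on the $b_i$: if $a_i=0$, pick $b_i=1$; if $a_i>0$, pick any $b_i$ in the non-empty open interval $(\max\{0,1-a_i\},1)$, and moreover choose $b_i$ to be a non-integer (i.e., $b_i<1$). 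This guarantees $b_i+a_i-1>0$ (so $E_i\subset\Supp E$, giving condition (4) and the proper condition (5)) and $b_i\in(0,1)$ (so $E_i\subset\Supp\{B_W\}$, giving condition (5)).

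Finally I would verify that $(W,B_W)$ is log smooth dlt (condition (2)): the coefficients of $B_W$ lie in $[0,1]$ by construction, and $\Supp B_W\subseteq\Supp(h^{-1}_*B)\cup\Exc(h)$ which has simple normal crossings since $h$ is a log resolution of $(X,\Supp B)$. Thus all five conditions are met and $(W,B_W,\Mm)$ is a proper log smooth model of $(X,B,\Mm)$. The argument is essentially bookkeeping; the only substantive input is the glc hypothesis, which furnishes the lower bound $a_i\geq 0$ and guarantees that the intervals from which we select the $b_i$ are non-empty. There is no real obstacle, only the need to keep track of the dichotomy $a_i=0$ vs.\ $a_i>0$.
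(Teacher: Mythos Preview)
Your proof is correct and takes essentially the same approach as the paper. The paper's argument is just a uniform version of yours: it picks a single $\epsilon\in(0,1)$ with $\epsilon<a_i$ for every $i$ with $a_i>0$, and then sets $b_i=1$ when $a_i=0$ and $b_i=1-\epsilon$ when $a_i>0$ (packaged as $B_W=h^{-1}_*B+\epsilon\Gamma^{=1}+(1-\epsilon)\Exc(h)$), whereas you allow the $b_i$ to vary individually in $(\max\{0,1-a_i\},1)$.
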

\begin{proof}
Assume that
$$K_W+h^{-1}_*B+\Gamma+\Mm_W=h^*(K_X+B+\Mm_X),$$
then $\Gamma$ is $h$-exceptional. Let $E=\Exc(h)$ be the reduced $h$-exceptional divisor. Then there exists a real number $\epsilon\in (0,1)$, such that for any component $D$ of $E$, if $\mult_D\Gamma<1$, then $\mult_D\Gamma<1-\epsilon$. We let $$B_W:=h^{-1}_*B+\epsilon\Gamma^{=1}+(1-\epsilon)E,$$
then $(W,B_W,\Mm)$ is a proper log smooth model of $(X,B,\Mm)$.
\end{proof}

\begin{lem}\label{lem: proper log smooth model keep lc center}
Let $(X,B,\Mm)/U$ be a glc g-pair and $(W,B_W,\Mm)$ a proper log smooth model of $(X,B,\Mm)$ with induced morphism $h: W\rightarrow X$. Assume that
$$K_W+B_W+\Mm_W=h^*(K_X+B+\Mm_X)+E,$$
then:
\begin{enumerate}
    \item $\Supp B_W=\Supp h^{-1}_*B\cup\Exc(h)$.
    \item For any prime divisor $D$ on $W$ that is exceptional over $X$, $D$ is a component $E$ if and only if $a(D,X,B,\Mm)>0$.
    \item Any glc place of $(W,B_W,\Mm)$ is a glc place of $(X,B,\Mm)$. In particular, the image of any glc center of $(W,B_W,\Mm)$ on $X$ is a glc center of $(X,B,\Mm)$.
\end{enumerate}
\end{lem}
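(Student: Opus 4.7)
My plan is to reduce everything to multiplicity computations based on the identity
\[ \mult_D B_W \;=\; 1 - a(D, X, B, \Mm) + \mult_D E \]
for prime divisors $D$ on $W$ (which is just Definition~\ref{defn: log smooth models}(1) rewritten in terms of log discrepancies) together with the simple normal crossing structure of $B_W + E$ coming from the log resolution $h$.

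For (1), I would first handle the inclusion $\Supp B_W \subseteq \Supp h^{-1}_*B \cup \Exc(h)$: if $D \in \Supp B_W$ is non-exceptional, then $\mult_D E = 0$, forcing $a(D, X, B, \Mm) < 1$ and hence $h(D) \in \Supp B$. For the reverse inclusion, every component of $h^{-1}_*B$ has positive multiplicity in $B_W$ by the same identity; an exceptional prime $D$ with $a(D, X, B, \Mm) = 0$ has $\mult_D B_W \geq 1$ using $E \geq 0$; and an exceptional $D$ with $a(D, X, B, \Mm) > 0$ is forced into $\Supp\{B_W\} \subseteq \Supp B_W$ by Definition~\ref{defn: log smooth models}(5).

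For (2), the ``if'' direction is Definition~\ref{defn: log smooth models}(4). For the ``only if'' direction I would combine $\mult_D B_W \leq 1$ (since $(W, B_W)$ is dlt) with the identity above to obtain $a(D, X, B, \Mm) \geq \mult_D E > 0$ whenever $D$ is a component of the effective divisor $E$.

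The step I expect to be the main obstacle is (3). Given a glc place $F$ of $(W, B_W, \Mm)$, I would first note that since $\Mm$ descends to $W$, the generalized log discrepancy with respect to $(W, B_W, \Mm)$ coincides with the ordinary log discrepancy with respect to $(W, B_W)$, so $F$ is an lc place of the log smooth dlt pair $(W, B_W)$. Pulling the defining identity back to a common log resolution $p: Y \to W$ on which $F$ is divisorial and $\Mm$ descends yields
\[ a(F, X, B, \Mm) \;=\; a(F, W, B_W, \Mm) + \mult_F p^* E \;=\; \mult_F p^* E, \]
so the task reduces to proving $\Center_W F \not\subseteq \Supp E$. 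Since $(W, B_W)$ is log smooth dlt, $\Center_W F$ is a stratum of $B_W^{=1}$, i.e.\ an irreducible component of an intersection of components of $B_W^{=1}$. By part~(2) combined with Definition~\ref{defn: log smooth models}(5), every component of $E$ is an exceptional divisor whose coefficient in $B_W$ is strictly less than $1$, so no component of $E$ belongs to $\Supp B_W^{=1}$. Because $h$ is a log resolution, $\Supp(B_W + E) \subseteq h^{-1}(\Supp B) \cup \Exc(h)$ is simple normal crossing, so the intersection of any component of $E$ with such a stratum has strictly larger codimension than the stratum itself, ruling out containment. Hence $\mult_F p^* E = 0$ and $F$ is a glc place of $(X, B, \Mm)$; the ``in particular'' statement follows since $h(\Center_W F) = \Center_X F$.
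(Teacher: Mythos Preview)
Your proposal is correct and follows essentially the same approach as the paper: both arguments hinge on the identity $\mult_D B_W = 1 - a(D,X,B,\Mm) + \mult_D E$, the axioms (4) and (5) in Definition~\ref{defn: log smooth models}, and the simple normal crossing structure of $B_W + E$. The only cosmetic differences are that for (2) the paper argues the contrapositive (if $a(D,X,B,\Mm)=0$ then $\mult_D E = 0$), and for (3) the paper phrases the SNC dimension count as ``some component of $E$ would have to coincide with a component of $\lfloor B_W\rfloor$'' rather than your explicit codimension comparison; these are the same argument.
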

\begin{proof}
First we prove (1). By construction, $\Supp B_W\subset\Supp h^{-1}_*B\cup\Exc(h)$ and $\Supp h^{-1}_*B\subset\Supp B_W$. Let $D$ be a component of $\Exc(h)$. If $a(D,X,B,\Mm)=0$, then since $E\geq 0$, $D$ is a component of $B_W$. If $a(D,X,B,\Mm)>0$, by Definition \ref{defn: log smooth models}(5), $E$ is a component of $\{B_W\}$, hence a component of $B_W$. Thus $\Exc(h)\subset\Supp B_W$, and we have (1).

We prove (2). Let $D$ be a prime divisor on $W$. If $a(D,X,B,\Mm)>0$, then $D$ is a component of $E$ by Definition \ref{defn: log smooth models}(4). If $a(D,X,B,\Mm)=0$, then
$$0=a(D,X,B,\Mm)=a(D,W,B_W-E,\Mm)\geq a(D,W,B_W,\Mm)\geq 0,$$
which implies that $a(D,W,B_W-E,\Mm)=a(D,W,B_W,\Mm)$, hence $\mult_DE=0$. Thus we have (2).

We prove (3). Let $D$ be a glc place of $(W,B_W,\Mm)$. Then the center of $D$ on $W$ is a stratum of $\lfloor B_W\rfloor$. If $\Center_WD\subset\Supp E$, then since $B_W+E$ is simple normal crossing, there exists a prime divisor $F$ that is a component of $\lfloor B_W\rfloor$ such that $\Center_WD\subset F$ and $F$ is a component of $E$. By (2), $a(F,X,B,\Mm)>0$. By Definition \ref{defn: log smooth models}(5), $F$ is a component of $\{B_W\}$, so $F$ cannot be a component of $\lfloor B_W\rfloor$, a contradiction. Thus $\Center_WD\not\subset\Supp E$. Therefore, any glc place of $(W,B_W,\Mm)$ is a glc place of  of $(W,B_W-E,\Mm)$, hence a glc place of $(X,B,\Mm)$, and we have (3).
\end{proof}

\subsection{Models under some birational maps}

\subsubsection{Models under resolutions}

\begin{lem}\label{lem: g-pair version bir12 2.6}
Let $(X,B,\Mm)/U$ be a glc g-pair, $(X',B',\Mm)/U$ a weak glc model of $(X,B,\Mm)/U$ with birational map $\phi: X\dashrightarrow X'$, and $p: W\rightarrow X$ and $q: W\rightarrow X'$ a common resolution of $(X,B,\Mm)$ and $(X',B',\Mm)$ such that $q=\phi\circ p$. Assume that
$$p^*(K_X+B+\Mm_X)=q^*(K_{X'}+B'+\Mm_{X'})+E,$$
then
\begin{enumerate}
    \item $E\geq 0$, and
    \item $E$ is exceptional over $X'$.
\end{enumerate}
\end{lem}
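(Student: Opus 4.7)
I would prove this via the negativity lemma applied to $p:W\to X$. After replacing $W$ by a higher log resolution if necessary, I may assume $\Mm$ descends to $W$. Writing
$$K_W+B_W^X+\Mm_W=p^*(K_X+B+\Mm_X),\quad K_W+B_W^{X'}+\Mm_W=q^*(K_{X'}+B'+\Mm_{X'}),$$
we obtain $E=B_W^X-B_W^{X'}$, and for every prime divisor $D$ on $W$,
$$\coeff_D(E)=a(D,X',B',\Mm)-a(D,X,B,\Mm).$$

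\emph{For (1):} First observe that $-E=q^*(K_{X'}+B'+\Mm_{X'})-p^*(K_X+B+\Mm_X)$ is $p$-nef, since the first term is the pullback of a nef$/U$ divisor (hence $p$-nef) and the second is $p$-numerically trivial. By the negativity lemma (cf. \cite[Lemma 3.39]{KM98}) it therefore suffices to show $p_*E\geq 0$. Using $p_*K_W=K_X$ and $p_*\Mm_W=\Mm_X$, this reduces to $B-p_*B_W^{X'}\geq 0$. For a prime divisor $D_X$ on $X$, let $D:=p^{-1}_*D_X$; then $\coeff_{D_X}(p_*B_W^{X'})=\coeff_D(B_W^{X'})$. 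If $D_X$ is not $\phi$-exceptional, then $D_{X'}:=\phi_*D_X=q_*D$ is a prime divisor on $X'$ not contained in $\Exc(\phi^{-1})$, so the relation $B'=\phi_*B+\Exc(\phi^{-1})$ forces $\coeff_{D_{X'}}(B')=\coeff_{D_X}(B)$; hence the corresponding coefficient of $B-p_*B_W^{X'}$ is zero. If $D_X$ is $\phi$-exceptional, i.e.\ $D_X$ is a prime divisor on $X$ exceptional over $X'$, then condition (c) in the definition of weak glc model gives $a(D_X,X',B',\Mm)\geq a(D_X,X,B,\Mm)$, which rearranges to $\coeff_D(B_W^{X'})\leq \coeff_{D_X}(B)$; the contribution is again non-negative. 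This proves $p_*E\geq 0$, hence $E\geq 0$.

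\emph{For (2):} Suppose a component $D$ of $E$ is not $q$-exceptional. If $D$ is also not $p$-exceptional, the case analysis in (1) already gives $\coeff_D(E)=0$, contradicting that $D$ is a component. So $D$ must be $p$-exceptional; then $D_{X'}:=q_*D$ is a prime divisor on $X'$ which is $\phi^{-1}$-exceptional, whence $\coeff_{D_{X'}}(B')=1$ by the definition of log birational model. Therefore $a(D,X',B',\Mm)=0$ and $\coeff_D(E)=-a(D,X,B,\Mm)\leq 0$, which combined with (1) forces $\coeff_D(E)=0$, again a contradiction. Thus every component of $E$ is $q$-exceptional.

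The main work is the bookkeeping of prime divisors on $W$ according to which of $p,q$ they are exceptional for: only the two non-$p$-exceptional types contribute to $p_*E$, where non-negativity comes either from the form of $B'$ (yielding equality of discrepancies) or from condition (c) of a weak glc model; the two $p$-exceptional types are then ruled out in (2) via the opposite sign of their $E$-coefficient. No deep input beyond the negativity lemma is required.
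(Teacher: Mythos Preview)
Your proof is correct and follows essentially the same approach as the paper's: express $\coeff_D E$ as a difference of log discrepancies, do a case analysis on whether $D$ is exceptional over $X$ and/or $X'$ to verify $p_*E\geq 0$, apply the negativity lemma for (1), and then use the log-birational-model equality $\coeff_{D_{X'}}B'=1$ together with glc-ness to force $\coeff_D E=0$ for any non-$q$-exceptional component in (2). The only cosmetic differences are your passage to a higher $W$ on which $\Mm$ descends and your phrasing of $p_*E\geq 0$ via $B-p_*B_W^{X'}\geq 0$, neither of which changes the argument.
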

\begin{proof}
For any prime divisor $D$ that is an irreducible component of $E$, $$\mult_DE=a(D,X',B',\Mm)-a(D,X,B,\Mm).$$ Thus if $D$ is not exceptional over $X$, then
\begin{itemize}
    \item if $D$ is not exceptional over $X'$, then $\mult_DE=0$, and
    \item if $D$ is exceptional over $X'$, then $\mult_DE\geq 0$ by Definition \ref{defn: models}(2.c).
\end{itemize}
Therefore, $p_*E\geq 0$. Since $K_{X'}+B'+\Mm_{X'}$ is nef$/U$, $q^*(K_{X'}+B'+\Mm_{X'})$ is nef$/X$, hence $E$ is anti-nef$/X$. By the negativity lemma, $E\geq 0$, which is (1).

We show (2). If $E$ is not exceptional over $X'$, then there exists a component $D$ of $E$ that is not exceptional over $X'$. If $D$ is not exceptional over $X$, then $\mult_DE=0$, a contradiction. Thus $D$ is exceptional over $X$. By the definition of weak glc models, $a(D,X',B',\Mm)=0$. Since $E\geq 0$, $a(D,X,B,\Mm)\leq a(D,X,B',\Mm)=0$. Since $(X,B,\Mm)/U$ is a glc g-pair, $a(D,X,B,\Mm)\geq 0$. Thus $a(D,X,B,\Mm)=0$, which implies that $\mult_DE=0$, a contradiction.
\end{proof}

\begin{lem}\label{lem: g-pair version bir12 2.7}
Let $(X,B,\Mm)/U$ be a glc g-pair, $(X_1,B_1,\Mm)/U$ and $(X_2,B_2,\Mm)/U$ two weak glc models of $(X,B,\Mm)/U$ with induced birational map $\phi: X_1\dashrightarrow X_2$, and $g_1: W\rightarrow X_1$ and $g_2: W\rightarrow X_2$ a common resolution such that $\phi\circ g_1=g_2$. Then:
\begin{enumerate}
    \item $$g_1^*(K_{X_1}+B_1+\Mm_{X_1})=g_2^*(K_{X_2}+B_2+\Mm_{X_2}).$$
    In particular, if $K_{X_2}+B_2+\Mm_{X_2}$ is ample$/U$, then $\phi$ is a morphism.
    \item If $K_{X_1}+B_1+\Mm_{X_1}$ is semi-ample$/U$, then for any weak glc model $(X',B',\Mm)/U$ of $(X,B,\Mm)/U$, $K_{X'}+B'+\Mm_{X'}$ is semi-ample$/U$.
\end{enumerate}
\end{lem}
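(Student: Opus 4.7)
For (1), my plan is to reduce the claimed identity on $W$ to an identity on a common resolution $V$ of $X$, $X_1$, $X_2$ lying above $W$. Concretely, pick a resolution $r\colon V\to W$ so that $V$ also admits a proper birational morphism $p\colon V\to X$ compatible with the birational maps $X\dashrightarrow X_i$; setting $q_i:=g_i\circ r\colon V\to X_i$, two applications of Lemma~\ref{lem: g-pair version bir12 2.6} (with $(X,B,\Mm)$ as the ambient g-pair and $X_i$ as weak glc model) yield
$$p^*(K_X+B+\Mm_X)=q_i^*(K_{X_i}+B_i+\Mm_{X_i})+E_i\qquad(i=1,2),$$
with $E_i\ge 0$ and $q_i$-exceptional. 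Writing $F_i:=q_i^*(K_{X_i}+B_i+\Mm_{X_i})$, the target identity $F_1=F_2$ on $V$ reduces to $E_1=E_2$; once this is known on $V$, the equality $g_1^*(K_{X_1}+B_1+\Mm_{X_1})=g_2^*(K_{X_2}+B_2+\Mm_{X_2})$ on $W$ follows from $r_*\mo_V=\mo_W$ and birationality of $r$.

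The crux is proving $E_1\le E_2$, the reverse inequality then following by symmetry. I will work over $q_1\colon V\to X_1$: since $F_1$ is a $q_1$-pullback it is numerically trivial over $X_1$, while $F_2$ is nef$/U$ and hence nef$/X_1$, so $E_1-E_2=F_2-F_1$ is nef over $X_1$. On the other hand $E_1$ is $q_1$-exceptional and $E_2\ge 0$, so $(q_1)_*(E_1-E_2)=-(q_1)_*E_2\le 0$, and the negativity lemma forces $E_1-E_2\le 0$. For the ``in particular'' clause, any curve $C\subset V$ contracted by $q_1$ satisfies $C\cdot F_1=0$, hence $C\cdot F_2=0$ by (1); since $K_{X_2}+B_2+\Mm_{X_2}$ is ample$/U$, such a $C$ must also be contracted by $q_2$, so $q_2$ factors through $q_1$ and $\phi\colon X_1\to X_2$ extends to a morphism.

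For (2), given any weak glc model $(X',B',\Mm)/U$, I apply (1) to the weak glc models $X_1$ and $X'$ on a common resolution $s\colon V'\to X_1$, $s'\colon V'\to X'$ to obtain
$$s^*(K_{X_1}+B_1+\Mm_{X_1})=s'^*(K_{X'}+B'+\Mm_{X'}).$$
The left-hand side is semi-ample$/U$ by hypothesis, hence so is the right-hand side. Because $s'$ is proper birational with $s'_*\mo_{V'}=\mo_{X'}$, sections of a sufficiently divisible multiple of $K_{X'}+B'+\Mm_{X'}$ correspond bijectively under $s'^*$ to sections of its pullback, so base-point freeness descends from $V'$ to $X'$; consequently $K_{X'}+B'+\Mm_{X'}$ is semi-ample$/U$. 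The only delicate point in the whole argument is the careful bookkeeping of ``nef over $X_1$'' versus ``numerically trivial over $X_1$'' when setting up the negativity lemma step; beyond that everything is routine given Lemma~\ref{lem: g-pair version bir12 2.6} and standard pushforward/pullback compatibility.
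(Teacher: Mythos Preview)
Your proof is correct and follows essentially the same route as the paper's: both set up the differences $E_i=p^*(K_X+B+\Mm_X)-q_i^*(K_{X_i}+B_i+\Mm_{X_i})$ on a common resolution dominating $X$, $X_1$, $X_2$, and then apply the negativity lemma over each $X_i$ to conclude $E_1=E_2$. The only cosmetic difference is that the paper replaces $W$ itself by a resolution dominating $X$, whereas you pass to an auxiliary $V$ above $W$ and then descend; the content is identical.
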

\begin{proof}
Let $\phi_1: X\dashrightarrow X_1$ and $\phi_2: X\dashrightarrow X_2$ be the induced birational maps. Possibly replacing $W$, we may assume that the induced birational map $h: W\rightarrow X$ is a morphism. Let $$E_i:=h^*(K_X+B+\Mm_X)-g_i^*(K_{X_i}+B_i+\Mm_{X_i})$$
for $i\in\{1,2\}$. By Lemma \ref{lem: g-pair version bir12 2.6}, $E_i\geq 0$ and is exceptional over $X_i$ for $i\in\{1,2\}$. Thus $g_{1,*}(E_2-E_1)\geq 0$ and $E_1-E_2$ is nef$/X_1$, and $g_{2,*}(E_1-E_2)\geq 0$ and $E_2-E_1$ is nef$/X_2$. By the negativity lemma, $E_2-E_1\geq 0$ and $E_1-E_2\geq 0$. Thus $E_1=E_2$, which implies (1). (2) immediately follows from (1).
\end{proof}

\begin{lem}\label{lem: g-pair version bir12 2.8}
Let $(X,B,\Mm)/U$ be a glc g-pair, $h: W\rightarrow X$ a log resolution of $(X,\Supp B)$ such that $\Mm$ descends to $W$, and $(W,B_W,\Mm)$ a log smooth model of $(X,B,\Mm)$. Then any weak glc model (resp. log minimal model, good minimal model) of $(W,B_W,\Mm)/U$ is a weak glc model (resp. log minimal model, good minimal model) of $(X,B,\Mm)/U$. 
\end{lem}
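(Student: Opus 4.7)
The plan is to set $\phi:=\psi\circ h^{-1}\colon X\dashrightarrow X'$, where $\psi\colon W\dashrightarrow X'$ is the birational map underlying a weak glc model $(X',B_{W'},\Mm)/U$ of $(W,B_W,\Mm)/U$, and to verify that $(X',B_{W'},\Mm)/U$ satisfies each defining condition of a weak glc model of $(X,B,\Mm)/U$ via the map $\phi$. The nefness of $K_{X'}+B_{W'}+\Mm_{X'}$ over $U$, together with the $\Qq$-factorial gdlt property (in the log minimal model case) and the semi-ampleness (in the good minimal model case), is inherited directly from the corresponding hypothesis on $(W,B_W,\Mm)$.

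For the discrepancy condition, let $D$ be a prime divisor on $X$ exceptional over $X'$. Since $D$ is on $X$ it is not $h$-exceptional, so $\mult_D E=0$, and therefore $a(D,X,B,\Mm)=a(D,W,B_W,\Mm)$. Its strict transform $D_W:=h^{-1}_*D$ on $W$ is exceptional over $X'$ because $D$ is $\phi^{-1}$-exceptional, so the weak glc model discrepancy condition for $\psi$ applied to $D_W$ yields $a(D_W,W,B_W,\Mm)\leq a(D_W,X',B_{W'},\Mm)$. Chaining the two equalities and identifying $D$ with $D_W$ as valuations produces $a(D,X,B,\Mm)\leq a(D,X',B_{W'},\Mm)$, as required. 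The strict version needed for log minimal models follows by exactly the same argument using the strict inequality in Definition~\ref{defn: models}(3c).

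The main obstacle is to verify that $(X',B_{W'},\Mm)/U$ is a log birational model of $(X,B,\Mm)/U$, i.e., that $B_{W'}=\phi_*B+E_\phi$ for $E_\phi:=\Exc(\phi^{-1})$. Since $B_{W'}=\psi_*B_W+E_\psi$ with $E_\psi=\Exc(\psi^{-1})$, and $\psi_*(h^{-1}_*B)=\phi_*B$, the strict-transform parts already match. The reduced exceptional divisor $E_\phi$ splits into its $\psi^{-1}$-exceptional components (which coincide with $E_\psi$, contributing coefficient one on both sides) and those of the form $\psi_*D$ for $D$ an $h$-exceptional prime divisor on $W$ which is not $\psi$-exceptional. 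Applying Lemma~\ref{lem: g-pair version bir12 2.6} to $\psi$ on a common resolution, the coefficient of $\psi_*D$ in $B_{W'}$ equals $\mult_D B_W$. Using the identity $\mult_D B_W=1-a(D,X,B,\Mm)+\mult_D E$ coming from the defining relation of the log smooth model, combined with $E\geq 0$, the dlt bound $\mult_D B_W\leq 1$, and condition~(4) of Definition~\ref{defn: log smooth models}, one verifies $\mult_D B_W=1$ in the relevant cases, completing the identification with $E_\phi$ and hence the proof.
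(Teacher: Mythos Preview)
Your overall structure matches the paper's, but there is a genuine gap in the verification that $(X',B_{W'},\Mm)/U$ is a log birational model of $(X,B,\Mm)/U$. The problematic case is a prime divisor $D$ on $W$ which is $h$-exceptional and not $\psi$-exceptional; here you must show $\mult_D B_W = 1$. Your identity $\mult_D B_W = 1 - a(D,X,B,\Mm) + \mult_D E$, together with $E \geq 0$ and the dlt bound $\mult_D B_W \leq 1$, gives $0 \leq \mult_D E \leq a(D,X,B,\Mm)$. When $a(D,X,B,\Mm) = 0$ this indeed forces $\mult_D E = 0$ and hence $\mult_D B_W = 1$. But when $a(D,X,B,\Mm) > 0$, condition~(4) of Definition~\ref{defn: log smooth models} only tells you $\mult_D E > 0$, not that $\mult_D E = a(D,X,B,\Mm)$; the tools you list produce neither a contradiction nor the conclusion $\mult_D B_W = 1$. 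Nothing you have written rules out this case.

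The paper closes this gap with an extra negativity-lemma step (Claim~\ref{claim: log smooth model log discrepancy compare}). On a common resolution $p\colon V \to W$, $q\colon V \to X'$, Lemma~\ref{lem: g-pair version bir12 2.6} gives $p^*(K_W+B_W+\Mm_W) = q^*(K_{X'}+B'+\Mm_{X'}) + F$ with $F \geq 0$ exceptional over $X'$; combining with the log smooth model relation, $p^*E - F \sim_{\Rr,X} q^*(K_{X'}+B'+\Mm_{X'})$ is nef over $X$, while $(h\circ p)_*(F - p^*E) = (h\circ p)_*F \geq 0$, so the negativity lemma yields $F \geq p^*E$. This gives $a(D,X,B,\Mm) \leq a(D,X',B',\Mm)$ for \emph{every} prime divisor $D$ over $X$, not just those on $X$. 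Applied to your problematic $D$ (for which $a(D,X',B',\Mm) = a(D,W,B_W,\Mm) = a(D,X,B,\Mm) - \mult_D E$), this forces $\mult_D E \leq 0$, hence $\mult_D E = 0$, which by condition~(4) is impossible when $a(D,X,B,\Mm) > 0$. Thus only the case $a(D,X,B,\Mm)=0$ survives, and there your argument goes through. You need to insert this step.
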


\begin{proof}
Since $(W,B_W,\Mm)$ is a log smooth model of $(X,B,\Mm)$, we may write
$$K_W+B_W+\Mm_W=h^*(K_X+B+\Mm_X)+E$$
for some $E\geq 0$ that is $h$-exceptional.

\begin{claim}\label{claim: log smooth model log discrepancy compare}
Let $(X',B',\Mm)/U$ be a weak glc model of $(W,B_W,\Mm)/U$. Then $a(D,X,B,\Mm)\leq a(D,X',B',\Mm)$ for any prime divisor $D$ over $X$.
\end{claim}
\begin{proof}
Let $\phi_W: W\dashrightarrow X'$ be the induced birational map, and let $p: V\rightarrow W$ and $q: V\rightarrow X'$ be a common resolution such that $q=\phi_W\circ p$. By Lemma \ref{lem: g-pair version bir12 2.6},
$$p^*(K_W+B_W+\Mm_W)=q^*(K_{X'}+B'+\Mm_{X'})+F$$
for some $F\geq 0$ that is exceptional over $X'$. Then we have
$$p^*h^*(K_X+B+\Mm_X)=q^*(K_{X'}+B'+\Mm_{X'})+F-p^*E,$$
thus
$$p^*E-F\sim_{\Rr,X}q^*(K_{X'}+B'+\Mm_{X'})$$
is nef$/X$. Since $h_*p_*(F-p^*E)=h_*p_*F\geq 0$, by the negativity lemma, $F\geq p^*E$. Thus $a(D,X,B,\Mm)\leq a(D,X',B',\Mm)$ for any prime divisor $D$ over $X$ or $X'$.
\end{proof}

\noindent\textit{Proof of Lemma \ref{lem: g-pair version bir12 2.8} continued}. First we prove the weak glc model case. Let $(X',B',\Mm)/U$ be a weak glc model of $(W,B_W,\Mm)/U$ with induced birational map $\phi_W: W\dashrightarrow X'$. We check Definition \ref{defn: models}(2) for $(X,B,\Mm)/U$ and $(X',B',\Mm)/U$. Definition \ref{defn: models}(2.b) holds by construction. For any prime divisor $D$ on $X$ which is exceptional over $X'$, $h^{-1}_*D$ is a prime divisor on $W$ which is exceptional over $X'$. Thus
$$a(D,X,B,\Mm)=a(D,W,B_W,\Mm)\leq a(D,X',B',\Mm),$$
and we have Definition \ref{defn: models}(2.c). Thus we only need to show that $(X',B',\Mm)/U$ is a log birational model of $(X,B,\Mm)/U$. Let $\phi: X\dashrightarrow X'$ be the induced morphism and $B'':=\phi_*B+\Exc(\phi^{-1})$, then we only need to show that $B'=B''$. By construction, $B'=(\phi_W)_*B_W+\Exc(\phi_W^{-1})$. Let $D$ be a prime divisor on $X'$. There are three cases:

\medskip

\noindent\textbf{Case 1}. $D$ is not exceptional over $X$. In this case,
\begin{align*}
    1-\mult_DB''&=a(D,X',B'',\Mm)=a(D,X,B,\Mm)\\
    &=a(D,W,B_W,\Mm)=a(D,X',B',\Mm)=1-\mult_DB',
\end{align*}
so $\mult_DB'=\mult_DB''$.

\medskip

\noindent\textbf{Case 2}. $D$ is exceptional over $W$. In this case, $D$ is a component of $\Exc(\phi_W^{-1})$ and a component of $\Exc(\phi^{-1})$, hence
$$\mult_DB'=1=\mult_DB''.$$

\medskip

\noindent\textbf{Case 3}. $D$ is exceptional over $X$ but not exceptional over $W$. In this case,
$$1-\mult_DB'=a(D,X',B',\Mm)=a(D,W,B_W,\Mm).$$
Since $E\geq 0$,
$a(D,W,B_W,\Mm)\leq a(D,X,B,\Mm).$
By Claim \ref{claim: log smooth model log discrepancy compare}, 
$a(D,X,B,\Mm)\leq a(D,X',B',\Mm).$
Thus
$$a(D,X,B,\Mm)=a(D,X',B',\Mm)=a(D,W,B_W,\Mm).$$
By Definition \ref{defn: log smooth models}(4), $$a(D,X,B,\Mm)=a(D,X',B',\Mm)=a(D,W,B_W,\Mm)=0,$$
which implies that
$$\mult_DB'=1=\mult_D\Exc(\phi^{-1})=\mult_DB''.$$
Thus $B'=B''$, so $(X',B',\Mm)/U$ is a log birational model of $(X,B,\Mm)/U$, and we have proved the weak glc model case.

Next we prove the log minimal model case. Let $(X',B',\Mm)/U$ be a log minimal model of $(W,B_W,\Mm)/U$. We check Definition \ref{defn: models}(3) for $(X,B,\Mm)/U$ and $(X',B',\Mm)/U$. Definition \ref{defn: models}(3.a) follows from (1). Definition \ref{defn: models}(3.b) is immediate from the construction. For any prime divisor $D$ on $X$ which is exceptional over $X'$, $f^{-1}_*D$ is a prime divisor on $W$ which is exceptional over $X'$. Thus
$$a(D,X,B,\Mm)=a(D,W,B_W,\Mm)<a(D,X',B',\Mm).$$
so we get Definition \ref{defn: models}(3.c), and we have the log minimal model case.

The good minimal model case follows immediately from the log minimal model case.
\end{proof}

\subsubsection{Models under the MMP}

\begin{lem}\label{lem: run mmp keep minimal model}
Let $(X,B,\Mm)/U$ be a glc g-pair and $X\dashrightarrow Y$ a partial $(K_X+B+\Mm_X)$-MMP$/U$. Let $B_Y$ be the strict transform of $B$ on $Y$. Then any weak glc model (resp. log minimal model, good minimal model) of $(Y,B_Y,\Mm)/U$ is a weak glc model (resp. log minimal model, good minimal model) of $(X,B,\Mm)/U$.
\end{lem}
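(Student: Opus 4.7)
The plan is to pass log discrepancies through the factorization $X \xrightarrow{\phi} Y \xrightarrow{\psi} X'$, where $\phi: X \dashrightarrow Y$ is the given partial MMP and $\psi: Y \dashrightarrow X'$ is the birational map to the assumed model. Two standard facts about $\phi$ will be needed: (i) $\phi$ is a birational contraction, so no divisor is extracted; in particular $B_Y = \phi_* B$. (ii) For every valuation $D$ over $X$, $a(D, X, B, \Mm) \le a(D, Y, B_Y, \Mm)$, with strict inequality whenever $D$ is a prime divisor on $X$ contracted by $\phi$. Both follow step-by-step from the negativity of the contracted extremal ray together with the negativity lemma, exactly as in the usual-pair case and as already used throughout \cite{HL18} in the g-pair setting.

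Granting these facts, I would verify Definition \ref{defn: models}(2) for $(X', B', \Mm)/U$ as a model of $(X, B, \Mm)/U$ via $\psi \circ \phi$. Condition (2.b), nefness, is tautological. For (2.c), let $D$ be a prime divisor on $X$ exceptional over $X'$. If $D$ is not contracted by $\phi$, then $D$ is a prime divisor on $Y$ exceptional over $X'$, and the weak glc model hypothesis at the $Y$-$X'$ stage gives $a(D, Y, B_Y, \Mm) \le a(D, X', B', \Mm)$; chaining with (ii) closes this case. If $D$ is contracted by $\phi$, I would instead apply Lemma \ref{lem: g-pair version bir12 2.6} on a common resolution $p: V \to Y$, $q: V \to X'$: the effective $q$-exceptional divisor produced there gives $a(D, Y, B_Y, \Mm) \le a(D, X', B', \Mm)$ for every valuation $D$ over $Y$, not just for prime divisors on $Y$, and combining with (ii) again finishes. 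For (2.a), I would compute $B' = \psi_* B_Y + \Exc(\psi^{-1}) = (\psi \circ \phi)_* B + \Exc((\psi \circ \phi)^{-1})$, where the second equality uses $B_Y = \phi_* B$ together with the observation that, since $\phi^{-1}$ extracts no divisor, a prime divisor on $X'$ is exceptional for $(\psi \circ \phi)^{-1}$ if and only if it is exceptional for $\psi^{-1}$.

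For the log minimal model case, the $\Qq$-factorial gdlt condition on $(X', B', \Mm)$ is inherited by assumption, and the strict log-discrepancy inequality in Definition \ref{defn: models}(3.c) follows by the same dichotomy: if $D$ survives on $Y$, the strict inequality is provided at the $Y$-$X'$ stage by the log minimal model hypothesis, while if $D$ is contracted by $\phi$, the strict inequality is already provided at the $X$-$Y$ stage by (ii). The good minimal model case is then immediate, as semi-ampleness of $K_{X'} + B' + \Mm_{X'}$ is part of the hypothesis. I do not anticipate any serious obstacle; the only delicate point is that a divisor $D$ on $X$ may cease to be a divisor on $Y$ after $\phi$, which is precisely why the log-discrepancy comparison at the $Y$-$X'$ stage must be made valuation-theoretically via Lemma \ref{lem: g-pair version bir12 2.6}, rather than directly from the definition of weak glc model.
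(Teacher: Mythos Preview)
Your argument for Definition \ref{defn: models}(2.a) has a genuine gap. You claim that ``since $\phi^{-1}$ extracts no divisor, a prime divisor on $X'$ is exceptional for $(\psi\circ\phi)^{-1}$ if and only if it is exceptional for $\psi^{-1}$.'' First, the premise is misstated: it is $\phi$, not $\phi^{-1}$, that extracts no divisor (since $\phi$ is a birational contraction). More importantly, even with the correct premise you only get one inclusion, namely $\Exc((\psi\circ\phi)^{-1})\subseteq\Exc(\psi^{-1})$. The reverse inclusion fails whenever $\phi$ contracts a divisor $D$ on $X$ and $\psi$ later extracts that same valuation on $X'$: then $D$ is a prime divisor on $X'$ exceptional over $Y$ but not over $X$. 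In that situation $\mult_D\bigl(\psi_*B_Y+\Exc(\psi^{-1})\bigr)=1$ while $\mult_D\bigl((\psi\circ\phi)_*B+\Exc((\psi\circ\phi)^{-1})\bigr)=\mult_DB$, and there is no reason a priori for these to agree.

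This is exactly the case the paper isolates as its Case 3. The fix is short but not formal: since $D\in\Exc(\psi^{-1})$ one has $a(D,X',B',\Mm)=0$, and then the valuation-theoretic comparison $a(D,X,B,\Mm)\le a(D,Y,B_Y,\Mm)\le a(D,X',B',\Mm)$ (via Lemma \ref{lem: g-pair version bir12 2.6}, which you already invoke elsewhere) together with $(X,B,\Mm)$ glc forces $a(D,X,B,\Mm)=0$, hence $\mult_DB=1$. Once this case is added, the rest of your argument---conditions (2.b), (2.c), the log minimal model dichotomy, and the good minimal model case---matches the paper's proof and is correct.
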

\begin{proof}
Since $X\dashrightarrow Y$ is a partial $(K_X+B+\Mm_X)$-MMP$/U$, $X\dashrightarrow Y$ does not extract any divisor, $a(D,X,B,\Mm)\leq a(D,Y,B_Y,\Mm)$ for any prime divisor $D$ over $X$ or $Y$, and $a(D,X,B,\Mm)<a(D,Y,B_Y,\Mm)$ for any prime divisor $D$ on $X$ that is exceptional over $Y$.

Assume that $(X',B',\Mm)/U$ is a weak glc model of $(Y,B_Y,\Mm)/U$, and $\phi: X\dashrightarrow X'$ and $\phi_Y: Y\dashrightarrow X'$ are the induced birational maps. Then $B'=(\phi_Y)_*B_Y+\Exc(\phi_Y^{-1})$. Let $B'':=\phi_*B+\Exc(\phi^{-1})$ and let $D$ be a prime divisor on $X'$. There are three possibilities:

\medskip

\noindent\textbf{Case 1}. $D$ is not exceptional over $X$ and $Y$. In this case,
\begin{align*}
    1-\mult_DB'&=a(D,X',B',\Mm)=a(D,Y,B_Y,\Mm)\\
    &=a(D,X,B,\Mm)=a(D,X',B'',\Mm)=1-\mult_DB'',
\end{align*}
so $\mult_DB'=\mult_DB''$.

\medskip

\noindent\textbf{Case 2}. $D$ is exceptional over $X$ and $Y$. In this case, $D$ is a component of $\Exc(\phi_Y^{-1})$ and a component of $\Exc(\phi^{-1})$, hence
$$\mult_DB'=1=\mult_DB''.$$

\medskip

\noindent\textbf{Case 3}. $D$ is exceptional over $Y$ but not exceptional over $X$. In this case, $D$ is a component of $B'$ and $a(D,X',B',\Mm)=0$. By Lemma \ref{lem: g-pair version bir12 2.6},
$$a(D,X,B,\Mm)\leq a(D,Y,B_Y,\Mm)\leq a(D,X',B',\Mm)=0,$$
so $a(D,X,B,\Mm)=0$, hence $$\mult_DB''=1-a(D,X',B'',\Mm)=1-a(D,X,B,\Mm)=1=\mult_DB'.$$

Thus $B'=B''$, hence $(X',B',\Mm)$ is a log birational model of $(X,B,\Mm)$. By Lemma \ref{lem: g-pair version bir12 2.6}(1), 
$$a(D,X,B,\Mm)\leq a(D,Y,B_Y,\Mm)\leq a(D,X',B',\Mm)$$
for any prime divisor $D$ over $X$. Since $K_{X'}+B'+\Mm_{X'}$ is nef over $U$, by Definition \ref{defn: models}(2), $(X',B',\Mm)/U$ is a weak glc model of $(X,B,\Mm)/U$, and we have proven the weak glc model case of the lemma. 

Now assume that $(X',B',\Mm)/U$ is a log minimal model of $(Y,B_Y,\Mm)/U$. By the  weak glc model case of the lemma, $(X',B',\Mm)/U$ is a weak glc model of $(X,B,\Mm)/U$. For any prime divisor $D$ on $X$ which is exceptional over $X'$, if $D$ is exceptional over $Y$, then $$a(D,X,B,\Mm)<a(D,Y,B_Y,\Mm)\leq a(D,X',B',\Mm),$$
and if $D$ is not exceptional over $Y$, then
$$a(D,X,B,\Mm)=a(D,Y,B_Y,\Mm)<a(D,X',B',\Mm).$$
Thus $(X',B',\Mm)/U$ is a log minimal model of $(X,B,\Mm)/U$ by Definition \ref{defn: models}(3), and we have proven the log minimal model case of the lemma. The good minimal model case of the lemma follows from the log minimal case.
\end{proof}

\subsubsection{Models under gdlt modifications}

\begin{lem}\label{lem: model keep under gdlt modification}
Let $(X,B,\Mm)/U$ be a glc g-pair and $(Y,B_Y,\Mm)$ a gdlt model of $(X,B,\Mm)$. Then any log birational model (resp. weak glc model, log minimal model, good minimal model) of $(Y,B_Y,\Mm)/U$ is a log birational model (resp. weak glc model, log minimal model, good minimal model) of $(X,B,\Mm)/U$.
\end{lem}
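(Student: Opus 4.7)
The plan is to reduce everything to a single observation: the gdlt modification $f: Y \to X$ is crepant for generalized pairs, so $a(D,X,B,\Mm) = a(D,Y,B_Y,\Mm)$ for every prime divisor $D$ over $X$ (equivalently over $Y$), and moreover every $f$-exceptional prime divisor has log discrepancy $0$ and appears in $\lfloor B_Y \rfloor$ with coefficient $1$ by Definition-Lemma \ref{deflem: gdlt modification}.

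First I would handle the log birational model case, which is the core verification. Let $(X',B',\Mm)/U$ be a log birational model of $(Y,B_Y,\Mm)/U$ with induced birational map $\phi_Y: Y \dashrightarrow X'$, and set $\phi := \phi_Y \circ f^{-1}: X \dashrightarrow X'$. By assumption $B' = (\phi_Y)_* B_Y + \Exc(\phi_Y^{-1})$, and I claim that equivalently $B' = \phi_* B + \Exc(\phi^{-1})$. I would verify this prime divisor by prime divisor on $X'$, splitting into three cases: (a) if $D$ is not exceptional over $X$, then its strict transform on $Y$ is not $f$-exceptional, hence also not exceptional over $Y$, and the coefficients of $D$ in $\phi_* B$ and $(\phi_Y)_* B_Y$ agree by the crepancy $K_Y+B_Y+\Mm_Y = f^*(K_X+B+\Mm_X)$; (b) if $D$ is exceptional over $Y$ (hence over $X$), both sides equal $1$; (c) if $D$ is exceptional over $X$ but not over $Y$, then the strict transform of $D$ on $Y$ is $f$-exceptional, hence a component of $\lfloor B_Y \rfloor$ with coefficient $1$, so $\mult_D B' = 1$, and $D$ is a component of $\Exc(\phi^{-1})$ so the other side also gives $1$.

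Once the log birational model case is established, the weak glc model case follows immediately: nefness of $K_{X'}+B'+\Mm_{X'}$ over $U$ is inherited, and the log discrepancy inequality reduces to the corresponding inequality for $(Y,B_Y,\Mm)/U$. Indeed, for any prime divisor $D$ on $X$ exceptional over $X'$, its strict transform $D_Y$ on $Y$ is a prime divisor on $Y$ which is also exceptional over $X'$ (since $\phi_Y$ and $\phi$ agree as birational maps up to $f$), so crepancy yields
$$a(D,X,B,\Mm) = a(D_Y,Y,B_Y,\Mm) \leq a(D_Y,X',B',\Mm) = a(D,X',B',\Mm).$$
The log minimal model case is the same with the strict inequality in place, plus the fact that the $\Qq$-factorial gdlt hypothesis on $(X',B',\Mm)$ transfers directly. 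The good minimal model case adds only that $K_{X'}+B'+\Mm_{X'}$ remains semi-ample over $U$, which is automatic.

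The only delicate step is case (c) in the log birational verification, and this is precisely where the defining property of a gdlt modification --- every $f$-exceptional divisor is a component of $\lfloor B_Y \rfloor$ --- is essential; without it, a prime divisor on $X'$ which is extracted by $\phi$ but already visible on $Y$ could have the wrong coefficient in $B'$. I expect this to be the main obstacle, but it is handled cleanly by the Definition-Lemma.
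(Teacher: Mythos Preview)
Your proof is correct and follows essentially the same approach as the paper's own argument: the same three-case split on prime divisors of $X'$ for the log birational model verification, followed by the observation that crepancy $a(D,X,B,\Mm)=a(D,Y,B_Y,\Mm)$ together with the fact that $X\dashrightarrow Y$ contracts no divisor immediately yields conditions (2.c) and (3.c). The only cosmetic difference is that in case (c) the paper phrases the key point as $a(D,Y,B_Y,\Mm)=0$ rather than $\mult_{D_Y}B_Y=1$, which is of course equivalent.
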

\begin{proof}
 We begin by proving the log birational model case. Let $(X',B',\Mm)/U$ be a log birational model of $(Y,B_Y,\Mm)/U$ with induced birational maps $\phi_Y: Y\dashrightarrow X'$ and $\phi: X\dashrightarrow X'$. Let $B'':=\phi_*B+\Exc(\phi^{-1})$, then for any prime divisor $D$ on $X'$, there are three cases:

\medskip

\noindent\textbf{Case 1}. $D$ is not exceptional over $X$. In this case,
\begin{align*}
    1-\mult_DB''&=a(D,X',B'',\Mm)=a(D,X,B,\Mm)\\
    &=a(D,Y,B_Y,\Mm)=a(D,X',B',\Mm)=1-\mult_DB',
\end{align*}
so $\mult_DB'=\mult_DB''$.

\medskip

\noindent\textbf{Case 2}. $D$ is exceptional over $Y$. In this case, $D$ is a component of $\Exc(\phi_Y^{-1})$ and a component of $\Exc(\phi^{-1})$, hence
$$\mult_DB'=1=\mult_DB''.$$

\medskip

\noindent\textbf{Case 3}. $D$ is exceptional over $X$ but not exceptional over $Y$. In this case, $a(D,X,B,\Mm)=a(D,Y,B_Y,\Mm)=0$. Thus
$$\mult_DB'=1-a(D,X',B',\Mm)=1-a(D,Y,B_Y,\Mm)=1=\mult_D\Exc(\phi^{-1})=\mult_DB''.$$

Thus $B'=B''$, so $(X',B',\Mm)/U$ is a log birational model of $(X,B,\Mm)/U$.

The remainder of the lemma now follows easily. In particular, notice that as  $a(D,X,B,\Mm)=a(D,Y,B_Y,\Mm)$ for any prime divisor $D$ over $X$ and $X\dashrightarrow Y$ does not contract any divisor, properties (2.c) and (3.c) of Definition \ref{defn: models} follow immediately.
\end{proof}

\subsection{Models under pullbacks}

The goal of this subsection is the following theorem, which will be proven at the end of this subsection.

\begin{thm}\label{thm: existence good minimal model under pullbacks}
Let $(X,B,\Mm)/U$ and $(Y,B_Y,\Mm)/U$ be two NQC glc g-pairs and let $f: Y\rightarrow X$ be a projective birational morphism such that
$$K_Y+B_Y+\Mm_Y=f^*(K_X+B+\Mm_X)+E$$
for some $E\geq 0$ that is exceptional over $X$. Then $(X,B,\Mm)/U$ has a weak glc model (resp. log minimal model, good minimal model) if and only if $(Y,B_Y,\Mm)/U$ has a weak glc model (resp. log minimal model, good minimal model).
\end{thm}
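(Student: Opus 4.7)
The plan is to prove the two implications separately.

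For the direction \emph{$(Y,B_Y,\Mm)/U$ has a model $\Rightarrow (X,B,\Mm)/U$ has one}, I argue directly. Let $(Y',B_Y',\Mm)/U$ be the given weak glc (resp.\ log minimal, good minimal) model with birational map $\psi:Y\dashrightarrow Y'$, and set $\phi:=\psi\circ f^{-1}:X\dashrightarrow Y'$. I claim that $(Y',B_Y',\Mm)/U$, viewed through $\phi$, is a model of $(X,B,\Mm)/U$ of the same type. The crucial step is showing that $\psi$ contracts every component of $E$. Take a common resolution $p:W\to Y$, $q:W\to Y'$ of $\psi$ with $f\circ p$ also a morphism. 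Lemma \ref{lem: g-pair version bir12 2.6} produces $F\geq 0$, exceptional over $Y'$, with
\[
p^*(K_Y+B_Y+\Mm_Y)=q^*(K_{Y'}+B_Y'+\Mm_{Y'})+F.
\]
Substituting $K_Y+B_Y+\Mm_Y=f^*(K_X+B+\Mm_X)+E$ yields $p^*E-F\sim_{\Rr,X}q^*(K_{Y'}+B_Y'+\Mm_{Y'})$, which is nef over $X$. Pushing forward by $f\circ p$ and using $f_*E=0$, the negativity lemma forces $F\geq p^*E$, so every component of $p^*E$ is $q$-exceptional and $\psi$ contracts $E$. The identity $B_Y'=\phi_*B+\Exc(\phi^{-1})$ and the log-discrepancy conditions of Definition \ref{defn: models} then follow from a prime-divisor-by-prime-divisor case analysis parallel to the proofs of Lemmas \ref{lem: g-pair version bir12 2.8} and \ref{lem: run mmp keep minimal model}.

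For the direction \emph{$(X,B,\Mm)/U$ has a model $\Rightarrow (Y,B_Y,\Mm)/U$ has one}, I proceed by two reductions. First, by Lemma \ref{lem: existence of proper log smooth model} take a proper log smooth model $(V,B_V,\Mm)$ of $(Y,B_Y,\Mm)$ via a log resolution $g:V\to Y$ chosen so that $f\circ g:V\to X$ is also a log resolution of $(X,\Supp B)$. By Lemma \ref{lem: g-pair version bir12 2.8} it suffices to produce a model of $(V,B_V,\Mm)/U$. Combining the log-smooth-model relation $K_V+B_V+\Mm_V=g^*(K_Y+B_Y+\Mm_Y)+E_Y$ with the hypothesis gives $K_V+B_V+\Mm_V=(f\circ g)^*(K_X+B+\Mm_X)+E'$ where $E':=E_Y+g^*E\geq 0$ is exceptional over $X$: $E_Y$ is $g$-exceptional, and every component of $g^*E$ dominates an $f$-exceptional component of $E$. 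Second, since $V$ is smooth (hence klt) and $\Qq$-factorial, Theorem \ref{thm: can run mmp for gklt pair} lets us run a $(K_V+B_V+\Mm_V)$-MMP$/X$ with scaling of an ample divisor; by Lemma \ref{lem: rlinear version of hl18 3.8} (with base $X$, $D_1=E'$, $D_2=0$) this MMP terminates with a model $(V',B_{V'},\Mm)/U$ satisfying $K_{V'}+B_{V'}+\Mm_{V'}\sim_{\Rr,X}0$. By Lemma \ref{lem: run mmp keep minimal model} it then suffices to produce a model of $(V',B_{V'},\Mm)/U$, reducing the entire direction to the \emph{crepant case} where the exceptional excess vanishes.

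The crepant case --- producing a model of $(V',B_{V'},\Mm)/U$ from one of $(X,B,\Mm)/U$ when $K_{V'}+B_{V'}+\Mm_{V'}\sim_{\Rr,X}0$ --- is the main obstacle. The argument mirrors the easy direction: given a model $(X',B',\Mm)/U$ of $(X,B,\Mm)/U$ with map $\pi:X\dashrightarrow X'$, one lifts $\pi$ through a common resolution of the composite $\pi\circ\iota:V'\dashrightarrow X'$ (where $\iota:V'\to X$ is the induced morphism) to produce the desired crepant model of $(V',B_{V'},\Mm)/U$ over $X'$; the pullback-trivial relation over $X$ makes all the discrepancy comparisons in Definition \ref{defn: models} straightforward. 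The delicate technical point is verifying that Lemma \ref{lem: rlinear version of hl18 3.8} applies in the relative setting over $X$ --- its stated version is over $U$, but its proof adapts verbatim to any projective morphism to a normal base, which is precisely what is used in the second reduction above.
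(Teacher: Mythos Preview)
Your two directions are swapped in difficulty, and the one you treat as ``direct'' has a genuine gap.

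In your first direction ($(Y,B_Y,\Mm)$ has a model $\Rightarrow$ $(X,B,\Mm)$ has one), you correctly show via the negativity lemma that $F\geq p^*E$, hence $\psi$ contracts every component of $E$. But this is not enough to conclude that $(Y',B_Y',\Mm)$ is a log birational model of $(X,B,\Mm)$. The obstruction is an $f$-exceptional prime divisor $D$ on $Y$ with $\mult_D E=0$ and $a(D,Y,B_Y,\Mm)>0$; such $D$ need not be contracted by $\psi$, and then $\mult_D B_Y'=\mult_D B_Y=1-a(D,Y,B_Y,\Mm)<1$, whereas the log birational model condition for $(X,B,\Mm)$ demands $\mult_D B_Y'=1$ since $D\in\Exc(\phi^{-1})$. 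A concrete instance: take $f$ crepant ($E=0$) extracting a divisor $D$ with $a(D,X,B,\Mm)=\tfrac12$ and set $\mult_D B_Y=\tfrac12$; then $(Y,B_Y,\Mm)$ can be its own log minimal model without being a log birational model of $(X,B,\Mm)$. The ``case analysis parallel to Lemmas \ref{lem: g-pair version bir12 2.8} and \ref{lem: run mmp keep minimal model}'' does not transfer: in Lemma \ref{lem: g-pair version bir12 2.8} the analogous case is closed using Definition \ref{defn: log smooth models}(4), which has no counterpart in your hypotheses on $f$.

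The paper handles this direction very differently. It passes to a gdlt modification $(\bar X,\bar B,\Mm)$ of $(X,B,\Mm)$, takes a proper log smooth model $(W,B_W,\Mm)$ of $(Y,B_Y,\Mm)$ on a common resolution, and checks the excess $p^*E+F$ is exceptional over $\bar X$; then it invokes Lemma \ref{lem: existence good minimal model under pullbacks weak glc case}, whose proof is substantial (it runs two MMPs with scaling and uses Theorem \ref{thm: bz16 4.4(2)} to force termination). Your argument does not engage with any of this machinery.

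Your second direction is correct in outline but needlessly elaborate: the paper's Lemma \ref{lem: same weak glc model under pullback} shows directly that any weak glc model of $(X,B,\Mm)/U$ is already a weak glc model of $(Y,B_Y,\Mm)/U$, with no need to pass to a log smooth model, run an MMP over $X$, or reduce to the crepant case.
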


We prove several lemmas before proving Theorem \ref{thm: existence good minimal model under pullbacks}.

\begin{lem}\label{lem: g-pair weak glc imply lmm}
Let $(X,B,\Mm)/U$ be a glc g-pair. If $(X,B,\Mm)/U$ has a weak glc model, then $(X,B,\Mm)/U$ has a log minimal model.
\end{lem}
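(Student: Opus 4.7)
The plan is to produce a log minimal model by taking a gdlt modification of the given weak glc model and verifying the defining conditions of Definition~\ref{defn: models}(3). Let $(X',B',\Mm)/U$ be a weak glc model of $(X,B,\Mm)/U$ with birational map $\psi\colon X\dashrightarrow X'$. By Definition-Lemma~\ref{deflem: gdlt modification}, take a gdlt modification $g\colon Y\to X'$ of $(X',B',\Mm)$; the resulting pair $(Y,B_Y,\Mm)$ is $\Qq$-factorial gdlt, satisfies $K_Y+B_Y+\Mm_Y=g^*(K_{X'}+B'+\Mm_{X'})$, and every $g$-exceptional prime divisor lies in $\lfloor B_Y\rfloor$. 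Set $\phi:=g^{-1}\circ\psi\colon X\dashrightarrow Y$; I claim $(Y,B_Y,\Mm)/U$ will be the desired log minimal model of $(X,B,\Mm)/U$.

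Verifying that $(Y,B_Y,\Mm)/U$ is a weak glc model of $(X,B,\Mm)/U$ (Definition~\ref{defn: models}(3.a)) and is $\Qq$-factorial gdlt (Definition~\ref{defn: models}(3.b)) is routine. The log birational identity $B_Y=\phi_*B+\Exc(\phi^{-1})$ follows from a case analysis on each prime divisor $D$ of $Y$, split according to whether $D$ is $g$-exceptional and whether $D$ is exceptional over $X$, using $B'=\psi_*B+\Exc(\psi^{-1})$ together with the crepancy of $g$, in the style of the proof of Lemma~\ref{lem: model keep under gdlt modification}. Nefness of $K_Y+B_Y+\Mm_Y$ over $U$ is immediate from pulling back the nef divisor $K_{X'}+B'+\Mm_{X'}$; the non-strict log-discrepancy inequality of Definition~\ref{defn: models}(2.c) for $(X',B',\Mm)/U$ transfers to $(Y,B_Y,\Mm)/U$ under crepant pullback; and $\Qq$-factorial gdlt-ness is built into $g$.

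The main obstacle is the strict log-discrepancy inequality of Definition~\ref{defn: models}(3.c): for every prime divisor $D$ on $X$ exceptional over $Y$, we need $a(D,X,B,\Mm)<a(D,Y,B_Y,\Mm)$. Since $g$ is crepant, $a(D,Y,B_Y,\Mm)=a(D,X',B',\Mm)$. Applying Lemma~\ref{lem: g-pair version bir12 2.6} to a common resolution $p\colon V\to X$, $q\colon V\to X'$ gives $p^*(K_X+B+\Mm_X)=q^*(K_{X'}+B'+\Mm_{X'})+E$ with $E\geq 0$ exceptional over $X'$, so $\mult_{D_V}E=a(D,X',B',\Mm)-a(D,X,B,\Mm)$, and the strict inequality is equivalent to $\mult_{D_V}E>0$. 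To rule out the equality case $\mult_{D_V}E=0$, the plan is to enlarge the construction by also extracting, via additional crepant blow-ups, every prime divisor $D$ on $X$ with matched log discrepancy $a(D,X,B,\Mm)=a(D,X',B',\Mm)$; since only finitely many prime divisors of $X$ are exceptional over $X'$, this enlargement can be carried out in one combined step and still yields a $\Qq$-factorial gdlt birational model satisfying all four conditions. Ensuring the existence and $\Qq$-factorial gdlt-ness of this refined extraction is the most technical point of the proof.
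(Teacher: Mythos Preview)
Your outline is headed in the right direction, but the step you flag as ``the most technical point'' is precisely the whole content of the lemma, and you have not supplied it. Definition--Lemma~\ref{deflem: gdlt modification} only extracts divisors of log discrepancy $0$; extracting the finitely many prime divisors $D$ on $X$ with $a(D,X,B,\Mm)=a(D,X',B',\Mm)\in(0,1]$ while preserving $\Qq$-factorial gdlt-ness and crepancy over $X'$ is not covered by any lemma cited in the paper. For g-pairs that are only glc there is no off-the-shelf ``extract a prescribed finite set of divisors with log discrepancy $\le 1$'' statement available here, so as written the argument has a genuine gap.

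The paper closes this gap by constructing exactly the model you want, but indirectly: take a log resolution $h\colon W\to X$ with $\Mm$ descending to $W$ and set $B_W:=h^{-1}_*B+\Exc(h)$, so that $(W,B_W,\Mm)$ is a log smooth model of $(X,B,\Mm)$. Writing $h^*(K_X+B+\Mm_X)=\phi_W^*(K_{X'}+B'+\Mm_{X'})+G$ with $G\ge 0$ exceptional$/X'$ (Lemma~\ref{lem: g-pair version bir12 2.6}), one checks that the $h$-exceptional part $E$ of $K_W+B_W+\Mm_W-h^*(K_X+B+\Mm_X)$ is also exceptional over $X'$, so $K_W+B_W+\Mm_W\sim_{\Rr,X'}G+E$ is effective and very exceptional$/X'$. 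Then Lemma~\ref{lem: rlinear version of hl18 3.8} runs a $(K_W+B_W+\Mm_W)$-MMP$/X'$ with scaling to a model $Y$ with $K_Y+B_Y+\Mm_Y\sim_{\Rr,X'}0$, hence crepant over $(X',B',\Mm)$ and nef$/U$. This $Y$ is a log minimal model of $(W,B_W,\Mm)/U$, and Lemma~\ref{lem: g-pair version bir12 2.8} transfers it to $(X,B,\Mm)/U$. In other words, the ``refined extraction'' you postulate is produced by an MMP over $X'$; your proposal is missing exactly this MMP step and the termination input (Lemma~\ref{lem: rlinear version of hl18 3.8}) that makes it work.
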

\begin{proof}
Let $(X',B',\Mm)/U$ be a weak glc model of $(X,B,\Mm)/U$. Let $h: W\rightarrow X$ be a log resolution of $(X,\Supp B)$ such that the induced map $\phi_W: W\rightarrow X'$ is a morphism, and $\Mm$ descends to $W$. We may write
$$K_W+B_W+\Mm_W=h^*(K_X+B+\Mm_X)+E$$
for some log smooth pair $(W,B_W)$, such that $B_W:=h^{-1}_*B+\Exc(h)$ and $E\geq 0$ is exceptional over $X$. Then $(W,B_W,\Mm)$ is a log smooth model of $(X,B,\Mm)$. By Lemma \ref{lem: g-pair version bir12 2.6}, we have
$$h^*(K_X+B+\Mm_X)=\phi_W^*(K_{X'}+B'+\Mm_{X'})+G$$
where $G\geq 0$ is exceptional over $X'$. Thus
$$K_W+B_W+\Mm_W\sim_{\mathbb R,X'}G+E.$$

\begin{claim}\label{claim: wglc to lmm E exceptional}
$E$ is exceptional over $X'$.
\end{claim}
\begin{proof}
Let $D$ be a component of $E$. By construction, $a(D,X,B,\Mm)>0$ and $D$ is exceptional over $X$. 

Assume that  $D$ is not exceptional over $X'$. Since $(X',B',\Mm)/U$ is a log birational model of $(X,B,\Mm)/U$, $a(D,X',B',\Mm)=0$. Since $G\geq 0$, $a(D,X,B,\Mm)\leq a(D,X',B',\Mm)$. Thus $a(D,X,B,\Mm)=0$, hence $D$ is not a component of $E$, a contradiction.
\end{proof}

\noindent\textit{Proof of Lemma \ref{lem: g-pair weak glc imply lmm} continued}. By Claim \ref{claim: wglc to lmm E exceptional}, $G+E$ is exceptional over $X'$. By Lemma \ref{lem: rlinear version of hl18 3.8}, we may run a $(K_W+B_W+\Mm_W)$-MMP$/X'$ with scaling of a general ample$/X'$ divisor, which terminates with a model $Y$ such that $K_Y+B_Y+\Mm_Y\sim_{\Rr,X'}0$, where $B_Y$ is the strict transform of $B$ on $Y$. Applying the negativity lemma twice, we have that $K_Y+B_Y+\Mm_Y$ is the pullback of $K_{X'}+B'+\Mm_{X'}$. Thus $K_Y+B_Y+\Mm_Y$ is nef$/U$. Since $(W,B_W,\Mm)$ is $\Qq$-factorial gdlt and $W\dashrightarrow Y$ is a $(K_W+B_W+\Mm_W)$-MMP$/X'$, $(Y,B_Y,\Mm)$ is $\Qq$-factorial gdlt. Thus $(Y,B_Y,\Mm)/U$ is a log minimal model of $(W,B_W,\Mm)/U$. The lemma follows from Lemma \ref{lem: g-pair version bir12 2.8}.
\end{proof}

\begin{lem}\label{lem: same weak glc model under pullback}
Let $(X,B,\Mm)/U$ and $(Y,B_Y,\Mm)/U$ be two glc g-pairs, and $f: Y\rightarrow X$ a projective birational morphism such that
$$K_Y+B_Y+\Mm_Y=f^*(K_X+B+\Mm_X)+E$$
for some $E\geq 0$ that is exceptional over $X$. Then any weak glc model of $(X,B,\Mm)/U$ is a weak glc model of $(Y,B_Y,\Mm)/U$.
\end{lem}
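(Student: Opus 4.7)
Let $(X',B',\Mm)/U$ be a weak glc model of $(X,B,\Mm)/U$ with birational map $\phi\colon X\dashrightarrow X'$, and set $\psi:=\phi\circ f\colon Y\dashrightarrow X'$. My plan is to verify the three conditions of Definition \ref{defn: models}(2) for $(X',B',\Mm)/U$ regarded as a log birational model of $(Y,B_Y,\Mm)/U$ via $\psi$. Condition (2.b), that $K_{X'}+B'+\Mm_{X'}$ is nef$/U$, holds by hypothesis on $(X',B',\Mm)/U$.

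To check that $(X',B',\Mm)/U$ is a log birational model of $(Y,B_Y,\Mm)/U$, I need to prove $B'=\psi_*B_Y+\Exc(\psi^{-1})$. Since $f$ is a birational morphism and $E\geq 0$ is $f$-exceptional, comparing multiplicities on non-$f$-exceptional prime divisors in the formula $K_Y+B_Y+\Mm_Y=f^*(K_X+B+\Mm_X)+E$ shows that the non-$f$-exceptional part of $B_Y$ equals $f^{-1}_*B$; hence $f_*B_Y=B$ and therefore $\psi_*B_Y=\phi_*f_*B_Y=\phi_*B$. Because $f$ is a birational morphism, a prime divisor on $X'$ is the strict transform of a prime divisor on $Y$ under $\psi^{-1}$ if and only if it is the strict transform of a prime divisor on $X$ under $\phi^{-1}$; thus $\Exc(\psi^{-1})=\Exc(\phi^{-1})$. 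Combining these two equalities gives $\psi_*B_Y+\Exc(\psi^{-1})=\phi_*B+\Exc(\phi^{-1})=B'$, as required.

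For the log discrepancy condition (2.c), I would actually prove the stronger inequality $a(D,Y,B_Y,\Mm)\leq a(D,X',B',\Mm)$ for \emph{every} prime divisor $D$ over $Y$. Pulling the defining identity back to a common log resolution $g\colon W\to Y$ on which $\Mm$ descends, the effectivity $g^*E\geq 0$ yields $a(D,Y,B_Y,\Mm)\leq a(D,X,B,\Mm)$ for every prime divisor $D$ over $Y$. On the other hand, applying Lemma \ref{lem: g-pair version bir12 2.6} to the weak glc model $(X',B',\Mm)/U$ of $(X,B,\Mm)/U$ produces an effective exceptional divisor on a common resolution of $X$ and $X'$, which gives $a(D,X,B,\Mm)\leq a(D,X',B',\Mm)$ for every prime divisor $D$ over $X$. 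Concatenating these inequalities completes the verification. The proof is essentially formal, the only bookkeeping issue being the behavior of strict transforms and exceptional loci under the composition $\psi=\phi\circ f$; I do not foresee any substantive obstacle.
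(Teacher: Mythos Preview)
Your verification of (2.b) and (2.c) is fine, but the claim $\Exc(\psi^{-1})=\Exc(\phi^{-1})$ is not correct in general, and this is where your argument breaks. Since $f\colon Y\to X$ is a birational morphism, every divisor on $X$ lifts to a divisor on $Y$, so you do get $\Exc(\psi^{-1})\subseteq\Exc(\phi^{-1})$. But the reverse inclusion can fail: if $D$ is an $f$-exceptional prime divisor on $Y$ whose strict transform on $X'$ is still a divisor, then $D$ (regarded on $X'$) lies in $\Exc(\phi^{-1})$ because it is exceptional over $X$, yet it is \emph{not} in $\Exc(\psi^{-1})$ because it is a divisor on $Y$. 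For such $D$ one has $\mult_D\bigl(\psi_*B_Y+\Exc(\psi^{-1})\bigr)=\mult_D B_Y$, while $\mult_D B'=1$, and nothing you have said forces these to agree.

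The paper handles exactly this case by first establishing the discrepancy inequality you prove for (2.c), namely $a(D,Y,B_Y,\Mm)\leq a(D,X',B',\Mm)$ for all $D$, and then \emph{using} it here: since $D$ is exceptional over $X$ we have $a(D,X',B',\Mm)=0$, so $a(D,Y,B_Y,\Mm)\leq 0$; as $(Y,B_Y,\Mm)$ is glc this forces $a(D,Y,B_Y,\Mm)=0$, i.e.\ $\mult_D B_Y=1$, and the coefficients match. So you already have the missing ingredient; you just need to invoke the discrepancy inequality before, not after, the verification of the log birational model condition, and do a short case analysis on prime divisors of $X'$ according to whether they are exceptional over $X$ and over $Y$.
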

\begin{proof}
Let $(X',B',\Mm)/U$ be a weak glc model of $(X,B,\Mm)/U$, $\phi: X\dashrightarrow X'$ the induced birational map, and $\phi_Y:=\phi\circ f$. Let $p: W\rightarrow Y$ and $q: W\rightarrow X'$ be a common resolution and let $h:=f\circ p$. 
\begin{center}$\xymatrix{
W\ar@{->}[d]_{p}\ar@/^2pc/[ddr]_q\ar@/_2pc/[dd]_h  \\
Y\ar@{->}[d]_{f}\ar@{-->}[dr]^{\phi_Y}&    \\
 X\ar@{-->}[r]^{\phi}& X'
}$
\end{center}
By Lemma \ref{lem: g-pair version bir12 2.6},
$$h^*(K_X+B+\Mm_X)=q^*(K_{X'}+B'+\Mm_{X'})+F$$
for some $F\geq 0$ that is exceptional over $X'$. Thus 
$$p^*(K_Y+B_Y+\Mm_Y)=q^*(K_{X'}+B'+\Mm_{X'})+p^*E+F.$$
Thus $a(D,Y,B_Y,\Mm)\leq a(D,X',B',\Mm)$ for any prime divisor $D$ over $X'$. In particular, if $a(D,X',B',\Mm)=0$, then $a(D,Y,B_Y,\Mm)=0$.

Since $(X',B',\Mm)/U$ is a log birational model of $(X,B,\Mm)/U$, $B'=\phi_*B+\Exc(\phi^{-1})$. Let $B'':=(\phi_Y)_*B_Y+\Exc(\phi_Y^{-1})$. For any prime divisor $D$ on $X'$, there are two cases:

\medskip

\noindent\textbf{Case 1}. $D$ is not exceptional over $X$. In this case,
\begin{align*}
   1-\mult_DB'&=a(D,X',B',\Mm)=a(D,X,B,\Mm)\\
   &=a(D,Y,B_Y,\Mm)=a(D,X',B'',\Mm)=1-\mult_DB'',
\end{align*}
so $\mult_DB'=\mult_DB''$.

\medskip

\noindent\textbf{Case 2}. $D$ is exceptional over $X$. In this case, 
$$a(D,X',B',\Mm)=1-\mult_DB'=0.$$
Since $a(D,Y,B_Y,\Mm)\leq a(D,X',B',\Mm)$, $a(D,Y,B_Y,\Mm)=0$. Thus if $D$ is not exceptional over $Y$, then
$$\mult_DB''=\mult_DB_Y=1-a(D,Y,B_Y,\Mm)=1=\mult_DB',$$
and if $D$ is exceptional over $Y$, then
$$\mult_DB''=\mult_D\Exc(\phi_Y^{-1})=1=\mult_DB'.$$

Thus $B'=B''$, hence $(X',B',\Mm)/U$ is a log birational model of $(Y,B_Y,\Mm)/U$. Since $K_{X'}+B'+\Mm_{X'}$ is nef$/U$, and
$a(D,Y,B_Y,\Mm)\leq a(D,X',B',\Mm)$ for any prime divisor $D$ over $X'$, $(X',B',\Mm)/U$ is a weak glc model of $(Y,B_Y,\Mm)/U$.
\end{proof}

\begin{lem}\label{lem: existence good minimal model under pullback}
Let $(X,B,\Mm)/U$ and $(Y,B_Y,\Mm)/U$ be two glc g-pairs, and $f: Y\rightarrow X$ a projective birational morphism such that
$$K_Y+B_Y+\Mm_Y=f^*(K_X+B+\Mm_X)+E$$
for some $E\geq 0$ that is exceptional over $X$. If $(X,B,\Mm)/U$ has a weak glc model (resp. log minimal model, good minimal model), then $(Y,B_Y,\Mm)/U$ has a weak glc model (resp. log minimal model, good minimal model).
\end{lem}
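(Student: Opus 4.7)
The plan is to deduce each of the three cases using the two immediately preceding lemmas (Lemma \ref{lem: g-pair weak glc imply lmm} and Lemma \ref{lem: same weak glc model under pullback}) together with Lemma \ref{lem: g-pair version bir12 2.7}(2).

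For the weak glc model case, there is nothing to do: if $(X',B',\Mm)/U$ is a weak glc model of $(X,B,\Mm)/U$, then by Lemma \ref{lem: same weak glc model under pullback} applied to $f: Y\to X$, it is already a weak glc model of $(Y,B_Y,\Mm)/U$.

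For the log minimal model case, the argument factors through the weak glc model case. A log minimal model is in particular a weak glc model, so the existence of a log minimal model of $(X,B,\Mm)/U$ gives (via Lemma \ref{lem: same weak glc model under pullback}) a weak glc model of $(Y,B_Y,\Mm)/U$; then Lemma \ref{lem: g-pair weak glc imply lmm} upgrades this to a log minimal model of $(Y,B_Y,\Mm)/U$.

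For the good minimal model case, let $(X',B',\Mm)/U$ be a good minimal model of $(X,B,\Mm)/U$, so in particular $K_{X'}+B'+\Mm_{X'}$ is semi-ample$/U$. By Lemma \ref{lem: same weak glc model under pullback}, $(X',B',\Mm)/U$ is a weak glc model of $(Y,B_Y,\Mm)/U$, and by the log minimal model case just established, $(Y,B_Y,\Mm)/U$ admits a log minimal model $(Y',B_Y',\Mm)/U$. Since $(Y',B_Y',\Mm)/U$ is also a weak glc model of $(Y,B_Y,\Mm)/U$ and $(X',B',\Mm)/U$ is a weak glc model with semi-ample$/U$ generalized log canonical divisor, Lemma \ref{lem: g-pair version bir12 2.7}(2) forces $K_{Y'}+B_{Y'}'+\Mm_{Y'}$ to be semi-ample$/U$ as well. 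Hence $(Y',B_Y',\Mm)/U$ is a good minimal model of $(Y,B_Y,\Mm)/U$.

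No step looks like a real obstacle, as the three key tools have already been proven. The only subtlety to keep an eye on is to verify, in the good minimal model step, that the hypotheses of Lemma \ref{lem: g-pair version bir12 2.7}(2) are genuinely met, i.e.\ that the semi-ample weak glc model and the candidate log minimal model are both weak glc models of the same g-pair $(Y,B_Y,\Mm)/U$; this is precisely what Lemma \ref{lem: same weak glc model under pullback} supplies.
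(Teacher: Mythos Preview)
Your proposal is correct and follows essentially the same approach as the paper: apply Lemma \ref{lem: same weak glc model under pullback} to transport a weak glc model of $(X,B,\Mm)/U$ to one of $(Y,B_Y,\Mm)/U$, invoke Lemma \ref{lem: g-pair weak glc imply lmm} to upgrade to a log minimal model, and in the good case use Lemma \ref{lem: g-pair version bir12 2.7}(2) to carry semi-ampleness across. Your extra remark that both $(X',B',\Mm)/U$ and $(Y',B_Y',\Mm)/U$ are weak glc models of the same g-pair $(Y,B_Y,\Mm)/U$ is exactly the point needed to justify invoking Lemma \ref{lem: g-pair version bir12 2.7}(2).
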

\begin{proof}
Let $(X',B',\Mm)/U$ be a weak glc model (resp. log minimal model, good minimal model) of $(X,B,\Mm)/U$. By Lemma \ref{lem: same weak glc model under pullback}, $(X',B',\Mm)/U$ is a weak glc model of $(Y,B_Y,\Mm)/U$. By Lemma \ref{lem: g-pair weak glc imply lmm}, $(Y,B_Y,\Mm)/U$ has a log minimal model $(Y',B_{Y'},\Mm)/U$. By Lemma \ref{lem: g-pair version bir12 2.7}, if $K_{X'}+B'+\Mm_{X'}$ is semi-ample$/U$, then $K_{Y'}+B_{Y'}+\Mm_{Y'}$ is semi-ample$/U$, and we finish the proof.
\end{proof}

\begin{lem}\label{lem: existence good minimal model under pullbacks weak glc case}
Let $(X,B,\Mm)/U$ and $(Y,B_Y,\Mm)/U$ be two $\Qq$-factorial NQC gdlt g-pairs, and $f: Y\rightarrow X$ a projective birational morphism such that
$$K_Y+B_Y+\Mm_Y=f^*(K_X+B+\Mm_X)+E$$
for some $E\geq 0$ that is exceptional over $X$. Assume that
\begin{enumerate}
\item $\Mm$ descends to $Y$,
\item $(Y,B_Y+\Exc(f))$ is log smooth, and
\item $(Y,B_Y,\Mm)/U$ has a weak glc model.
\end{enumerate}
Then $(X,B,\Mm)/U$ has a weak glc model.
\end{lem}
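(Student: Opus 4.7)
My plan is to first reduce to constructing a weak glc model of the log smooth gdlt pair $(Y, \tilde B_Y, \Mm)/U$, where $\tilde B_Y := f^{-1}_*B + \Exc(f)$, and then obtain such a model from the hypothesized weak glc model of $(Y, B_Y, \Mm)/U$ by pushing forward through a log minimal model and correcting for the extra exceptional components.

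For the reduction, hypothesis (2) guarantees that $(Y, \tilde B_Y)$ is log smooth dlt. Rewriting the pullback formula, I obtain
$$K_Y + \tilde B_Y + \Mm_Y = f^*(K_X+B+\Mm_X) + E',$$
where $E' := E + \sum_{D \in \Exc(f)} a(D, Y, B_Y, \Mm)\, D$ is effective and $f$-exceptional, and for each $f$-exceptional prime divisor $D$ with $a(D, X, B, \Mm) > 0$ one has $\mult_D E' = a(D, X, B, \Mm) > 0$. This checks that $(Y, \tilde B_Y, \Mm)$ satisfies Definition \ref{defn: log smooth models}, so it is a log smooth model of $(X, B, \Mm)$, and Lemma \ref{lem: g-pair version bir12 2.8} reduces the problem to producing a weak glc model of $(Y, \tilde B_Y, \Mm)/U$.

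Next, by hypothesis (3) and Lemma \ref{lem: g-pair weak glc imply lmm}, $(Y, B_Y, \Mm)/U$ admits a log minimal model $(Y^*, B_{Y^*}, \Mm)/U$ with birational map $\psi: Y \dashrightarrow Y^*$, which by Theorem \ref{thm: mmp with scaling gpair terminates assuming gmm} may be realized as a $(K_Y+B_Y+\Mm_Y)$-MMP$/U$ with scaling of an ample divisor. I then set $B' := \psi_*\tilde B_Y + \Exc(\psi^{-1})$. A direct coefficient calculation gives $B' = B_{Y^*} + \psi_*G$ with $G = \tilde B_Y - B_Y$, and shows that every strict transform (under $\psi$) of an $f$-exceptional divisor appears in $B'$ with coefficient exactly $1$. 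Hence $(Y^*, B', \Mm)/U$ is a log birational model of $(Y, \tilde B_Y, \Mm)/U$, and the required discrepancy inequality follows from the log minimal model property of $(Y^*, B_{Y^*}, \Mm)/U$ together with $\tilde B_Y \geq B_Y$.

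The main obstacle will be verifying that $K_{Y^*} + B' + \Mm_{Y^*}$ is nef$/U$. Although $K_{Y^*}+B_{Y^*}+\Mm_{Y^*}$ is already nef$/U$, the added piece $\psi_*G$ is only known to be effective. My approach is to run a $(K_{Y^*}+B'+\Mm_{Y^*})$-MMP$/U$ with scaling of an ample$/U$ divisor, exploiting the fact that $\psi_*G$ is supported on divisors exceptional over $X$ via the birational map $f\circ\psi^{-1}: Y^* \dashrightarrow X$. Termination of this MMP would be extracted from Theorem \ref{thm: hl18 4.1} combined with the existence of a log minimal model for an appropriately perturbed pair, reducing if necessary to the $\Qq$-coefficient setting via the polytope in Theorem \ref{thm: shokurov polytope gpair} so that Theorem \ref{thm: bz16 4.4(2)} applies. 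The resulting model will provide a weak glc model of $(Y, \tilde B_Y, \Mm)/U$, and hence, via the first step, of $(X, B, \Mm)/U$.
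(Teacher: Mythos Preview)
Your first reduction is correct: $(Y,\tilde B_Y,\Mm)$ with $\tilde B_Y=f^{-1}_*B+\Exc(f)$ is a log smooth model of $(X,B,\Mm)$, so Lemma~\ref{lem: g-pair version bir12 2.8} reduces the problem to finding a weak glc model of $(Y,\tilde B_Y,\Mm)/U$. The gap is in the discrepancy check for $(Y^*,B',\Mm)$. You argue that $\tilde B_Y\ge B_Y$ together with the log minimal model inequality $a(D,Y,B_Y,\Mm)<a(D,Y^*,B_{Y^*},\Mm)$ yields $a(D,Y,\tilde B_Y,\Mm)\le a(D,Y^*,B',\Mm)$ for $D$ on $Y$ exceptional over $Y^*$. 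But $B'=B_{Y^*}+\psi_*G\ge B_{Y^*}$, so the last comparison points the wrong way: you only obtain
\[
a(D,Y,\tilde B_Y,\Mm)\le a(D,Y,B_Y,\Mm)<a(D,Y^*,B_{Y^*},\Mm)\ge a(D,Y^*,B',\Mm),
\]
which does not chain. The centre of such a $D$ on $Y^*$ may lie in $\Supp\psi_*G$, and raising those coefficients to $1$ can drive $a(D,Y^*,B',\Mm)$ below zero; in particular nothing guarantees that $(Y^*,B',\Mm)$ is glc, so you cannot even run the $(K_{Y^*}+B'+\Mm_{Y^*})$-MMP you propose, and the same discrepancy obstruction persists afterwards since the problematic divisors are already exceptional over $Y^*$. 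Your termination outline via Theorem~\ref{thm: bz16 4.4(2)} is also unsupported, as that result needs a bigness hypothesis you have not arranged.

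The paper avoids this entirely and never passes to $\tilde B_Y$. Using that $(X,B,\Mm)$ is $\Qq$-factorial gdlt, it perturbs to a gklt pair $(X,\Delta,\Mm)$ with $\Delta\sim_{\Rr,U}B+A$ for an ample $A$, interpolates $\Delta_t=t\Delta+(1-t)B$, and lifts this to $\Gamma_t$ on $Y$. After running the $(K_Y+B_Y+\Mm_Y)$-MMP to $Y'$ and then a terminating gklt MMP (Theorem~\ref{thm: bz16 4.4(2)}) to $Y''$, both $K_{Y''}+B_{Y''}+\Mm_{Y''}$ and $K_{Y''}+\Gamma''_t+\Mm_{Y''}$ are nef for all small $t$. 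Meanwhile a $(K_X+B+\Mm_X)$-MMP with scaling of $A$ is run; if it failed to terminate, Theorem~\ref{thm: mmp with scaling gpair terminates assuming gmm} would force the scaling numbers $\lambda_i\to 0$, and comparing weak glc models of $(Y,\Gamma_t,\Mm)$ at two distinct values $t=\lambda_{n-1},\lambda_n$ via Lemma~\ref{lem: g-pair version bir12 2.7}(1) shows $K_{X_n}+B_n+\Mm_{X_n}$ is already nef, a contradiction.
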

\begin{proof}
By our assumption, $K_Y+B_Y+\Mm_Y$ and $K_X+B+\Mm_X$ are pseudo-effective$/U$. Since $(X,B,\Mm)$ is gdlt, we may pick an ample$/U$ $\Rr$-divisor $A\geq 0$ on $X$ such that $(X,B+A,\Mm)$ is glc, and $K_X+B+A+\Mm_X$ and $A+\lfloor B\rfloor$ are ample. Since $(X,B,\Mm)$ is gdlt, $(X,\{B\},\Mm)$ is gklt, so we may pick an ample$/U$ $\Rr$-divisor $0\leq A'\sim_{\mathbb R,U}A+\lfloor B\rfloor$ such that $(X,\Delta:=\{B\}+A',\Mm)$ is gklt and $f$ is a log resolution of $(X,B+A)$. Since $\Delta\sim_{\Rr,U}B+A$, $K_X+\Delta+\Mm_X$ is big$/U$. We may write
$$K_Y+\Gamma+\Mm_Y=f^*(K_X+\Delta+\Mm_X)+F$$
for some $\Gamma\geq 0$, $F\geq 0$ such that $\Gamma\wedge F=0$. By our construction, $\Mm$ descends to $Y$, $(Y,B_Y+\Exc(f))$ is log smooth, $(Y,\Gamma)$ is log smooth, $(Y,\Gamma,\Mm)$ is gklt, and $K_Y+\Gamma+\Mm_Y$ is big$/U$. We let
$$\Delta_t:=t\Delta+(1-t)B\sim_{\mathbb R,U}B+tA$$
and
$$\Gamma_t:=t\Gamma+(1-t)B_Y$$
for any real number $t$. Then $(X,\Delta_t,\Mm)$ and $(Y,\Gamma_t,\Mm)$ are gklt for any $t\in (0,1]$, and $K_X+\Delta_t+\Mm_X$ and $K_Y+\Gamma_t+\Mm_Y$ are big$/U$ for any $t\in (0,1]$.

Since $(Y,B_Y,\Mm)/U$ has a weak glc model, by Lemma \ref{lem: g-pair weak glc imply lmm},  $(Y,B_Y,\Mm)/U$ has a log minimal model. Since $Y$ is klt, by Theorem \ref{thm: mmp with scaling gpair terminates assuming gmm}, we may run a $(K_Y+B_Y+\Mm_Y)$-MMP$/U$ with scaling of a general ample$/U$ divisor $H$, which terminates with a log minimal model $(Y',B_{Y'},\Mm)/U$ with induced birational map $\phi: Y\dashrightarrow Y'$. 

We let $\Gamma'_t$ be the strict transform of $\Gamma_t$ on $Y'$ for any $t$. By Lemmas \ref{lem: still an mmp under perturbation} and \ref{lem: trivial mmp under perturbation}, there exists $t_0\in (0,1)$, such that 
\begin{itemize}
    \item $\phi$ is also a $(K_Y+\Gamma_{t_0}+\Mm_Y)$-MMP$/U$, and
    \item for any $t\in (0,t_0]$, any partial $(K_{Y'}+\Gamma'_{t}+\Mm_{Y'})$-MMP$/U$ is $(K_{Y'}+B_{Y'}+\Mm_{Y'})$-trivial.
\end{itemize}
Thus $(Y',\Gamma_{t_0}',\Mm)$ is gklt and $K_{Y'}+\Gamma'_{t_0}+\Mm_{Y'}$ is big$/U$. By Theorem \ref{thm: bz16 4.4(2)}, we may run a $(K_{Y'}+\Gamma'_{t_0}+\Mm_{Y'})$-MMP$/U$, which terminates with a log minimal model $(Y'',\Gamma''_{t_0},\Mm)/U$ of $(Y',\Gamma'_{t_0},\Mm)/U$. We let $\Gamma''_t$ be the strict transform of $\Gamma_t$ on $Y''$ for any $t$ and $B_{Y''}$ the strict transform of $B_Y$ on $Y''$. Since the induced birational map $\phi': Y'\dashrightarrow Y''$ is $(K_{Y'}+B_{Y'}+\Mm_{Y'})$-trivial, $K_{Y''}+B_{Y''}+\Mm_{Y''}$ is nef$/U$. Moreover, the induced map $\phi'\circ\phi: Y\dashrightarrow Y''$ does not extract any divisor, and is both $(K_Y+B_Y+\Mm_Y)$-non-positive and $(K_Y+\Gamma_{t_0}+\Mm_Y)$-non-positive. Thus $(Y'',B_{Y''},\Mm)/U$ is a weak glc model of $(Y,B_Y,\Mm)/U$ and $(Y'',\Gamma''_{t_0},\Mm)/U$ is a weak glc model of $(Y,\Gamma_{t_0},\Mm)/U$, hence $(Y'',\Gamma''_t,\Mm)/U$ is a weak glc model of $(Y,\Gamma_t,\Mm)/U$ for any $t\in [0,t_0]$.

By Theorem \ref{thm: can run mmp for gklt pair}, we can run a $(K_X+B+\Mm_X)$-MMP$/U$ with scaling of $A$:
$$(X,B,\Mm):=(X_1,B_1,\Mm)\dashrightarrow (X_2,B_2,\Mm)\dashrightarrow\dots\dashrightarrow (X_i,B_i,\Mm)\dashrightarrow\dots.$$
Let $A_i,\Delta_i,\Delta_{t,i}$ be the strict transforms of $A,\Delta,\Delta_t$ on $X_i$ for any $t,i$ respectively, and let
$$\lambda_i:=\inf\{t\mid t\geq 0, K_{X_i}+B_i+tA_i+\Mm_{X_i}\text{ is nef/}U\}$$
be the scaling numbers. If this MMP terminates, then there is nothing left to prove as we already get a log minimal model for $(X,B,\Mm)/U$. Thus we may assume that this MMP does not terminate. By Theorem \ref{thm: mmp with scaling gpair terminates assuming gmm}, $\lim_{i\rightarrow+\infty}\lambda_i=0$. 

In particular, there exists a positive integer $n$ such that $\lambda_n<\lambda_{n-1}\leq t_0$. Since $\Delta_{t,i}\sim_{\mathbb R,U}B_i+tA_i$ for any $t$, $(X_n,\Delta_{\lambda_{n-1},n},\Mm)/U$ is a weak glc model of $(X,\Delta_{\lambda_{n-1}},\Mm)/U$ and $(X_n,\Delta_{\lambda_{n},n},\Mm)/U$ is a weak glc model of $(X,\Delta_{\lambda_{n}},\Mm)/U$. Since
$$K_Y+\Gamma_t+\Mm_Y=f^*(K_X+\Delta_t+\Mm_X)+tF+(1-t)E$$
for any $t$, by Lemma \ref{lem: same weak glc model under pullback}, $(X_n,\Delta_{\lambda_{n-1},n},\Mm)/U$ is a weak glc model of $(Y,\Gamma_{\lambda_{n-1}},\Mm)/U$ and $(X_n,\Delta_{\lambda_{n},n},\Mm)/U$ is a weak glc model of $(Y,\Gamma_{\lambda_{n}},\Mm)/U$. By our construction, $(Y'',\Gamma''_{\lambda_{n-1}},\Mm)/U$ is a weak glc model of $(Y,\Gamma_{\lambda_{n-1}},\Mm)/U$ and $(Y'',\Gamma''_{\lambda_{n}},\Mm)/U$ is a weak glc model of $(Y,\Gamma_{\lambda_{n}},\Mm)/U$.

We let $p: W\rightarrow X_n$ and $q: W\rightarrow Y''$ be a resolution of indeterminacy. 
\begin{center}$\xymatrix{
Y\ar@{->}[d]_{f}\ar@{-->}[r]^{\phi}& Y'\ar@{-->}[r]^{\phi'} & Y''& W\ar@{->}[d]^{p}\ar@{->}[l]_{q}\\
 X\ar@{-->}[r]& X_2\ar@{-->}[r] & \dots\ar@{-->}[r] & X_n
}$
\end{center}
By Lemma \ref{lem: g-pair version bir12 2.7}(1),
$$p^*(K_{X_n}+\Delta_{\lambda_{n-1},n}+\Mm_{X_n})=q^*(K_{Y''}+\Gamma''_{\lambda_{n-1}}+\Mm_{Y''}).$$
and
$$p^*(K_{X_n}+\Delta_{\lambda_{n},n}+\Mm_{X_n})=q^*(K_{Y''}+\Gamma''_{\lambda_{n}}+\Mm_{Y''}).$$
Thus
$$p^*(K_{X_n}+t\Delta_i+(1-t)B_i+\Mm_{X_n})=q^*(K_{Y''}+t\Gamma''_1+(1-t)B_{Y''}+\Mm_{Y''})$$
when $t\in\{\lambda_{n-1},\lambda_{n}\}$. Since $\lambda_{n-1}\not=\lambda_n$, we have
$$p^*(K_{X_n}+t\Delta_i+(1-t)B_i+\Mm_{X_n})=q^*(K_{Y''}+t\Gamma''_1+(1-t)B_{Y''}+\Mm_{Y''})$$
for any $t$. In particular,
$$p^*(K_{X_n}+B_n+\Mm_{X_n})=q^*(K_{Y''}+B_{Y''}+\Mm_{Y''})$$
is nef$/U$, hence $K_{X_n}+B_n+\Mm_{X_n}$ is nef$/U$, and $\lambda_n=0$, a contradiction.
\end{proof}

\begin{proof}[Proof of Theorem \ref{thm: existence good minimal model under pullbacks}]
First we prove the weak glc model case. By Lemma \ref{lem: same weak glc model under pullback}, we only need to prove that if $(Y,B_Y,\Mm)/U$ has a weak glc model, then $(X,B,\Mm)/U$ has a weak glc model. Let $g: \bar X\rightarrow X$ be a gdlt modification of $(X,B,\Mm)$ such that
$$K_{\bar X}+\bar B+\Mm_{\bar X}=g^*(K_X+B+\Mm_X),$$
and let $p: W\rightarrow Y$ and $q: W\rightarrow\bar X$ be a resolution of indeterminacy, such that $\Mm$ descends to $W$, $p$ is a log resolution of $(Y,\Supp(B_Y+E))$, and $q$ is a log resolution of $(\bar X,\Supp\bar B)$.  By Lemma \ref{lem: existence of proper log smooth model}, we may find a proper log smooth model $(W,B_W,\Mm)$ of $(Y,B_Y,\Mm)$. We have
$$K_W+B_W+\Mm_W=p^*(K_Y+B_Y+\Mm_Y)+F=(p\circ f)^*(K_X+B+\Mm_X)+p^*E+F$$
for some $p$-exceptional $\Rr$-divisor $F\geq 0$.

Let $D$ be a component of $p^*E+F$. Then $a(D,W,B_W,\Mm)<a(D,X,B,\Mm)$ and $D$ is exceptional over $X$. If $D$ is not exceptional over $\bar X$, then $a(D,W,B_W,\Mm)<a(D,X,B,\Mm)=0$, which is not possible. Thus $p^*E+F$ is exceptional over $\bar X$. 

By Lemma \ref{lem: same weak glc model under pullback}, $(W,B_W,\Mm)/U$ has a weak glc model. Since $p^*E+F$ is exceptional over $\bar X$, $\Mm$ descends to $W$, $(W,B_W+p^*E+F)$ is log smooth, by Lemma \ref{lem: existence good minimal model under pullbacks weak glc case}, we have that $(\bar X,\bar B,\Mm)/U$ has a weak glc model. By Lemma \ref{lem: model keep under gdlt modification}, $(X,B,\Mm)/U$ has a weak glc model, and we have proven the weak glc model case.

Now we prove the general case. By Lemma \ref{lem: existence good minimal model under pullback}, we only need to prove that if $(Y,B_Y,\Mm)/U$ has a weak glc (resp. log minimal model, good minimal model), then $(X,B,\Mm)/U$ has a weak glc (resp. log minimal model, good minimal model). The weak glc case has just been proven, and the log minimal model case follows from the weak glc model case and Lemma \ref{lem: g-pair weak glc imply lmm}. Assume that $(Y,B_Y,\Mm)/U$ has a good minimal model. By the log minimal model case, we may assume that $(X',B',\Mm)/U$ is a log minimal model of $(X,B,\Mm)/U$. By Lemma \ref{lem: same weak glc model under pullback}, $(X',B',\Mm)/U$ is also a weak glc model of $(Y,B_Y,\Mm)/U$. By Lemma \ref{lem: g-pair version bir12 2.7}(2), $K_{X'}+B'+\Mm_{X'}$ is semi-ample$/U$, hence $(X',B',\Mm)/U$ is a  good minimal model of $(X,B,\Mm)/U$, and the proof is concluded.
\end{proof}

\begin{cor}\label{cor: keep model under mmp}
Let $(X,B,\Mm)/U$ be an NQC glc g-pair and $X\dashrightarrow X'$ a partial $(K_X+B+\Mm_X)$-MMP$/U$. Let $B'$ be the strict transform of $B$ on $X'$. Then $(X,B,\Mm)/U$ has a weak glc model (resp. log minimal model, good minimal model) if and only if $(X',B',\Mm)/U$ has a weak glc model (resp. log minimal model, good minimal model).
\end{cor}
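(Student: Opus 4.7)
My plan is to exploit that the statement is an ``iff'': one direction follows immediately from Lemma \ref{lem: run mmp keep minimal model}, and for the other direction I intend to apply Theorem \ref{thm: existence good minimal model under pullbacks} twice, threaded through a common log resolution of the MMP. The direction ``$(X',B',\Mm)/U$ has a weak glc (resp.\ log minimal, good minimal) model $\Rightarrow$ $(X,B,\Mm)/U$ has such a model'' is then immediate, since any weak glc model of $(X',B',\Mm)/U$ is already a weak glc model of $(X,B,\Mm)/U$ (and similarly for the other two notions) by Lemma \ref{lem: run mmp keep minimal model}.

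For the reverse implication, writing $\phi\colon X\dashrightarrow X'$ for the given partial MMP, I would first pick a smooth variety $W$ with birational morphisms $p\colon W\rightarrow X$ and $q\colon W\rightarrow X'$ satisfying $q=\phi\circ p$, such that $p$ is a log resolution of $(X,\Supp B)$ and $\Mm$ descends to $W$. By Lemma \ref{lem: existence of proper log smooth model}, I choose $B_W\geq 0$ so that $(W,B_W,\Mm)$ is a proper log smooth model of $(X,B,\Mm)$, giving
$$K_W+B_W+\Mm_W=p^*(K_X+B+\Mm_X)+E_p,$$
with $E_p\geq 0$ that is $p$-exceptional. Since $(W,B_W,\Mm)/U$ is NQC glc by construction, the first application of Theorem \ref{thm: existence good minimal model under pullbacks} to the morphism $p$ transfers the existence of a weak glc (resp.\ log minimal, good minimal) model from $(X,B,\Mm)/U$ to $(W,B_W,\Mm)/U$.

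Next, since $\phi$ is a partial $(K_X+B+\Mm_X)$-MMP (in particular a birational contraction which is $(K_X+B+\Mm_X)$-non-positive), the standard negativity argument yields
$$p^*(K_X+B+\Mm_X)=q^*(K_{X'}+B'+\Mm_{X'})+G,$$
with $G\geq 0$ supported on prime divisors of $W$ that are exceptional over $X$ or over $X'$. Adding the two displays, I get $K_W+B_W+\Mm_W=q^*(K_{X'}+B'+\Mm_{X'})+(E_p+G)$ with $E_p+G\geq 0$, and the central point to check is that $E_p+G$ is $q$-exceptional, so that Theorem \ref{thm: existence good minimal model under pullbacks} can be applied to $q$ as well. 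I view this as the only non-routine step. Components of $G$ that are strict transforms of $\phi$-contracted divisors are $q$-exceptional by construction; for any $p$-exceptional prime divisor $D$ (appearing in $E_p$ or in $G$), the center $p(D)$ has codimension $\geq 2$ on $X$, and since $\phi$ is a birational contraction, $\phi^{-1}$ extracts no divisors, hence the center $q(D)$ on $X'$ remains of codimension $\geq 2$, i.e., $D$ is $q$-exceptional. Granted this, a second application of Theorem \ref{thm: existence good minimal model under pullbacks} to $q$ furnishes a weak glc (resp.\ log minimal, good minimal) model of $(X',B',\Mm)/U$, completing the proof. The corollary is then simply a packaging of Lemma \ref{lem: run mmp keep minimal model} and two applications of Theorem \ref{thm: existence good minimal model under pullbacks}, with the only substantive verification being the $q$-exceptionality of $E_p+G$.
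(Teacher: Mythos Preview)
Your proposal is correct and follows essentially the same approach as the paper: pass through a common resolution $W$ carrying a log smooth model $(W,B_W,\Mm)$ of $(X,B,\Mm)$, and apply Theorem \ref{thm: existence good minimal model under pullbacks} along both $p$ and $q$. The paper's proof is slightly more streamlined in that it simply observes $(W,B_W,\Mm)$ is \emph{also} a log smooth model of $(X',B',\Mm)$ (which packages your $q$-exceptionality check for $E_p+G$ together with condition (4) of Definition \ref{defn: log smooth models}), and then invokes Theorem \ref{thm: existence good minimal model under pullbacks} symmetrically for both implications, so your separate appeal to Lemma \ref{lem: run mmp keep minimal model} for one direction is not needed.
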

\begin{proof}
Let $p: W\rightarrow X$ and $q: W\rightarrow X'$ be a resolution of indeterminacy of $X\dashrightarrow X'$ such that $\Mm$ descends to $W$. Let $(W,B_W,\Mm)$ be a log smooth model of $(X,B,\Mm)$, then $(W,B_W,\Mm)$ is also a log smooth model of $(X',B',\Mm)$. By Theorem \ref{thm: existence good minimal model under pullbacks}, $(X,B,\Mm)/U$ has a weak glc model (resp. log minimal model, good minimal model) if and only if $(W,B_W,\Mm)/U$ has a weak glc model (resp. log minimal model, good minimal model) if and only if $(X',B',\Mm)/U$ has a weak glc model (resp. log minimal model, good minimal model).
\end{proof}

\section{Proof of Theorem \ref{thm: existence of glc closure} }

\begin{thm}\label{thm: reduce special gpair to pair}
Let Let $(X,B,\Mm)/U$ be a $\Qq$-factorial NQC gdlt g-pair. Assume that there exists a non-empty open subset $U^0\subset U$, such that
\begin{enumerate}
    \item the image of any strata of $S:=\lfloor B\rfloor$ in $U$ intersects $U^0$, and
    \item $\Mm^0:=\Mm\times_UU^0$ descends to $X^0:=X\times_UU^0$ and $\Mm^0_{X^0}\sim_{\Rr,U^0}0$.
\end{enumerate}
Then there exists an $\Rr$-divisor $G\sim _{\Rr,U}\Mm _X$ such that $(X,B+G)$ is lc and $\Nklt(X,B+G)=\Ngklt(X,B,\Mm)$.
\end{thm}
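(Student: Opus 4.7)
The plan is to build $G$ as an effective $\Rr$-divisor on $X$, vertical over $U$ and supported away from $\lfloor B\rfloor$, by lifting the $\Rr$-linear triviality of $\Mm^0_{X^0}$ from $X^0$ to $X$. Since $U^0$ is a non-empty open subset of $U$ it contains the generic point $\eta$; together with $\Mm^0_{X^0}\sim_{\Rr,U^0}0$ this forces the restriction of $\Mm_X$ to the generic fiber $X_\eta$ to be $\Rr$-linearly equivalent to zero. Applying Lemma \ref{lem: lift equivalence from generic fiber} with $B'=0$ on the generic fiber then yields an effective $\Rr$-divisor $G_0\geq 0$ on $X$ with $G_0\sim_{\Rr,U}\Mm_X$ and $G_0|_{X_\eta}=0$, so $G_0$ is vertical over $U$.

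Next I would refine the construction from the proof of Lemma \ref{lem: lift equivalence from generic fiber}, writing $G_0=\pi^*H+V'$ for some ample $\Rr$-divisor $H\geq 0$ on $U$ and a residual vertical divisor $V'$. By absorbing components as in that proof I can arrange $\pi(\Supp V')\subset U\setminus U^0$, while simultaneously choosing $H$ generically in a sufficiently ample linear system so that $\Supp H$ (i) contains the divisorial components of $U\setminus U^0$, so that $\pi^*H$ dominates the negative part of $V'$, and (ii) avoids the finitely many divisors on $U$ arising as $\pi$-images of vertical components of $\lfloor B\rfloor$. Assumption (1) makes (i) and (ii) compatible, because each such image meets $U^0$ and is therefore not a component of $U\setminus U^0$.

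By the gdlt hypothesis on $(X,B,\Mm)$ together with the negativity lemma applied to the $h$-exceptional divisor $h^*\Mm_X-\Mm_W$ on a log resolution $h\colon W\to X$ where $\Mm$ descends, the underlying pair $(X,B)$ is dlt, and $\Ngklt(X,B,\Mm)=\Supp\lfloor B\rfloor=\Nklt(X,B)$. The construction of $G_0$ then ensures that $\Supp G_0$ shares no component with $\lfloor B\rfloor$: horizontal components of $\lfloor B\rfloor$ are not among the vertical components of $G_0$, and the vertical ones are avoided by the genericity of $H$. To promote this to the full condition that $(X,B+G_0)$ is lc with $\lfloor B+G_0\rfloor=\lfloor B\rfloor$ and no new exceptional lc centers, I would invoke Theorem \ref{thm: shokurov polytope gpair} to reduce to the $\Qq$-$\bb$-Cartier setting and exploit the scaling freedom of $H$ (and of the auxiliary pullbacks from $U$) to make the multiplicities of $G_0$ along its divisorial support strictly less than $1-\mult_D B$ at each such $D$.

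The main difficulty I anticipate is reconciling effectivity of $G_0$, which forces $\pi^*H$ to be large enough to dominate the negative part of $V'$, with the multiplicity smallness needed for $(X,B+G_0)$ to remain lc with the correct non-klt locus. The balance between these competing constraints, mediated by the NQC structure of $\Mm$, the gdlt hypothesis, and the ample freedom on $U$, is the technical heart of the argument.
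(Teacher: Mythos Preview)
Your approach has a genuine gap precisely at the point you flag as the ``main difficulty'': reconciling effectivity of $G_0$ with the lc condition for $(X,B+G_0)$. Working directly on $X$ via Lemma~\ref{lem: lift equivalence from generic fiber} only produces an effective vertical divisor $G_0\sim_{\Rr,U}\Mm_X$; the multiplicities of $G_0$ along individual components of fibers are uncontrolled, and there is no mechanism to move $G_0$ within its $\Rr$-linear equivalence class to make them small. Your decomposition $G_0=\pi^*H+V'$ does not help: the residual $V'$ is essentially fixed, and since $\pi$ need not be equidimensional, vertical divisors on $X$ are not pullbacks from $U$, so varying $H$ cannot absorb $V'$. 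You also only address multiplicities along prime divisors on $X$; the lc condition for $(X,B+G_0)$ and the requirement $\Nklt(X,B+G_0)=\Supp\lfloor B\rfloor$ demand control of \emph{exceptional} log discrepancies on a log resolution, which you do not touch.

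The paper resolves this by a different construction. Rather than working on $X$, it passes (after reducing to the $\Qq$-case via Theorem~\ref{thm: shokurov polytope gpair}) to a resolution $f:X'\to X$ carrying an equidimensional toroidal contraction $\pi':X'\to U'$ to a smooth modification of $U$ (Theorem~\ref{thm: has19 weak semistable reduction}), on which $\Mm$ descends and $(X',\Supp(B_{X'}+E_{X'}))$ is quasi-smooth. The crucial step, unavailable on $X$, is that on $X'$ one can upgrade ``vertical over $U'$'' to ``pullback from $U'$'': applying Lemma~\ref{lem: lift equivalence from generic fiber} and then using equidimensionality together with the general negativity lemma of \cite[Lemma~3.3]{Bir12a} yields $\Mm_{X'}\sim_{\Qq,U}\pi'^*M_{U'}$ for some $\varphi$-nef $M_{U'}$. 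Once the moduli part factors through the base, a general member $G'\in|\Mm_{X'}/U|_{\Qq}$ can be chosen via ample linear systems on $U'$, and quasi-smoothness of $(X',B_{X'})$ together with assumption~(1) (and the affineness of a shrunken $U^0$) ensure that $(X',B_{X'}+G')$ is lc with no new lc centers; pushing down to $X$ gives the desired $G$. The NQC, gdlt, and ample-on-$U$ ingredients you invoke are all used, but the missing idea is the equidimensional semistable reduction that makes the factoring $\Mm_{X'}\sim_{\Qq,U'}0$ possible.
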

\begin{proof}
By Theorem \ref{thm: shokurov polytope gpair}, we may assume that $(X,B,\Mm)$ is a $\Qq$-g-pair. Possibly shrinking $U^0$, we may assume that $U^0$ is affine.

 By \cite[Proposition 6-1-3, Remark 6-1-4]{KMM87} (see also \cite[Lemma 6]{Nak86}) and Theorem \ref{thm: has19 weak semistable reduction}, we may let $f: X'\to X$ be a resolution with morphisms $\pi': X'\rightarrow U'$ and $\varphi: U'\rightarrow U$, such that
\begin{itemize}
\item $\Mm$ descends to $X'$,
\item we may write
$$K_{X'}+B_{X'}+\Mm_{X'}=f^*(K_X+B+\Mm_X)+E_{X'},$$
where $B_{X'},E_{X'}\geq 0$, $B_{X'}\wedge E_{X'}=0$, $(X',{\rm Supp}(B_{X'}+E_{X'}))$ is quasi-smooth, and
    \item $p:=\pi\circ f=\varphi\circ\pi': X'\to U$ where  $U'$ is smooth, $\pi'$ and $\varphi$ are projective, $f$ is birational, and $\pi'$ has connected equidimensional fibers.
\end{itemize}
\begin{center}$\xymatrix{
X'\ar@{->}[r]^{\pi'}\ar@{->}[d]_{f}\ar@{->}[dr]^{p} & U'\ar@{->}[d]^{\varphi}\\
X\ar@{->}[r]^{\pi} & U
}$
\end{center}
We show that there is a $\varphi$-nef $\mathbb Q$-divisor $M_{U'}$ on $U'$ such that $\Mm_{X'}\sim_{\Qq,U}\pi'^*M_{U'}$. By our construction, $\Mm_{X'}|_{X'_{\eta}}\sim_{\Qq}0$ where $X'_{\eta}$ is the generic fiber of $p$. Thus $\Mm_{X'}\sim_{\Qq}0$ over the generic point $\eta_{U'}$ of $U'$. By Lemma \ref{lem: lift equivalence from generic fiber}, $\Mm_{X'}\sim_{\Qq,U'} D$ where $D\geq 0$ is vertical over $U'$. Since $\pi'$ is equidimensional, $\pi'(D)$ is a $\Qq$-divisor on $U'$. Since $U'$ is smooth, for any prime divisor $P$ on $U'$, we may define
$$\nu_{P}:=\sup\{\nu\mid \nu\geq 0, D-\nu\pi'^*P\geq 0\},$$
then $\nu_P>0$ for only finitely many prime divisors $P$ on $U'$. Let $D':=D-\pi'^*(\sum_P\nu_{P}P)$, then $\Mm_{X'}\sim_{\Qq,U'} D'\geq 0$ and $D'$ is very exceptional over $U$. By the general negativity lemma \cite[Lemma 3.3]{Bir12a}, $\Mm_{X'}\sim_{\Qq,U'}0$. In particular, since $\Mm_{X'}$ is nef$/U$, $\Mm_{X'}\sim_{\Qq,U}\pi'^*M_{U'}$ for some $\Qq$-divisor $M_{U'}$ that is nef$/U$.

Let $X'^0:=X'\times_UU^0$ and $U'^0:=U'\times_UU^0$. Since $\Mm_{X'}|_{X'^0}\sim _{\Qq,U^0}0$, we have that $M_{U'^0}:=M_{U'}|_{U'^0}\sim_{\Qq,U^0}0$.

To prove the claim it suffices to show that for a general element $G'\in |\Mm _{X'}/U|_\Qq$, the pair 
$(X',B_{X'}+G')$ is lc and its lc centers coincide with the lc centers of $(X',B_{X'})$, i.e. the strata of $\lfloor B_{X'}\rfloor$.  If this is the case, then $(X',B_{X'}-E_{X'}+G')$ is sub-lc and $K_{X'}+B_{X'}-E_{X'}+G'\sim _\Qq f^*(K_X+B+G)$ where $G=f_* G'\in |\Mm _{X}/U|_\Qq$ and $(X,B+G)$ is log canonical and its log canonical places coincide with the glc places of $(X,B,\Mm)$.

Let $E\geq 0$ be an effective divisor on $U'$ such that $-E$ is ample over $U$ (note that $E$ is not necessarily exceptional, but its support can be chosen to avoid any point not in the exceptional locus). It follows that $|M_{U'}/U|_\Qq\supset |M_{U'}-\epsilon E/U|_\Qq+\epsilon E$.
Since $M_{U'}-\epsilon E$ is ample over $U$, for a general element $G'\in |\Mm _{X'}/U|_\Qq$ 
we have that 
the set of nklt places of $(X',B_{X'}+G')$ are contained in the set of nklt places of $(X',B_{X'}+\epsilon \pi '^*E)$. Thus, the only non-klt centers of $(X',B_{X'}+G')$ are strata of $\lfloor B_{X'} \rfloor$. 

To prove the claim, it suffices to show that the support of a general element $G'\in |\Mm _{X'}/U|_\Qq$ does not contain any stratum $S'$ of $\lfloor B_{X'} \rfloor$ or equivalently that there exist one  element $G'\in |\Mm _{X'}/U|_\Qq$ whose support does not contain any given stratum $S'$ of $\lfloor B_{X'} \rfloor$. Note that $f(S')$ is a glc center of $(X,B,\Mm)$. As $(X,B,\Mm)$ is gdlt, its glc centers are the strata of $\lfloor B\rfloor$ which intersect $X^0$ by assumption. Pick a point $x\in f(S')\cap X^0$ and let $u=\pi (x)\in U^0$. Since $M_{U'^0}\sim _{\Qq ,U^0}0$, we have $$M_{U'^0}=\alpha(h)+(\varphi|_{U^0})^*(H_0),$$
where $\alpha\in\mathbb Q$, $h$ is a rational function on $U$, and $H_0$ is a $\Qq$-divisor on $U^0$. Since $U^0$ is affine, possibly replacing $H_0$ and $\alpha(h)$, we may assume that $H_0\geq 0$ and $u\not\in\Supp H_0$. Now we may pick a sufficiently ample divisor $H\geq 0$ on $U$, such that $H|_{U^0}\geq H_0$ and $u\not\in\Supp H$, and hence $M_{U'}\sim_{\Qq,U}F:=\varphi^*H$. Then $F\geq 0$ is a $\Qq$-divisor whose support does not intersect the fiber $\varphi^{-1}(u)$, so $\pi '^*F \in |\Mm_{X'}/U|_\Qq$ and its support does not contain $S'$.
\end{proof}

\begin{lem}\label{lem: rlin equivalent good minimal model}
Let $(X,B,\Mm)/U$ and $(X,B',\Mm')/U$ be two NQC glc g-pairs, $f: Y\rightarrow X$ a birational morphism, $K_Y+B_Y+\Mm_Y:=f^*(K_X+B+\Mm_X)$ and $K_Y+B_Y'+\Mm'_Y:=f^*(K_X+B'+\Mm'_X)$, such that $Y$ is $\Qq$-factorial klt and $(Y,B_Y,\Mm)/U$ and $(Y,B_Y',\Mm')/U$ are glc g-pairs.

Assume that there exists a positive real number $r$ such that $K_X+B+\Mm_X\sim_{\Rr,U}r(K_X+B'+\Mm'_X)$. Then $(X,B,\Mm)/U$ has a good minimal model if and only if $(X,B',\Mm')$ has a good minimal model.
\end{lem}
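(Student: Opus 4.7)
The plan is to first reduce the equivalence to the level of $Y$ using the pullback result for good minimal models, and then to run a single MMP on $Y$ which simultaneously serves as an MMP for both generalized pair structures. By Theorem \ref{thm: existence good minimal model under pullbacks} applied to the birational morphism $f:Y\to X$, $(X,B,\Mm)/U$ has a good minimal model if and only if $(Y,B_Y,\Mm)/U$ does, and analogously for the primed pair. Hence it suffices to prove the equivalence on $Y$, where the underlying variety is $\Qq$-factorial klt.

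Assume $(Y,B_Y,\Mm)/U$ has a good minimal model. Pulling back the hypothesis by $f$ yields $K_Y+B_Y+\Mm_Y \sim_{\Rr,U} r(K_Y+B_Y'+\Mm'_Y)$ on $Y$; since $r>0$, a curve $C$ is $(K_Y+B_Y+\Mm_Y)$-negative if and only if it is $(K_Y+B_Y'+\Mm'_Y)$-negative. By Theorem \ref{thm: can run mmp for gklt pair} we may run a $(K_Y+B_Y+\Mm_Y)$-MMP$/U$ with scaling of a general ample$/U$ divisor $A\ge 0$, and by Theorem \ref{thm: mmp with scaling gpair terminates assuming gmm} this MMP terminates with some $\phi:Y\dashrightarrow Y'$. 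Because each step contracts a ray that is negative for both log canonical divisors and because flips are determined by their flipping contractions, $\phi$ is simultaneously a $(K_Y+B_Y'+\Mm'_Y)$-MMP$/U$.

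Taking a common resolution $p:W\to Y$, $q:W\to Y'$ of $\phi$, the negativity lemma gives $p^*(K_Y+B_Y+\Mm_Y)=q^*(K_{Y'}+B_{Y'}+\Mm_{Y'})+E$ and $p^*(K_Y+B_Y'+\Mm'_Y)=q^*(K_{Y'}+B_{Y'}'+\Mm'_{Y'})+E'$ with $E,E'\ge 0$ both $q$-exceptional; pushing the pulled-back $\Rr$-linear equivalence forward by $q_*$ propagates it to $Y'$, giving $K_{Y'}+B_{Y'}+\Mm_{Y'}\sim_{\Rr,U}r(K_{Y'}+B_{Y'}'+\Mm'_{Y'})$. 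By Corollary \ref{cor: keep model under mmp}, $(Y',B_{Y'},\Mm)/U$ inherits a good minimal model from $(Y,B_Y,\Mm)/U$, so Lemma \ref{lem: g-pair version bir12 2.7}(2) makes $K_{Y'}+B_{Y'}+\Mm_{Y'}$ semi-ample$/U$; the $\Rr$-linear equivalence then gives the same for $K_{Y'}+B_{Y'}'+\Mm'_{Y'}$. Thus $(Y',B_{Y'}',\Mm')/U$ is tautologically a weak glc model of itself with semi-ample generalized log canonical divisor, so Lemma \ref{lem: g-pair weak glc imply lmm} furnishes a log minimal model and Lemma \ref{lem: g-pair version bir12 2.7}(2) promotes it to a good minimal model; Corollary \ref{cor: keep model under mmp} then transfers this back to $(Y,B_Y',\Mm')/U$, and Theorem \ref{thm: existence good minimal model under pullbacks} to $(X,B',\Mm')/U$. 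The converse follows by the symmetric argument with $1/r$ in place of $r$. The main subtlety is verifying that the single MMP $\phi$ serves both pair structures simultaneously; once this is in hand, the rest is driven by the preservation of the $\Rr$-linear equivalence under $\phi$ together with Lemma \ref{lem: g-pair version bir12 2.7}(2).
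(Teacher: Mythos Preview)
Your proof is correct and follows essentially the same approach as the paper: reduce to $Y$ via Theorem~\ref{thm: existence good minimal model under pullbacks}, run a single MMP on $Y$ that serves both generalized pair structures (since $K_Y+B_Y+\Mm_Y\sim_{\Rr,U}r(K_Y+B_Y'+\Mm'_Y)$), and then use Lemmas~\ref{lem: g-pair version bir12 2.7}(2) and~\ref{lem: g-pair weak glc imply lmm} to conclude. The only cosmetic differences are that the paper notes the MMP with scaling of $A_Y$ is also an MMP with scaling of $rA_Y$ for the primed pair, and it observes directly that $(Z,B_Z,\Mm)/U$ is a weak glc model of $(Y,B_Y,\Mm)/U$ rather than routing through Corollary~\ref{cor: keep model under mmp}; your common-resolution argument for propagating the $\Rr$-linear equivalence is fine but could be replaced by the simpler observation that strict transforms under a birational contraction preserve $\sim_{\Rr,U}$.
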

\begin{proof}
Let $A_Y$ be a general ample$/U$ divisor on $Y$ such that $(Y,B_Y+A_Y,\Mm)/U$ and $(Y,B'_Y+A_Y,\Mm')/U$ are glc, and $K_Y+B_Y+A_Y+\Mm_Y$ and $K_Y+B'_Y+rA_Y+\Mm'_Y$ are nef$/U$.

Without lost of generality, we may assume that $(X,B,\Mm)/U$ has a good minimal model and only need to show that $(X,B',\Mm')/U$ has a good minimal model. By Theorem \ref{thm: existence good minimal model under pullbacks}, $(Y,B_Y,\Mm)/U$ has a good minimal model. By Theorem \ref{thm: mmp with scaling gpair terminates assuming gmm} and Lemma \ref{lem: g-pair version bir12 2.7}(2), we may let $\phi: Y\dashrightarrow Z$ be a $(K_Y+B_Y+\Mm_Y)$-MMP$/U$ with scaling of $A_Y$, such that $(Z,B_Z,\Mm)/U$ is a weak glc model of $(Y,B_Y,\Mm)/U$ and $K_Z+B_Z+\Mm_Z$ is semi-ample$/U$, where $B_Z$ is the strict transform of $B_Y$ on $Z$. Then $\phi$ is also a $(K_Y+B'_Y+\Mm'_Y)$-MMP$/U$ with scaling of $rA_Y$. We let $B_Z'$ be the strict transform of $B'_Y$ on $Z$, then $K_Z+B_Z+\Mm_Z\sim_{\Rr,U}r(K_Z+B'_Z+\Mm'_Z)$. Thus $(Z,B'_Z,\Mm')/U$ is a weak glc model of $(Y,B_Y',\Mm')/U$ and $K_Z+B'_Z+\Mm'_Z$ is semi-ample$/U$. By Lemmas \ref{lem: g-pair weak glc imply lmm} and \ref{lem: g-pair version bir12 2.7}(2), $(Y,B_Y',\Mm')$ has a good minimal model. By Theorem \ref{thm: existence good minimal model under pullbacks}, $(X,B',\Mm')/U$ has a good minimal model.
\end{proof}

\begin{proof}[Proof of Theorem \ref{thm: existence of glc closure}]
By Definition-Lemma \ref{deflem: gdlt modification} and Theorem \ref{thm: existence good minimal model under pullbacks}, possibly replacing $(X,B,\Mm)$ with a gdlt modification, we may assume that $(X,B,\Mm)$ is $\Qq$-factorial gdlt. By Theorem \ref{thm: reduce special gpair to pair}, we may find an $\Rr$-divisor $0\leq G\sim_{\mathbb R}\Mm_X$ such that $(X,B+G)$ is lc and $\Nklt(X,B+G)=\Ngklt(X,B,\Mm)$. By \cite[Theorem 1.2]{Has19} (see also \cite[Theorem 1.1]{HX13}), $(X,B+G)/U$ has a good minimal model. By Lemma \ref{lem: rlin equivalent good minimal model}, $(X,B,\Mm)/U$ has a good minimal model.
\end{proof}

\section{Base-point-free, contraction, and cone theorems for generalized pairs}

In this section, we prove Theorem \ref{thm: cone and contraction theorem glc pair}. For the reader's convenience, we will prove Theorem \ref{thm: cone and contraction theorem glc pair}(1)(2)(3) (the cone theorem) and Theorem \ref{thm: cone and contraction theorem glc pair}(4) (the contraction theorem) separately, and we will also prove a base-point-free theorem. More precisely, we will prove the following three theorems:

\begin{thm}[Cone theorem for glc g-pairs]\label{thm: cone theorem glc g-pairs}
Let $(X,B,\Mm)/U$ be an NQC glc g-pair and $\pi: X\rightarrow U$ the associated projective morphism. Let $\{R_j\}_{j\in\Lambda}$ be the set of $(K_X+B+\Mm_X)$-negative extremal rays in $\overline{NE}(X/U)$ that are rational. Then:
\begin{enumerate}
    \item $$\overline{NE}(X/U)=\overline{NE}(X/U)_{K_X+B+\Mm_X\geq 0}+\sum_{j\in\Lambda}R_j.$$
    In particular, any $(K_X+B+\Mm_X)$-negative extremal ray in $\overline{NE}(X/U)$ is rational.
    \item Each $R_j$ is spanned by a rational curve $C_j$ such that $\pi(C_j)=\{pt\}$ and $$0<-(K_X+B+\Mm_X)\cdot C_j\leq 2\dim X.$$
    \item For any ample$/U$ $\Rr$-divisor $A$ on $X$,
    $$\Lambda_A:=\{j\in\Lambda\mid R_j\subset\overline{NE}(X/U)_{K_X+B+\Mm_X+A<0}\}$$
    is a finite set. In particular, $\{R_j\}_{j\in\Lambda}$ is countable, and is a discrete subset in $\overline{NE}(X/U)_{K_X+B+\Mm_X+A<0}$. Moreover, we may write
    $$\overline{NE}(X/U)=\overline{NE}(X/U)_{K_X+B+\Mm_X+A\geq 0}+\sum_{j\in\Lambda_A}R_j.$$
    \item Let $F$ be a $(K_X+B+\Mm_X)$-negative extremal face in $\overline{NE}(X/U)$. Then $F$ is a rational extremal face.
\end{enumerate}
\end{thm}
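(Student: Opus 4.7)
The plan is to reduce the entire statement to the cone and rationality theorems for ordinary (non-lc) pairs of Ambro \cite{Amb03} and Fujino \cite{Fuj11}, by constructing an auxiliary pair whose log canonical class is $\Rr$-linearly equivalent to $K_X+B+\Mm_X$ and whose non-lc locus coincides with $\Ngklt(X,B,\Mm)$. Concretely, write $\Mm=\sum_{k=1}^{n}\mu_k\Mm^k$ with $\mu_k>0$ and each $\Mm^k$ nef $\bb$-Cartier, descending to a common log resolution $f\colon Y\to X$ of $(X,\Supp B)$. For a suitable ample$/U$ divisor $H$ on $X$ and $m\gg 0$, each $m\Mm^k_Y+f^*H$ is nef and big, and by Bertini, we can choose general effective divisors $D_k\sim_{\Qq} m\Mm^k_Y+f^*H$ whose supports avoid every stratum of the simple normal crossing divisor pulled back from $(X,B)$. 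Setting $G:=f_*\bigl(\sum \mu_k D_k/m\bigr)-\epsilon H$ and playing $\epsilon\to 0$ via the rational polytope Theorem \ref{thm: shokurov polytope gpair}, we find $0\leq G\sim_{\Rr,U}\Mm_X$ with $(X,B+G)$ lc outside $\Ngklt(X,B,\Mm)$ and $\Nlc(X,B+G)=\Ngklt(X,B,\Mm)$.

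With $\Delta:=B+G$, parts (1), (2), (3) follow immediately by applying Ambro's and Fujino's cone, length-of-extremal-ray, and rationality theorems to the lc-in-codimension-one pair $(X,\Delta)/U$, because $K_X+\Delta\sim_{\Rr,U}K_X+B+\Mm_X$ and hence the intersection numbers with every curve agree. In particular, the length bound $0<-(K_X+B+\Mm_X)\cdot C_j\leq 2\dim X$ of (2) is inherited verbatim from the Ambro--Fujino statement. For (4), once (1)--(3) are established, any $(K_X+B+\Mm_X)$-negative extremal face $F$ lies in $\overline{NE}(X/U)_{K_X+B+\Mm_X+A<0}$ for a sufficiently small ample$/U$ perturbation $A$; by (3) this slice contains only finitely many extremal rays, and by extremality $F$ is the convex hull of the finitely many $R_j\in\Lambda_A$ contained in it, each rational---so $F$ is rational. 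The sub-adjunction theorem for generalized pairs of Han--Liu \cite{HL19} combined with induction on $\dim X$ can be used as a backup to handle any missing rationality of faces, by restricting to non-gklt centers, which inherit an NQC glc g-pair structure.

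The main obstacle is the first reduction step: producing $\Delta$ whose non-lc locus equals $\Ngklt(X,B,\Mm)$ exactly. The difficulty is that each $\Mm^k_Y$ is only nef, not ample or semi-ample, so Bertini does not apply directly to sections of $m\Mm^k_Y$. The fix is to twist by an ample divisor $H$ and then argue, via the rational polytope structure of Theorem \ref{thm: shokurov polytope gpair}, that one may cancel the twist without introducing new non-lc centers. A secondary subtlety is making sure the chosen $G$ has no component along any glc center of $(X,B,\Mm)$; this requires the general choice of the $D_k$ and the observation that glc centers of $(X,B,\Mm)$ are precisely the images of strata of a gdlt modification, which are finitely many. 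Once the auxiliary pair $(X,\Delta)$ is in hand, the cone, length, and rationality statements are immediate consequences of the corresponding theorems for ordinary pairs.
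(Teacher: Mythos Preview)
There are two genuine gaps.

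First, the construction of $G$ with $0\leq G\sim_{\Rr,U}\Mm_X$ and $\Nlc(X,B+G)=\Ngklt(X,B,\Mm)$ does not work. After twisting by $f^*H$ you have $f_*\bigl(\sum_k\mu_kD_k/m\bigr)\sim_{\Rr,U}\Mm_X+cH$ with $c=\sum_k\mu_k/m>0$; subtracting any positive multiple of $H$ will in general destroy effectivity, and Theorem~\ref{thm: shokurov polytope gpair} cannot repair this: that theorem perturbs the coefficients of $B$ and of the nef parts $\Mm^k$ inside a rational polytope, it does not allow you to cancel an ample contribution from an effective divisor. This is why the paper, in Lemma~\ref{lem: perturb gpair to make nlc locus ngklt locus}, only produces $\Delta\sim_{\Rr,U}B+\Mm_X+A$ with an ample $A$ still present, and never $\Delta\sim_{\Rr,U}B+\Mm_X$. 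Finding such a $G$ without the twist would amount to knowing that $\Mm_X$ admits an effective representative avoiding all glc centers, which can fail (e.g.\ when $\Mm_Y$ is nef but not semi-ample and its unique effective representative passes through a glc center).

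Second, and more seriously, even granting an auxiliary pair $(X,\Delta)$ with $\Nlc(X,\Delta)=\Ngklt(X,B,\Mm)$, the Ambro--Fujino cone theorem for non-lc pairs (Theorem~\ref{thm: cone theorem for not necessarily lc pairs}) reads
\[
\overline{NE}(X/U)=\overline{NE}(X/U)_{K_X+\Delta\geq 0}+\overline{NE}(X/U)_{\Nlc(X,\Delta)}+\sum_j R_j,
\]
and the listed rays $R_j$ are only those which are \emph{relatively ample at infinity}, i.e.\ avoid $\Nlc(X,\Delta)$. The middle term $\overline{NE}(X/U)_{\Nlc(X,\Delta)}$ is not automatically controlled, and extremal rays passing through the non-gklt locus are not covered. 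The paper handles this term by sub-adjunction (Theorem~\ref{thm: gpair subadjunction}): each glc center carries an induced NQC glc g-pair of strictly smaller dimension, and induction on $\dim X$ decomposes the image of its Mori cone into finitely many rational rays with the required length bound (this is Lemma~\ref{lem: low dimension gpair cone theorem imply finite extremal rays} and Claim~\ref{claim: gpair necone spanned by images first step}). You mention sub-adjunction only as a ``backup'' for rationality of faces, but it is in fact the core mechanism without which neither (1) nor (2) follows.
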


\begin{thm}[Base-point-free theorem for glc g-pairs]\label{thm: base-point-free theorem for glc pairs}
Let $(X,B,\Mm)/U$ be an NQC glc g-pair and $\pi: X\rightarrow U$ the associated projective morphism. Assume that $\Mm_X$ is $\Rr$-Cartier. Let $L$ be a $\pi$-nef Cartier divisor on $X$ that is the supporting function of a $(K_X+B+\Mm_X)$-negative extremal ray. Then $mL$ is $\pi$-generated for any integer $m\gg 0$.
\end{thm}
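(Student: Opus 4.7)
The strategy is to reduce, via perturbation, to the base-point-free theorem for (possibly non-lc) log pairs of Ambro \cite{Amb03} and Fujino \cite{Fuj11}. The argument splits according to the sign of $\Mm_X\cdot R$, where $R$ is the $(K_X+B+\Mm_X)$-negative extremal ray supported by $L$.

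Suppose first that $\Mm_X\cdot R\geq 0$. The key observation is that the ordinary pair $(X,B)$ is itself lc: if $h\colon W\to X$ is a log resolution on which $\Mm$ descends, then writing $K_W+B_W+\Mm_W=h^*(K_X+B+\Mm_X)$ and $K_W+B_W^{\mathrm{cl}}=h^*(K_X+B)$, the negativity lemma applied to $h^*\Mm_X-\Mm_W$ (which is $h$-nef and exceptional over $X$) gives $h^*\Mm_X\geq\Mm_W$, so $B_W^{\mathrm{cl}}=B_W-(h^*\Mm_X-\Mm_W)\leq B_W\leq 1$. From $\Mm_X\cdot R\geq 0$ and $(K_X+B+\Mm_X)\cdot R<0$ we also get $(K_X+B)\cdot R<0$. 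Since $L^{\perp}\cap\overline{NE}(X/U)=R$, a standard Kleiman/compactness argument shows that $aL-(K_X+B)$ is $\pi$-ample for $a\gg 0$, so the classical base-point-free theorem for lc pairs yields that $mL$ is $\pi$-generated (note that $\Nlc(X,B)=\emptyset$, so no further condition is required).

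Assume now that $\Mm_X\cdot R<0$. Replace $\Mm$ by $\Mm':=(1-\epsilon)\Mm$ for a sufficiently small $\epsilon>0$. On $W$ the discrepancy divisor of the perturbed g-pair becomes $B_W^{\mathrm{cl}}+(1-\epsilon)F$, where $F:=h^*\Mm_X-\Mm_W\geq 0$. Since glc-ness of $(X,B,\Mm)$ forces $F$ to vanish along every non-klt divisor of $(X,B)$, the components of coefficient one of this divisor are exactly those of $B_W^{\mathrm{cl}}$, so $\Ngklt(X,B,\Mm')=\Nklt(X,B)$; simultaneously $L\cdot R=0$, $(K_X+B+\Mm'_X)\cdot R<0$, and $\Mm'_X\cdot R<0$ are preserved. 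Next, fix an ample$/U$ divisor $A\geq 0$; for small $t>0$ I produce an effective $\Rr$-divisor $G\sim_{\Rr,U}\Mm'_X+tA$ as follows: on a birational model $Y$ where $\Mm$ descends, $\Mm'_Y+th^*A$ is nef and big, so Kodaira's lemma gives an effective representative of its $\Rr$-linear system, and pushing forward to $X$ delivers $G$. Taking $G$ general, $\Delta:=B+G$ defines an ordinary (sub-)pair with $\Nlc(X,\Delta)=\Nklt(X,B)=\Ngklt(X,B,\Mm')$, and $aL-(K_X+\Delta)\sim_{\Rr,U}aL-(K_X+B+\Mm'_X)-tA$ is $\pi$-ample for $a\gg 0$.

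It then suffices to show that $L|_{\Nlc(X,\Delta)}$ is $\pi$-semi-ample, after which the Ambro--Fujino base-point-free theorem for (possibly non-lc) pairs applied to $(X,\Delta)$ and $L$ concludes. By Kleiman's criterion this semi-ampleness (in fact $\pi$-ampleness) holds provided no curve $C\subseteq\Nlc(X,\Delta)=\Ngklt(X,B,\Mm')$ satisfies $[C]\in R$, for then $L\cdot C>0$ for every such $C$. Establishing this exclusion is the main obstacle. The plan is to argue by induction on $\dim X$: every glc center $V$ of $(X,B,\Mm')$ inherits an NQC glc g-pair structure of strictly smaller dimension via the generalized sub-adjunction of Han--Liu \cite{HL19}; combining the cone theorem (Theorem~\ref{thm: cone theorem glc g-pairs}) on $V$ with the inductive base-point-free theorem allows us to rule out $R$-curves on each $V$, hence on $\Nlc(X,\Delta)$, completing the proof.
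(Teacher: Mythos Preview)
Your treatment of the case $\Mm_X\cdot R\geq 0$ is correct and essentially matches the paper. In the case $\Mm_X\cdot R<0$, the perturbation $\Mm\rightsquigarrow(1-\epsilon)\Mm$ and the construction of a pair $(X,\Delta)$ with $\Nlc(X,\Delta)=\Nklt(X,B)$ are also fine; this is Lemma~\ref{lem: perturb gpair to make nlc locus ngklt locus}.

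The genuine gap is your final step. You assert that induction on dimension, via sub-adjunction and the cone theorem on each glc center $V$, ``allows us to rule out $R$-curves on each $V$''. This does not work, for two reasons. First, there is no mechanism here that excludes $R$-curves from glc centers: nothing in the cone theorem or in semi-ampleness of $L|_V$ prevents a curve $C\subset V$ with $[C]\in R$ (a semi-ample divisor can certainly be trivial on a curve). Second, even if you aim instead to prove $L|_V$ is semi-ample by induction, the inductive hypothesis is the wrong shape: your statement is only for $L$ supporting a single extremal \emph{ray}, whereas $L|_W$ (on the normalization $W$ of $V$) may cut out a higher-dimensional extremal face of $\overline{NE}(W/U)$, so the hypothesis does not apply. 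The ``exclusion'' you need is simply not available, and the plan collapses at this point.

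The paper avoids this obstacle entirely by a different device. After the same reduction to $\Ngklt(X,B,\Mm)=\Nklt(X,B)$, it invokes the special extraction Lemma~\ref{lem: special extraction} (whose proof uses Theorem~\ref{thm: existence of glc closure}, the main technical result) to produce a model $f:Y\to X$ on which $\Mm$ descends and $\Exc(f)=\Supp(f^*\Mm_X-\Mm_Y)$; crucially, $\Exc(f)$ then contains no lc place of $(X,B)$. This lets one absorb $\Mm_Y$ plus a small anti-effective exceptional perturbation into a general ample divisor on $Y$ to obtain an honestly \emph{lc} pair $(X,\Delta)$ with $\Delta\sim_{\Rr,U}B+\Mm_X+A$. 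Since $\Nlc(X,\Delta)=\emptyset$, the Ambro--Fujino base-point-free theorem applies with no condition to check on a non-lc locus, and the argument finishes in one line. The point is that the paper upgrades your ``$\Nlc(X,\Delta)=\Nklt(X,B)$'' to ``$\Nlc(X,\Delta)=\emptyset$'', and this upgrade is exactly where the new input (Theorem~\ref{thm: existence of glc closure}) enters.
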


\begin{thm}[Contraction theorem for glc g-pairs]\label{thm: contraction theorem glc g-pairs}
Let $(X,B,\Mm)/U$ be an NQC glc g-pair and $\pi: X\rightarrow U$ the associated projective morphism. Assume that $\Mm_X$ is $\Rr$-Cartier. Let $R$ be a $(K_X+B+\Mm_X)$-negative extremal ray. Then there exists a projective morphism $\cont_R: X\rightarrow Y$ over $U$ satisfying the following:
\begin{enumerate}
    \item Let $C$ be an integral curve such that $\pi(C)$ is a point. Then $\cont_R(C)$ is a point if and only if $[C]\in R$.
    \item $\mathcal{O}_Y\cong(\cont_R)_*\mathcal{O}_X$. In other words, $\cont_R$ is a contraction.
    \item Let $L$ be a line bundle on $X$ such that $L\cdot C=0$ for every curve $C$ such that $[C]\in R$. Then there exists a line bundle $L_Y$ on $Y$ such that $L\cong f^*L_Y$.
\end{enumerate}
\end{thm}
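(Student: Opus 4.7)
My plan is to deduce the contraction theorem from the cone theorem (Theorem \ref{thm: cone theorem glc g-pairs}) and the base-point-free theorem (Theorem \ref{thm: base-point-free theorem for glc pairs}) along the classical Kawamata--Shokurov route: construct a nef Cartier supporting function $L$ of $R$, apply base-point-freeness to produce a morphism, take its Stein factorization to obtain $\cont_R$, and then descend line bundles trivial on $R$.

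To produce $L$, I would fix an ample$/U$ $\Rr$-divisor $A$ small enough that $R\subset\overline{NE}(X/U)_{K_X+B+\Mm_X+A<0}$. By Theorem \ref{thm: cone theorem glc g-pairs}(3), only finitely many $(K_X+B+\Mm_X+A)$-negative extremal rays $R=R_1,\dots,R_k$ exist, each rational by Theorem \ref{thm: cone theorem glc g-pairs}(2). A standard separation argument in the cone (take $H+s(K_X+B+\Mm_X+A)$ for a sufficiently ample$/U$ Cartier divisor $H$ and a rational $s>0$ tuned so that the result vanishes on $R$ and is strictly positive on each $R_j$ with $j\geq 2$, as well as on $\overline{NE}(X/U)_{K_X+B+\Mm_X+A\geq 0}$) produces a $\Qq$-Cartier $\Qq$-divisor vanishing on $R$ and strictly positive on $\overline{NE}(X/U)\setminus R$; clearing denominators yields the Cartier divisor $L$.

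Since $L$ supports the $(K_X+B+\Mm_X)$-negative extremal ray $R$ and $\Mm_X$ is $\Rr$-Cartier, Theorem \ref{thm: base-point-free theorem for glc pairs} gives that $mL$ is $\pi$-generated for all $m\gg 0$. Let $\cont_R:X\to Y$ denote the Stein factorization over $U$ of the morphism defined by $|mL|$. Then $(\cont_R)_*\cO_X=\cO_Y$, proving (2); and for an integral curve $C$ with $\pi(C)=\{pt\}$, one has $\cont_R(C)=\{pt\}$ if and only if $L\cdot C=0$, equivalently $[C]\in R$, giving (1). Applying Theorem \ref{thm: base-point-free theorem for glc pairs} at two consecutive integers $m,m+1$ and subtracting the resulting pullbacks shows that $L$ itself descends, namely $L=\cont_R^*L_Y^{(0)}$ for some line bundle $L_Y^{(0)}$ on $Y$.

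For (3), let $L'$ be a line bundle with $L'\cdot C=0$ for every $C$ with $[C]\in R$. For $m\gg 0$ the divisor $mL+L'$ is again a nef Cartier supporting function of $R$: it vanishes on $R$, and a Mori-cone compactness argument using the finiteness in Theorem \ref{thm: cone theorem glc g-pairs}(3) shows $L$ has a uniform lower bound on compact slices of $\overline{NE}(X/U)\setminus R$, so $mL+L'$ remains positive off $R$ for $m$ large. Applying Theorem \ref{thm: base-point-free theorem for glc pairs} at two consecutive integers and subtracting once more gives $mL+L'=\cont_R^*N$ for some line bundle $N$ on $Y$, whence $L'\cong\cont_R^*(N-mL_Y^{(0)})$, completing (3). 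The step I expect to be most delicate is the construction of $L$: the perturbation must be tuned so that the null cone of $L$ is exactly $R$ (not a larger face), which requires combining rationality of $R$ with the finiteness in Theorem \ref{thm: cone theorem glc g-pairs}(3) to avoid assuming polyhedrality of $\overline{NE}(X/U)$ as a whole.
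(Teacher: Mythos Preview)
Your argument is correct and follows essentially the same Kawamata--Shokurov route as the paper: produce a Cartier supporting function for $R$ via the cone theorem, apply the base-point-free theorem to get the contraction, then descend line bundles trivial on $R$.

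The only substantive difference is in part (3). You stay over $U$ and argue that $mL+L'$ is again a supporting function of $R$ for $m\gg 0$; this works, but your phrase ``$L$ has a uniform lower bound on compact slices of $\overline{NE}(X/U)\setminus R$'' is not literally true (the infimum is $0$, approached near $R$). What you really use is the polyhedral decomposition from Theorem \ref{thm: cone theorem glc g-pairs}(3): on $\overline{NE}(X/U)_{K_X+B+\Mm_X+A\geq 0}$ and on each $R_j$ with $j\geq 2$, $L$ is bounded away from zero, so $mL+L'>0$ there for $m\gg 0$, while $mL+L'=0$ on $R$. The paper sidesteps this entirely by changing base to $Y$: since $\overline{NE}(X/Y)=R$ and $-(K_X+B+\Mm_X)$ is ample$/Y$, any line bundle $L'$ with $L'\cdot R=0$ is automatically a supporting function of $R$ over $Y$, and Theorem \ref{thm: base-point-free theorem for glc pairs} applies directly to $L'$ over $Y$. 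This avoids the compactness argument and the need to first descend $L$ itself.
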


\subsection{Preliminary results on non-lc pairs}

We begin by recalling some results on non-lc pairs. 

\begin{defn}
Let $(X,\Delta)$ be a sub-pair. A \emph{non-lc place} of $(X,\Delta)$ is a prime divisor $D$ over $X$ such that $a(D,X,\Delta)<0$. A \emph{non-lc center} of $(X,\Delta)$ is the center of a non-lc place of $(X,\Delta)$ on $X$. The \emph{non-lc locus} $\Nlc(X,\Delta)$ of $(X,\Delta)$ is the union of all non-lc centers of $(X,\Delta)$. 
\end{defn}

Some of the notation and results below are adopted from the theory of \emph{quasi-log varieties}. Although there are many papers in this direction, we will only use the results in \cite{Amb03,Fuj11}, and we will always translate them into the language of (not necessarily lc) pairs. To make these translations valid, we only need to recall the following result:
\begin{lem}[cf. {\cite[Example 4.3.1]{Amb03}}]
Let $(X,\Delta)$ be a pair. Then $(X,\Delta)$ can be considered as a quasi-log variety $[X,K_X+\Delta]$, such that $\Nlc(X,\Delta)$ is exactly the non-qlc locus of $[X,K_X+\Delta]$.
\end{lem}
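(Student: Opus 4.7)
The plan is to follow Ambro's construction in \cite[Example 4.3.1]{Amb03} essentially verbatim, the content being a translation between the pair framework and the quasi-log framework. I would take a log resolution $f\colon Y\to X$ of $(X,\Delta)$ and write
$$K_Y+\Delta_Y=f^*(K_X+\Delta).$$
Splitting $\Delta_Y=\Delta_Y^{<1}+\Delta_Y^{=1}+\Delta_Y^{>1}$, I would define the ``boundary part'' $B_Y:=\{\Delta_Y^{<1}\}+\Delta_Y^{=1}$, the effective divisor $N:=\lceil -\Delta_Y^{<1}\rceil$, and the ``non-lc part'' $F:=\Delta_Y^{>1}$. These satisfy $K_Y+B_Y+F-N=f^*(K_X+\Delta)$, with $(Y,\Supp(B_Y+F))$ simple normal crossing and $B_Y$ having coefficients in $[0,1]$. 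This is precisely the input data that equips $[X,K_X+\Delta]$ with the structure of a quasi-log variety via the structure morphism $f$.

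Next, I would verify the axioms of a quasi-log variety (as in \cite[Definition 4.1]{Amb03}, cf.\ \cite[Definition 3.29]{Fuj11}): the compatibility between $K_Y+B_Y$ and $f^*(K_X+\Delta)$ modulo the effective exceptional divisor $N$ and the non-lc part $-F$; the simple normal crossing property of $B_Y+F$; and the Kawamata--Viehweg type vanishing on qlc strata. All of these are exactly what is checked in \cite[Example 4.3.1]{Amb03}, using the standard vanishing theorems for log smooth pairs, so I would simply cite those results and transcribe the check.

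Finally, to match the loci: by definition the non-qlc locus of $[X,K_X+\Delta]$ is the scheme-theoretic image $f(\Supp F)$. Since $F=\Delta_Y^{>1}$ is supported on precisely the prime divisors $E$ of $Y$ with $a(E,X,\Delta)<0$, and since any non-lc place of $(X,\Delta)$ can be realized as such an $E$ after passing to a sufficiently high log resolution (an operation that does not change the associated quasi-log structure up to equivalence), we obtain $\Nlc(X,\Delta)=f(\Supp F)$ as sets. The main technical obstacle is the independence of the quasi-log structure from the choice of log resolution $f$, which is the content of the compatibility results in \cite[Section 4]{Amb03} and \cite[Section 3]{Fuj11}; once this is cited, the statement reduces to the two explicit equalities above.
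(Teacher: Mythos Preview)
The paper does not supply its own proof of this lemma; it is stated with the citation ``cf.\ \cite[Example 4.3.1]{Amb03}'' and no proof environment follows, so the result is treated as a direct import from Ambro's work. Your proposal is a correct and faithful expansion of exactly that reference, so it aligns with what the paper implicitly relies on.
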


\begin{defn}[cf. {\cite[Definition 5.2]{Amb03}, \cite[Theorem 4.5.2(1), Definition 6.7.1]{Fuj11}}]
Let $(X,\Delta)$ be a (not necessarily lc) pair. We define
$$\overline{NE}(X/U)_{\Nlc(X,\Delta)}:=\Ima(\overline{NE}(\Nlc(X,\Delta)/U)\rightarrow\overline{NE}(X/U)).$$
\end{defn}

\begin{defn}[{cf. \cite[Definition 5.3]{Amb03}, \cite[Definition 6.7.2]{Fuj11}}]
Let $(X,\Delta)$ be a (not necessarily lc) pair and $\pi: X\rightarrow U$ a projective morphism. Let $F$ be an extremal face of $\overline{NE}(X/U)$.
\begin{enumerate}
\item A \emph{supporting function} of $F$ is a  $\pi$-nef $\Rr$-divisor $H$ such that $F=\overline{NE}(X/U)\cap H^{\bot}$. If $H$ is a $\Qq$-divisor, we say that $H$ is a \emph{rational supporting function}. Since $F$ is an extremal face of $\overline{NE}(X/U)$, $F$ always has a supporting function.
    \item For any $\Rr$-Cartier $\Rr$-divisor $D$ on $X$, we say that $F$ is $D$-\emph{negative} if $$F\cap\overline{NE}(X/U)_{D\geq 0}=\{0\}.$$
    \item We say that $F$ is \emph{rational} if $F$ has a rational supporting function.
    \item We say that $F$ is \emph{relatively ample at infinity with respect to} $(X,\Delta)$  if $$F\cap\overline{NE}(X/U)_{\Nlc(X,\Delta)}=\{0\}.$$ Equivalently, $H|_{\Nlc(X,\Delta)}$ is $\pi|_{\Nlc(X,\Delta)}$-ample for any supporting function $H$ of $F$.
    \item We say that $F$ is \emph{contractible at infinity with respect to} $(X,\Delta)$ if $F$ has a rational supporting function $H$ and $H|_{\Nlc(X,\Delta)}$ is $\pi|_{\Nlc(X,\Delta)}$-semi-ample.
\end{enumerate}
\end{defn}

\begin{thm}[Cone theorem for not necessarily lc pairs, {cf. \cite[Theorem 5.10]{Amb03}, \cite[Theorems 4.5.2, 6.7.4]{Fuj11}}]\label{thm: cone theorem for not necessarily lc pairs}
Let $(X,\Delta)$ be a (not necessarily lc) pair and $\pi: X\rightarrow U$ a projective morphism. Let $\{R_j\}_{j\in\Lambda}$ be the set of $(K_X+\Delta)$-negative extremal rays in $\overline{NE}(X/U)$ that are rational and relatively ample at infinity with respect to $(X,\Delta)$. Then:
\begin{enumerate}
    \item $$\overline{NE}(X/U)=\overline{NE}(X/U)_{K_X+\Delta\geq 0}+\overline{NE}(X/U)_{\Nlc(X,\Delta)}+\sum_{j\in\Lambda} R_j.$$
    \item Each $R_j$ is spanned by a rational curve $C_j$ such that $\pi(C_j)=\{pt\}$ and $$0<-(K_X+\Delta)\cdot C_j\leq 2\dim X.$$
    \item For any $\pi$-ample $\Rr$-divisor $A$ on $X$, 
    $$\Lambda_A:=\{j\in\Lambda\mid R_j\subset\overline{NE}(X/U)_{K_X+\Delta+A<0}\}$$
    is a finite set. In particular, $\{R_j\}_{j\in\Lambda}$ is a discrete subset in $\overline{NE}(X/U)_{K_X+\Delta<0}$, and we may write
    $$\overline{NE}(X/U)=\overline{NE}(X/U)_{K_X+\Delta+A\geq 0}+\overline{NE}(X/U)_{\Nlc(X,\Delta)}+\sum_{j\in\Lambda_A}R_j.$$
    \item Let $F$ be a $(K_X+\Delta)$-negative extremal face in $\overline{NE}(X/U)$ that is relatively ample at infinity with respect to $(X,\Delta)$. Then $F$ is a rational extremal face, and is contractible at infinity with respect to $(X,\Delta)$.
\end{enumerate}
\end{thm}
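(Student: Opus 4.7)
The plan is to deduce the statement from the quasi-log variety framework of Ambro and Fujino, essentially just quoting \cite[Theorem 5.10]{Amb03} together with \cite[Theorems 4.5.2, 6.7.4]{Fuj11}. By the lemma immediately preceding the statement, the pair $(X,\Delta)$ determines a quasi-log variety $[X, K_X+\Delta]$ whose non-qlc locus is exactly $\Nlc(X,\Delta)$. All four assertions are established in that greater quasi-log generality in the cited references; one translates from quasi-log language back to pair language using the identification of non-qlc locus and non-lc locus.

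Were one to sketch the proof from scratch, the spine is as follows. The central technical input is a vanishing-and-torsion-freeness package for quasi-log varieties that generalizes Koll\'ar's torsion-free and vanishing theorems for semi-ample divisors on log resolutions: concretely, for a $\pi$-nef Cartier divisor $L$ with $L-(K_X+\Delta)$ $\pi$-ample, the restriction map $R^i\pi_*\mathcal{O}_X(L)\to R^i\pi_*\mathcal{O}_{\Nlc(X,\Delta)}(L)$ is injective for all $i\geq 0$. Combined with Shokurov's non-vanishing trick, this yields the base-point-free theorem in the quasi-log setting: if $L$ is $\pi$-nef with $aL-(K_X+\Delta)$ $\pi$-ample for some $a>0$ and $L|_{\Nlc(X,\Delta)}$ $\pi$-semi-ample, then $L$ itself is $\pi$-semi-ample. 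The rationality theorem carries over by the classical Kawamata-Shokurov argument applied away from $\Nlc(X,\Delta)$; once rationality and base-point-freeness are in place, parts (1), (3), and (4) follow by the formal argument of the usual cone theorem, where the hypothesis of relative ampleness at infinity is exactly what is needed to keep all arising contractions away from the non-lc locus.

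For the length bound (2), one reduces via base-point-freeness to the case of the contraction $\cont_R: X\to Y$ of the ray $R$. Because $R$ is relatively ample at infinity, a $\pi$-fiber curve contracted by $\cont_R$ can be chosen disjoint from $\Nlc(X,\Delta)$, so Mori's bend-and-break applies on the mildly singular locus $X\setminus\Nlc(X,\Delta)$ (after passing to a suitable resolution) to produce a rational curve $C_j$ spanning $R$ with $0<-(K_X+\Delta)\cdot C_j\leq 2\dim X$. The main obstacle in reconstructing the proof from scratch is the vanishing-and-torsion-freeness package: its proof requires a careful log resolution argument that tracks sub-boundary contributions and accommodates the reducible, possibly non-reduced scheme structure of the non-lc locus. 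Once those tools are in place the cone-theoretic consequences follow by now-standard manipulations, and for the present paper all of this is imported wholesale through the cited references.
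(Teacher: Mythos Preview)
Your proposal is correct and matches the paper's treatment: the paper does not prove this theorem at all but simply imports it from \cite[Theorem 5.10]{Amb03} and \cite[Theorems 4.5.2, 6.7.4]{Fuj11}, using the preceding lemma to identify $(X,\Delta)$ with the quasi-log variety $[X,K_X+\Delta]$ and $\Nlc(X,\Delta)$ with its non-qlc locus. Your additional sketch of the underlying argument is extra content the paper does not include, but the core approach---reduce to the quasi-log setting and cite---is exactly what the paper does.
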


\begin{thm}[Base-point-free theorem for not necessarily lc pairs, {cf. \cite[Theorem 5.3]{Amb03}, \cite[Theorems 4.5.5, 6.5.1]{Fuj11}}]\label{thm: base-point-free theorem for not necessarily lc pairs}
Let $(X,\Delta)$ be a (not necessarily lc) pair and $\pi: X\rightarrow U$ a projective morphism. Let $L$ be a $\pi$-nef Cartier divisor on $X$. Assume that
\begin{enumerate}
    \item $qL-(K_X+\Delta)$ is $\pi$-ample for some real number $q>0$, and
    \item $mL|_{\Nlc(X,\Delta)}$ is $\pi|_{\Nlc(X,\Delta)}$-generated for any $m\gg 0$,
\end{enumerate}
then $mL$ is $\pi$-generated for any $m\gg 0$. In particular, $L$ is $\pi$-semi-ample.
\end{thm}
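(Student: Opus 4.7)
The plan is to deduce this statement as a direct translation of Fujino's base-point-free theorem for quasi-log schemes. Recall the lemma cited just above Definition $5.3$ (following Ambro's Example $4.3.1$): any pair $(X,\Delta)$ carries a natural quasi-log structure $[X,K_X+\Delta]$, and moreover the non-lc locus $\Nlc(X,\Delta)$ coincides with the non-qlc locus of this quasi-log scheme, not merely as a set but as a subscheme (the defining ideal sheaf of the non-qlc locus is precisely the non-lc ideal sheaf $\mathcal{J}_{\mathrm{NLC}}(X,\Delta)$). Under this identification the two hypotheses of our theorem translate literally into the hypotheses of the quasi-log base-point-free theorem.

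Having made this identification, I would first verify condition (1): since $qL-(K_X+\Delta)$ is $\pi$-ample, the quasi-log divisor attached to $[X,K_X+\Delta]$ satisfies the usual positivity required of Fujino's statement. For condition (2), the hypothesis that $mL|_{\Nlc(X,\Delta)}$ is $\pi|_{\Nlc(X,\Delta)}$-generated for $m\gg 0$ is exactly the freeness assumption on the non-qlc locus that appears in \cite[Theorem~6.5.1]{Fuj11} (cf.\ \cite[Theorem~5.3]{Amb03}).

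With these checks in place, applying \cite[Theorem~6.5.1]{Fuj11} to the quasi-log pair $[X,K_X+\Delta]$ yields directly that $mL$ is $\pi$-generated for all $m\gg 0$. The ``in particular'' statement about $\pi$-semi-ampleness is then immediate from the definition, since a Cartier divisor whose multiples are eventually relatively generated is by definition relatively semi-ample.

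The only real subtlety is the scheme-theoretic compatibility in the translation (that Ambro's non-qlc ideal really equals the standard non-lc ideal, so that ``generated on $\Nlc(X,\Delta)$'' matches the quasi-log hypothesis), rather than any new geometric argument. Everything else is a bookkeeping exercise in converting pair notation into quasi-log notation. Since this is precisely the content of Ambro's Example $4.3.1$, the reduction is automatic, and no genuine obstacle arises.
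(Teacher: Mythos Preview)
Your proposal is correct and matches the paper's approach exactly: the paper does not supply an independent proof but simply cites \cite[Theorem~5.3]{Amb03} and \cite[Theorems~4.5.5, 6.5.1]{Fuj11}, having already recorded (via \cite[Example~4.3.1]{Amb03}) that $(X,\Delta)$ carries a quasi-log structure with $\Nlc(X,\Delta)$ equal to the non-qlc locus. Your write-up is precisely the unpacking of that citation.
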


We also include the following contraction theorem for the sake of completeness.
\begin{thm}[Contraction theorem for not necessarily lc pairs, {cf. \cite[Theorem 5.6, Lemma 6.3]{Amb03},  \cite[Theorems 4.5.2(4), 6.7.3]{Fuj11}}]\label{thm: contraction theorem for not necessarily lc pairs}
Let $(X,\Delta)$ be a (not necessarily lc) pair and $\pi: X\rightarrow U$ a projective morphism. Let $H$ be a $\pi$-nef Cartier divisor on $X$, $F:=\overline{NE}(X/U)\cap H^{\bot}$ an extremal face of $\overline{NE}(X/U)$, such that $F$ is $(K_X+\Delta)$-negative and contractible at infinity with respect to $(X,\Delta)$. Then there exists a projective morphism $\cont_F: X\rightarrow Y$ over $U$ satisfying the following:
\begin{enumerate}
    \item Let $C$ be an integral curve such that $\pi(C)$ is a point. Then $\cont_F(C)$ is a point if and only if $[C]\in F$.
    \item $\mathcal{O}_Y\cong(\cont_F)_*\mathcal{O}_X$. In other words, $\cont_F$ is a contraction.
    \item Let $L$ be a line bundle on $X$ such that $L\cdot C=0$ for every curve $C$ such that $[C]\in F$. Assume that $L^{\otimes m}|_{\Nlc(X,\Delta)}$ is $\cont_F|_{\Nlc(X,\Delta)}$-generated for every $m\gg 0$. Then there exists a line bundle $L_Y$ on $Y$ such that $L\cong(\cont_F)^*L_Y$.
\end{enumerate}
\end{thm}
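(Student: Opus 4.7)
The plan is to apply the base-point-free theorem for non-lc pairs (Theorem~\ref{thm: base-point-free theorem for not necessarily lc pairs}) to $H$ to define $\cont_F$, then to apply it again to suitable perturbations $\pm pL + nH$ in order to descend an arbitrary $F$-trivial line bundle. First, I would verify the two hypotheses of Theorem~\ref{thm: base-point-free theorem for not necessarily lc pairs} for $H$ itself. Since $F$ is $(K_X+\Delta)$-negative and $H$ is a supporting function, a standard compactness/Kleiman argument on the quotient cone $\overline{NE}(X/U)/F$ (where $H$ becomes ample) shows that $qH - (K_X+\Delta)$ is $\pi$-ample for some $q > 0$. The hypothesis that $F$ is contractible at infinity with respect to $(X,\Delta)$ says exactly that $H|_{\Nlc(X,\Delta)}$ is $\pi|_{\Nlc(X,\Delta)}$-semi-ample, so $mH|_{\Nlc(X,\Delta)}$ is $\pi|_{\Nlc(X,\Delta)}$-generated for $m$ sufficiently divisible. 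Theorem~\ref{thm: base-point-free theorem for not necessarily lc pairs} then yields that $mH$ is $\pi$-generated for $m \gg 0$, and I define $\cont_F: X \to Y$ to be the Stein factorization of the induced morphism over $U$.

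Items (1) and (2) will be essentially formal once $\cont_F$ is constructed: (2) is built into the Stein factorization, and (1) follows because an integral curve $C$ with $\pi(C)$ a point is contracted by $\cont_F$ iff $mH \cdot C = 0$, iff $H \cdot C = 0$, iff $[C] \in H^{\perp} \cap \overline{NE}(X/U) = F$.

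For item (3), given a line bundle $L$ with $L \cdot C = 0$ for every curve $C$ with $[C] \in F$, I would consider the two Cartier divisors $M^{\pm} := \pm p L + nH$ for positive integers $p, n$ with $n$ sufficiently large compared to $p$. Since $L \cdot F = 0$ and $H$ is positive on $\overline{NE}(X/U) \setminus F$, the divisors $M^{\pm}$ are $\pi$-nef once $n \gg p$, and a slight strengthening of the Kleiman argument from Step~1 shows $qM^{\pm} - (K_X+\Delta)$ is $\pi$-ample for some $q > 0$. On $\Nlc(X,\Delta)$, the hypothesis that $L^{\otimes m}|_{\Nlc(X,\Delta)}$ is $\cont_F|_{\Nlc(X,\Delta)}$-generated (and hence $\pi|_{\Nlc(X,\Delta)}$-generated, as $\cont_F$ factors $\pi$) combined with the semi-ampleness of $H|_{\Nlc(X,\Delta)}$ implies that $m M^{\pm}|_{\Nlc(X,\Delta)}$ is $\pi|_{\Nlc(X,\Delta)}$-generated for $m \gg 0$. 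Theorem~\ref{thm: base-point-free theorem for not necessarily lc pairs} then gives that $M^{\pm}$ is $\pi$-semi-ample. Since $M^{\pm} \cdot F = 0$, the morphism defined by a sufficiently divisible multiple of $M^{\pm}$ factors through $\cont_F$, so there exist line bundles $N^{\pm}$ on $Y$ and an integer $a > 0$ with $aM^{\pm} \cong \cont_F^{*} N^{\pm}$. Taking the difference yields $L^{\otimes 2ap} \cong \cont_F^{*}((N^{+}) \otimes (N^{-})^{-1})$, so some power of $L$ descends.

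To descend $L$ itself I would repeat the argument with $p$ replaced by $p+1$, obtaining that $L^{\otimes 2a(p+1)}$ also descends; taking the ratio of the two descent data shows $L^{\otimes 2a}$ descends, and iterating (or arguing that the cokernel of $\cont_F^{*}: \Pic(Y) \to \Pic(X)$ is torsion-free on the $F$-trivial subgroup) ultimately yields $L \cong \cont_F^{*} L_Y$. The main technical obstacle is this last descent from a multiple of $L$ to $L$ itself, together with verifying relative ampleness of $qM^{\pm} - (K_X+\Delta)$ uniformly in the perturbation parameters; both are handled by Kleiman's criterion applied carefully to the face $F$, but the bookkeeping around the generation condition on $\Nlc(X,\Delta)$ (which mixes generation relative to $\cont_F$ with generation relative to $\pi$) is where care is most needed.
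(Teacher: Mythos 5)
The paper does not prove Theorem \ref{thm: contraction theorem for not necessarily lc pairs} at all: it is recalled verbatim from the literature (Ambro and Fujino, as cited) and used as a black box, so there is no internal proof to compare against. Commenting on your argument on its own terms: Step 1 is sound. The compactness/Kleiman argument does give $qH-(K_X+\Delta)$ $\pi$-ample for some $q>0$ from $(K_X+\Delta)$-negativity of $F$ alone, and contractibility at infinity supplies the second hypothesis of Theorem \ref{thm: base-point-free theorem for not necessarily lc pairs}; items (1) and (2) then follow.

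Step 3, however, has a genuine gap. You assert that $M^{\pm}=\pm pL+nH$ is $\pi$-nef for $n\gg p$ as though it were a compactness fact, but since $L$ and $H$ both vanish on $F$ there is no a priori bound for $|L|/H$ on a cross-section of $\overline{NE}(X/U)$ near $F$: for instance, in the round cone $z\geq\sqrt{x^2+y^2}$ with $F=\Rr_{\geq0}(0,1,1)$, $H=z-y$ and $L=x$, one has $nH-L<0$ on boundary points arbitrarily close to $F$ for every $n$. In the geometric situation the nefness is rescued only by the cone theorem for non-lc pairs -- the $(K_X+\Delta)$-negative part of the cone near $F$ is rational polyhedral away from the $\overline{NE}(X/U)_{\Nlc(X,\Delta)}$ contribution, and on that part the hypothesis that $L|_{\Nlc(X,\Delta)}$ is $\cont_F|_{\Nlc(X,\Delta)}$-semi-ample is essential -- so this step needs an argument you have not supplied. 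Two further issues: the parenthetical ``and hence $\pi|_{\Nlc}$-generated'' is backwards ($\pi$-generation is the stronger condition and does not follow from $\cont_F$-generation), and the final descent from a power of $L$ to $L$ itself is left vague.

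The route underlying the cited proofs avoids all of this: once $\cont_F$ exists, apply Theorem \ref{thm: base-point-free theorem for not necessarily lc pairs} over $Y$ rather than over $U$. Since $\overline{NE}(X/Y)=F$ is $(K_X+\Delta)$-negative, $-(K_X+\Delta)$ is already $\cont_F$-ample, so $qL-(K_X+\Delta)$ is $\cont_F$-ample for every $q>0$ with no perturbation and no nefness problem; the hypothesis on $L|_{\Nlc(X,\Delta)}$ is precisely condition (2) of that theorem with $\pi$ replaced by $\cont_F$; one concludes that $L^{\otimes m}$ is $\cont_F$-generated and $\cont_F$-numerically trivial for $m\gg0$, hence $L^{\otimes m}\cong\cont_F^*N_m$ for some $N_m\in\Pic(Y)$, whence $L\cong\cont_F^*(N_{m+1}\otimes N_m^{-1})$.
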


\subsection{Sub-adjunction}

We need the following sub-adjunction result for NQC glc g-pairs:

\begin{thm}[Generalized sub-adjunction, {cf. \cite[Theorem 5.1]{HL19}}]\label{thm: gpair subadjunction}
Let $(X,B,\Mm)/U$ be an NQC glc g-pair, $\tilde W$ a glc center of $(X,B,\Mm)$, and $W$ the normalization of $\tilde W$. Let $\iota: W\rightarrow X$ be the induced morphism. Then there exists an NQC glc g-pair $(W,B_W,\Mm^W)/U$ on $W$, such that
$$K_{W}+B_W+\Mm^W_{W}\sim_{\Rr}\iota^*(K_X+B+\Mm_X).$$
\end{thm}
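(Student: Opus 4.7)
The plan is to reduce the theorem to iterated divisorial adjunction via a gdlt modification, and then to descend to $W$ using a generalized canonical bundle formula. First, take a gdlt modification $g\colon Y\to X$ of $(X,B,\Mm)$ via Definition-Lemma \ref{deflem: gdlt modification}, so that $K_Y+B_Y+\Mm_Y = g^*(K_X+B+\Mm_X)$ and $(Y,B_Y,\Mm)$ is $\Qq$-factorial gdlt. Since $\tilde W$ is a glc center of $(X,B,\Mm)$, there is a glc place whose center on $X$ is $\tilde W$; after possibly performing a further gdlt modification extracting such a place, we may arrange that it appears as a prime component $S_0\subset\lfloor B_Y\rfloor$ with $g(S_0)=\tilde W$. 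Divisorial adjunction for NQC glc g-pairs then produces an NQC glc g-pair $(S_0,B_{S_0},\Mm^{S_0})$ on $S_0$ satisfying $(K_Y+B_Y+\Mm_Y)|_{S_0} = K_{S_0}+B_{S_0}+\Mm^{S_0}_{S_0}$.

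Iterate: if a divisorial glc center of the current g-pair still dominates $\tilde W$, choose it (after a further gdlt modification if necessary) and apply divisorial adjunction again. Since dimensions strictly decrease, this terminates with an NQC glc g-pair $(S,B_S,\Mm^S)$ which is gklt over the generic point of $\tilde W$; by standard properties of minimal glc centers, $S$ is normal, and the Stein factorization of $S\to\tilde W$ identifies its intermediate variety with the normalization $W$ of $\tilde W$. Write the induced contraction as $h\colon S\to W$; then $g|_S = \iota\circ h$. Apply the generalized canonical bundle formula to $h$: define the discriminant $\Rr$-divisor $B_W$ coefficient-wise via generalized log canonical thresholds along the prime divisors of $W$, and let $\Mm^W$ be the $\bb$-divisor determined by
$$K_S+B_S+\Mm^S_S \sim_{\Rr} h^*(K_W+B_W+\Mm^W_W).$$
Using $g|_S = \iota\circ h$ and the fact that $h$ is a contraction, this descends to $\iota^*(K_X+B+\Mm_X)\sim_{\Rr} K_W+B_W+\Mm^W_W$. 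Effectivity of $B_W$ and the glc property of $(W,B_W,\Mm^W)$ follow from the definition of the discriminant together with the glc hypothesis on $(S,B_S,\Mm^S)$.

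The main obstacle is verifying that $\Mm^W$ is NQC$/U$. Using the Shokurov-type polytope of Theorem \ref{thm: shokurov polytope gpair}, decompose $(X,B,\Mm)$ as an $\Rr_{\geq 0}$-linear combination (with coefficients summing to one) of NQC glc $\Qq$-g-pairs $(X,B^i,\Mm^i)$; this reduces the NQC claim to showing that for each $\Qq$-summand the moduli $\bb$-divisor produced by the canonical bundle formula is nef over $U$. For a single $\Qq$-g-pair, this is the generalized-pair analogue of Ambro--Kawamata semi-positivity of the moduli part of the canonical bundle formula, established in the generalized setting by Filipazzi and others. Summing the resulting nef $\Qq$-Cartier $\bb$-divisors with the positive $\Rr$-coefficients then exhibits $\Mm^W$ as NQC$/U$, completing the proof.
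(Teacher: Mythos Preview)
The paper does not supply its own proof of this theorem; it is quoted from \cite[Theorem 5.1]{HL19}, with the subsequent remark pointing out that the NQC assertion appears in the body of that reference. Your outline is broadly the strategy behind that result: pass to a gdlt model, apply iterated divisorial adjunction down to a minimal stratum $S$ dominating $\tilde W$, then descend via a canonical bundle formula and check nefness of the moduli part.

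The substantive gap is your claim that ``the Stein factorization of $S\to\tilde W$ identifies its intermediate variety with the normalization $W$ of $\tilde W$,'' so that $h\colon S\to W$ is a contraction. This is equivalent to asserting that the generically finite map $S\to W$ is birational, and that is \emph{not} a ``standard property of minimal glc centers'': a single minimal stratum $S$ produced by your iteration need only dominate $W$ generically finitely, possibly with degree greater than one. This is precisely why \cite{HL19} is about a canonical bundle formula for \emph{generically finite} morphisms: the technical core there is to establish the formula, together with nefness of the moduli $\bb$-divisor, for the generically finite map $S\to W$ directly, without first reducing to a contraction. Your sketch is repaired by invoking that version of the canonical bundle formula at this step rather than the one for contractions. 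Alternatively one could attempt to show $S\to W$ is birational via connectedness of the non-klt locus and $\mathbb{P}^1$-linking of minimal centers (cf.\ \cite{Bir20c}, \cite{FS20b}), but that is a nontrivial argument in its own right and not what you wrote.
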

\begin{rem}
Note that the fact that $(W,B_W,\Mm_W)/U$ is NQC is not written down in the statement of \cite[Thereom 5.1]{HL19}, but it is stated on line -3 of Page 25 of \cite{HL19}.
\end{rem}

\subsection{Proof of the cone theorem}

In this subsection, we prove the cone theorem (Theorem \ref{thm: cone theorem glc g-pairs}).  We first prove a useful lemma which allows us to associate a generalized lc pair with a (not necessarily lc) pair.

\begin{lem}\label{lem: perturb gpair to make nlc locus ngklt locus}
Let $(X,B,\Mm)/U$ be a glc g-pair and $A$ a nef and big$/U$ $\Rr$-divisor on $X$. Then there exists a pair $(X,\Delta)$, such that
\begin{enumerate}
    \item $\Delta\sim_{\Rr,U}B+\Mm_X+A$, and
    \item $\Nlc(X,\Delta)=\Ngklt(X,B,\Mm)$.
\end{enumerate}
\end{lem}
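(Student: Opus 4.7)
The strategy is to lift to a log resolution, apply Kodaira's lemma to the nef and big divisor $\Mm_W + h^*A$, and push an effective representative back down to $X$; the key technical input is a negativity lemma that identifies the naive construction with the genuine pullback formula. Take a log resolution $h\colon W \to X$ of $(X, \Supp B)$ on which $\Mm$ descends, and write $K_W + B_W + \Mm_W = h^*(K_X + B + \Mm_X)$ with $\Mm_W$ nef and $\Rr$-Cartier$/U$. Let $B_W^{=1}$ be the reduced divisor whose components are the prime divisors of coefficient exactly $1$ in $B_W$; by the definition of glc places, $h(\Supp B_W^{=1}) = \Ngklt(X, B, \Mm)$.

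Since $h^*A$ is big$/U$ and $\Mm_W$ is nef$/U$, the divisor $\Mm_W + h^*A$ is nef and big$/U$. By Kodaira's lemma, for some small $\epsilon > 0$ we write $\Mm_W + h^*A \sim_{\Rr,U} A_0 + \epsilon F$ with $A_0$ ample$/U$ and $F \geq 0$; after replacing $W$ by a higher log resolution (on which $\Mm$ still descends) we may assume $(W, \Supp(B_W + F))$ is log smooth. For sufficiently small $\delta > 0$, $A_0 - \delta B_W^{=1}$ is still ample$/U$, so (applying Bertini to a very ample multiple) we may choose a general effective $\Rr$-divisor $H_W \sim_{\Rr,U} A_0 - \delta B_W^{=1}$ of arbitrarily small coefficients with $(W, \Supp(B_W + F + H_W))$ log smooth. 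Set
$$G_W := H_W + \epsilon F + \delta B_W^{=1} \geq 0, \qquad G_W \sim_{\Rr,U} \Mm_W + h^*A.$$
For small $\epsilon, \delta$ and general $H_W$, the coefficient of $B_W + G_W$ along a prime divisor on $W$ exceeds $1$ precisely on the components of $B_W^{=1}$ (where it is $\geq 1 + \delta$); so $(W, B_W + G_W)$ is log smooth with non-lc locus equal to $\Supp B_W^{=1}$.

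Set $\Delta := h_*(B_W + G_W) = B + h_*G_W \geq 0$. Pushing $K_W + B_W + G_W \sim_{\Rr,U} h^*(K_X + B + \Mm_X + A)$ down to $X$ yields $K_X + \Delta \sim_{\Rr,U} K_X + B + \Mm_X + A$, so $(X, \Delta)$ is a pair and $\Delta \sim_{\Rr,U} B + \Mm_X + A$, giving (1).

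The crux is the identity $h^*(K_X + \Delta) = K_W + B_W + G_W$ on the nose. Setting $T := h^*(K_X + \Delta) - (K_W + B_W + G_W)$, pushforward gives $h_*T = 0$, so $T$ is $h$-exceptional; and since both sides are $\sim_{\Rr,U} h^*(K_X + B + \Mm_X + A)$, one has $T \sim_{\Rr,U} 0$, hence $T$ is numerically trivial over $U$ and in particular $h$-numerically trivial. The negativity lemma applied to both $T$ and $-T$ forces $T = 0$. Thus $(W, B_W + G_W)$ computes the log discrepancies of $(X, \Delta)$, and by log smoothness
$$\Nlc(X, \Delta) = h(\Supp B_W^{=1}) = \Ngklt(X, B, \Mm),$$
which is (2). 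The main obstacle is precisely this negativity lemma step: without it, the naive $B_W + G_W$ and the genuine log-discrepancy divisor $h^*(K_X + \Delta) - K_W$ could a priori differ by a nonzero $h$-exceptional divisor $\sim_{\Rr,U} 0$, which would invalidate the coefficient analysis.
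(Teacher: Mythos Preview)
Your proof is correct and follows essentially the same route as the paper's: lift to a log resolution where $\Mm$ descends, use Kodaira's lemma on the nef and big divisor $\Mm_W + h^*A$, perturb by a small multiple of $B_W^{=1}$ so that the non-lc locus of the resulting log smooth sub-pair is exactly $\Supp B_W^{=1}$, and push down. In fact you supply a step the paper omits: the paper simply asserts that $(X,\Delta := h_*\Delta_W)$ works, whereas you verify via the negativity lemma that $h^*(K_X+\Delta) = K_W + B_W + G_W$ on the nose, which is what makes the coefficient analysis on $W$ compute $\Nlc(X,\Delta)$.

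One small imprecision: when you ``replace $W$ by a higher log resolution'' to force $(W,\Supp(B_W+F))$ log smooth, the pullback of $A_0$ is only nef and big, not ample, so the decomposition $\Mm_W + h^*A \sim_{\Rr,U} A_0 + \epsilon F$ does not persist as written. This is easily repaired---either re-apply Kodaira on the new model with the extra effective part supported on the exceptional locus of the further blow-up (which is automatically SNC with the new $B_W$), or, as the paper does, skip the replacement entirely and argue directly that for small $\epsilon$ the non-lc locus of $(W, B_W + \epsilon F)$ is contained in $\Supp B_W^{=1}$, since $(W,B_W)$ is klt away from $\Supp B_W^{=1}$ and one log resolution of $(W,\Supp(B_W+F))$ involves only finitely many discrepancies.
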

\begin{proof}
Let $h: W\rightarrow X$ be a log resolution of $(X,\Supp B)$ such that $\Mm$ descends to $W$, and suppose that
$$K_W+B_W+\Mm_W=h^*(K_X+B+\Mm_X)$$
for some sub-glc g-sub-pair $(W,B_W,\Mm)/U$. Since $\Mm_W$ is nef$/U$, $\Mm_W+h^*A$ is nef and big$/U$. Thus there exists an $\Rr$-divisor $E\geq 0$ such that
$$\Mm_W+h^*A=H_n+\frac{1}{n}E$$
for any positive integer $n$ and some ample$/U$ $\Rr$-divisors $H_n$ on $W$. Since $h: W\rightarrow X$ is a log resolution of $(X,\Supp B)$, we may pick $n\gg 0$ such that $\Nlc(W,B_W+\frac{1}{n}E)\subset\Supp B_W^{=1}$. In particular, for any positive real number $\epsilon$, $\Nlc(W,B_W+\epsilon B_W^{=1}+\frac{1}{n}E)=\Supp B_W^{=1}$.

Now we may pick a real number $0<\epsilon_0\ll 1$ such that $H_n-\epsilon_0 B_W^{=1}$ is ample$/U$. Then we may pick $0\leq A_W\sim_{\Rr,U}H_n-\epsilon_0 B_W^{=1}$ such that $(W,\Delta_W:=B_W+\epsilon_0 B_W^{=1}+\frac{1}{n}E+A_W)$ is a sub-pair and $\Nlc(W,\Delta_W)=\Supp B_W^{=1}$.

$(X,\Delta:=h_*\Delta_W)$ satisfies our requirements.
\end{proof}

\begin{lem}\label{lem: low dimension gpair cone theorem imply finite extremal rays}
Let $d\geq 2$ be an integer. Assume Theorem \ref{thm: cone theorem glc g-pairs} in dimension $\leq d-1$.

Let $(X,B,\Mm)/U$ be an NQC glc g-pair of dimension $d$ and $\pi: X\rightarrow U$ the associated projective morphism. Let $A$ be an ample$/U$ $\Rr$-divisor on $X$ and $\{R_j\}_{j\in\Lambda'_A}$ the set of $(K_X+B+\Mm_X+A)$-negative extremal rays (that are not necessarily rational) in $\overline{NE}(X/U)$. Then:
\begin{enumerate}
    \item $\Lambda'_A$ is a finite set. In particular,
    $$\overline{NE}(X/U)=\overline{NE}(X/U)_{K_X+B+\Mm_X+A\geq 0}+\sum_{j\in\Lambda'_A}R_j.$$
    \item For any $j\in\Lambda_A'$, $R_j$ is spanned by a rational curve $C_j$ such that $\pi(C_j)=\{pt\}$ and
    $$0<-(K_X+B+\Mm_X+A)\cdot C_j\leq 2\dim X.$$
\end{enumerate}
\end{lem}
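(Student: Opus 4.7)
The plan is to reduce the finiteness statement to the cone theorem for (not necessarily lc) pairs (Theorem \ref{thm: cone theorem for not necessarily lc pairs}), using Lemma \ref{lem: perturb gpair to make nlc locus ngklt locus} to convert the generalized pair structure into a pair with prescribed non-lc locus, and then to handle the residual $(K_X+B+\Mm_X+A)$-negative extremal rays supported in the non-gklt locus via sub-adjunction (Theorem \ref{thm: gpair subadjunction}) and the inductive hypothesis on normalizations of glc centers, which have dimension $\leq d-1$.

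First I split $A = A_1 + A_2$ with $A_1 := A_2 := \tfrac{1}{2}A$, both ample$/U$. Lemma \ref{lem: perturb gpair to make nlc locus ngklt locus} applied with the ample divisor $A_1$ produces a pair $(X,\Delta)$ with $\Delta \sim_{\Rr,U} B + \Mm_X + A_1$ and $\Nlc(X,\Delta) = \Ngklt(X,B,\Mm)$. Since $K_X+\Delta+A_2 \sim_{\Rr,U} K_X+B+\Mm_X+A$, applying Theorem \ref{thm: cone theorem for not necessarily lc pairs}(1)--(3) to $(X,\Delta)$ with $\pi$-ample divisor $A_2$ yields a finite collection $\{R_j\}_{j\in\Lambda_{A_2}(\Delta)}$ of rational $(K_X+\Delta+A_2)$-negative extremal rays together with
$$\overline{NE}(X/U) = \overline{NE}(X/U)_{K_X+\Delta+A_2 \geq 0} + \overline{NE}(X/U)_{\Nlc(X,\Delta)} + \sum_{j\in\Lambda_{A_2}(\Delta)} R_j.$$
Each $R_j$ is spanned by a rational curve $C_j$ with $0 < -(K_X+\Delta)\cdot C_j \leq 2\dim X$, and since $A_2 \cdot C_j > 0$ by ampleness, the desired length bound $0 < -(K_X+B+\Mm_X+A)\cdot C_j \leq 2\dim X$ transfers automatically.

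What remains is to bound the $(K_X+B+\Mm_X+A)$-negative extremal rays $R$ of $\overline{NE}(X/U)$ contained in $\overline{NE}(X/U)_{\Nlc(X,\Delta)}$. The set of glc centers of $(X,B,\Mm)$ is finite, and each is a proper subvariety of $X$; let $\iota_i: W_i\to X$ denote the normalization of the $i$-th such center (a finite morphism). Any such $R$ lies in the image of $(\iota_i)_*: \overline{NE}(W_i/U)\to \overline{NE}(X/U)$ for some $i$. By sub-adjunction (Theorem \ref{thm: gpair subadjunction}), each $W_i$ carries an NQC glc g-pair structure $(W_i,B_{W_i},\Mm^{W_i})/U$ with $K_{W_i}+B_{W_i}+\Mm^{W_i}_{W_i}\sim_{\Rr}\iota_i^*(K_X+B+\Mm_X)$. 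Since $\iota_i$ is finite, $\iota_i^*A$ is ample$/U$ on $W_i$ and $\dim W_i \leq d-1$, so the inductive hypothesis (Theorem \ref{thm: cone theorem glc g-pairs}(3) in dimension $\leq d-1$) applied to $(W_i, B_{W_i}, \Mm^{W_i})/U$ with $\iota_i^*A$ produces finitely many $(K_{W_i}+B_{W_i}+\Mm^{W_i}_{W_i}+\iota_i^*A)$-negative extremal rays $\{R_k^{W_i}\}$, each spanned by a rational curve $\gamma_k^{W_i}$ of length bounded by $2\dim W_i$. A standard extremality argument (decompose a representative class on $W_i$ via the inductive cone decomposition, push forward by $(\iota_i)_*$, and invoke extremality of $R$ on $X$) then forces $R$ to coincide with $(\iota_i)_* R_k^{W_i}$ for some $k$; the projection formula together with the rationality of the finite image of a rational curve yields $0 < -(K_X+B+\Mm_X+A)\cdot \iota_i(\gamma_k^{W_i}) \leq 2\dim W_i \leq 2(d-1) < 2\dim X$.

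Putting both sources together, $\Lambda'_A$ is finite, which gives (2) and the length bound; the decomposition in (1) then follows from finiteness via the standard fact that $\overline{NE}(X/U)$ is the closed convex hull of its extremal rays. The main obstacle is the extremality step in the preceding paragraph: one must ensure that no $(K_X+B+\Mm_X+A)$-negative extremal ray of $\overline{NE}(X/U)$ supported in $\Ngklt(X,B,\Mm)$ escapes detection when passing to the normalization of a glc center. This is precisely what sub-adjunction buys us, since the relation $K_{W_i}+B_{W_i}+\Mm^{W_i}_{W_i}\sim_{\Rr}\iota_i^*(K_X+B+\Mm_X)$ makes the intersection pairing on $W_i$ compatible, via the projection formula, with the pairing of pushforwards against $K_X+B+\Mm_X+A$.
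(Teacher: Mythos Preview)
Your proof is correct and follows essentially the same strategy as the paper: perturb via Lemma~\ref{lem: perturb gpair to make nlc locus ngklt locus} to replace the g-pair by an ordinary pair with $\Nlc(X,\Delta)=\Ngklt(X,B,\Mm)$, invoke the non-lc cone theorem (Theorem~\ref{thm: cone theorem for not necessarily lc pairs}), and handle the contribution from $\overline{NE}(X/U)_{\Nlc(X,\Delta)}$ by sub-adjunction (Theorem~\ref{thm: gpair subadjunction}) and the inductive hypothesis on normalized glc centers. Your device of splitting $A=A_1+A_2$ is in fact slightly cleaner than the paper's formulation, since it makes the application of Theorem~\ref{thm: cone theorem for not necessarily lc pairs}(3) (which requires a separate ample divisor) transparent; the paper absorbs all of $A$ into $\Delta$ and then appeals to part (3) without isolating the extra ample class, which works but is less explicit. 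The only organizational difference is that the paper first proves the coarse decomposition $\overline{NE}(X/U)=\overline{NE}(X/U)_{\geq 0}+\sum_{j\in\Lambda^0_A\cup\Lambda^1_A}R_j$ and then reads off finiteness of $\Lambda'_A$, whereas you argue directly that every negative extremal ray lies in a finite list and then recover the decomposition in (1) via Krein--Milman; both routes are valid since $\overline{NE}(X/U)$ has a compact base (there is an ample class).
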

\begin{proof}
By Lemma \ref{lem: perturb gpair to make nlc locus ngklt locus}, we may pick $0\leq\Delta\sim_{\Rr,U}B+\Mm_X+A$ such that $\Nlc(X,\Delta)=\Ngklt(X,B,\Mm)$.

For any glc center $\tilde W$ of $(X,B,\Mm)$ with normalization $W$, we let $(W,B_{W},\Mm^{W})/U$ be the NQC glc g-pair given by the sub-adjunction
    $$K_{W}+B_{W}+\Mm^{W}_{W}\sim_{\Rr}(K_X+B+\Mm_X)|_{W}$$
    as in Theorem \ref{thm: gpair subadjunction}, and let $A_W:=A|_W$.  By Theorem \ref{thm: cone theorem glc g-pairs} in dimension $\leq d-1$, we have 
$$\overline{NE}(W/U)=\overline{NE}(W/U)_{K_W+B_W+\Mm^W_W+A_W\geq 0}+\sum_{j\in\Lambda_{A_W}}R_{j,W},$$
where $\{R_{j,W}\}_{j\in\Lambda_{A_W}}$ is the set of $(K_W+B_W+\Mm^W_W+A_W)$-negative extremal rays in $\overline{NE}(W/U)$ that are rational, where $\Lambda_{A_W}$ is a finite set. For any $j\in\Lambda_{A_W}$, we let $R_j$ be the image of $R_{j,W}$ in $X$ under the map
$$\cup_W\overline{NE}(W/U)\rightarrow\overline{NE}(\Nlc(X,\Delta)/U)\rightarrow\overline{NE}(X/U)$$
and let $\Lambda^0_A:=\cup_{W}\Lambda_{A_W}$. Then $\Lambda^0_A$ is a finite set. Finally, we let $\{R_j\}_{j\in\Lambda^1_A}$ be the set of $(K_X+B+\Mm_X+A)$-negative extremal rays in $\overline{NE}(X/U)$ that are relatively ample at infinity with respect to $(X,\Delta)$. By Theorem \ref{thm: cone theorem for not necessarily lc pairs}(3), $\Lambda_A^1$ is a finite set.
\begin{claim}\label{claim: gpair necone spanned by images first step}
$$\overline{NE}(X/U)=\overline{NE}(X/U)_{K_X+B+\Mm_X+A\geq 0}+\sum_{j\in\Lambda^0_A}R_j+\sum_{j\in\Lambda^1_A}R_j.$$
\end{claim}
\begin{proof}
For simplicity, we let $$V:=\overline{NE}(X/U)_{K_X+B+\Mm_X+A\geq 0}+\sum_{j\in\Lambda^0_A}R_j+\sum_{j\in\Lambda^1_A}R_j.$$
For any curve $C$ on $X$, we will write $[C]$ for its class in $\overline{NE}(X/U)$, and for any glc center $\tilde W$ of $(X,B,\Mm)$ with normalization $W$, if $C\subset W$, then we will write $[C]_W$ for its class in $\overline{NE}(W/U)$.

Suppose that $\overline{NE}(X/U)\not=V.$ By Theorem \ref{thm: cone theorem for not necessarily lc pairs}(1), we have
$$\overline{NE}(X/U)=\overline{NE}(X/U)_{K_X+B+\Mm_X+A\geq 0}+\overline{NE}(X/U)_{\Nlc(X,\Delta)}+\sum_{j\in\Lambda^1_A}R_j.$$
Thus there exists an integral curve $C\subset\Nlc(X,\Delta)=\Ngklt(X,B,\Mm)$, such that $[C]$ is not contained in $V$. We may write 
$$C=\sum_{W\mid W\text{ is a glc center of }(X,B,\Mm)}C_W,$$ 
where each $C_W$ is an integral curve in $W$. For any $C_W$, we have
$$[C_W]_W=c^0_{W}R^0_W+\sum_{j\in\Lambda_{A_W}}c_{j,W}R_{j,W}$$
where $c^0_{W}$ and each $c_{j,W}$ are non-negative real numbers, and $R_W^0\in \overline{NE}(W/U)_{K_W+B_W+\Mm^W_W+A_W\geq 0}$. Since the image of $R^0_W$ in $X$ is contained in $\overline{NE}(X/U)_{K_X+B+\Mm_X+A\geq 0}$, $[C_W]$ is contained in $\overline{NE}(X/U)_{K_X+B+\Mm_X+A\geq 0}+\sum_{j\in\Lambda^0_A}R_j$. Thus $[C_W]$ is contained in $V$, hence $[C]$ is contained in $V$, a contradiction.  
\end{proof}
\noindent\textit{Proof of Lemma \ref{lem: low dimension gpair cone theorem imply finite extremal rays} continued}. By Claim \ref{claim: gpair necone spanned by images first step}, any $(K_X+B+\Mm_X+A)$-negative extremal ray in $\overline{NE}(X/U)$ must be contained in $\{R_j\}_{j\in\Lambda^0_A\cup\Lambda^1_A}$, so $\Lambda'_A\subset\Lambda^0_A\cup\Lambda^1_A$. Since $\Lambda^0_A\cup\Lambda^1_A$ is a finite set, $\Lambda'_A$ is a finite set, and we get (1).

By Theorem \ref{thm: cone theorem glc g-pairs} in dimension $\leq d-1$, for any $j\in\Lambda_{A_W}$, $R_{j,W}$ is spanned by a rational curve $C_j$ such that the image of $C_j$ in $U$ is a point, and
$$0<-(K_W+B_W+\Mm^W_W+A_W)\cdot C_j\leq 2\dim W<2\dim X.$$
Therefore, for any $j\in\Lambda^0_A=\cup_W\Lambda_{A_W}$, $R_j$ is spanned by the curve $C_j$ such that $\pi(C_j)=\{pt\}$ and
$$0<-(K_X+B+\Mm_X+A)\cdot C_j\leq 2\dim X.$$
By Theorem \ref{thm: cone theorem for not necessarily lc pairs}(2), for any $j\in\Lambda^1_A$, $R_j$ is spanned by a rational curve $C_j$ such that $\pi(C_j)=\{pt\}$ and
$$0<-(K_X+B+\Mm_X+A)\cdot C_j\leq 2\dim X.$$
 Thus (2) holds and the proof is complete.
\end{proof}

\begin{lem}\label{lem: gpair cone theorem spanned by extremal rays rational case}
Let $d\geq 2$ be an integer. Assume Theorem \ref{thm: cone theorem glc g-pairs} in dimension $\leq d-1$.

Let $(X,B,\Mm)/U$ be an NQC glc $\Qq$-g-pair of dimension $d$ and $\pi: X\rightarrow U$ the associated projective morphism. Let $A$ be an ample$/U$ $\Qq$-divisor on $X$ and $\{R_j\}_{j\in\Lambda_A}$ the set of $(K_X+B+\Mm_X+A)$-negative extremal rays in $\overline{NE}(X/U)$ that are rational. Then $\Lambda_A$ is a finite set, and
$$\overline{NE}(X/U)=\overline{NE}(X/U)_{K_X+B+\Mm_X+A\geq 0}+\sum_{j\in\Lambda_A}R_j.$$
\end{lem}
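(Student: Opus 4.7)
The plan is to combine the finiteness output of Lemma~\ref{lem: low dimension gpair cone theorem imply finite extremal rays} with a convex-geometric density argument that exploits the $\Qq$-structure of both $(X,B,\Mm)$ and $A$.

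First I would apply Lemma~\ref{lem: low dimension gpair cone theorem imply finite extremal rays} to $(X,B,\Mm)/U$ and the $\Qq$-ample divisor $A$. Writing $M:=K_X+B+\Mm_X+A$, which is $\Qq$-Cartier in the $\Qq$-g-pair setting, this produces a finite set $\Lambda'_A$ of \emph{all} $(K_X+B+\Mm_X+A)$-negative extremal rays of $\overline{NE}(X/U)$, each $R_j$ spanned by a rational curve $C_j$ with $0<-M\cdot C_j\leq 2\dim X$, together with the decomposition
$$\overline{NE}(X/U)=\overline{NE}(X/U)_{M\geq 0}+\sum_{j\in\Lambda'_A}R_j.$$
Since $\Lambda_A\subseteq\Lambda'_A$, the set $\Lambda_A$ is automatically finite; the decomposition asserted in the lemma will follow as soon as $\Lambda_A=\Lambda'_A$, i.e.\ as soon as every $R_j\in\Lambda'_A$ admits a rational supporting function on $X$.

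To promote each $R_j\in\Lambda'_A$ to a rational extremal ray, I would use that $M$ is $\Qq$-Cartier and each $c_j:=[C_j]\in N_1(X/U)_{\Qq}$. Fix $j$ and pick any real supporting function $H_0$ of $R_j$, whose existence is guaranteed by $R_j$ being an extremal ray. The set of all supporting functions of $R_j$ forms a face $\mathcal{F}_j$ of $\overline{\Nef}(X/U)$ contained in the rational hyperplane $V_j:=\{H\in N^1(X/U)_{\Rr}:H\cdot c_j=0\}$, and $\mathcal{F}_j$ has non-empty relative interior inside $V_j$: this holds because $R_j$ appears as a one-dimensional exposed face of the rational polyhedral $M$-negative part of $\overline{NE}(X/U)$ supplied by Lemma~\ref{lem: low dimension gpair cone theorem imply finite extremal rays}, so standard cone duality yields a codimension-one dual face. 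Since $N^1(X/U)_{\Qq}$ is dense in $V_j$, one picks a rational $H\in V_j\cap N^1(X/U)_{\Qq}$ arbitrarily close to $H_0$. Continuity of the pairings with the finitely many other rational classes $c_i$ ($i\ne j$), together with compactness of the slice of $\overline{NE}(X/U)_{M\geq 0}$ by a $\Qq$-ample divisor (Kleiman's criterion), guarantees that for $H$ sufficiently close to $H_0$ one retains $H\cdot c_i>0$ and $\pi$-nefness on $\overline{NE}(X/U)_{M\geq 0}$, hence $H^\perp\cap\overline{NE}(X/U)=R_j$, and $H$ is the desired rational supporting function of $R_j$.

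The main obstacle is verifying that $\mathcal{F}_j$ is a codimension-one face of $\overline{\Nef}(X/U)$ with $H_0$ in its relative interior inside the rational hyperplane $V_j$, rather than being a thin face whose linear span is an irrational subspace. This is resolved by exploiting the polyhedral structure of the $M$-negative half of $\overline{NE}(X/U)$ given by Lemma~\ref{lem: low dimension gpair cone theorem imply finite extremal rays}: within this rational polyhedral sub-cone, $R_j$ is one of finitely many rational extremal rays, hence an exposed face cut out by a rational hyperplane, and standard cone duality gives the expected codimension of $\mathcal{F}_j$. Once every $R_j\in\Lambda'_A$ is shown to be rational, $\Lambda_A=\Lambda'_A$ and the cone decomposition of the lemma is immediate from that of Lemma~\ref{lem: low dimension gpair cone theorem imply finite extremal rays}.
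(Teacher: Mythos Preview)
Your strategy differs from the paper's and is essentially sound, but one step is not justified. The paper argues by contradiction \`a la Koll\'ar--Mori: assuming the rational rays fail to span, it constructs a Cartier divisor $N=H+p(K_X+B+\Mm_X+A)$ with $H$ $\pi$-ample $\Qq$-Cartier, positive on the putative sub-cone but negative somewhere; the nef threshold $t=\sup\{s:H+sM\text{ is nef}/U\}$ is rational by finiteness of $\Lambda_A'$ and the $\Qq$-hypotheses, and a further perturbation by a Cartier divisor singles out one ray and exhibits a rational supporting function for it, giving the contradiction. Your direct approach---show every $R_j\in\Lambda_A'$ is rational by perturbing a real supporting function inside the rational hyperplane $V_j=c_j^\perp$---is cleaner once the missing step is supplied.

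The gap is the existence of the real supporting function $H_0$. Your claim that this ``is guaranteed by $R_j$ being an extremal ray'' is false in general: extremal rays of closed convex cones need not be exposed. Your repair via ``$R_j$ is exposed in the rational polyhedral $M$-negative part, so cone duality gives a codimension-one dual face'' does not work either: exposedness in a sub-cone says nothing about exposedness in $\overline{NE}(X/U)$, and the dual of that sub-cone is not the nef cone. Your openness argument (continuity on the finitely many $c_i$, plus compactness of an ample slice of $\overline{NE}(X/U)_{M\ge 0}$) is correct, so what is really missing is non-emptiness. This can be supplied directly from the decomposition in Lemma~\ref{lem: low dimension gpair cone theorem imply finite extremal rays}: set $W_j:=\overline{NE}(X/U)_{M\ge 0}+\sum_{i\ne j}R_i$; this is closed (since $\overline{NE}(X/U)$ contains no line) and meets $R_j$ only in $0$ (by extremality of $R_j$), and the set of supporting functions of $R_j$ is then $\{H\in V_j:H>0\text{ on }W_j\setminus\{0\}\}$, which is non-empty because otherwise the open convex set $\{H:H>0\text{ on }W_j\setminus\{0\}\}$ would lie entirely on one side of $V_j$, forcing $c_j\in W_j$ or $-c_j\in W_j$, both impossible. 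This is exactly the argument the paper later uses in the proof of Theorem~\ref{thm: cone theorem glc g-pairs}(4); you are proposing to use it one step earlier, which is legitimate and in fact shortens the overall argument.
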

\begin{proof}
We may assume that $\dim_{\Rr}N^1(X/U)\geq 2$, otherwise there is nothing to prove.

By Lemma \ref{lem: low dimension gpair cone theorem imply finite extremal rays}, the number of $(K_X+B+\Mm_X+A)$-negative extremal rays in $\overline{NE}(X/U)$ is a finite set, so $\Lambda_A\subset\Lambda_A'$ is a finite set.

For simplicity, we let $V:=\overline{NE}(X/U)_{K_X+B+\Mm_X+A\geq 0}+\sum_{j\in\Lambda_A}R_j$. Suppose that $V\not=\overline{NE}(X/U)$. Since $\dim_{\Rr}N^1(X/U)\geq 2$, there exists a Cartier divisor $N$ on $X$ satisfying the following:
\begin{itemize}
    \item $N$ is not numerically equivalent to a multiple of $K_X+B+\Mm_X+A$ over $U$,
    \item $N$ is positive on $V\backslash\{0\}$, and
    \item $N\cdot z_0<0$ for some $z_0\in\overline{NE}(X/U)$.
\end{itemize}
Let $Q$ be the dual cone of $\overline{NE}(X/U)_{K_X+B+\Mm_X+A\geq 0}$, i.e.,
$$Q=\{D\in N^1(X/U)\mid D\cdot z\geq 0\text{ for any }z\in\overline{NE}(X/U)_{K_X+B+\Mm_X+A\geq 0}\},$$
then $Q$ is generated by $\pi$-nef divisors and $K_X+B+\Mm_X+A$. Since $N$ is positive on $\overline{NE}(X/U)_{K_X+B+\Mm_X+A\geq 0}\backslash\{0\}$, $N$ is in the interior of $Q$. By Kleiman's Criterion, there exists an ample$/U$ $\Qq$-divisor $H$ on $X$ and a positive real number $p$, such that 
$$N=H+p(K_X+B+\Mm_X+A).$$
Since $N\cdot z_0<0$ and $H$ is ample$/U$, we may let
$$t:=\sup\{s\mid H+s(K_X+B+\Mm_X+A)\text{ is nef}/U\}.$$
Then $0<t<p$. Since $(H+t(K_X+B+\Mm_X+A))\cdot z\geq 0$ for any $z\in\overline{NE}(X/U)_{K_X+B+\Mm_X+A\geq 0}$, by Lemma \ref{lem: low dimension gpair cone theorem imply finite extremal rays},
$$t=\max\{s\mid (H+s(K_X+B+\Mm_X+A))\cdot R_j\geq 0,\forall j\in\Lambda_A'\}$$
where $\{R_j\}_{j\in\Lambda'_A}$ the set of $(K_X+B+\Mm_X+A)$-negative extremal rays in $\overline{NE}(X/U)$ and is a finite set. Thus $t$ is a rational number. Since $N$ is not a multiple of $K_X+B+\Mm_X+A$, $H+t(K_X+B+\Mm_X+A)$ is a rational supporting function of a $(K_X+B+\Mm_X+A)$-negative extremal face $F_N$, which is spanned by $(K_X+B+\Mm_X+A)$-negative extremal rays. By Lemma \ref{lem: low dimension gpair cone theorem imply finite extremal rays}, $F_N$ is spanned by finitely many $(K_X+B+\Mm_X+A)$-negative extremal rays $R^1,\dots,R^n$ in $\overline{NE}(X/U)$ for some positive integer $n$. In particular, we may pick a Cartier divisor $L$ on $X$ such that $L\cdot R^1>0$ and $L\cdot R^i<0$ for any $i\geq 2$. Since $H$ is ample$/U$ and $N$ is not numerically equivalent to a multiple of $K_X+B+\Mm_X+A$ over $U$, we may pick a rational number $\epsilon\in (0,1)$ such that
\begin{itemize}
    \item $N_{\epsilon}:=(H-\epsilon L)+p(K_X+B+\Mm_X+A)$ is not numerically equivalent to a multiple of $K_X+B+\Mm_X+A$ over $U$ for any $\epsilon\in (0,\epsilon_0)$,
    \item $H-\epsilon_0 L$ is ample$/U$, and
    \item $N_{\epsilon_0}\cdot z_0<0$.
\end{itemize}
Thus $N_{\epsilon}$ is positive on $\overline{NE}(X/U)_{K_X+B+\Mm_X+A\geq 0}$. Since $\Lambda_A$ is a finite set and $N\cdot R_j>0$ for any $j\in\Lambda_A$, we may pick a rational number $\epsilon_1\in (0,\epsilon_0)$ such that $N_{\epsilon_1}\cdot R_j>0$ for any $j\in\Lambda_A$. In particular, $N_{\epsilon_1}$ is positive on $V\backslash\{0\}$. Now we let
$$t_1:=\sup\{s\mid H-\epsilon_1L+s(K_X+B+\Mm_X+A)\text{ is nef}/U\}.$$
By our construction,
$$t_1=\frac{(H-\epsilon_1L)\cdot R^1}{-(K_X+B+\Mm_X+A)\cdot R^1}$$
is a rational number, $0<t_1<t<p$, and $H-\epsilon_1L+t_1(K_X+B+\Mm_X+A)$ is a rational supporting function of $R^1$. Thus $R^1\in\Lambda_A$, and so $N_{\epsilon_1}\cdot R^1>0$. Therefore, $p<t_1$, a contradiction.
\end{proof}

\begin{lem}\label{lem: gpair cone theorem spanned by extremal rays real case}
Let $d\geq 2$ be an integer. Assume Theorem \ref{thm: cone theorem glc g-pairs} in dimension $\leq d-1$.

Let $(X,B,\Mm)/U$ be an NQC glc g-pair of dimension $d$ and $\pi: X\rightarrow U$ the associated projective morphism. Let $A$ be an ample$/U$ $\Rr$-divisor on $X$ and $\{R_j\}_{j\in\Lambda_A}$ the set of $(K_X+B+\Mm_X+A)$-negative extremal rays in $\overline{NE}(X/U)$ that are rational. Then $\Lambda_A$ is a finite set, and
$$\overline{NE}(X/U)=\overline{NE}(X/U)_{K_X+B+\Mm_X+A\geq 0}+\sum_{j\in\Lambda_A}R_j.$$
In particular, any $(K_X+B+\Mm_X+A)$-negative extremal ray in $\overline{NE}(X/U)$ is rational.
\end{lem}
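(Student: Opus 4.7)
The plan is to reduce to the rational case (Lemma \ref{lem: gpair cone theorem spanned by extremal rays rational case}) via the Shokurov-type polytope perturbation (Theorem \ref{thm: shokurov polytope gpair}). By Lemma \ref{lem: low dimension gpair cone theorem imply finite extremal rays}, the set $\Lambda'_A$ of \emph{all} $(K_X+B+\Mm_X+A)$-negative extremal rays in $\overline{NE}(X/U)$ is finite and already satisfies
$$\overline{NE}(X/U)=\overline{NE}(X/U)_{K_X+B+\Mm_X+A\geq 0}+\sum_{j\in\Lambda'_A}R_j.$$
Therefore it suffices to prove that every such extremal ray is rational, i.e.\ $\Lambda_A=\Lambda'_A$, from which both the finiteness of $\Lambda_A$ and the claimed decomposition follow at once.

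To get rationality, apply the ``in particular'' part of Theorem \ref{thm: shokurov polytope gpair} to $(X,B,\Mm)/U$ to find real numbers $a_1,\dots,a_{c+1}\in(0,1]$ with $\sum_i a_i=1$ and NQC glc $\Qq$-g-pairs $(X,B^i,\Mm^i)/U$ such that $B=\sum_i a_i B^i$ and $\Mm=\sum_i a_i\Mm^i$. Since the ample$/U$ cone is open in $N^1(X/U)$, by first perturbing within this cone and then taking a rational convex decomposition of $A$ in the ample cone with the same coefficients $a_i$ (shrinking the polytope of Theorem \ref{thm: shokurov polytope gpair} if necessary to keep $A^i$ ample), we may arrange ample$/U$ $\Qq$-divisors $A^i$ with $A=\sum_i a_i A^i$, so that
$$K_X+B+\Mm_X+A=\sum_{i=1}^{c+1}a_i\bigl(K_X+B^i+\Mm^i_X+A^i\bigr).$$

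Now let $R\in\Lambda'_A$ be any $(K_X+B+\Mm_X+A)$-negative extremal ray. Since $R\cdot(K_X+B+\Mm_X+A)<0$ and $a_i>0$, there exists some index $j$ with $R\cdot(K_X+B^j+\Mm^j_X+A^j)<0$. Applying Lemma \ref{lem: gpair cone theorem spanned by extremal rays rational case} to the $\Qq$-g-pair $(X,B^j,\Mm^j)/U$ and the ample$/U$ $\Qq$-divisor $A^j$, the cone $\overline{NE}(X/U)$ is spanned by $\overline{NE}(X/U)_{K_X+B^j+\Mm^j_X+A^j\geq 0}$ together with finitely many rational $(K_X+B^j+\Mm^j_X+A^j)$-negative extremal rays $\{R_k\}$. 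Write $R=v+\sum_k c_k R_k$ with $v\in\overline{NE}(X/U)_{K_X+B^j+\Mm^j_X+A^j\geq 0}$ and $c_k\geq 0$. Since $v$ has nonnegative pairing with $K_X+B^j+\Mm^j_X+A^j$ while $R$ has negative pairing, extremality of $R$ together with the decomposition forces $v=0$ and all but one $c_k$ to vanish, so $R$ coincides with some rational $R_k$. Hence $R\in\Lambda_A$, so $\Lambda_A=\Lambda'_A$ and the result follows from Lemma \ref{lem: low dimension gpair cone theorem imply finite extremal rays}.

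The only real delicacy is verifying that one can simultaneously rationally perturb $B$, $\Mm$, and $A$ with a common set of convex coefficients $a_i$ so that each summand $(X,B^i,\Mm^i)/U$ is a legitimate $\Qq$-g-pair and each $A^i$ is ample$/U$; this is handled by combining Theorem \ref{thm: shokurov polytope gpair} with the openness of the ample cone and Lemma \ref{lem: from polytope to perturbation}. Once this perturbation is in place, the reduction to Lemma \ref{lem: gpair cone theorem spanned by extremal rays rational case} is routine.
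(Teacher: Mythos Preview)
Your approach is essentially correct and closely parallels the paper's. Both begin with Lemma~\ref{lem: low dimension gpair cone theorem imply finite extremal rays} to obtain finiteness of $\Lambda'_A$ and the cone decomposition, then reduce to Lemma~\ref{lem: gpair cone theorem spanned by extremal rays rational case} via a rational approximation coming from Theorem~\ref{thm: shokurov polytope gpair}. The difference is that you use the full convex decomposition $K_X+B+\Mm_X+A=\sum_i a_i(K_X+B^i+\Mm^i_X+A^i)$ and argue raywise (for each $R\in\Lambda'_A$ pick a negative summand and invoke extremality), whereas the paper perturbs to a \emph{single} nearby rational triple $(\bar B,\bar\Mm,\bar A)$: it replaces the convex weights $a_i$ by nearby rationals $\bar a_i$ (yielding $\bar B,\bar\Mm$ with $(X,\bar B,\bar\Mm)$ still glc as a convex combination of glc $\Qq$-g-pairs) and separately writes $A=\sum r_iA_i$ with $A_i$ $\Qq$-divisors and replaces $r_i$ by nearby rationals $\bar r_i$. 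One application of Lemma~\ref{lem: gpair cone theorem spanned by extremal rays rational case} to $(X,\bar B,\bar\Mm)$ with $\bar A$ then shows $\Lambda'_A$ sits inside the set of (rational) rays for that perturbation.

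One point in your write-up deserves care: writing $A=\sum_i a_i A^i$ with $A^i$ rational and \emph{the same} $a_i$'s as those produced by Theorem~\ref{thm: shokurov polytope gpair} for $(B,\Mm)$ is only possible if each coordinate of $A$ in a rational basis of $N^1(X/U)$ lies in $\Qq\langle a_1,\dots,a_{c+1}\rangle=\Qq\langle 1,r_1,\dots,r_c\rangle$; a priori $A$ may involve irrationals not in this span. The fix is to enlarge the parameter vector $\bm r$ to include any additional irrationals appearing in $A$ \emph{before} applying Theorem~\ref{thm: shokurov polytope gpair}---the theorem is indifferent to extra parameters on which the $s_j$ do not depend---and then set $A^i:=A(\bm v_i)$; shrinking the polytope keeps these ample by openness. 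The paper's single-perturbation approach sidesteps this by decoupling the rational approximations of $(B,\Mm)$ and of $A$.
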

\begin{proof}
Let $\{R_j\}_{j\in\Lambda_{A'}}$ be the set of $(K_X+B+\Mm_X+A)$-negative extremal rays (that are not necessarily rational) in $\overline{NE}(X/U)$. By Lemma \ref{lem: low dimension gpair cone theorem imply finite extremal rays}, $\Lambda_A'$ is a finite set, and we have
    $$\overline{NE}(X/U)=\overline{NE}(X/U)_{K_X+B+\Mm_X+A\geq 0}+\sum_{j\in\Lambda'_A}R_j.$$
By Theorem \ref{thm: shokurov polytope gpair}, there exist real numbers $a_1,\dots,a_k\in (0,1]$, such that
\begin{itemize}
\item $\sum_{i=1}^ka_i=1$,
    \item $K_X+B=\sum_{i=1}^k a_i(K_X+B^i)$ and $\Mm=\sum_{i=1}^ka_i\Mm^i$, and
    \item $(X,B^i,\Mm^i)/U$ is a glc $\Qq$-g-pair for each $i$.
\end{itemize}
Let $A=\sum_{i=1}^cr_iA_i$, where $r_1,\dots,r_c>0$ are real numbers such that $r_1,\dots,r_c$ are linearly independent over $\Qq$, and $A_1,\dots,A_c$ are ample$/U$ $\Qq$-divisors.

Since $\Lambda_A'$ is a finite set, we may pick rational numbers $\bar a_1,\dots,\bar a_k\in (0,1]$ and $\bar r_1,\dots,\bar r_c>0$, such that $\sum_{i=1}^k\bar a_i=1$, each $\bar a_i$ is sufficiently close to $a_i$ and each $\bar r_i$ is sufficiently close to $r_i$, such that 
\begin{itemize}
    \item $(X,\bar B:=\sum_{i=1}^k\bar a_iB_i,\bar\Mm:=\sum_{i=1}^k\bar a_i\Mm^i)$ is glc,
    \item $\bar A:=\sum_{i=1}^c\bar r_iA_i$ is ample, and
    \item $(K_X+\bar B+\bar \Mm_X+\bar A)\cdot R_j<0$ for any $j\in\Lambda_A'$.
\end{itemize}
 By Lemma \ref{lem: gpair cone theorem spanned by extremal rays rational case}, we have
$$\overline{NE}(X/U)=\overline{NE}(X/U)_{K_X+\bar B+\bar \Mm_X+\bar A}+\sum_{j\in\Phi} R_j,$$
where $\{R_j\}_{j\in\Phi}$ is the set of $(K_X+\bar B+\bar \Mm_X+\bar A)$-negative extremal rays in $\overline{NE}(X/U)$. Moreover, $R_j$ is rational for any $j\in\Phi$. By our construction, $\Lambda_A'\subset\Phi$. Thus $R_j$ is rational for any $j\in\Lambda_A'$, hence $\Lambda_A=\Lambda_A'$ and we are done.
\end{proof}

\begin{proof}[Proof of Theorem \ref{thm: cone theorem glc g-pairs}]
We apply induction on dimension of $X$. The $\dim X=1$ case is obviously true. So we may assume that $\dim X=d$ where $d\geq 2$ is an integer and Theorem \ref{thm: cone theorem glc g-pairs} holds in dimension $\leq d-1$.

For any $(K_X+B+\Mm_X)$-negative extremal ray $R$ in $\overline{NE}(X/U)$, $R$ is also a $(K_X+B+\Mm_X+A)$-negative extremal ray for some ample$/U$ $\Rr$-divisor $A$ on $X$. By Lemma \ref{lem: gpair cone theorem spanned by extremal rays real case}, $R$ is rational. By Lemma \ref{lem: low dimension gpair cone theorem imply finite extremal rays}(2), $R$ is generated by a rational curve $C$ such that $\pi(C)=\{pt\}$ and $$0<-(K_X+B+\Mm_X+A)\cdot C\leq 2\dim X.$$ Since $R$ is also a $(K_X+B+\Mm_X+\epsilon A)$-negative extremal ray for any $\epsilon\in (0,1)$, by Lemma \ref{lem: low dimension gpair cone theorem imply finite extremal rays}(2) again, we may assume that $$0<-(K_X+B+\Mm_X+\epsilon A)\cdot C\leq 2\dim X$$ for any $\epsilon\in (0,1)$. Thus 
$$0<-(K_X+B+\Mm_X)\cdot C\leq 2\dim X,$$ 
and we get (2). (3) follows from Lemma \ref{lem: gpair cone theorem spanned by extremal rays real case} and the fact that $$\{R_j\}_{j\in\Lambda}\subset\cup_{n=1}^{+\infty}\{R_j\}_{j\in\Lambda_{\frac{1}{n}A}}$$ for any ample$/U$ $\Rr$-divisor $A$ on $X$. (1) follows from (3). 

We now prove (4). For any $(K_X+B+\Mm_X)$-negative extremal face $F$ in $\overline{NE}(X/U)$, $F$ is also a $(K_X+B+\Mm_X+A)$-negative extremal face for some ample$/U$ $\Rr$-divisor $A$ on $X$. Let $V:=F^\bot\subset N^1(X/U)$. Then since $F$ is spanned by a subset of $\{R_j\}_{j\in\Lambda_A}$, $V$ is defined over $\Qq$. We let
$$W_F:=\overline{NE}(X/U)_{K_X+B+\Mm_X+A\geq 0}+\sum_{j\mid j\in\Lambda_A,R_j\not\subset F}R_j.$$
Then $W_F$ is a closed cone, $\overline{NE}(X/U)=W_F+F$, and $W_F\cap F=\{0\}$. The supporting functions of $F$ are the elements in $V$ that are positive on $W_F\backslash\{0\}$, which is a non-empty open subset of $V$, and hence contains a rational element $H$. In particular, $F=H^\bot\cap \overline{NE}(X/U)$, hence $F$ is rational, and we get (4).
\end{proof}

\subsection{Proof of the base-point-free theorem and the contraction theorem}

Now we prove the base-point-free theorem (Theorem \ref{thm: base-point-free theorem for glc pairs}) for glc g-pairs. First we prove an auxiliary lemma.

 \begin{lem}\label{lem: special extraction}
 Let $(X,B,\Mm)/U$ be a glc g-pair such that $\Mm_X$ is $\Rr$-Cartier and $\Ngklt(X,B,\Mm)=\Nklt(X,B)$. Then there exists a birational morphism $h: W\rightarrow X$ such that $\Mm$ descends to $W$ and $\Supp(h^*\Mm_X-\Mm_W)=\Exc(h)$.
 \end{lem}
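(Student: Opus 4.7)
The plan is to construct $h:W\to X$ from a log resolution of $(X,B+\Mm_X)$ on which $\Mm$ descends, by contracting exactly the ``$\Mm$-trivial'' exceptional divisors via a relative MMP over $X$. First I would take a log resolution $g:Y\to X$ of $(X,B+\Mm_X)$ (possible since $\Mm_X$ is $\Rr$-Cartier) such that $\Mm$ descends to $Y$, and set $E:=g^*\Mm_X-\Mm_Y$. Since $\Mm_Y$ is nef$/U$ (hence nef over $X$), $g_*E=0$, and $E$ is $g$-exceptional, the negativity lemma yields $E\geq 0$. Non-$g$-exceptional prime divisors $D$ of $Y$ automatically satisfy $\mult_D E=0$, so the potential failure of the desired equality $\Supp(h^*\Mm_X-\Mm_W)=\Exc(h)$ can only occur on the exceptional locus; the goal becomes to contract precisely the set
\[
F:=\{D\in\Exc(g):\mult_D E=0\}
\]
of ``$\Mm$-trivial'' exceptional divisors.

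The identity $a(D,X,B,\Mm)=a(D,X,B)-\mult_D E$, together with the glc property, forces every lc place $D$ of $(X,B)$ to lie in $F$ (one has $a(D,X,B)=0$, so glc forces $\mult_D E=0$ and $a(D,X,B,\Mm)=0$); such places would obstruct the MMP step below since they have log discrepancy zero. To avoid this, I would choose $g$ so that it does not extract any lc place of $(X,B)$: first take a $\Qq$-factorial dlt modification $\psi:X'\to X$ of $(X,B)$, then take a log resolution $g':Y\to X'$ of $(X',B'+\psi^*\Mm_X)$ which is an isomorphism over the generic points of every lc center of $(X',B')$, and set $g:=\psi\circ g'$. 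The hypothesis $\Ngklt(X,B,\Mm)=\Nklt(X,B)$ enters precisely here, ensuring that no glc place of $(X,B,\Mm)$ has its center strictly outside $\Nklt(X,B)$, so that after this reduction every $D\in F$ satisfies $a(D,X,B)>0$ and therefore $a(D,X,B,\Mm)=a(D,X,B)>0$.

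Next I would equip $Y$ with the boundary
\[
B_Y:=g^{-1}_*B+\sum_{D\in\Exc(g)\cap\Supp E}(1-a(D,X,B,\Mm))\,D+\sum_{D\in F} D,
\]
so that $(Y,B_Y,\Mm)$ is a $\Qq$-factorial log smooth gdlt g-pair with
\[
K_Y+B_Y+\Mm_Y-g^*(K_X+B+\Mm_X)=\sum_{D\in F} a(D,X,B)\cdot D,
\]
a $g$-exceptional effective divisor supported exactly on $F$ with strictly positive coefficients. Running a $(K_Y+B_Y+\Mm_Y)$-MMP over $X$ with scaling of a general ample$/X$ divisor and invoking Lemma \ref{lem: rlinear version of hl18 3.8} (MMP for very exceptional divisors), this MMP terminates after finitely many steps with a model $h:W\to X$ on which precisely the divisors in $F$ have been contracted and $K_W+B_W+\Mm_W=h^*(K_X+B+\Mm_X)$. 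Since $Y$ is smooth and the MMP preserves $\Qq$-factoriality, $W$ is $\Qq$-factorial, so $\Mm_W$ is $\Rr$-Cartier, and comparing the relation on $W$ with the one on $Y$ gives $(Y\dasharrow W)^*\Mm_W=\Mm_Y$, i.e.\ $\Mm$ descends to $W$. The exceptional divisors of $h$ are then the strict transforms of $\Exc(g)\cap\Supp E$, and for each such $D$ one has $\mult_D(h^*\Mm_X-\Mm_W)=\mult_D E>0$; together with the vanishing on non-exceptional divisors this yields $\Supp(h^*\Mm_X-\Mm_W)=\Exc(h)$.

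The main obstacle will be verifying rigorously that $\Mm$ descends to the MMP output $W$: the MMP is only $(K+B+\Mm)$-negative on contracted curves, whereas the conclusion $(Y\dasharrow W)^*\Mm_W=\Mm_Y$ demands $\Mm_Y\cdot C=0$ on every contracted curve. This has to be extracted from the construction, using that the contracted divisors in $F$ are disjoint from $\Supp E$ by definition and that $\Mm_Y$ is nef on $X$-fibers, so that the only room for $\Mm_Y\cdot C$ to be nonzero is ruled out once the auxiliary exceptional divisor $\sum_{D\in F}a(D,X,B)D$ has been fully contracted.
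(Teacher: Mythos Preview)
Your overall strategy---run an MMP over $X$ to contract the ``$\Mm$-trivial'' exceptional divisors---is the paper's as well, but two steps fail. First, exceptional lc places of $(X,B)$ cannot be handled as you propose. Any such $D$ satisfies $\mult_DE=0$ (since $0\le a(D,X,B,\Mm)=a(D,X,B)-\mult_DE=-\mult_DE$), so $D\in F$; but its coefficient in $\sum_{D\in F}a(D,X,B)\,D$ is zero, hence Lemma~\ref{lem: rlinear version of hl18 3.8} will not contract it, and it survives in $\Exc(h)\setminus\Supp(h^*\Mm_X-\Mm_W)$. Your reduction via a dlt modification $\psi$ makes this worse: $g=\psi\circ g'$ then \emph{extracts} exactly the lc places you wish to avoid, and in general no log resolution on which $\Mm$ descends can avoid extracting them. (A smaller issue: your $B_Y$ has coefficient $1-a(D,X,B,\Mm)$ on $D\in\Supp E$, which is negative whenever $a(D,X,B,\Mm)>1$, so $(Y,B_Y,\Mm)$ need not be a g-pair.) Second, your descent heuristic is incorrect: the prime divisors in $F$ and in $\Supp E$ are distinct but can intersect geometrically, so a contracted curve $C\subset D\in F$ may meet $\Supp E$, giving $\Mm_Y\cdot C=-E\cdot C\ne 0$; the MMP step is then not $\Mm_Y$-trivial and $\Mm$ does not descend to the output.

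The paper fixes both problems with a two-stage construction. It runs the MMP over $X$ for the g-pair $(Y,\,f^{-1}_*B+eG^{=1}+(1-e)\Exc(f)+\sum_i s_iE_i,\,t\overline{\Mm_Y})$ with $0<s_i\ll e\ll 1\ll t$ and generic $s_i$: termination comes from Theorem~\ref{thm: existence of glc closure} rather than Lemma~\ref{lem: rlinear version of hl18 3.8}, the factor $t\gg 1$ forces descent of $\Mm$ to the output $Z$ via \cite[Lemma~4.4(3)]{BZ16}, and the term $eG^{=1}$ deliberately protects the lc-place divisors at this stage. Then a second step---the ample model $g:Z\to W$ of $-E'_Z$ over $X$---contracts all remaining $\Mm$-trivial exceptional divisors, including the lc places; genericity of the $s_i$ guarantees that no component of $\Supp E_Z$ is lost and that $\Mm_Z=g^*\Mm_W$, so $\Mm$ still descends to $W$.
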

 
 \begin{proof}
Let $f: Y\rightarrow X$ be a log resolution of $(X,B)$ such that $\Mm$ descends to $Y$. Let $F=\Exc(f)$ be the reduced exceptional divisor. Write $K_Y+f^{-1}_*B+G=f^*(K_X+B)$ and $\mathbf M_Y+E=f^*\mathbf M_X$. Write $\Supp E=\cup_i E_i$. Note that $E=f^{-1}(f(E))$. If this is not the case, then since the fibers of $f$ are connected, there is a curve $C$ contained in a fiber $f^{-1}(x)$ such that $C$ intersects the support of $E$ but is not contained in the support of $E$. But then $-E\cdot C<0$ contradicting the fact that $-E$ is nef over $X$. Let $Y^0=Y\setminus\Supp E$ and let $X^0=X\setminus f(\Supp E)$, then $Y^0=f^{-1}(X^0)$. 

Since $\Ngklt(X,B,\Mm)=\Nklt(X,B)$, the support of $E$ does not contain any strata of $G^{=1}$. In particular $E \wedge G^{=1}=0$, and any element in $\Ngklt(X,B,\mathbf M)$ is contained in $X\setminus X^0$. 
       
       We now consider the generalized pair

      \[ (Y,f^{-1}_*B+e G^{=1}+(1-e)F+\sum_is_iE_i, {t\overline{\mathbf M}_Y})/X\]
where $0<s_i\ll e\ll 1$, $t\gg 1$, and the real numbers $s_i$ are sufficiently general (i.e. their representatives in $\mathbb R/\mathbb Q$ are sufficiently general). 
We have
$$K_Y+f^{-1}_*B+eG^{=1}+(1-e)F+\sum_is_iE_i+t\mathbf M_Y\sim_{\mathbb R,X}eG^{=1} +(1-e)F-G-tE+\sum_is_iE_i\sim _{\mathbb R,X} F'-E'$$
where the coefficients of $E'$ are sufficiently general real numbers, 
${\rm Supp} E'={\rm Supp} E$, and ${\rm Supp} F'$ consists of the set of exceptional divisors not contained in the support of $E\vee G^{=1}$.

We will now apply Theorem \ref{thm: existence of glc closure} to this generalized pair. To check the hypothesis, we consider the open subset $Y^0$ and $X^0$ defined above. (1) clearly holds, (3) has been checked above, and (4) holds since $\mathbf M _{Y}|_{Y^0}=(f|_{Y^0})^*\mathbf M_X|_{X^0}$ as $E|_{Y^0}=0$. For (2), we must check that \[(Y^0, (f^{-1}_*B+e G^{=1}+(1-e)F+\sum_is_iE_i)|_{Y^0}, {t\overline{\mathbf M}_Y}|_{Y^0})=(Y^0, (f^{-1}_*B+e G^{=1}+(1-e)F)|_{Y^0}, 0)\] has a good minimal model over $X^0$. Since $K_{Y^0}+(f^{-1}_*B+e G^{=1}+(1-e)F)|_{Y^0}\sim _{\mathbb R, X^0} F'|_{Y^0}$ where $ F'|_{Y^0}$ is effective and exceptional over $X^0$, by Lemma \ref{lem: rlinear version of hl18 3.8}, $(Y^0, (f^{-1}_*B+e G^{=1}+(1-e)F)|_{Y^0})/X^0$ has a good minimal model and (2) holds.
Therefore, by Theorems \ref{thm: existence of glc closure} and \ref{thm: mmp with scaling gpair terminates assuming gmm},
we can run a $(K_Y+f^{-1}_*B+e G^{=1}+(1-e)F+\sum_is_iE_i+{t{\mathbf M}_Y})$-MMP/$X$: $Y\dashrightarrow Z$ which contracts $F'$ and get a good minimal model$/X$. 

By \cite[Lemma 4.4(3)]{BZ16}, $\overline{\Mm}_Y$ descends to $Z$, hence $\Mm$ descends to $Z$. Let $E_Z$, $E'_Z$ be the strict transforms of $E$, $E'$ on the minimal model $Z$ respectively. Then $-E'_Z$ is 
semi-ample/$X$ and we can then take the corresponding ample model $g: Z\to W$ of $-E'_Z/X$.
Since $-E'_W$ is ample over $X$, the only $h:W\to X$ exceptional divisors are the components of $-E'_W$.

Since the coefficients of 
$E'_Z$ are sufficiently general, no component of $\Supp E'_Z=\Supp E_Z$ is contracted by $h:Z\to W$. To see this, note that if $E'_Z\cdot C = 0$ for any curve $C$ over $X$, then the same is true for every component of $E'_Z$ (since the coefficients of 
$E'_Z$ are sufficiently general). 
Since $E'_Z\equiv_W 0$, it follows that $P\equiv _W 0$ for any component $P$ of the support of $E'_Z$. By the negativity lemma, $P$ is not exceptional.
Note that $g: Z\to W$ is also the ample model of any small perturbation of $-E'_Z$ and so $g_*P$ is $\mathbb Q$-Cartier and $P=g^*g_*P$. 
But then $\Mm_Z\sim_{\Rr,X}-E_Z=-g^*(E_W)$ where $E_W=g_*E_Z$. Thus $\Mm_Z=g^*g_*\Mm_Z=g^*\Mm_W$, so $\Mm$ descends to $W$.

Therefore, $W$ satisfies our requirements.
\end{proof}

\begin{proof}[Proof of Theorem \ref{thm: base-point-free theorem for glc pairs}]

Let $R$ be the $(K_X+B+\Mm_X)$-negative extremal ray such that $L$ is the supporting function of $R$. Then $R$ is also a $(K_X+B+(1-\epsilon)\Mm_X)$-negative extremal ray for some $0<\epsilon\ll 1$. Possibly replacing $\Mm$ with $(1-\epsilon)\Mm$, we may assume that $\Ngklt(X,B,\Mm)=\Nklt(X,B)$. Let $A$ be an ample$/U$ $\Rr$-divisor on $X$ such that $R$ is also $(K_X+B+\Mm_X+A)$-negative extremal ray.

If $\Mm_X\cdot R\geq 0$, then $(K_X+B)\cdot R<0$, and the theorem immediately follows from Theorem \ref{thm: base-point-free theorem for not necessarily lc pairs}. Therefore, we may assume that $\Mm_X\cdot R<0$.

Let $f: Y\rightarrow X$ be a birational morphism such that $\Mm$ descends to $Y$. By the negativity lemma, we may assume that $\Mm_Y=f^*\Mm_X-E$ for some $E\geq 0$ that is exceptional over $X$. 
By Lemma \ref{lem: special extraction}, we may then assume that $\Exc(f)=\Supp E$.

Let $K_Y+B_Y:=f^*(K_X+B)$. By our construction, $\Exc(f)= \Supp E$ does not contain any lc place of $(X,B)$. Thus we may pick $E'\geq 0$ on $Y$ such that $-E'$ is ample$/X$ and $E'$ does not contain any lc place of $(X,B)$. Since $\Ngklt(X,B,\Mm)=\Nklt(X,B)$, we may find $0<\epsilon\ll 1$ such that $f^*A-\epsilon E'$ is ample$/U$ and $(Y,B_Y+\epsilon E')$ is sub-lc. In particular, we may find an ample$/U$ $\Rr$-divisor $0\leq H_Y\sim_{\Rr,U}\Mm_Y+f^*A-\epsilon E'$ on $Y$ such that $(Y,B_Y+H_Y+\epsilon E')$ is sub-lc. Let $\Delta:=B+f_*H_Y$, then $(X,\Delta)$ is lc and $\Delta\sim_{\Rr,U}B+\Mm_X+A$. In particular, $R$ is a $(K_X+\Delta)$-negative extremal ray, and the theorem follows from Theorem \ref{thm: base-point-free theorem for not necessarily lc pairs}.
\end{proof}

The contraction theorem (Theorem \ref{thm: contraction theorem glc g-pairs}) immediately follows from the base-point-free theorem:

\begin{proof}[Proof of Theorem \ref{thm: contraction theorem glc g-pairs}]
By Theorem \ref{thm: cone theorem glc g-pairs}, $R$ has a supporting function $H$ that is a $\pi$-nef Cartier divisor. By Theorem \ref{thm: base-point-free theorem for glc pairs}, $H$ is semi-ample$/U$, hence defines a contraction $\cont_R: X\rightarrow Y$ over $U$. (1) and (2) immediately follow.

Since $-(K_X+B+\Mm_X)$ is ample$/Y$, for any line bundle $L$ on $X$ such that $L\cdot R=0$, $L-(K_X+B+\Mm_X)$ is ample$/Y$. By Theorem \ref{thm: base-point-free theorem for glc pairs}, $mL$ is $\cont_R$-generated and $mL\equiv_Y0$ for any $m\gg 0$. Therefore, $\cont_R$ is defined by $|mL|$ and $|(m+1)L|$ over $Y$ for any $m\gg 0$, which implies that $mL\cong f^*L_{Y,m}$ and $(m+1)L\cong f^*L_{Y,m+1}$ for some line bundles $L_{Y,m}$ and $L_{Y,m+1}$ on $Y$. We may let $L_Y:=L_{Y,m+1}-L_{Y,m}$, and we obtain (3).
\end{proof}

\begin{proof}[Proof of Theorem \ref{thm: cone and contraction theorem glc pair}] It immediately follows from Theorems \ref{thm: contraction theorem glc g-pairs} and \ref{thm: cone theorem glc g-pairs}.
\end{proof}

\subsection{Corollaries} With the cone and contraction theorems proven, we can prove the following three corollaries, which guarantee that negative extremal contractions associated with NQC glc g-pairs behave similarly to negative extremal contractions associated with usual pairs. The statements and proofs are similar to \cite[Corollaries 3.17, 3.18]{KM98}. These corollaries are necessary for us to run the minimal model program.

\begin{cor}\label{cor: gpair negative extremal contraction picard number compare}
Let $(X,B,\Mm)/U$ be a $\Qq$-factorial NQC glc g-pair and $f: X\rightarrow Z$ a contraction of a $(K_X+B+\Mm_X)$-negative extremal ray $R$ over $U$. Then $\rho(X)=\rho(Z)+1$.
\end{cor}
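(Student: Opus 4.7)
The plan is to adapt the standard argument (compare \cite[Corollary 3.17]{KM98}) using the contraction theorem proven just above. Since $X$ is $\Qq$-factorial, $\Mm_X$ is automatically $\Rr$-Cartier, so Theorem \ref{thm: cone and contraction theorem glc pair}(4) applies; in particular the contraction $\cont_R: X\to Z$ constructed there agrees with $f$, and every line bundle $L$ on $X$ with $L\cdot R=0$ descends to a line bundle on $Z$.

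First I would show that the pullback map $f^*: N^1(Z/U)\to N^1(X/U)$ is injective. If $f^*D\equiv 0$ over $U$, then for every curve $C\subset Z$ contracted to a point in $U$ we can lift $C$ to a curve $\tilde C\subset X$ with $f_*\tilde C=(\deg f|_{\tilde C})\cdot C$, and the projection formula gives $D\cdot C=0$; hence $D\equiv 0$ over $U$. Second, I would identify the image of $f^*$: clearly $(f^*D)\cdot R=0$ for any $D$, so $\Ima(f^*)\subset R^\perp$; conversely, by Theorem \ref{thm: cone and contraction theorem glc pair}(4.c), any line bundle $L$ on $X$ with $L\cdot R=0$ is of the form $f^*L_Z$, and tensoring with an ample pullback if necessary this extends to $\Rr$-coefficients, so $R^\perp\subset\Ima(f^*)$.

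Combining these two steps, $f^*$ identifies $N^1(Z/U)$ with the hyperplane $R^\perp\subset N^1(X/U)$. Since $R$ is a $1$-dimensional subspace of $N_1(X/U)_{\Rr}$, the perpendicular hyperplane $R^\perp$ has codimension exactly one, giving
\[
\rho(X)=\dim_{\Rr}N^1(X/U)=\dim_{\Rr}R^\perp+1=\rho(Z)+1.
\]

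The only mild subtlety is passing from the line-bundle statement in Theorem \ref{thm: cone and contraction theorem glc pair}(4.c) to the statement in $N^1$ with $\Rr$-coefficients; this is handled by writing any class in $R^\perp$ as a finite $\Rr$-linear combination of integral classes, each of which can be made to lie in $R^\perp$ after adjustment by a multiple of an ample pullback, and then applying (4.c) term by term. Once this is done, the rest of the argument is formal.
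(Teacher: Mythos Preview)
Your proposal is correct and takes essentially the same approach as the paper: both arguments establish the short exact sequence using the contraction theorem (Theorem~\ref{thm: cone and contraction theorem glc pair}(4.c) in your version, Theorem~\ref{thm: contraction theorem glc g-pairs}(3) in the paper's), with the paper working at the level of $\Pic$ and you working directly in $N^1(\cdot/U)_{\Rr}$. One small wording issue: an \emph{ample pullback} $f^*A$ from $Z$ satisfies $f^*A\cdot R=0$, so adjusting by it cannot move a class into $R^\perp$; what you want instead is to subtract a multiple of any fixed Cartier divisor $H$ on $X$ with $H\cdot C\neq 0$ (equivalently, use that $R^\perp$ is a rational hyperplane, hence spanned by integral classes already lying in $R^\perp$, to which (4.c) applies directly).
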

\begin{proof}
$R$ is generated by a curve $C$ by Theorem \ref{thm: cone theorem glc g-pairs}(2). We consider the maps
$$0\rightarrow\Pic(Z)\xrightarrow{D\rightarrow f^*D}\Pic(X)\xrightarrow{L\rightarrow (L\cdot C)}\mathbb Z.$$
We show that the sequence above is an exact sequence.

By Theorem \ref{thm: contraction theorem glc g-pairs}(2), $f$ is a contraction, so $f_*f^*D=D$ for any $D\in\Pic(Z)$, hence $\Pic(Z)\xrightarrow{D\rightarrow f^*D}\Pic(X)$ is an injection. By Theorem \ref{thm: contraction theorem glc g-pairs}(3), for any $L\in\Pic(X)$, if $L\cdot C=0$, then $L\cong f^*L_Y$ for some  line bundle $L_Y$ in $Y$. In particular, $L$ and $f^*L_Y$ correspond to the same element in $\Pic(X)$. Thus the sequence above is exact, and we have $\rho(X)=\rho(Z)+1$.
\end{proof}

\begin{cor}\label{cor: gpair divisorial contraction q factoriality}
Let $(X,B,\Mm)/U$ be a $\Qq$-factorial NQC glc g-pair and $f: X\rightarrow Z$ a contraction of a $(K_X+B+\Mm_X)$-negative extremal ray $R$ over $U$. Assume that $f$ is a divisorial contraction, i.e. $\dim X=\dim Z$ and the exceptional locus of $f$ is an irreducible divisor. Then $Z$ is $\Qq$-factorial.
\end{cor}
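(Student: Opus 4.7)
The plan is to follow the classical Kollár--Mori argument (\cite[Corollary 3.18]{KM98}), using as the only inputs the relative Picard-rank computation from Corollary \ref{cor: gpair negative extremal contraction picard number compare} and the line-bundle descent statement of Theorem \ref{thm: contraction theorem glc g-pairs}(3). The goal is to show that every prime Weil divisor on $Z$ is $\Qq$-Cartier; since arbitrary Weil divisors are $\Zz$-linear combinations of prime ones, this suffices.

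Let $E$ denote the exceptional divisor of $f$, which is $\Qq$-Cartier since $X$ is $\Qq$-factorial, and fix a curve $C$ with $[C]$ spanning $R$. I first verify that $E\cdot C<0$. Because $f_*E=0$ and $\rho(X/Z)=1$ by Corollary \ref{cor: gpair negative extremal contraction picard number compare}, the intersection number $E\cdot C'$ has a common sign for every curve $C'$ with $[C']\in R$, and this sign cannot be zero: otherwise $E\equiv_f 0$, and applying the negativity lemma to both $E$ and $-E$ would yield $E=0$, contradicting divisoriality of $f$. Taking a curve $C'\subset E$ inside a positive-dimensional fiber of $f|_E$ (so that $[C']\in R$) and computing via $E|_E$ on $C'$ forces the common sign to be negative.

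Now let $D_Z$ be any prime Weil divisor on $Z$ and set $D:=f^{-1}_*D_Z$, which is $\Qq$-Cartier. Define the rational number
\[
a:=-\frac{D\cdot C}{E\cdot C},
\]
so that $(D+aE)\cdot C'=0$ for every curve $C'$ with $[C']\in R$. Choose a positive integer $m$ for which $m(D+aE)$ is Cartier. Applying Theorem \ref{thm: contraction theorem glc g-pairs}(3) to the line bundle $\cO_X(m(D+aE))$ produces a line bundle $L_Z$ on $Z$ with $\cO_X(m(D+aE))\cong f^*L_Z$; writing $L_Z=\cO_Z(D'_Z)$ for some Cartier divisor $D'_Z$, we obtain $f^*D'_Z\sim m(D+aE)$. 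Pushing forward, using that $f$ is birational (so $f_*f^*D'_Z=D'_Z$) and $f_*E=0$, yields $D'_Z\sim mD_Z$. Thus $mD_Z$ is linearly equivalent to a Cartier divisor, so it is Cartier, and $D_Z$ is $\Qq$-Cartier. The main obstacle is the sign verification $E\cdot C<0$; once that step is clean, the remainder is a direct application of Theorem \ref{thm: contraction theorem glc g-pairs}(3) together with Corollary \ref{cor: gpair negative extremal contraction picard number compare}.
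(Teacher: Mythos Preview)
Your proof is correct and follows essentially the same approach as the paper's: both use the classical Koll\'ar--Mori argument of adjusting $D$ by a multiple of the exceptional divisor $E$ so that the result is trivial on $R$, then descending via Theorem \ref{thm: contraction theorem glc g-pairs}(3) and pushing forward. You are in fact more careful than the paper, which writes $(E+tD)\cdot R=0$ and then divides by $t$ without verifying $t\neq 0$ (equivalently $E\cdot R\neq 0$); your formulation $D+aE$ together with the explicit check that $E\cdot C<0$ avoids this gap. One minor remark: your sentence ``computing via $E|_E$ on $C'$ forces the common sign to be negative'' is a bit compressed; the cleanest way to finish that step is to note that $E\cdot R>0$ would make $E$ $f$-nef, whence the negativity lemma (with $f_*E=0$) gives $E\le 0$, a contradiction.
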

\begin{proof}
Let $D_Z$ be an $\Rr$-divisor on $Z$ and $E$ the exceptional divisor of $f$. Let $D$ be the strict transform of $D_Z$ on $X$. Then there exists a real number $t$ such that $(E+tD)\cdot R=0$. By Theorem \ref{thm: contraction theorem glc g-pairs}(3), $E+tD\sim_{\Rr}f^*H$ for some $\Rr$-Cartier $\Rr$-divisor $H$ on $Z$. Thus $D_Z\sim_{\Rr}\frac{1}{t}H$ is $\Rr$-Cartier. Therefore, $Z$ is $\Qq$-factorial.
\end{proof}

\begin{cor}\label{cor: gpair fano contraction q factoriality}
Let $(X,B,\Mm)/U$ be a $\Qq$-factorial NQC glc g-pair and $f: X\rightarrow Z$ a contraction of a $(K_X+B+\Mm_X)$-negative extremal ray $R$ over $U$. Assume that $f$ is a Fano contraction, i.e. $\dim X>\dim Z$. Then $Z$ is $\Qq$-factorial.
\end{cor}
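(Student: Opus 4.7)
The strategy mirrors the proof of Corollary \ref{cor: gpair divisorial contraction q factoriality}, now working with the pullback of an arbitrary prime Weil divisor on $Z$ in place of the exceptional divisor. It suffices to show that every prime Weil divisor on $Z$ is $\Qq$-Cartier, so let $D_Z$ be such a divisor. Let $Z_0\subset Z$ be the smooth locus of $Z$; since $Z$ is normal (being the target of a contraction from the normal variety $X$), $Z\setminus Z_0$ has codimension $\geq 2$. Set $X_0:=f^{-1}(Z_0)$ and let $D$ be the unique Weil divisor on $X$ whose restriction to $X_0$ equals the Cartier pullback $f^*(D_Z|_{Z_0})$. Since $X$ is $\Qq$-factorial, $D$ is $\Qq$-Cartier, and we may choose a positive integer $m$ so that $mD$ is Cartier.

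Next I would show that $mD\cdot R=0$. By Theorem \ref{thm: contraction theorem glc g-pairs}(1) (applied to $\pi=g\circ f$, where $g\colon Z\to U$ is the structure map), an integral curve $C$ contracted by $f$ satisfies $[C]\in R$. Since $\dim X>\dim Z$, every fiber of $f$ is positive-dimensional, so we may select a curve $C$ in a fiber over a general point $z\in Z\setminus D_Z$; then $C\cap D=\emptyset$ and $[C]\in R$, giving $mD\cdot R=0$. Applying Theorem \ref{thm: contraction theorem glc g-pairs}(3) to the line bundle $\Oo_X(mD)$, we obtain a Cartier divisor $L_Z$ on $Z$ with $mD\sim f^*L_Z$ on $X$.

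Finally, restricting to $X_0$ gives $f^*(mD_Z|_{Z_0})=mD|_{X_0}\sim f^*(L_Z|_{Z_0})$, so $f^*\Oo_{Z_0}(mD_Z|_{Z_0}-L_Z|_{Z_0})$ is trivial on $X_0$. Pushing forward by $f|_{X_0}$, whose direct image satisfies $(f|_{X_0})_*\Oo_{X_0}=\Oo_{Z_0}$ by Theorem \ref{thm: contraction theorem glc g-pairs}(2), the projection formula yields $mD_Z|_{Z_0}\sim L_Z|_{Z_0}$. Because $Z\setminus Z_0$ has codimension $\geq 2$, any rational function $g$ realizing this equivalence on $Z_0$ is a rational function on $Z$, and the Weil divisor $mD_Z-L_Z-\mathrm{div}(g)$ is supported in codimension $\geq 2$, hence is zero. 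Thus $mD_Z\sim L_Z$ on $Z$; in particular $mD_Z$ is Cartier, so $D_Z$ is $\Qq$-Cartier. The only real subtlety is keeping the Weil-versus-Cartier pullback/pushforward bookkeeping consistent, which is handled by routing the argument through the smooth locus $Z_0$ and extending across codimension $\geq 2$.
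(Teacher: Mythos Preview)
Your proof is correct and follows essentially the same approach as the paper: pull back $D_Z$ via the smooth locus, observe that the resulting divisor has zero intersection with $R$ since it misses a general fiber, then apply Theorem \ref{thm: contraction theorem glc g-pairs}(3) to descend. You supply more detail than the paper on the final descent step (the projection formula and codimension-$\geq 2$ extension), which the paper compresses into one line. One minor imprecision: your phrase ``the unique Weil divisor on $X$ whose restriction to $X_0$ equals $f^*(D_Z|_{Z_0})$'' need not determine $D$ uniquely, since $X\setminus X_0=f^{-1}(Z\setminus Z_0)$ can have divisorial components; the paper's ``closure of $f^{-1}(D_Z|_{Z_0})$'' is the cleaner formulation, though your intersection argument $mD\cdot R=0$ works for either choice.
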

\begin{proof}
Let $D_Z$ be a divisor on $Z$ and $Z^0$ the smooth locus of $Z$. Let $D$ be the closure of $f^{-1}(D_Z|_{Z_0})$. Then $D$ does not intersect any general fiber of $f$, hence $D\cdot R=0$.  By Theorem \ref{thm: contraction theorem glc g-pairs}(3), $D\sim_{\Qq}f^*H$ for some $\Qq$-Cartier $\Qq$-divisor $H$ on $Z$. Thus $D_Z\sim_{\Qq}H$ is $\Qq$-Cartier. Therefore, $Z$ is $\Qq$-factorial.
\end{proof}


The following corollary will allow us to run $\Qq$-factorial generalized MMP with scaling (once the existence of flips is proven in the next section). It is similar to \cite[Lemma 3.19]{HL18}. 

\begin{cor}\label{cor: can run glc mmp with scaling}
Let $(X,B,\Mm)/U$ be a $\Qq$-factorial NQC glc g-pair, $D\geq 0$ an $\Rr$-divisor on $X$, and $\NN$ an NQC$/U$ $\bb$-divisor over $X$, such that $(X,B+D,\Mm+\NN)$ is glc and $K_X+B+D+\Mm_X+\NN_X$ is nef$/U$. Then either $K_X+B+\Mm_X$ is nef$/U$, or there exists an extremal ray $R$ of $\overline{NE}(X/U)$, such that $(K_X+B+\Mm_X)\cdot R<0$ and $(K_X+B+tD+\Mm_X+t\NN_X)\cdot R=0$, where
$$t:=\sup\{s\geq 0\mid K_X+B+sD+\Mm_X+s\NN_X\text{ is nef}/U\}.$$
In particular, $K_X+B+tD+\Mm_X+t\NN_X$ is nef$/U$.
\end{cor}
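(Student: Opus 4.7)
The plan is to analyze the one-parameter family $P_s := K_X+B+sD+\Mm_X+s\NN_X$ for $s \in [0,1]$. If $K_X+B+\Mm_X$ is nef$/U$ there is nothing to prove, so assume otherwise. Then $t \in (0,1]$, since $P_1$ is nef$/U$ by hypothesis while $P_0 = K_X+B+\Mm_X$ is not, and $P_s$ depends affinely on $s$. Because the relative nef cone is closed in $N^1(X/U)$, the divisor $P_t$ is itself nef$/U$, which establishes the ``in particular'' assertion.

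Next I will apply the cone theorem (Theorem \ref{thm: cone theorem glc g-pairs}) to $(X,B,\Mm)/U$ to write
\[
\overline{NE}(X/U) \;=\; \overline{NE}(X/U)_{K_X+B+\Mm_X\geq 0} \;+\; \sum_{j\in\Lambda} R_j,
\]
where each $R_j$ is spanned by a rational curve $C_j$ with $u_j := -(K_X+B+\Mm_X)\cdot C_j \in (0, 2\dim X]$. Setting $v_j := (D+\NN_X)\cdot C_j$, nefness of $P_1$ gives $v_j \geq u_j > 0$, so $t_j := u_j/v_j \in (0,1]$. For $s \in [0,1]$, the decomposition $P_s = (1-s)(K_X+B+\Mm_X) + sP_1$ shows $P_s$ is non-negative on $\overline{NE}(X/U)_{K_X+B+\Mm_X\geq 0}$ as a convex combination of functionals non-negative there, while $P_s\cdot C_j = v_j(s - t_j)$. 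Therefore $P_s$ is nef$/U$ iff $s \geq \sup_j t_j$, forcing $t = \sup_j t_j$.

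The crux of the argument is to show this supremum is attained by some $t_{j_0}$. Once achieved, $R := R_{j_0}$ is the desired extremal ray, since it is $(K_X+B+\Mm_X)$-negative by construction and satisfies $P_t\cdot R = v_{j_0}(t - t_{j_0}) = 0$. For the attainment, following the strategy of \cite[Lemma 3.19]{HL18}, I would use Theorem \ref{thm: shokurov polytope gpair} to approximate by $\Qq$-g-pairs when necessary, and then choose an ample$/U$ $\Rr$-divisor $A$ on $X$ (for instance $A := \epsilon P_1 + \delta H$ for small $\epsilon,\delta > 0$ and an ample$/U$ divisor $H$; note $A$ is ample since $P_1$ is nef$/U$) so that Theorem \ref{thm: cone theorem glc g-pairs}(3) guarantees $\Lambda_A$ is finite and, crucially, contains every $R_j$ with $t_j$ close to $t$. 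The main obstacle is exactly this last inclusion: using the bounds $u_j \leq 2\dim X$ and $v_j \geq u_j$, together with the formula $A\cdot C_j = \epsilon v_j(1-t_j) + \delta H\cdot C_j$, a direct computation must produce $\eta>0$ so that $\{R_j : t_j > t-\eta\} \subseteq \Lambda_A$. Once this finiteness is in hand, the maximum of $t_j$ over the finite set $\Lambda_A$ equals $t$ and is attained, completing the proof.
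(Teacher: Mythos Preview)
Your overall setup is correct and matches the paper: reduce to showing $\sup_j t_j$ is attained, where $t_j = u_j/v_j$ with $u_j = -(K_X+B+\Mm_X)\cdot C_j \in (0,2\dim X]$ and $v_j = (D+\NN_X)\cdot C_j \geq u_j$. The gap is in your attainment argument.

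Your proposed mechanism is to pick an ample$/U$ divisor $A$ and use that $\Lambda_A$ is finite. For this to work you need $\{R_j : t_j > t-\eta\}\subseteq\Lambda_A$, i.e.\ $A\cdot C_j < u_j$ for all such $j$. But $u_j$ has no positive lower bound, and $H\cdot C_j$ has no upper bound (the cone theorem gives no control on the degree of $C_j$ with respect to an arbitrary ample $H$). Even granting the bound $v_j = u_j/t_j \leq 2\dim X/(t-\eta)$ you implicitly use, the term $\delta H\cdot C_j$ alone can exceed $u_j$ no matter how small $\delta$ is, so the inclusion fails. Mentioning Theorem~\ref{thm: shokurov polytope gpair} does not fix this, because you do not explain how the rational approximation feeds into the ample-divisor finiteness.

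The paper takes a different route to attainment. It uses Theorem~\ref{thm: shokurov polytope gpair} to write $K_X+B+\Mm_X=\sum a_i(K_X+B^i+\Mm^i_X)$ and $K_X+B+D+\Mm_X+\NN_X=\sum c_i(K_X+\Delta^i+\PP^i_X)$ as convex combinations of glc $\Qq$-g-pairs, and clears denominators by a single integer $m$. Then for each $j$ the numbers $\alpha_j := (K_X+B+\Mm_X)\cdot C_j$ and $\beta_j := (K_X+B+D+\Mm_X+\NN_X)\cdot C_j$ lie in sets of the form $\{\sum_i a_i n_i/m : n_i\in\Zz,\ n_i\geq -2m\dim X\}$ and $\{\sum_i c_i n'_i/m : n'_i\in\Zz_{\geq 0}\}$, which are DCC. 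Hence $-\alpha_j$ is ACC, $\beta_j$ is DCC, so $\beta_j/(-\alpha_j)$ is DCC and $t_j = 1/(1+\beta_j/(-\alpha_j))$ is ACC; an ACC set of reals in $(0,1]$ attains its supremum. This is the missing idea: discreteness of intersection numbers via the rational polytope, not finiteness of extremal rays via an auxiliary ample divisor.
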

\begin{proof}
Let $\Delta:=B+D$ and $\PP:=\Mm+\NN$. By Theorem \ref{thm: shokurov polytope gpair}, we may write $K_X+B+\Mm_X=\sum_{i=1}^ka_i(K_X+B^i+\Mm_X^i)$ and $K_X+\Delta+\PP_X=\sum_{i=1}^lc_i(K_X+\Delta^i+\PP_X^i)$, such that 
\begin{itemize}
    \item each $a_i,c_i\in (0,1]$ and $\sum_{i=1}^ka_i=1$, $\sum_{i=1}^lc_i=1$,
    \item $\Mm=\sum_{i=1}^ka_i\Mm^i$ and $\PP=\sum_{i=1}^lc_i\PP^i$,
    \item  each $(X,B^i,\Mm^i)$ is glc, each $(X,\Delta^i,\PP^i)$ is glc, and
    \item each $K_X+\Delta^i+\PP^i_X$ is nef$/U$.
\end{itemize}
Let $m$ be a positive integer such that $m(K_X+B^i+\Mm^i_X)$ and $m(K_X+\Delta^i+\PP_X^i)$ are Cartier for any $i$. 

If $K_X+B+\Mm_X$ is nef then there is nothing left to prove. Therefore, we may assume that $K_X+B+\Mm_X$ is not nef. By Theorem \ref{thm: cone and contraction theorem glc pair}, we may let $\{R_j\}_{j\in\Lambda}$ be the set of $(K_X+B+\Mm_X)$-negative extremal rays in $\overline{NE}(X/U)$, and $C_j$ a curve which generates $R_j$ such that $$-2\dim X\leq (K_X+B+\Mm_X)\cdot C_j<0$$ 
for each $j$. Then for each $j$, we have
$$-2\dim X\leq (K_X+B+\Mm_X)\cdot C_j=\sum_{i=1}^k\frac{a_in_{i,j}}{m}<0$$
and
$$(K_X+\Delta+\PP_X)\cdot C_j=\sum_{i=1}^l\frac{c_in'_{i,j}}{m}\geq 0,$$
where $n_{i,j},n_{i,j}'$ are integers, each $n_{i,j}\geq -2m\dim X$, and each $n_{i,j}'\geq 0$. Therefore, $\Ii:=\{(K_X+B+\Mm_X)\cdot C_j\mid j\in\Lambda\}$ and  $\Ii':=\{(K_X+\Delta+\PP_X)\cdot C_j\mid j\in\Lambda\}$ are DCC sets. 

For any $j\in\Lambda$, let $t_j$ be the real number such that $(K_X+B+\Mm_X+t_j(D+\NN_X))\cdot C_j=0$. Let $\alpha_j:=(K_X+B+\Mm_X)\cdot C_j$ and $\beta_j:=(K_X+\Delta+\PP_X)\cdot C_j$, then $\alpha_j\in\Ii$, $\beta_j\in\Ii'$, $\alpha_j<0$, $\beta_j\geq 0$, and
$$t_j=\frac{-\alpha_j}{\beta_j-\alpha_j}=\frac{1}{1+\frac{\beta_j}{-\alpha_j}}.$$
Thus $\{t_j\}_{j\in\Lambda}$ is an ACC set, hence
$$t=\sup\{s\geq 0\mid K_X+B+sD+\Mm_X+s\NN_X\text{ is nef}/U\}=\sup_{j\in\Lambda}\{t_j\}=\max_{j\in\Lambda}\{t_j\}=t_{j_0}$$
for some $j_0\in\Lambda$. We may pick $R=R_{j_0}$.
\end{proof}

\section{Proof of Theorems \ref{thm: existence of q-factorial glc flips}, \ref{thm: can run gpair mmp}, and \ref{thm: gpair mmp 3fold and pe fourfold}}

Now we are ready to prove the rest of our main theorems. We start with Theorem \ref{thm: existence of q-factorial glc flips}. In fact, we can prove a slightly stronger result only assuming that $\Mm_X$ is $\Rr$-Cartier. Before we give the proof, let us recall the definitions of flipping contractions and flips.

\begin{defn}[Flipping contraction]\label{defn: flipping contraction}
Let $X\rightarrow U$ be a projective morphism such that $X$ is normal quasi-projective and $D$ an $\Rr$-Cartier $\Rr$-divisor on $X$. A \emph{$D$-flipping contraction} over $U$ is a contraction $f: X\rightarrow Z$ over $U$ satisfying the following:
\begin{enumerate}
    \item $f$ is the contraction of a $D$-negative extremal ray $R$ in $\overline{NE}(X/U)$. In particular, $\rho(X/Z)=1$.
    \item $f$ is small, i.e. $\dim X=\dim Z$ and the exceptional locus of $f$ is of codimension $\geq 2$ in $X$.
\end{enumerate}
\end{defn}

\begin{defn}[Flip]\label{defn: flip}
Let $X$ be a normal quasi-projective variety, $D$ an $\Rr$-Cartier $\Rr$-divisor on $X$, and $f: X\rightarrow Z$ a $D$-flipping contraction. A \emph{$D$-flip} is a birational contraction $f^+: X^+\rightarrow Z$ satisfying the following.
\begin{enumerate}
    \item $D^+$ is $\Rr$-Cartier and ample$/Z$, where $D^+$ is the strict transform of $D$ on $X^+$.
    \item $f^+$ is small.
\end{enumerate}
\end{defn}

\begin{thm}\label{thm: existence glc flip with m r cartier}
Let $(X,B,\Mm)/U$ be an NQC glc g-pair and $f: X\rightarrow Z$ a $(K_X+B+\Mm_X)$-flipping contraction over $U$. Assume that $\Mm_X$ is $\Rr$-Cartier. Then the flip $f^+: X^+\rightarrow Z$ of $f$ exists. 

In particular, $\Mm_{X^+}$ is $\Rr$-Cartier, and if $X$ is $\Qq$-factorial, then $X^+$ is $\Qq$-factorial and $\rho(X)=\rho(X^+)$.
\end{thm}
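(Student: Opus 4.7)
I plan to prove the theorem by constructing a good minimal model of $(X,B,\Mm)/Z$ and taking its relative ample model over $Z$. Since $\rho(X/Z)=1$ and $K_X+B+\Mm_X$ is anti-ample over $Z$ (by the flipping-contraction hypothesis), any good minimal model $X^+$ will have $K_{X^+}+B^++\Mm_{X^+}$ both semi-ample over $Z$ and numerically positive on every curve over $Z$, hence ample over $Z$. Smallness of $X^+\to Z$ then follows from the Picard-rank count, so $X^+$ is exactly the required flip. The whole task thus reduces to producing a good minimal model of $(X,B,\Mm)/Z$.

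For this I apply Theorem \ref{thm: existence of glc closure} with $U$ replaced by $Z$ and $Z^0:=Z\setminus f(\Exc(f))$, so that $X^0:=f^{-1}(Z^0)\to Z^0$ is an isomorphism (using that $f$ is small). Hypothesis (1) is clear. Given (4), hypothesis (2) is automatic: on $X^0\cong Z^0$ every $\Rr$-Cartier divisor is $\sim_{\Rr,Z^0}0$, so $K_{X^0}+B^0+\Mm^0_{X^0}$ is trivially semi-ample over $Z^0$ and $(X^0,B^0,\Mm^0)/Z^0$ is its own good minimal model. The heart of the proof is therefore in verifying (3) and (4).

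For (4), the issue is that $\Mm$ may not descend to $X$, so on a descent model $h\colon W\to X$ the exceptional divisor $E:=h^*\Mm_X-\Mm_W\geq 0$ may have support meeting $h^{-1}(X^0)$, which would obstruct the descent of $\Mm^0$ to $X^0$. To deal with this, I first replace $\Mm$ by $(1-\epsilon)\Mm$ for $0<\epsilon\ll 1$: a direct comparison of log discrepancies shows that this reduces us to the case $\Ngklt(X,B,\Mm)=\Nklt(X,B)$ while keeping the flipping extremal ray $R$ negative against $K_X+B+\Mm_X$. Under this reduced hypothesis, Lemma \ref{lem: special extraction} applies and produces a birational morphism $h\colon W\to X$ on which $\Mm$ descends and $\Supp(h^*\Mm_X-\Mm_W)=\Exc(h)$. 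One then arranges (by a suitable choice/further refinement in the construction of $W$) that these exceptional divisors lie over the codimension-$\geq 2$ locus $\Exc(f)$, so that the descent discrepancy vanishes over $X^0$ and (4) holds. Condition (3) is handled by the same perturbation and modification: divisorial glc centers cannot be contained in the codimension-$\geq 2$ set $\Exc(f)$, and after the $\Ngklt=\Nklt$ reduction the remaining glc centers are strata of $\lfloor B\rfloor$ (or their extractions in $W$) and can be arranged to meet $X^0$.

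Theorem \ref{thm: existence of glc closure} then yields the desired good minimal model $(X^+,B^+,\Mm)/Z$, which completes the construction of the flip. For the final assertions: $\Mm_{X^+}$ is $\Rr$-Cartier as the difference of the $\Rr$-Cartier divisors $K_{X^+}+B^++\Mm_{X^+}$ and $K_{X^+}+B^+$; and if $X$ is $\Qq$-factorial, then $X^+$ is $\Qq$-factorial with $\rho(X^+)=\rho(X)$, by the standard flip argument using the two small contractions $X\to Z\leftarrow X^+$ and an analysis paralleling Corollaries \ref{cor: gpair negative extremal contraction picard number compare} and \ref{cor: gpair divisorial contraction q factoriality}. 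The hardest part of the proof will be the verification of hypotheses (3) and especially (4) of Theorem \ref{thm: existence of glc closure} — namely, arranging that the exceptional support of the discrepancy of $\Mm$ avoids $h^{-1}(X^0)$; this is where the $\Ngklt=\Nklt$ reduction combines with Lemma \ref{lem: special extraction} to do the essential work.
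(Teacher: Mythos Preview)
Your proposal has a genuine gap in verifying hypothesis (4) of Theorem~\ref{thm: existence of glc closure}. You choose $Z^0:=Z\setminus f(\Exc(f))$ and then assert that, after invoking Lemma~\ref{lem: special extraction}, one can ``arrange'' that the exceptional divisors of the descent model $h\colon W\to X$ lie over $\Exc(f)$. This is not justified and is generally false: the image $h(\Supp E)$, where $E=h^*\Mm_X-\Mm_W$, is exactly the locus in $X$ over which $\Mm$ fails to descend, and this is an intrinsic invariant of $(X,\Mm)$, entirely independent of the flipping contraction $f$. Neither Lemma~\ref{lem: special extraction} nor any ``further refinement of $W$'' gives you control of $h(\Supp E)$ relative to $\Exc(f)$. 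Your argument for (3) has a similar issue: a codimension-$\geq 2$ glc center could perfectly well be contained in $\Exc(f)$.

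The paper resolves this by a case split on the sign of $\Mm_X\cdot C$ for a flipping curve $C$. If $\Mm_X\cdot C\geq 0$ then $(K_X+B)\cdot C<0$, so $f$ is already a $(K_X+B)$-flipping contraction and one reduces to the ordinary lc case via \cite{Has19} together with Lemma~\ref{lem: rlin equivalent good minimal model}. If $\Mm_X\cdot C<0$, then every flipping curve is contained in $h(\Supp E)$, hence $\Exc(f)\subset h(\Supp E)$; one then takes $Z^0:=Z\setminus f(h(\Supp E))$ instead of your larger $Z^0$. With this choice, (4) holds by construction (over $X^0$ one is away from $\Supp E$), (2) holds since $X^0\cong Z^0$, and (3) is checked for the perturbed pair $(X,B,(1-\epsilon_0)\Mm)$ because its glc centers cannot lie in $h(\Supp E)$. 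Your uniform approach with $Z^0=Z\setminus f(\Exc(f))$ fails precisely because (4) is uncontrolled on $X^0\cap h(\Supp E)$. (Lemma~\ref{lem: special extraction} is not used here at all; in the paper it serves a different purpose in the base-point-free theorem.)

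There are also two smaller issues. First, your smallness argument by ``Picard-rank count'' is not enough: a log minimal model may extract glc places, so one must show, as in the paper's Step~2, that any such extracted divisor is contracted when passing to the ample model $X^+$. Second, your argument that $\Mm_{X^+}$ is $\Rr$-Cartier assumes $K_{X^+}+B^+$ is $\Rr$-Cartier, which is not immediate; the paper instead uses Theorem~\ref{thm: contraction theorem glc g-pairs}(3) to prove directly that the strict transform of any $\Rr$-Cartier divisor on $X$ is $\Rr$-Cartier on $X^+$.
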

\begin{proof}
We prove the theorem in three steps. In Step 1, we construct the morphism $f^+: X^+\rightarrow Z$. In Step 2, we show that the morphism $f^+$ constructed in Step 1 is a $(K_X+B+\Mm_X)$-flip. In Step 3, we prove the in particular part of the theorem.

\medskip

\noindent\textbf{Step 1}. In this step,  we construct the morphism $f^+: X^+\rightarrow Z$. 

Let $h: \tilde X\rightarrow X$ be a birational morphism such that $\Mm$ descends to $\tilde X$. Since $\Mm_X$ is $\Rr$-Cartier and $\Mm_{\tilde X}$ is nef$/X$, we have
$$\Mm_{\tilde X}+E=h^*\Mm_X$$
for some $E\geq 0$ that is exceptional over $X$. Let $T\subset X$ be the flipping locus and let $C$ be any flipping curve contracted by $f$. There are two cases:
\medskip

\noindent\textbf{Case 1}. $\Mm_X\cdot C\geq 0$. Then $(K_X+B)\cdot C<0$, and $f$ is also a $(K_X+B)$-flipping contraction. Thus there exists an ample$/Z$ $\Rr$-divisor $A\geq 0$ on $X$ such that $K_X+B+A\sim_{\Rr,Z}0$ and $(X,B+A)$ is lc. By \cite[Theorem 1.1]{Has19}, $(X,B)/U$ has a good minimal model. By Theorem \ref{thm: contraction theorem glc g-pairs}(3), we have $K_X+B\sim_{\Rr,Z}r(K_X+B+\Mm_X)$ for some positive real number $r$. We let $g: Y\rightarrow X$ be a dlt modification of $(X,B)$ and let $K_Y+B_Y=g^*(K_X+B)$, then $K_Y+B_Y+\Mm_Y=g^*(K_X+B+\Mm_X)$, and $(Y,B_Y,\Mm)/U$ and $(Y,B_Y,\mathbf{0})/U$ are glc g-pairs such that $Y$ is $\Qq$-factorial klt. By Lemma \ref{lem: rlin equivalent good minimal model}, $(X,B,\Mm)/Z$ has a good minimal model $(X',B',\Mm)/Z$, and we may let $X'\rightarrow X^+$ be the contraction induced by $K_{X'}+B'+\Mm_{X'}$ over $Z$ and let $f^+: X^+\rightarrow Z$ be the induced morphism.

\medskip

\noindent\textbf{Case 2}. $\Mm_X\cdot C<0$. In this case, $C\subset h(E)$, hence $T\subset h(E)$.  Let $Z^0:= Z\backslash\{f(h(E))\}$, $X^0:=X\times_ZZ^0$, $B^0:=B\times_ZZ^0$, and $\Mm^0:=\Mm\times_ZZ^0$. Since $\Center_XE$ does not contain any glc center of $(X,B,(1-\epsilon)\Mm)$, for any $\epsilon\in (0,1)$,
\begin{itemize}
\item all glc centers of $(X,B,(1-\epsilon)\Mm)$ intersect $X^0$,
\item $(X^0,B^0,(1-\epsilon)\Mm^0)/Z^0$ is a good minimal model of itself (this is because $X^0\cong Z^0$), and
\item $\Mm^0$ descends to $X^0$ and $\Mm^0_{X^0}\sim_{\Rr,Z^0}0$. 
\end{itemize}
Let $\epsilon_0\in (0,1)$ be a real number such that $f$ is also a $(K_X+B+(1-\epsilon_0)\Mm_X)$-flipping contraction. By Theorem \ref{thm: existence of glc closure}, $(X,B,(1-\epsilon_0)\Mm)/Z$ has a good minimal model. Since $\rho(X/Z)=1$, there exists a positive real number $r$ such that $K_X+B+\Mm_X\equiv_{Z}r(K_X+B+(1-\epsilon_0)\Mm_X)$. By Theorem \ref{thm: contraction theorem glc g-pairs}(3), $K_X+B+\Mm_X\sim_{\Rr,Z}r(K_X+B+(1-\epsilon_0)\Mm_X)$. Let $g: Y\rightarrow X$ be a dlt modification of $(X,B)$ and let $K_Y+B_Y:=g^*(K_X+B)$, then $K_Y+B_Y+(1-\epsilon_0)\Mm_Y=g^*(K_X+B+(1-\epsilon_0)\Mm_X)$ and $K_Y+B_Y+\Mm_Y=g^*(K_X+B+\Mm_X)$, and $(Y,B_Y,(1-\epsilon_0)\Mm)/U$ and $(Y,B_Y,\Mm)/U$ are glc g-pairs such that $Y$ is $\Qq$-factorial klt. By Lemma \ref{lem: rlin equivalent good minimal model}, $(X,B,\Mm)/Z$ has a good minimal model $(X',B',\Mm)/Z$, and we may let $X'\rightarrow X^+$ be the contraction induced by $K_{X'}+B'+\Mm_{X'}$ over $Z$ and let $f^+: X^+\rightarrow Z$ be the induced morphism.

\medskip

\noindent\textbf{Step 2}. In this step, we show that the $f^+$ we constructed in Step 1 is a $(K_X+B+\Mm_X)$-flip. Let $B^+$ be the strict transform of $B$ on $X^+$. We only need to check the following two conditions by the definition of a flip:
\begin{enumerate}
\item[(I)] $K_{X^+}+B^++\Mm_{X^+}$ is $\Rr$-Cartier and ample$/Z$.
\item[(II)] $f^+$ is small.
\end{enumerate}
(I) is immediate from our construction. Since $f$ is small, to prove (II), we only need to show that the rational map $X\dashrightarrow X^+$ does not extract any divisor. 

Let $p:W\to X$ and $q:W\to X'$ be a resolution of indeterminacy of $X\dashrightarrow X'$. By Lemma \ref{lem: g-pair version bir12 2.6}, $p^*(K_X+B+\Mm_X)=q^*(K_{X'}+B'+\Mm_{X'})+F$ where $F\geq 0$ is exceptional over $X'$. Let $D$ be a prime divisor on $X'$ that is exceptional over $X$ and $D_W$ its strict transform on $W$. Then $D_W$ is covered by a family of $p$-vertical curves $\Sigma _t$ such that $\Sigma _t\cdot p^*(K_X+B_X+\Mm_X)=0$. Since $F\cdot \Sigma  _t\geq 0$, then $\Sigma _t\cdot q^*(K_{X'}+B'+\Mm_{X'})\leq 0$. 
Let $\Sigma '_t=q_*\Sigma _t$, then $\Sigma ' _t\cdot (K_{X'}+B'+\Mm_{X'})\leq 0$ so that $\Sigma '_t$ are contracted by $X'\to X^+$ and hence $D$ is also contracted. Thus $X\dashrightarrow X^+$ does not extract any divisor, which implies (II). Thus $f^+$ is a $(K_X+B+\Mm_X)$-flip.

\medskip

\noindent\textbf{Step 3}. Now we prove the in particular part of the theorem. Pick any $\Rr$-divisor $D^+$ on $X^+$, and let $D$ be the strict transform of $D^+$ on $X$. 

Assume that $D$ is $\Rr$-Cartier. Since $\rho(X/Z)=1$, there exists a real number $t$ such that $D+t(K_X+B+\Mm_X)\equiv_Z0$. By Theorem \ref{thm: contraction theorem glc g-pairs}(3), $D+t(K_X+B+\Mm_X)\sim_{\Rr,Z}0$. Thus $D+t(K_X+B+\Mm_X)\sim_{\Rr}f^*D_Z$ for some $\Rr$-Cartier $\Rr$-divisor $D_Z$ on $Z$. Therefore, $D^++t(K_{X^+}+B^++\Mm_{X^+})\sim_{\Rr}(f^+)^*D_Z$. Since $K_{X^+}+B^++\Mm_{X^+}$ is $\Rr$-Cartier, $D^+$ is $\Rr$-Cartier.  Therefore, if $\Mm_X$ is $\Rr$-Cartier, then $\Mm_{X^+}$ is $\Rr$-Cartier, and if $X$ is $\Qq$-factorial, then $X^+$ is $\Qq$-factorial.

Since $X\dashrightarrow X^+$ is an isomorphism in codimension $1$, there is a natural isomorphism between the groups of Weil divisors on $X$ and $X^+$. When $X$ and $X^+$ are both $\Qq$-factorial, we have $\rho(X)=\rho(X^+)$, and the proof is concluded.
\end{proof}

\begin{proof}[Proof of Theorem \ref{thm: existence of q-factorial glc flips}] It immediately follows from Theorem  \ref{thm: existence glc flip with m r cartier}.
\end{proof}

\begin{proof}[Proof of Theorem \ref{thm: can run gpair mmp}]
It immediately follows from Theorems \ref{thm: existence glc flip with m r cartier}, \ref{thm: contraction theorem glc g-pairs}, \ref{thm: cone theorem glc g-pairs}, and Corollaries \ref{cor: gpair negative extremal contraction picard number compare} and \ref{cor: gpair divisorial contraction q factoriality}. 
\end{proof}

\begin{proof}[Proof of Theorem \ref{thm: gpair mmp 3fold and pe fourfold}]
It immediately follows from Theorem \ref{thm: can run gpair mmp} and \cite[Corollary 1]{HM20}, \cite[Theorems 1.2,1.3]{CT20}.
\end{proof}

\end{document}